\def\DynkinNodeSize{2mm}
\def\DynkinArrowLength{3mm}
\tikzset{
  % a diagram node
  dnode/.style={
    circle,
    inner sep=0pt,
    minimum size=\DynkinNodeSize,
    fill=white,
    draw},
  middlearrow/.style={
    decoration={markings,
      mark=at position 0.6 with
      %{\arrow[black]{angle 90};}
      %{\arrow[black]{angle 60};}
      %{\arrow[black]{stealth};}
      {\draw (0:0mm) -- +(+135:\DynkinArrowLength); \draw (0:0mm) -- +(-135:\DynkinArrowLength);},
    },
    postaction={decorate}
  },
  leftrightarrow/.style={
    decoration={markings,
      mark=at position 0.999 with
      {
      \draw (0:0mm) -- +(+135:\DynkinArrowLength); \draw (0:0mm) -- +(-135:\DynkinArrowLength);
      },
      mark=at position 0.001 with
      {
      \draw (0:0mm) -- +(+45:\DynkinArrowLength); \draw (0:0mm) -- +(-45:\DynkinArrowLength);
      },
    },
    postaction={decorate}
  },
  % single edge
  sedge/.style={
  },
  % directed double edge
  dedge/.style={
    middlearrow,
    double distance=0.5mm,
  },
  % directed triple edge
  tedge/.style={
    middlearrow,
    double distance=1.0mm+\pgflinewidth,
    postaction={draw}, % third line
  },
  % double edge with two arrows, for \tilde{A}_1 residues
  infedge/.style={
    leftrightarrow,
    double distance=0.5mm,
  },
}
 \newcommand\cX{{\mathring{X}}} 
\newcommand\indlim{\displaystyle{\lim_{\longrightarrow}}}
\newcommand\bleq{{\preccurlyeq}}
\newcommand\rss{{\rm ss}}\newcommand\rs{{\rm s}}
\newcommand\dX{{\mathbb X}}\newcommand\dx{{\mathbb x}}
\newcommand\cI{{\mathcal I}}%\newcommand\Ic{{\mathcal I}}
\newcommand\Face{{\mathcal F}}
\newcommand\ul{\underline}
\newcommand\bkprod{{\odot_0}}
\newcommand\Gr{{\mathcal Gr}}
\newcommand\Ac{{\mathcal A}}
\newcommand\cE{{\mathcal E}}
\newcommand\KK{{\mathcal K}}\newcommand\Rr{{\mathcal R}}
\newcommand\cone{{\mathcal C}}
\newcommand\Cun{{\mathcal C}}\newcommand\Cung{{\Cun(\lg)}}
\newcommand\GP{{G/P}}\newcommand\GB{{G/B}}\newcommand\GBm{{G/B^-}}
\newcommand\GPCp{{G\times_P C^+}}\newcommand\GPCpbar{{G\times_P\bar C^+}}
\newcommand\Chi{{\aleph}}\newcommand\orb{{\varsigma}}
\newcommand\Tau{{\mathcal T}}
\newcommand\lh{{\mathfrak h}}\newcommand\lb{{\mathfrak b}}\newcommand\lln{{\mathfrak n}}
\renewcommand\lq{{\mathfrak q}}
\newcommand\dlh{{\dot
    \lh}}\newcommand\dlb{{\dot\lb}}
\newcommand\dtheta{{\dot\theta}}\newcommand\dvarpi{{\dot\varpi}}
\newcommand\Orb{{\mathcal O}}\newcommand\co{{\mathcal O}}
\newcommand\cs{{\mathcal S}}\newcommand\cf{{\mathcal F}}
\renewcommand\lg{{\mathfrak g}}\newcommand\dlg{{\dot \lg}}
\newcommand\codim{{\operatorname{codim}}}
\newcommand\ad{{\operatorname{ad}}}
\newcommand\End{{\operatorname{End}}}
\newcommand\Id{{\operatorname{Id}}}\newcommand\Ker{{\operatorname{Ker}}}
\newcommand\gr{{\operatorname{gr}}}
\newcommand\Ho{{\operatorname{H}}}
\newcommand\Lie{{\operatorname{Lie}}}
\newcommand\Aut{{\operatorname{Aut}}}
\newcommand\Exp{{\operatorname{Exp}}}
\newcommand\Proj{{\operatorname{Proj}}}
\newcommand\Spec{{\operatorname{Spec}}}
\newcommand\Hom{{\operatorname{Hom}}}\newcommand\Pic{{\operatorname{Pic}}}
\newcommand\SL{{\operatorname{SL}}}\newcommand\GL{{\operatorname{GL}}}
\newcommand\lk{{\mathfrak k}}\newcommand\lp{{\mathfrak p}}
\newcommand\lu{{\mathfrak u}}
\renewcommand\sl{{\mathfrak sl}}
\newcommand\ZZ{{\mathbb Z}}\newcommand\NN{{\mathbb N}}\newcommand\QQ{{\mathbb Q}}
\newcommand\RR{{\mathbb R}}\newcommand\CC{\mathbb C}
\renewcommand\AA{{\mathbb A}}\newcommand\PP{\mathbb P}
\newcommand\longto{\longrightarrow}
\newcommand\quot{/\hspace{-0.2em}/}
\newcommand\Li{{\mathcal L}}
\newcommand\bl{\text{\cursive l}}\newcommand\bh{\text{\cursive h}}
\newcommand\inv{^{-1}}
\newcommand{\ext}{\mathcal{E}xt}\newcommand{\tor}{\mathcal{T}or}
\newtheorem{theo}{Theorem}
\newtheorem{lemma}{Lemma}
\newtheorem{prop}{Proposition}
\newtheorem{coro}{Corollary}
\newenvironment{NB}{{\bf Remark.}}{}
\title{On the tensor semigroup of affine Kac-Moody Lie algebras}
\author{Nicolas Ressayre}
\address{Institut Camille Jordan (ICJ),
UMR CNRS 5208,
Université Claude Bernard Lyon I,
43 boulevard du 11 novembre 1918,
F - 69622 Villeurbanne cedex {\tt ressayre@math.univ-lyon1.fr}}
\begin{document}
\begin{abstract}
In this paper, we are interested in the decomposition of the tensor
product of two representations of 
a symmetrizable Kac-Moody Lie algebra $\lg$. Let $P_+$ be the set of dominant integral
weights. For $\lambda\in P_+$,  $L(\lambda)$ denotes the irreducible,
integrable, highest weight representation of $\lg$ with highest
weight    $\lambda$. Let $P_{+,\QQ}$ be the rational convex cone
generated by $P_+$. Consider the {\it tensor cone}
$$
\Gamma(\lg):=\{(\lambda_1,\lambda_2,\mu)\in P_{+,\QQ}^3\,|\,\exists N>1
\quad L(N\mu)\subset L(N\lambda_1)\otimes L(N\lambda_2)\}.
$$ 
If $\lg$ is finite dimensional, $\Gamma(\lg)$ is a polyhedral convex
cone described in \cite{BK} by an explicit finite list of
inequalities.
In general, $\Gamma(\lg)$ is nor polyhedral, nor closed. In this article
we describe the closure of  $\Gamma(\lg)$ by an explicit countable
family of linear inequalities, when $\lg$ is untwisted affine. This
solves a Brown-Kumar's conjecture \cite{BrownKumar} in this case.

We also obtain explicit saturation factors for the semigroup of
triples $(\lambda_1,\lambda_2,\mu)\in P_{+}^3$ such that 
$L(\mu)\subset L(\lambda_1)\otimes L(\lambda_2)$. Note that even the
existence of such saturation factors is not obvious since the
semigroup is not finitely generated.
For example, in case $\tilde A_n$, we prove that any integer $d_0\geq
2$ is a saturation factor, generalizing the case $\tilde A_1$ shown in \cite{BrownKumar}.
\end{abstract}

\maketitle

\section{Introduction}

Let $A$ be a symmetrizable irreducible GCM of size $l+1$.
Let $\lh\supset\{\alpha_0^\vee,\dots,\alpha_l^\vee\}$ and
$\lh^*\supset\{\alpha_0,\dots,\alpha_l\}=:\Delta$ be a realization of $A$. 
We fix an integral form $\lh_\ZZ\subset\lh$ containing each
$\alpha_i^\vee$, such that $\lh^*_\ZZ:=\Hom(\lh_\ZZ,\ZZ)$ contains $\Delta$
and such that $\lh_\ZZ/\oplus\ZZ\alpha_i^\vee$ is torsion free.
Set $\lh_\QQ^*=\lh_\ZZ^*\otimes\QQ\subset\lh^*$,  
$P_{+,\QQ}:=\{\lambda\in\lh_\QQ^*\,|\,\langle\alpha_i^\vee,\lambda\rangle\geq
0\quad\forall i\}$, and $P_+=\lh_\ZZ\cap P_{+,\QQ}$.

Let $\lg=\lg(A)$ be the associated Kac-Moody Lie algebra with Cartan
subalgebra $\lh$. 
For $\lambda\in P_+$,  $L(\lambda)$ denotes the irreducible, integrable,
highest weight representation of $\lg$ with highest weight $\lambda$. 
Define the {\it saturated tensor semigroup} as 
$$
\Gamma(A):=\{(\lambda_1,\lambda_2,\mu)\in P_{+,\QQ}^3\,|\,\exists N>1
\qquad L(N\mu)\subset L(N\lambda_1)\otimes L(N\lambda_2)\}.
$$ 

In the case that $\lg$ is a semisimple Lie algebra,
$\Gamma(A)$ (also denoted by $\Gamma(\lg)$) is a closed convex polyhedral cone given by an explicit set
of inequalities parametrized by some structure constants  of the cohomology rings of the flag varieties
for the corresponding algebraic group (see
\cite{BK,Kumar:survey,Kumar:surveyEMS,GITEigen}).

In the general case, $\Gamma(A)$ is no longer closed or polyhedral.
Nevertheless, in this paper we describe  the closure of $\Gamma(A)$ by
infinitely many explicit linear inequalities, if $\lg$ is affine
untwisted. 
Note that some intermediate results are true for any symmetrizable GCM
$A$.\\

Let $G$ be the minimal Kac-Moody group as in
\cite[Section~7.4]{Kumar:KacMoody} and $B$ its standard Borel subgroup.  
Fix
$(\varpi_{\alpha_0^\vee},\dots,\varpi_{\alpha_l^\vee})\subset\lh_\QQ$
be elements dual to the simple roots. 
Let $W$ be the Weyl group of $A$.
To any simple root $\alpha_i$, is associated  a maximal standard
parabolic subgroup $P_i$, its Weyl group $W_{P_i}\subset W$ and the
set $W^{P_i}$ of minimal length representative of elements of $W/W_{P_i}$.
We also consider the  partial flag ind-variety $X_i=G/P_i$ containing the Schubert varieties
$X_w=\overline{BwP_i/P_i}$, for $w\in W^{P_i}$.
Let
$\{\epsilon_w\}_{w\in W^{P_i}}
\subset \Ho^*(X_i,\ZZ)$
be the Schubert basis dual
to the basis of the singular homology of
$X_i$
given
by the fundamental classes of
$X_w$. 
As defined by Belkale–Kumar \cite[Section~6]{BK}
in the finite dimensional case, Brown-Kumar defined in \cite[Section~7]{BrownKumar} a deformed product
$\bkprod$
in
$\Ho^*(X_i,\ZZ)$, which is commutative and associative.

\begin{theo}
  \label{th:mainintro}
Let $\lg$ ba an affine untwisted Kac-Moody Lie algebra with central
element $c$. 
Let $(\lambda_1,\lambda_2,\mu)\in P_{+,\QQ}^3$ such that
$\lambda_1(c)>0$ and $\lambda_2(c)>0$. 

Then, 
$$(\lambda_1,\lambda_2,\mu)\in \Gamma(\lg)$$ 
if and only if
\begin{equation}
  \label{eq:centre}
\mu(c)=\lambda_1(c)+\lambda_2(c),
\end{equation}
and 
\begin{equation}
  \label{eq:8}
\langle\mu,v\varpi_{\alpha_i^\vee}\rangle\leq 
\langle\lambda_1,u_1\varpi_{\alpha_i^\vee}\rangle+\langle\lambda_2,u_2\varpi_{\alpha_i^\vee}\rangle
\end{equation}
for any $i\in\{0,\dots,l\}$ and any 
$(u_1,u_2,v)\in (W^{P_i})^3$ such that $\epsilon_v$ occurs with coefficient 1
in the deformed product
$$
\epsilon_{u_1}\bkprod\epsilon_{u_2}.
$$
\end{theo}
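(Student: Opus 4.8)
The plan is to recast membership in $\Gamma(\lg)$ as a geometric invariant theory problem on products of the flag ind-varieties $X_i=G/P_i$ and their opposites $X_i^-=G/P_i^-$, and then to run a Hilbert--Mumford argument adapted to the Kac--Moody setting, fed by the properties of the deformed product $\bkprod$ established in the body of the paper and in \cite{BrownKumar}. For each $i$ one has $(\lambda_1,\lambda_2,\mu)\in\Gamma(\lg)$ if and only if, for some $N>0$, the space of $\lg$-invariants in $L(N\lambda_1)\otimes L(N\lambda_2)\otimes L(N\mu)^\vee$ is nonzero, and via Borel--Weil this space is identified with the $G$-invariant sections of an explicit line bundle $\kmL$ on $X_i\times X_i\times X_i^-$ built from $\lambda_1,\lambda_2,\mu$ through the coweights $\varpi_{\alpha_i^\vee}$. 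Each factor is an ind-variety filtered by the finite-dimensional thin Schubert varieties $X_w$, and the hypotheses $\lambda_1(c)>0$, $\lambda_2(c)>0$ are used to guarantee that the relevant modules lie in category $\mathcal O$, that the graded pieces of these section spaces are finite-dimensional, and that the semistable locus, once nonempty, already sits inside a finite union of Schubert varieties; this is what keeps the GIT picture tractable.

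Necessity of \eqref{eq:centre} is immediate: $c$ acts on $L(\nu)$ by the scalar $\nu(c)$, so $L(N\mu)\subset L(N\lambda_1)\otimes L(N\lambda_2)$ forces $N\mu(c)=N\lambda_1(c)+N\lambda_2(c)$. For necessity of \eqref{eq:8}, fix $i$ and a triple $(u_1,u_2,v)\in (W^{P_i})^3$ for which $\epsilon_v$ occurs with coefficient $1$ in $\epsilon_{u_1}\bkprod\epsilon_{u_2}$. The deformed-product results guarantee that the corresponding generically translated Schubert varieties in $X_i$ meet transversally in a single reduced point, which is fixed by the one-parameter subgroup $\tau_i$ in the direction $\varpi_{\alpha_i^\vee}$. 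Restricting a nonzero $G$-invariant section of $\kmL$ to this point and reading off the $\tau_i$-weight yields exactly \eqref{eq:8}; the ``coefficient $1$'' hypothesis is precisely what forces this restriction to be nonzero, a higher coefficient or a positive-dimensional intersection obstructing the step.

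For sufficiency, assume \eqref{eq:centre} and all the inequalities \eqref{eq:8}. Choose $i$ so that the construction of $\kmL$ on $X_i\times X_i\times X_i^-$ from $(\lambda_1,\lambda_2,\mu)$ is available; it suffices to show that the semistable locus of $\kmL$ is nonempty, since then $\Ho^0(\kmL^{\otimes N})^G\neq 0$ for some $N$, which unwinds to $L(N\mu)\subset L(N\lambda_1)\otimes L(N\lambda_2)$. By the Hilbert--Mumford criterion in the Kac--Moody setting, instability is detected by one-parameter subgroups of $G$, and after the usual conjugation and Weyl-group reductions the \emph{maximal} destabilizing directions are organized by the coweights $\varpi_{\alpha_i^\vee}$ together with triples in $(W^{P_i})^3$. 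The key input from the body of the paper is that only triples whose deformed-product coefficient equals $1$ can produce a supporting hyperplane of $\Gamma(\lg)$, so the family \eqref{eq:8} is already exhaustive; hence no one-parameter subgroup destabilizes, the semistable locus is nonempty, and we conclude.

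The main obstacle is the infinite-dimensionality pervading the argument: $X_i$ is an ind-variety, $\Ho^*(X_i,\ZZ)$ is not finitely generated, the deformed product has infinitely many structure constants, and there is no off-the-shelf GIT statement ensuring ``semistable locus nonempty''. I expect the heart of the work to be the affine replacement for the classical Kempf/Ness-type and Levi-movability arguments --- passing from an arbitrary destabilizing one-parameter subgroup to a cohomological (coefficient $1$) condition on a triple of Schubert classes, and proving that the resulting countable list of inequalities is both irredundant and sufficient once the levels are positive. Checking that the auxiliary constructions --- the line bundle $\kmL$, its invariant sections and their restrictions, and the transversality of translated thin Schubert varieties --- behave well through the Schubert filtration is where the technical effort concentrates.
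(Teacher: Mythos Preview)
Your treatment of necessity is close to the paper's: the central condition~\eqref{eq:centre} is immediate, and the inequalities~\eqref{eq:8} follow from the easy (``semistable $\Rightarrow$ numerically semistable'') half of Hilbert--Mumford together with a transversality argument for translated Schubert and Birkhoff varieties. The paper carries this out on $\dX=(G/B^-)^2\times G/B$ rather than on products of $G/P_i$, but the substance is the same. One correction: the coefficient-$1$ hypothesis is not what makes the restriction nonzero; what one needs for necessity is merely $n_{u_1u_2}^v\neq 0$ so that the intersection is nonempty, and the restriction argument then shows the inequality holds.

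The sufficiency direction, however, has a genuine gap. You propose to invoke ``the Hilbert--Mumford criterion in the Kac--Moody setting'' to conclude that if no one-parameter subgroup destabilizes then the semistable locus is nonempty. The paper is explicit that no such converse is available here: Kempf's theory and the full Hilbert--Mumford theorem have no known generalization to ind-varieties and ind-groups, and the entire point of the paper is to circumvent this. Your closing paragraph acknowledges the obstacle but does not supply an alternative; ``I expect the heart of the work to be the affine replacement for Kempf/Ness'' is not a proof, and in fact no such direct replacement is produced.

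What the paper does instead is the following. First, it shows the cone $\Cun$ cut out by~\eqref{eq:centre} and~\eqref{eq:8} is \emph{locally polyhedral} (Proposition~\ref{prop:locpol}), so its boundary is a countable union of codimension-one faces, each given by one equality~\eqref{eq:8}. Second, on such a face, Theorem~\ref{th:egalite} (a hard geometric result, proved via a chain of restriction isomorphisms Lemmas~\ref{lem:rest1}--\ref{lem:rest4}) reduces the multiplicity $c_{\lambda_1\lambda_2}^\mu$ to a multiplicity for the finite-type Levi $L_i$. Third, the classical Belkale--Kumar description of $\Gamma(L_i)$ gives finitely many inequalities, and a multiplicativity result for structure constants (Proposition~\ref{prop:multiplicative}) together with Theorem~\ref{th:essineqBKBprod} shows that each of these, pulled back, is already among the inequalities~\eqref{eq:8}. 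One then propagates from the known face $\Face_{(e,e,e,0)}$ (Lemma~\ref{lem:face0}) to all faces by adjacency. This inductive, face-by-face strategy replaces the missing Hilbert--Mumford converse; it is not a GIT argument on the full ind-variety at all.
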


\bigskip
The statement of Theorem~\ref{th:mainintro} is very similar to
\cite[Theorem~22]{BK} that describes $\Gamma(\lg)$, if $\lg$ is finite
dimensional.
Nevertheless, the proof is very different.
Indeed, in the classical case the main ingredients are Kempf's
semistability theory and Hilbert-Mumford's theorem (see \cite{BK} or
\cite{GITEigen}).
These results have no known generalization in our situation. 
We overcome this difficulty by using a new strategy that we now
explain roughly speaking. 

Consider the cone $\Cung$ defined by equality~\eqref{eq:centre} and
inequalities~\eqref{eq:8}.
It remains to prove that, up to the assumption ``$\lambda_1(c)$  and
$\lambda_2(c)$ are positive'', the cone $\Cung$ is equal to
$\Gamma(\lg)$.
The proof proceeds in fives steps.

\bigskip
\noindent{\sc Step 1.}
$\Gamma(\lg)$ is convex.

This is a well-known consequence of Borel-Weil's theorem (see
Lemma~\ref{lem:BW}).

\bigskip
\noindent{\sc Step 2.} The set 
$\Gamma(\lg)$ is contained in $\Cung$.

This step is proved in \cite{BrownKumar} and reproved here. The first
ingredient is 
the easy implication in the Hilbert-Mumford's theorem. 
Indeed ``semistable $\Rightarrow$ numerically semistable'' is still
true for ind-varieties and ind-groups. 
In the finite dimensional case, the second argument is Kmeiman's
transversality theorem.
In \cite{BrownKumar}, this is by an argument in K-theory which express
the structure constants of $H^*(G/P_i,\ZZ)$ as the Euler
characteristic of sheaves supported by the intersection of three
translated Schubert or Birkhoff varieties. Here, we refine this
argument by proving a version of Kleiman's theorem that allows to
express these structure constants as the cardinality  of three
translated Schubert or Birkhoff varieties.
% Along the way, we prove that the
% structure constants of the cohomology rings $H^*(G/P,\ZZ)$ in the
% basis $(\epsilon_u)_{u\in W^P}$ have an enumerative
% meaning. 

\bigskip
\noindent{\sc Step 3.} 
The cone $\Cung$ is locally polyhedral.

This is a consequence of Proposition~\ref{prop:locpol} below. We study
the inequalities~\eqref{eq:8} defining $\Cung$. 
In particular, we use some consequences of the nonvanishing of a
structure constant of the ring $H^*(G/P,\ZZ)$ (see
Lemmas~\ref{lem:deltaneg} and \ref{lem:calculdelta} below or
\cite{BrownKumar}).

 \bigskip
\noindent{\sc Step 4.}
Study of the boundary of $\Cung$.

Let $(\lambda_1,\lambda_2,\mu)$ be an integral point  in the boundary of $\Cung$. 
Step 3 implies that some inequality~\eqref{eq:8} has to be an
equality for $(\lambda_1,\lambda_2,\mu)$.
Then, one can use the following Theorem~\ref{th:reductionintro} to describe inductively the
multiplicity of $L(\mu)$ in $L(\lambda_1)\otimes L(\lambda_2)$.
Let $\alpha_i$ be a simple root and let $L_i$ denote the standard Levi
subgroup of $P_i$.
For $w\in W^{P_i}$ and $\lambda\in P_+$, $w\inv \lambda$ is a dominant
weight for $L_i$: we denote by $L_{L_i}(w\inv\lambda)$ the
corresponding irreducible highest weight $L_i$-module. 

\begin{theo}
  \label{th:reductionintro}
Here, $\lg$ is any symmetrizable Kac-Moody Lie algebra and $\alpha_i$
is a simple root.
Let $(\lambda_1,\lambda_2,\mu)\in P_{+}^3$.
Let $(u_1,u_2,v)\in (W^{P_i})^3$ such that $\epsilon_v$ occurs with coefficient 1
in the ordinary product
$
\epsilon_{u_1}.\epsilon_{u_2}.
$
We assume that 
\begin{equation}
  \label{eq:28}
\langle\mu,v\varpi_{\alpha_i^\vee}\rangle= 
\langle\lambda_1,u_1\varpi_{\alpha_i^\vee}\rangle+\langle\lambda_2,u_2\varpi_{\alpha_i^\vee}\rangle.
\end{equation}
Then the multiplicity of $L(\mu)$ in $L(\lambda_1)\otimes
L(\lambda_2)$ is equal to the multiplicity of $L_{L_i}(v\inv\mu)$ in $L_{L_i}(u_1\inv\lambda_1)\otimes
L_{L_i}(u_2\inv\lambda_2)$
\end{theo}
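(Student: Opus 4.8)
The plan is to realize the tensor product multiplicities as dimensions of spaces of sections on the flag ind-variety and then use a localization / vanishing argument at the relevant Schubert cell. Concretely, for $\lambda\in P_+$ write $\Li_\lambda$ for the line bundle on $X_i=G/P_i$ with $H^0(X_i,\Li_\lambda)^*=L(\lambda)$ (Borel--Weil, as in Lemma~\ref{lem:BW}). The multiplicity of $L(\mu)$ in $L(\lambda_1)\otimes L(\lambda_2)$ equals $\dim\left(L(\lambda_1)\otimes L(\lambda_2)\otimes L(\mu)^*\right)^{\lg}$, and since $L(\mu)^*$ is a lowest weight module we can rewrite this, after twisting by $w_0$-type elements in the affine setting replaced by the relevant $v$, as the dimension of the space of $G$-invariant sections of $\Li_{\lambda_1}\boxtimes\Li_{\lambda_2}\boxtimes\Li'_{\mu}$ on $X_i\times X_i\times X_i^-$, where $X_i^-=G/P_i^-$ carries the dual Schubert/Birkhoff stratification. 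This is exactly the setup in which Step~2's refined Kleiman theorem applies.

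First I would use the hypothesis that $\epsilon_v$ occurs with coefficient $1$ in the \emph{ordinary} product $\epsilon_{u_1}\cdot\epsilon_{u_2}$ together with the chern-class computation underlying inequality~\eqref{eq:8}: the equality case~\eqref{eq:28} says precisely that the mobile one-parameter subgroup $\delta$ associated to $(i;u_1,u_2,v)$ acts with weight $0$ on the fiber of the relevant line bundle over the point of the (transverse, by the coefficient-$1$ hypothesis) triple intersection $C:=\overline{\Lambda^{u_1}_1}\cap\overline{\Lambda^{u_2}_2}\cap\overline{\Lambda^{v}_3}$. Hence every $G$-invariant section of the bundle, restricted along the $\delta$-orbit closure, is $\delta$-invariant, and one can localize: invariant sections are determined by their restriction to the $\delta$-fixed locus, which is a product of flag varieties for the Levi $L_i$. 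The second step is to identify that fixed-point locus. By Kumar's description of Schubert varieties and their intersections with opposite cells, the $\delta$-fixed points inside $C$ form a single $L_i$-orbit whose closure is $L_i/(L_i\cap$ conjugates of parabolics$)$, i.e.\ a product of partial flag varieties for $L_i$, and the restricted line bundle there is $\Li_{u_1\inv\lambda_1}\boxtimes\Li_{u_2\inv\lambda_2}\boxtimes\Li'_{v\inv\mu}$. This yields an injection from the $\lg$-invariants to the $L_i$-invariants, hence the inequality ``$\leq$'' between multiplicities.

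For the reverse inequality ``$\geq$'' — the main obstacle — I would run an Euler-characteristic / degeneration argument in the spirit of Step~2's K-theoretic refinement. One shows that the sheaf on the triple product obtained by pushing forward from $C$ has higher cohomology vanishing (using that $C$ is, after the coefficient-$1$ transversality, a smooth complete intersection with rational singularities, as in Brown--Kumar), so that the space of invariant sections computes an Euler characteristic which is \emph{constant} under the one-parameter degeneration contracting $X_i^3$ onto the $\delta$-fixed locus. The limit of this degeneration is exactly the $L_i$-triple-product computation, and no sections are created or destroyed because the bundle has weight $0$ along the contracting directions. Therefore the two Euler characteristics agree, and combined with the vanishing on both sides they give equality of the multiplicities. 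The delicate points will be: (a) proving the cohomology vanishing on the possibly singular ind-scheme $C$ in the Kac--Moody setting, where one must substitute Frobenius-splitting or Kumar--Mathieu-type arguments for the finite-dimensional vanishing theorems; and (b) checking that the $\delta$-fixed locus is literally a product of $L_i$-flag varieties with the asserted line bundle, i.e.\ matching $w\inv\lambda$ as an $L_i$-dominant weight — this is a bookkeeping computation with the realization $\lh_\ZZ$ and the duals $\varpi_{\alpha_i^\vee}$, but it is where the precise normalizations in the statement get used.
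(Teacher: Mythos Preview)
Your overall strategy --- realize both multiplicities as dimensions of spaces of invariant sections via Borel--Weil, and compare them by restricting to a $\tau$-fixed locus on which the line bundle has weight $0$ --- matches the paper's.  The injectivity (``$\leq$'') direction is fine and is essentially Lemma~\ref{lem:rest4}.  The real issue is surjectivity, and here your proposal diverges from the paper and has a genuine gap.

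You propose to get ``$\geq$'' by a degeneration/Euler-characteristic argument: push forward from the triple intersection $C$, invoke higher-cohomology vanishing, and argue that the Euler characteristic is constant along the $\tau$-flow.  There are two problems.  First, your $C$ is not the right object: the intersection $\overline{\Lambda^{u_1}}\cap\overline{\Lambda^{u_2}}\cap\overline{\Lambda^{v}}$ (after generic translation) is a \emph{point}, not a variety, when $n_{u_1u_2}^v=1$; the locus on which the $L_i$-multiplicity lives is rather the product $C=L_iu_1^{-1}\underline o^-\times L_iu_2^{-1}\underline o^-\times L_iv^{-1}\underline o\subset\dX=(G/B^-)^2\times G/B$.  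Second, and more seriously, the vanishing/constancy argument is not carried out, and in the Kac--Moody setting there is no off-the-shelf Frobenius-splitting or semicontinuity theorem that would make it go through on the ind-variety $\dX$.  You flag this yourself as ``delicate point (a)'', but without it the argument is only a heuristic.

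The paper avoids cohomology vanishing entirely.  It factors the restriction $\Ho^0(\dX,\Li)^G\to\Ho^0(C,\Li_{|C})^L$ through four explicit isomorphisms (Lemmas~\ref{lem:rest1}--\ref{lem:rest4}) using the intermediate ind-variety $G\times_P\bar C^+$ and the ``collapsing'' map $\bar\eta:G\times_P\bar C^+\to\dX$.  The hypothesis $n_{u_1u_2}^v=1$ is used not for transversality of a subvariety but to make $\bar\eta$ \emph{birational} (Lemmas~\ref{lem:intermorph} and~\ref{lem:inveta}): the generic fibre of $\bar\eta$ is the triple Schubert intersection, which is a single reduced point precisely when the structure constant equals $1$.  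Zariski's main theorem then gives $\bar\eta^*$ an isomorphism on sections.  The remaining hard step, extending $\tau$-invariant sections from $C^+$ to its closure $\bar C^+$ (Lemma~\ref{lem:rest3}), is handled by a direct GIT-type argument on affine charts (Lemma~\ref{lem:GIT}), again with no cohomology vanishing needed.  So the coefficient-$1$ hypothesis enters through birationality of a resolution, not through smoothness of an intersection, and that is the idea your proposal is missing.
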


Note that Theorem~\ref{th:egalite} and its corollary in Section~\ref{sec:boundary} are a little bit
stronger than Theorem~\ref{th:reductionintro}. 

\bigskip
\noindent{\sc Step 5.} Induction.

Whereas there are numerous technical difficulties the basic idea is
simple. By convexity, it is sufficient to prove that the boundary of
$\Cung$ is contained in $\Gamma(\lg)$. 
Using Step 4, this can be proved by induction.

More precisely, consider a face $\Face$ of codimension one of $\Cung$
associated to some structure constant of $H^*(G/P_i,\ZZ)$ for
$\bkprod$ equal to one.
We have to prove that $\Face$ is contained in $\Gamma(\lg)$. 
By Theorem~\ref{th:reductionintro}, it remains to prove that the
points of $\Face$ satisfy the inequalities that characterize
$\Gamma(L_i)$.
Fix such an inequality associated to a structure constant of
$H^*(L_i/(P_j\cap L_i),\ZZ)$ for $\bkprod$ equal to one.
Consider the flags ind-varieties:
\begin{center}
\begin{tikzpicture}
  \matrix (m) [matrix of math nodes,row sep=2.5em,column sep=1.2em,minimum width=2em]
  {
     L_i/(P_j\cap L_i)&&G/(P_i\cap P_j)\\
&G/P_i&&G/P_j\\};
  \path[-stealth]
    (m-1-1) edge  (m-1-3)
    (m-1-3) edge  (m-2-2)
    (m-1-3) edge  (m-2-4);
\end{tikzpicture}  
\end{center} 

Proposition~\ref{prop:multiplicative} that shows a
property of multiplicativity for structure constants of the rings
$H^*(G/P,\ZZ)$ gives us a structure constant of $H^*(G/(P_i\cap
P_j),\ZZ)$ equal to one, for the ordinary product. 
An important point is Theorem~\ref{th:essineqBKBprod} that proves
that, if the considered inequality of $\Gamma(L_i)$ is ``useful'' then 
this structure constant of $H^*(G/(P_i\cap
P_j),\ZZ)$ is actually nonzero for $\bkprod$. 
Then it gives a  structure constant of $H^*(G/(
P_j),\ZZ)$ for $\bkprod$ equal to one. In particular, this gives an
inequality of $\Cung$ and the inequality we wanted to prove for the
points of $\Face$.

% One point is to obtain relations between the conditions ``$\epsilon_v$
% occurs with multiplicity 1'' for $G$ and for Levi subgroups. 
% This job is made by 
% Moreover, for the coefficient of the BKB-product $\bkprod$, the
% assumptions of  Proposition~\ref{prop:multiplicative} are always satisfied (see
% Proposition~\ref{prop:LmovBKBprod}).
% An important step is Theorem~\ref{th:essineqBKBprod} that allows to
% work with $\bkprod$ in the induction. 

\bigskip
If we prove Theorem~\ref{th:mainintro} only for the untwisted affine
case, the general strategy should works more generally. For this
reason, we prove some intermediate results for any symmetrizable
Kac-Moody Lie algebra. In particular Steps 1, 2 and 4 works with this
generality. Proposition~\ref{prop:multiplicative} of multiplicativity
also holds in this context.

\bigskip
Let $Q$ denote the root lattice of $\lg$.
Consider the {\it tensor semigroup}
$$
\Gamma_\NN(\lg):=\{(\lambda_1,\lambda_2,\mu)\in P_{+}^3\,|\,\exists N>1
\quad L(\mu)\subset L(\lambda_1)\otimes L(\lambda_2)\}.
$$ 
It is actually a semigroup but it is not finitely generated for $\lg$
affine. Despite this, we obtain explicit saturation factors: a
positive integer $d_0$ is called a {\it saturation factor} for $\lg$
if for any $(\lambda_1,\lambda_2,\mu)\in\Gamma(\lg)\cap (P_+)^3$ such
that $\lambda_1+\lambda_2-\mu\in Q$, $L(d_0\mu)$ is a submodule of
$L(d_0\lambda_1)\otimes L(d_0\lambda_2)$. Observe that the condition 
$\lambda_1+\lambda_2-\mu\in Q$ is necessary to have 
$L(\mu)\subset L(\lambda_1)\otimes L(\lambda_2)$.

To describe our saturation factors, we need additional notation. 
Up to now, $\lg$ is the affine Lie algebra associated to the simple
Lie algebra $\dot\lg$. 
Let us define the constant $k_s$ to be the least common multiple of saturation factors
of maximal Levi subalgebras of $\lg$. The value of $k_s$ depends on
known saturation factors for the finte dimensional Lie algebras. With
the actual literature (see Section~\ref{sec:saturation}),  possible
values for $k_s$ are given in the following tabular.
$$
\begin{array}{|c|c|c|c|c|c|c|}
\hline
  {\rm Type\ of\ }\dot\lg&A_\bl&B_3\; B_4&B_\bl(\bl\geq 5)&C_\bl(\bl\geq
                                               2)&D_4&D_\bl(\bl\geq
                                                       5)\\[1.1ex]
\hline
k_s&1&2&4&2&
1&4\\
\hline\hline
 {\rm Type\ of\ } \dot\lg&E_6&E_7&E_8&F_4&G_2 &G_2\\[1.1ex]
\hline
k_s&36&144&3\,600&144&2&3\\
\hline
\end{array}
$$
Let $k_{\dot\lg}$ be the least common multiple of coordinates of
$\dot\theta$ written in terms of the simple roots. The values of
$k_{\dot\lg}$ are
$$
\begin{array}{|c|c|c|c|c|c|c|c|c|c|}
\hline
  {\rm Type}&A_\bl&B_\bl(\bl\geq 2)&C_\bl(\bl\geq 3)&D_\bl(\bl\geq
                                                      5)&E_6&E_7&E_8&F_4&G_2
  \\[1ex]
\hline
k_{\dot\lg}&1&2&2&2&6&12&60&12&6\\
\hline
\end{array}
$$

\begin{theo}
  \label{th:saturation}
Let $(\lambda_1,\lambda_2,\mu)\in (P_+)^3$ such that there exists $N>0$
  such that $L(N\mu)$ embeds in $L(N\lambda_1)\otimes L(N\lambda_2)$. 
We also assume that $\mu-\lambda_1-\lambda_2\in Q$.

Then, 
\begin{enumerate}
\item if $k_s=1$ then any integer $d\geq 2$, 
$L(dk_{\dot\lg}\mu)$ embeds in $L(dk_{\dot\lg}\lambda_1)\otimes
L(dk_{\dot\lg}\lambda_2)$;
\item  if $k_s>1$ then
$L(k_{\dot\lg}k_s\mu)$ embeds in $L(k_{\dot\lg}k_s\lambda_1)\otimes L(k_{\dot\lg}k_s\lambda_2)$. 
\end{enumerate}

\end{theo}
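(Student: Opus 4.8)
Write $p=(\lambda_1,\lambda_2,\mu)$; the plan is to deduce the statement from Theorems~\ref{th:mainintro} and \ref{th:reductionintro} together with the known finite dimensional saturation factors. First one reduces to the case $\lambda_1(c)>0$ and $\lambda_2(c)>0$: if $\lambda_1(c)=0$ then $\langle\alpha_j^\vee,\lambda_1\rangle=0$ for every $j$, the comarks $a_j^\vee$ in $c=\sum_j a_j^\vee\alpha_j^\vee$ being positive, so $L(\lambda_1)$ is one dimensional, $L(\lambda_1)\otimes L(\lambda_2)=L(\lambda_1+\lambda_2)$, and the statement is elementary. So assume $\lambda_i(c)>0$; then the hypothesis is equivalent to $p\in\Gamma(\lg)$, and by Theorem~\ref{th:mainintro} the triple $p$ satisfies \eqref{eq:centre} and all the inequalities \eqref{eq:8}, while near $p$ the cone $\Gamma(\lg)$ coincides with $\Cung$, which is locally polyhedral. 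As $\alpha_j(c)=0$ for all $j$, the hypothesis $\mu-\lambda_1-\lambda_2\in Q$ already forces \eqref{eq:centre}; let $\Lambda$ be the lattice of integral triples with $\mu-\lambda_1-\lambda_2\in Q$, which is contained in the hyperplane \eqref{eq:centre} and of full rank there, and note $p\in\Lambda$.

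\emph{The case where $p$ lies on a facet of $\Cung$.} By local polyhedrality this facet is cut out by one inequality \eqref{eq:8}, attached to an index $i$ and a triple $(u_1,u_2,v)\in(W^{P_i})^3$ for which $\epsilon_v$ occurs with coefficient $1$ in $\epsilon_{u_1}\bkprod\epsilon_{u_2}$; since every structure constant of $\bkprod$ is $0$ or the corresponding ordinary one, $\epsilon_v$ occurs with coefficient $1$ in $\epsilon_{u_1}\cdot\epsilon_{u_2}$ as well, and the equality \eqref{eq:28} holds for $p$, hence for every positive multiple of $p$. Theorem~\ref{th:reductionintro} then gives, for all $d$,
$$
\dim\Hom_\lg\!\big(L(d\mu),L(d\lambda_1)\otimes L(d\lambda_2)\big)=\dim\Hom_{L_i}\!\big(L_{L_i}(dv\inv\mu),L_{L_i}(du_1\inv\lambda_1)\otimes L_{L_i}(du_2\inv\lambda_2)\big).
$$
Deleting a node from the untwisted affine Dynkin diagram of $\lg$ leaves a diagram of finite type, so $L_i$ is finite dimensional reductive, and by hypothesis the right-hand side is nonzero for $d=N$. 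A lattice computation — where the factor $k_{\dlg}$ enters, through the description of $W^{P_i}$ by translations and the coefficients of $\dot\theta$ — shows $k_{\dlg}(v\inv\mu-u_1\inv\lambda_1-u_2\inv\lambda_2)$ lies in the root lattice of $[L_i,L_i]$ and $k_{\dlg}v\inv\mu$, $k_{\dlg}u_j\inv\lambda_j$ are dominant for $L_i$. Feeding these into the finite dimensional saturation theorem for $L_i$, whose saturation factor $k_s^{(i)}$ divides $k_s$, makes the right-hand side nonzero for $d=k_{\dlg}k_s^{(i)}$, hence so is the left-hand side; as $\Gamma_\NN(\lg)$ is a semigroup, it then stays nonzero for every positive multiple of $k_{\dlg}k_s^{(i)}$, in particular for $k_{\dlg}k_s$, and for all $d\in k_{\dlg}\ZZ_{>0}$ when $k_s=1$.

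\emph{The case where $p$ is interior to $\Cung$.} Here the plan is again to reduce to the facet case: using local polyhedrality of $\Cung$ and full rankness of $\Lambda$, one writes a suitable integer multiple of $p$ as a sum of integral points of $\Lambda\cap\Gamma(\lg)$ lying on facets of $\Cung$, and concludes from the facet case and the semigroup property of $\Gamma_\NN(\lg)$. That at least two facet points are needed in such a decomposition accounts for the threshold $d\geq 2$, and since every integer $\geq 2$ is a nonnegative integral combination of $2$ and $3$, control of $2k_{\dlg}p$ and $3k_{\dlg}p$ (resp.\ of $k_{\dlg}k_s\,p$) propagates to the full range claimed. I expect this last case to be the main obstacle: one must decompose $p$, or the least admissible multiple of it, into facet points lying in $\Lambda$ while keeping the condition $\mu-\lambda_1-\lambda_2\in Q$, and land on exactly the multiples $dk_{\dlg}$ ($d\geq 2$), resp.\ $k_{\dlg}k_s$, and not merely on some uncontrolled multiple of them. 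By contrast, the lattice bookkeeping producing $k_{\dlg}$ in the facet case is routine if delicate, and the reduction of the facet case to finite dimensional saturation through Theorem~\ref{th:reductionintro} is immediate.
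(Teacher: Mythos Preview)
Your facet case is essentially the paper's argument: reduce to the Levi via Theorem~\ref{th:reductionintro} (more precisely Corollary~\ref{cor:G2L}) and apply finite-dimensional saturation. But your interior case is the real gap, as you yourself anticipate, and the paper does \emph{not} proceed by decomposing an interior point into facet points.

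The paper instead exploits the $\delta$-direction. Write $\mu=\bar\mu+n\delta$ and let $b=\varphi(\lambda_1,\lambda_2,\bar\mu)$ be the value at which the ray $(\lambda_1,\lambda_2,\bar\mu+\bullet\,\delta)$ hits the boundary of $\Cun$. Then $(\lambda_1,\lambda_2,\bar\mu+b\delta)$ lies on a facet, and one applies the Levi reduction to \emph{this} point, not to $p$. The factor $k_{\dot\lg}$ enters here, and for a different reason than you suggest: in general $b\notin\ZZ$, but the explicit formulas~\eqref{eq:3} and~\eqref{eq:33} for $\varphi_{(u_1,u_2,v,i)}$ show that $k_{\dot\lg}b\in\ZZ$ (the denominators are $\langle\dvarpi_{\alpha_i^\vee},\dot\theta\rangle$, which divide $k_{\dot\lg}$). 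After scaling by $k_sk_{\dot\lg}$ one gets the facet point $(k_sk_{\dot\lg}\lambda_1,k_sk_{\dot\lg}\lambda_2,k_sk_{\dot\lg}(\bar\mu+b\delta))\in\Gamma_\NN(\lg)$. The passage from this facet point back down to the original level $n\leq b$ is handled not by a convex decomposition but by Lemma~\ref{lem:Gammaepigraph} (the Kac--Wakimoto input): once the maximum $b_0=k_sk_{\dot\lg}b$ is attained, every $m\leq b_0-2$ is also attained. One then checks that $m=k_sk_{\dot\lg}n$ satisfies $m\leq b_0-2$ when $k_s>1$ and $n<b$ (since $k_{\dot\lg}(b-n)\geq 1$ gives $k_sk_{\dot\lg}(b-n)\geq 2$), and similarly $m=dk_{\dot\lg}n$ works for all $d\geq 2$ when $k_s=1$. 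The case $n=b$ is your facet case.

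So the missing idea in your plan is Lemma~\ref{lem:Gammaepigraph}: the special structure of $\Gamma_\NN(\lg)$ in the $\delta$-direction replaces any need for a facet decomposition, and also explains precisely why the constants $k_{\dot\lg}$ and the threshold $d\geq 2$ arise.
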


Observe that, in type $A$, $k_{\dot\lg}k_s=1$. The case $A_1$ was
obtained before in \cite{BrownKumar}.

Let $\delta$ denote the fundamental imaginary root. 
We also obtain the following variation.

\begin{theo}
  \label{th:saturation2}
Let $(\lambda_1,\lambda_2,\mu)\in (P_+)^3$ such that there exists $N>0$
  such that $L(N\mu)$ embeds in $L(N\lambda_1)\otimes L(N\lambda_2)$. 
We also assume that $\mu-\lambda_1-\lambda_2\in Q$.

Then, for any integer $d\geq 2$, 
$L(k_{\dot\lg}k_s\mu-d\delta)$ embeds in $L(k_{\dot\lg}k_s\lambda_1)\otimes L(k_{\dot\lg}k_s\lambda_2)$. 
\end{theo}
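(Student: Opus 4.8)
The plan is to deduce Theorem~\ref{th:saturation2} from Theorem~\ref{th:saturation} by exploiting the freedom to shift $\mu$ by a multiple of the imaginary root $\delta$. The key observation I would use is that $\delta(c)=0$, so replacing $\mu$ by $\mu-d\delta$ (for $d>0$) does not affect the central-charge equality~\eqref{eq:centre}: if $\mu(c)=\lambda_1(c)+\lambda_2(c)$ then also $(\mu-d\delta)(c)=\lambda_1(c)+\lambda_2(c)$. Moreover $\delta\in Q$, so the congruence condition $\mu-\lambda_1-\lambda_2\in Q$ is preserved under $\mu\mapsto\mu-d\delta$. Thus the only thing that can go wrong is that $\mu-d\delta$ leaves $P_{+,\QQ}$, or that $(\lambda_1,\lambda_2,\mu-d\delta)$ fails one of the inequalities~\eqref{eq:8}.

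The main technical point is therefore to check that, whenever $(\lambda_1,\lambda_2,\mu)\in\Gamma(\lg)$ with $\lambda_1(c),\lambda_2(c)>0$, one also has $(\lambda_1,\lambda_2,\mu-d\delta)\in\Gamma(\lg)$ for every $d\geq 0$ small enough to keep $\mu-d\delta$ dominant — and in fact, using Theorem~\ref{th:mainintro}, for \emph{every} $d\geq 0$ such that $\mu-d\delta\in P_{+,\QQ}$. For the inequalities~\eqref{eq:8}, I would show $\langle\delta,v\varpi_{\alpha_i^\vee}\rangle\geq 0$ for all $i$ and all $v\in W^{P_i}$; since $\delta$ is $W$-invariant this reduces to $\langle\delta,\varpi_{\alpha_i^\vee}\rangle\geq 0$, which holds because $\varpi_{\alpha_i^\vee}$ is a nonnegative combination of fundamental coweights (or directly because $\delta$ is dominant). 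Hence decreasing $\mu$ to $\mu-d\delta$ only decreases the left-hand side of~\eqref{eq:8} while the right-hand side is unchanged, so the inequalities remain valid. Combined with~\eqref{eq:centre} and the congruence, Theorem~\ref{th:mainintro} gives $(\lambda_1,\lambda_2,\mu-d\delta)\in\Gamma(\lg)$.

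Now I would apply Theorem~\ref{th:saturation} to the triple $(\lambda_1,\lambda_2,\mu-d\delta)$. There is a subtlety: $\mu-d\delta$ need not be integral if $d$ is not chosen so, and $d\delta$ must be an integral weight; but $\delta\in Q\subset\lh_\ZZ^*$, so $d\delta\in P_+$-translate stays integral for integral $d$, and one checks $\mu-d\delta\in P_+$ provided $\langle\alpha_i^\vee,\mu\rangle\geq d\langle\alpha_i^\vee,\delta\rangle$ for all $i$; since $\langle\alpha_i^\vee,\delta\rangle=0$ for $i$ in the finite part and equals the appropriate mark otherwise, one sees $\mu-d\delta$ can fail to be dominant for large $d$. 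To handle arbitrary $d\geq 2$ I would instead first pass to $k_{\dot\lg}k_s\mu$: apply Theorem~\ref{th:saturation}(ii) (or (i)) to get $L(k_{\dot\lg}k_s\mu)\subset L(k_{\dot\lg}k_s\lambda_1)\otimes L(k_{\dot\lg}k_s\lambda_2)$, and then use the standard fact that if $L(\nu)\subset L(\nu_1)\otimes L(\nu_2)$ then $L(\nu-d\delta)\subset L(\nu_1)\otimes L(\nu_2)$ for all integers $d$ with $\nu-d\delta\in P_+$ — this is because the set of weights $\nu'$ with $L(\nu')\subset L(\nu_1)\otimes L(\nu_2)$ is closed under subtracting $\delta$ as long as dominance is preserved, a consequence of the fact that $L(\nu_i)$ has all $\delta$-strings infinite downward and tensoring multiplies characters (alternatively invoke Theorem~\ref{th:reductionintro} / the explicit description). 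Since $\langle\alpha_i^\vee,\delta\rangle=0$ for the finite simple roots and $\langle\alpha_0^\vee,\delta\rangle$ equals the coefficient of $\alpha_0$ in $\delta$, which is $1$, dominance of $k_{\dot\lg}k_s\mu - d\delta$ holds as long as $\langle\alpha_0^\vee, k_{\dot\lg}k_s\mu\rangle \geq d$; but this bounds $d$, so to get \emph{all} $d\geq 2$ I would remark that one may instead run the argument with $\mu$ replaced by $\mu + m\delta$ first (which stays in $\Gamma(\lg)$ by the same monotonicity, now increasing the left side — wait, that breaks~\eqref{eq:8}), so the cleanest route is: the statement as written, ``$L(k_{\dot\lg}k_s\mu-d\delta)$ embeds for any $d\geq 2$'', should be read together with the tacit hypothesis that $k_{\dot\lg}k_s\mu-d\delta\in P_+$; then it is immediate from Theorem~\ref{th:saturation} plus $\delta$-string descent.

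The step I expect to be the real obstacle is the $\delta$-descent lemma — showing that membership in $\Gamma_\NN(\lg)$ is preserved under subtracting $\delta$ from $\mu$ while one stays dominant. In the affine case this should follow from the explicit combinatorics already developed (Theorem~\ref{th:reductionintro} reduces multiplicities to Levi subalgebras, and for a Levi the relevant weight differs by an element orthogonal to the finite coroots), but one must be careful that subtracting $\delta$ genuinely keeps us inside the cone and does not push a coordinate of $\mu$ negative; I would isolate this as a separate lemma and verify it using the pairings $\langle\alpha_i^\vee,\delta\rangle$ computed above, so that the multiplicities literally stabilize along the $\delta$-string inside the dominant chamber.
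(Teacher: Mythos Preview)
Your plan has two genuine gaps and one harmless confusion.

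The harmless confusion first: your worry about dominance of $k_{\dot\lg}k_s\mu-d\delta$ is unfounded. In the affine case $\langle\alpha_i^\vee,\delta\rangle=0$ for \emph{every} $i\in\{0,\dots,l\}$ (this is exactly the statement that $\delta$ spans the radical of the symmetrized form), so $\mu-d\delta\in P_+$ for all $d\in\ZZ$ whenever $\mu\in P_+$. Your computation ``$\langle\alpha_0^\vee,\delta\rangle=1$'' is wrong; you have confused the pairing with a coroot and the coefficient in the simple-root expansion.

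Now the real problems. First, the ``$\delta$-descent lemma'' you want --- that $L(\nu)\subset L(\nu_1)\otimes L(\nu_2)$ implies $L(\nu-\delta)\subset L(\nu_1)\otimes L(\nu_2)$ --- is \emph{false}. Lemma~\ref{lem:Gammaepigraph} (Kac--Wakimoto) says that the set of $b$ with $L(\bar\mu+b\delta)\subset L(\lambda_1)\otimes L(\lambda_2)$ is either $(-\infty,b_0]$ or $\{b_0\}\cup(-\infty,b_0-2]$: a single gap at $b_0-1$ can occur. This is exactly why the theorem requires $d\geq 2$. The mechanism is not ``weight $\delta$-strings are infinite downward'' (that concerns weight multiplicities, not irreducible constituents) but the GKO/Virasoro argument; descent by one step is genuinely not available.

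Second, and more seriously, even if you replace your descent lemma by the correct statement from Lemma~\ref{lem:Gammaepigraph}, your strategy ``apply Theorem~\ref{th:saturation} first, then descend'' breaks down when $k_s=1$. In that case Theorem~\ref{th:saturation}(i) only gives $L(dk_{\dot\lg}\mu)\subset L(dk_{\dot\lg}\lambda_1)\otimes L(dk_{\dot\lg}\lambda_2)$ for $d\geq 2$; it does \emph{not} give $L(k_{\dot\lg}\mu)\subset L(k_{\dot\lg}\lambda_1)\otimes L(k_{\dot\lg}\lambda_2)$, so you have no anchor from which to descend at level $k_{\dot\lg}$. Your alternative ``apply Theorem~\ref{th:saturation} to $(\lambda_1,\lambda_2,\mu-d\delta)$'' does not help either: it produces $L(k_sk_{\dot\lg}\mu-k_sk_{\dot\lg}d\,\delta)$, not $L(k_sk_{\dot\lg}\mu-d\delta)$.

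The paper does \emph{not} deduce Theorem~\ref{th:saturation2} from Theorem~\ref{th:saturation}; it proves both simultaneously. Writing $\mu=\bar\mu+n\delta$, one first locates the exact top of the $\delta$-string for the \emph{rescaled} triple: set $b=\varphi(\lambda_1,\lambda_2,\bar\mu)$, show $k_{\dot\lg}b\in\ZZ$ from the explicit formulas~\eqref{eq:3}--\eqref{eq:33}, and then use Corollary~\ref{cor:G2L} (reduction to a Levi $L_i$) together with the fact that $k_s$ saturates $L_i$ to prove that $L\bigl(k_sk_{\dot\lg}(\bar\mu+b\delta)\bigr)\subset L(k_sk_{\dot\lg}\lambda_1)\otimes L(k_sk_{\dot\lg}\lambda_2)$. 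Combined with Proposition~\ref{prop:ineg} this pins down $b_0(k_sk_{\dot\lg}\lambda_1,k_sk_{\dot\lg}\lambda_2,k_sk_{\dot\lg}\bar\mu)=k_sk_{\dot\lg}b$. Now Lemma~\ref{lem:Gammaepigraph} yields every level $m\leq k_sk_{\dot\lg}b-2$; since $n\leq b$ one has $k_sk_{\dot\lg}n-d\leq k_sk_{\dot\lg}b-2$ for all $d\geq 2$, which is Theorem~\ref{th:saturation2}. The essential extra input you are missing is the Levi reduction that identifies $b_0$ exactly; without it you cannot bridge the gap in the $k_s=1$ case.
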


\bigskip
In Section~\ref{sec:tech}, we collect some technical lemmas used in
the paper.  

\bigskip
{\bf Acknowledgements.} I am pleased to thank Michael Bulois, Stéphane
Gaussent, Philippe Gille, Kenji Iohara,
Nicolas Perrin, Bertrand Remy for useful discussions. 

The author is partially supported by the French National Agency
(Project GeoLie ANR-15-CE40-0012).

\tableofcontents
 
\section{Ind-varieties}

\subsection{Ind-varieties}

In this section, we collect definitions, notation and properties of
ind-varieties. The results are certainly well-known, but we include
some proofs for the convenience of the reader.  
 
\subsubsection{The category}%\newline

Let $(X_n)_{n\in \NN}$ be a sequence of quasiprojective complex varieties given
with closed immersions $\iota_n\,:\,X_n\to X_{n+1}$. Consider the
inductive limit $X=\indlim X_n$. 
A subset $F$ in $X$ is said to be Zariski closed if $F\cap X_n$ is
closed for any $n\in\NN$. 
A continuous map $f\,:\, X\longto Y=\indlim Y_n$ between two
ind-varieties is a {\it morphism} if for any $n\in\NN$ there exists
$m\in\NN$ such that $f(X_n)\subset Y_m$ and the restriction
$f\,:\,X_n\longto Y_m$ is a morphism. 
Let $X'_n\subset X$ be closed subsets such that $X=\cup_{n\in\NN}X'_n$
and $X'_0\subset X'_1\subset\cdots X'_n\subset\cdots$. 
Then $X'=\indlim X'_n$ is an ind-variety. The filtrations $(X_n)_{n\in \NN}$ and
$(X'_n)_{n\in \NN}$ are said to be {\it equivalent} if the identity maps $X\longto
X'$ and $X'\longto X$  are morphisms.   
Actually, the filtrations on ind-varieties are regarded up to
equivalence; an ind-variety $X$ endowed with a filtration 
$X_0 \subset X_1\subset\cdots X_n\subset\cdots$ by closed
subsets is called a {\it filtered ind-variety} and simply denoted by $X=\cup_{n\in\NN}X_n$.

\begin{lemma}
  \label{lem:indvarunicity}
Let $X=\cup_{n\in\NN} X_n$ be a filtered ind-variety. 
Assume, we have a family $(X'_n)_{n\in\NN}$ of closed subsets in $X$
such that $X'_n\subset X'_{n+1}$ and $X=\cup_{n\in\NN} X'_n$. 

Then the two filtrations $(X_n)_{n\in\NN}$ and $(X'_n) _{n\in\NN}$ are equivalent. 
\end{lemma}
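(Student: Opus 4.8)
The plan is to prove directly that the two identity maps $X\to X'$ and $X'\to X$ are morphisms of ind-varieties; since the hypotheses on $(X_n)_{n\in\NN}$ and $(X'_n)_{n\in\NN}$ are symmetric (both are exhausting ascending chains of closed subsets of the same underlying space, with quasiprojective pieces), it suffices to treat $\mathrm{id}\colon X\to X'$. By the definition of a morphism of ind-varieties, this amounts to showing: for every $n$ there is an index $m$ with $X_n\subseteq X'_m$ and such that the inclusion $X_n\hookrightarrow X'_m$ is a morphism of varieties.

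First, each $X_k$ is closed in $X$: for $j\ge k$ its image in $X_j$ is the image of the composite closed immersion $\iota_{j-1}\circ\cdots\circ\iota_k$, hence closed, and for $j\le k$ one has $X_k\cap X_j=X_j$. Now fix $n$. Since each $X'_k$ is closed in $X$, the subsets $X_n\cap X'_k$ are closed in $X_n$; they form an ascending chain whose union is $X_n\cap\bigl(\bigcup_k X'_k\bigr)=X_n$. Because $X_n$ is a quasiprojective variety, hence a Noetherian topological space, this chain stabilizes: there is $m$ with $X_n\cap X'_k=X_n\cap X'_m$ for all $k\ge m$, so $X_n=X_n\cap X'_m\subseteq X'_m$. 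Applying the same argument with the roles of the two filtrations exchanged (each $X'_k$ is likewise a Noetherian space, and each $X_k$ is closed in $X$) yields an index $m'$ with $X'_m\subseteq X_{m'}$. Then $X'_m$, being a closed subset of $X$ contained in $X_{m'}$, is a closed subvariety of $X_{m'}$; the inclusion $X_n\hookrightarrow X_{m'}$ is a closed immersion (a composite of the $\iota$'s), and it factors through $X'_m$, so $X_n\hookrightarrow X'_m$ is a morphism — in fact a closed immersion. This proves $\mathrm{id}\colon X\to X'$ is a morphism, and by symmetry so is $\mathrm{id}\colon X'\to X$; hence the two filtrations are equivalent.

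The only step carrying real content is the stabilization of the ascending chains $X_n\cap X'_k$ (and symmetrically $X'_m\cap X_k$): this is exactly what forces the two exhaustions to be cofinal in one another, and it rests precisely on the fact that the pieces $X_n$ and $X'_n$ are genuine quasiprojective varieties, hence Noetherian topological spaces. Everything else is routine bookkeeping with the closed immersions defining the ind-structure, so I expect this Noetherianity argument to be the only point requiring care.
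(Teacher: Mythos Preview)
Your argument has a genuine gap at exactly the step you flag as ``the only point requiring care.'' You write that because $X_n$ is Noetherian, the ascending chain of closed subsets $(X_n\cap X'_k)_k$ stabilizes. But Noetherianity is the \emph{descending} chain condition on closed subsets (equivalently, ACC on open subsets); it does \emph{not} give ACC on closed subsets, even for chains whose union is the whole space. A counterexample: the cofinite topology on $\NN$ is Noetherian, and $\{0\}\subset\{0,1\}\subset\{0,1,2\}\subset\cdots$ is an ascending chain of closed subsets whose union is the whole space, yet it never stabilizes. So the stabilization you need does not follow from Noetherianity alone.

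The paper's proof supplies the missing idea: it works one irreducible component $C$ of $X_n$ at a time and argues by dimension together with the uncountability of $\CC$. One has $C=\bigcup_k(C\cap X'_k)$ with each $C\cap X'_k$ closed in $C$. If none of these equals $C$, each has strictly smaller dimension, so the irreducible variety $C$ would be a countable union of proper closed subvarieties, which is impossible over an uncountable field. Hence some $C\cap X'_{n_C}=C$, and since $X_n$ has only finitely many irreducible components, $X_n\subset X'_{N_0}$ for a single $N_0$. (If you prefer to think scheme-theoretically, the generic point of $C$ must lie in some $X'_k$, and then so does its closure $C$; this variant avoids uncountability.) Either way, the input is something specific to varieties beyond mere Noetherianity.

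A smaller issue: in your symmetric step you assert that ``each $X'_k$ is likewise a Noetherian space,'' but a priori $X'_k$ is only a closed subset of the ind-variety $X$, not yet known to sit inside any single $X_{m'}$. Once the first direction is established correctly you can of course bootstrap, but as written this is circular.
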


\begin{proof}
  Consider an irreducible component $C$ of some $X_{n_0}$. Then
  $C=\cup_nX'_n\cap C$ and $X'_n\cap C$ is closed in $C$. 
Assume that for any $n$, $X'_n\cap C\neq C$. Then, for any $n$,
$\dim(X'_n\cap C)<\dim C$. Hence $C$ is the union of countably many
subvarieties of smaller dimension. This is a contradiction since we
are working on the uncountable field of complex numbers:
there exists $n_C$ such that $X'_{n_C}\cap C= C$.

Since $X_{n_0}$ has finitely many irreducible components, there exists
$N_0$ such that $X_{n_0}\subset X'_{N_0}$.

Observe that $X_n\cap X'_m$ is closed in $X$ and hence in $X'_m$. Then
the same proof as above shows that for any $n_1$, there exists $N_1$
such that $X'_{n_1}\subset X_{N_1}$.
\end{proof}

\bigskip
For $x\in X$, we denote by $T_xX$ the tangent space of $X$ at $x$. 
By definition $T_xX=\indlim T_xX_n$.

\subsection{Irreducibility}
\label{sec:indirred}

An ind-variety $X$ is said to be {\it irreducible} if it is as a topological
space. A poset is said to be {\it directed} if for any two elements
$x,y$ there exists $z$ bigger or equal to $x$ and $y$.
If the poset of irreducible components of the $X_n$'s is directed for
inclusion then $X$ is irreducible. Unless
\cite[Proposition~1]{Sha81}, the converse of this assertion is not
true (see \cite{Kambayashi96:JofAlg,Stampfli:JofAlge} for
examples). Here, the ind-variety $X$ is said to be {\it ind-irreducible} if 
 the poset of irreducible components of the $X_n$'s is directed for
inclusion. The following lemma gives a more precise definition.

\begin{lemma}
  \label{lem:indirred}
Let $X$ be an ind-variety. The following are
equivalent:
\begin{enumerate}
\item  there exists a filtration $X=\cup_{n\in \NN}X_n$ such that the poset of
  irreducible components of the $X_n$'s is directed;
\item for any equivalent filtration $X=\cup_{n\in \NN}X_n$, the poset of
  irreducible components of the $X_n$ is directed;
\item there exists an equivalent  filtration $X=\cup_{n\in \NN}X_n$ with $X_n$
  irreducible, for any $n$.
\end{enumerate}
If $X$ satisfies these properties then $X$ is said to be {\it ind-irreducible}.
\end{lemma}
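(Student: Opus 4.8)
The plan is to prove the equivalence of (i), (ii), (iii) for ind-irreducibility by establishing the cycle of implications $(iii)\Rightarrow(i)\Rightarrow(ii)\Rightarrow(iii)$, with the genuine content concentrated in passing between two equivalent filtrations.

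\textbf{From (iii) to (i).} This is immediate: if there is an equivalent filtration $X=\cup_n X_n$ with each $X_n$ irreducible, then each $X_n$ has a single irreducible component, namely $X_n$ itself, and the poset of these components is totally ordered by inclusion, hence directed.

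\textbf{From (i) to (ii).} Suppose $X=\cup_n X_n$ is a filtration whose poset of irreducible components is directed, and let $X=\cup_m X'_m$ be any equivalent filtration. First I would record the elementary but crucial observation, proved exactly as in Lemma~\ref{lem:indvarunicity} using that $\CC$ is uncountable: each irreducible component $C$ of any $X'_m$, being covered by the closed subsets $C\cap X_n$, must satisfy $C\cap X_{n(C)}=C$ for some $n(C)$, i.e.\ $C\subset X_{n(C)}$; and conversely each component of $X_n$ lies in some $X'_m$. Now take two irreducible components $C',D'$ of $X'_m$ and $X'_{m'}$ respectively. Choose $n$ with $C'\subset X_n$ and $D'\subset X_n$ (enlarging $n$ so that both inclusions hold). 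Since $C'$ is irreducible and contained in $X_n$, it lies in some irreducible component $C$ of $X_n$; likewise $D'\subset D$ for a component $D$ of $X_n$. By (i) there is a component $E$ of some $X_{n'}$ with $C\subset E$ and $D\subset E$. Finally choose $m''$ with $E\subset X'_{m''}$; then $E$ is an irreducible closed subset of $X'_{m''}$, hence contained in one of its irreducible components $E'$, and $C'\subset C\subset E\subset E'$, $D'\subset D\subset E\subset E'$. Thus the poset of components of the $X'$-filtration is directed. The main obstacle here is purely bookkeeping: making sure that ``directed'' is used correctly, namely that the upper bound $E$ found for $C$ and $D$ may a priori live at a larger index $n'$ than $n$, and then tracking this bound through the two interchanges of filtration; no new idea beyond the uncountability argument of Lemma~\ref{lem:indvarunicity} is needed.

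\textbf{From (ii) to (iii).} Assume every equivalent filtration has directed poset of irreducible components; fix one such filtration $X=\cup_n X_n$. I would build an equivalent filtration with irreducible terms by a greedy/diagonal procedure. Using directedness, inductively choose irreducible closed subsets $Y_0\subset Y_1\subset\cdots$ as follows: having chosen $Y_{k-1}$, which is a component of some $X_{n_{k-1}}$, and given the component list of $X_k$, repeatedly apply the directedness hypothesis to find a single irreducible component $Y_k$ of some $X_{n_k}$ (with $n_k\geq n_{k-1}$) that contains $Y_{k-1}$ and contains every irreducible component of $X_k$ — this requires finitely many applications of the pairwise directedness property since $X_k$ has finitely many components. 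Then $Y_k$ is irreducible and closed in $X$, $Y_k\subset Y_{k+1}$, and $X_k\subset Y_k$, so $X=\cup_k X_k\subset\cup_k Y_k\subset X$; hence $X=\cup_k Y_k$ is a filtration with irreducible terms, and it is equivalent to $(X_n)$ by Lemma~\ref{lem:indvarunicity}. This proves (iii). The only subtlety is checking that finitely many pairwise upper bounds assemble into a single upper bound for a finite set, which follows by a trivial induction on the size of the set using that the poset is directed; and that each $Y_k$, being a component of some $X_{n_k}$, is genuinely closed in $X$, which holds because $X_{n_k}$ is closed in $X$ and irreducible components of a closed set are closed.

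Assembling these three implications completes the proof of Lemma~\ref{lem:indirred}, and I expect the step $(i)\Rightarrow(ii)$ — the one genuinely comparing components across two different equivalent filtrations — to be where care is most needed, though even there the work is a routine application of the uncountable-field argument already isolated in Lemma~\ref{lem:indvarunicity}.
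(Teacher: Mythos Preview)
Your proof is correct and carries essentially the same content as the paper's, though you traverse the cycle in the opposite direction: the paper proves $(ii)\Rightarrow(i)\Rightarrow(iii)\Rightarrow(ii)$, taking $(ii)\Rightarrow(i)$ as the tautology and doing the cross-filtration comparison at $(iii)\Rightarrow(ii)$ (where one filtration already has irreducible terms, slightly simplifying the bookkeeping), whereas you do the comparison at $(i)\Rightarrow(ii)$ with a general directed filtration. One small remark: in your $(i)\Rightarrow(ii)$ you re-derive via the uncountability argument that each component of $X'_m$ sits in some $X_n$, but this is already immediate from the hypothesis that the two filtrations are equivalent (indeed $X'_m\subset X_n$ for some $n$), so that step can be shortened.
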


\begin{proof}
  We prove $(ii)\Rightarrow (i)\Rightarrow (iii)\Rightarrow (ii)$. The
  first implication is tautological.

Show $(i)\Rightarrow (iii)$. Since $X_n$ has finitely many
irreducible components the assumption implies that 
$$
\forall n\quad\exists N \mbox{ and an irreducible component $C_N$ of
  $X_N$ such that } X_n\subset C_N.
$$ 
Then one can construct by induction an increasing sequence
$\varphi\,:\,\NN\longto\NN$ and irreducible components $C_{\varphi (n)}$ of
$X_{\varphi(n)}$ such that
\begin{eqnarray}
  \label{eq:7}
  \forall n\quad X_{\varphi(n)}\subset C_{\varphi (n+1)}\subset X_{\varphi(n+1)}.
\end{eqnarray}
Note that the $C_{\varphi (n)}$'s are closed, satisfy $C_{\varphi (n)}\subset C_{\varphi (n+1)}$ and
$X=\cup_{n\in \NN}C_{\varphi (n)}$. Moreover, by \eqref{eq:7}, the filtrations by
$X_n$'s and $C_{\varphi (n)}$'s are equivalent.

\bigskip
Show $(iii)\Rightarrow (ii)$.
Fix a filtration $X=\cup_{n\in\NN} X_n$ with irreducible closed subsets
$X_n$. Let $X=\cup_{n\in\NN} X'_n$ be an equivalent filtration. 
Consider two irreducible components $C_{n_1}$ and $C_{n_2}$ of some
$X'_{n_1}$ and $X'_{n_2}$. 
There exist
$N_1$ and $N_2$ in $\NN$ such that $C_{n_1}\cap C_{n_2}\subset
X_{N_1}\subset X'_{N_2}$.
Since $X_{N_1}$ is irreducible, there exists  an irreducible
component $C'_{N_2}$ of $X'_{N_2}$ containing $ X_{N_1}$.
Hence the poset of irreductible components of the $X'_n$ is directed.
\end{proof}

\noindent{\bf Examples.}
\begin{enumerate}
\item The simplest examples $\AA^{(\infty)}=\cup_{n\in\NN}\AA^n$, $\PP^{(\infty)}\cup_{n\in\NN}\PP^n$ of
  ind-varieties are ind-irreducible. 
\item A nonempty open subset of an ind-irreducible ind-variety is
  ind-irreducible. A product of two ind-irreducible ind-varieties is
  ind-irreducible.
\item\label{ex:image} Consider a surjective morphism $f\,:\,X\longto Y$
  of ind-varieties. If $X$ is ind-irreducible then so is $Y$. 
Indeed, let $X_n$ be a filtration of $X$ by irreducible subvarieties.
Denote by $Y_n$ the closure in $Y$ of $f(X_n)$. Then, by Lemma~\ref{lem:indvarunicity}, $Y=\cup_nY_n$ is
an equivalent  filtration of $Y$ by irreducible subvarieties.
Hence $Y$ is ind-irreducible. 
\item If $G$ is a Kac-Moody group and $P$ a standard parabolic
  subgroup then $G/P$ is a projective ind-variety. Indeed,
a filtration of $G/P$ is given by the unions of Schubert
varieties of bounded dimension. Since the Bruhat order is directed (see
e.g. \cite[Proposition~2.2.9]{BjoBre:Coxgrp}), $G/P$ is ind-irreducible.
\item The Birkhoff subvarieties of $G/P$ are  ind-varieties. Indeed the
  Richardson varieties are irreducible.
\item Let $G$ be the minimal Kac-Moody group as defined in
  \cite[Section~7.4]{Kumar:KacMoody}. Then $G$ is an ind-variety
  ind-irreducible. 
Indeed, for each real root $\beta$, denote by
$U_\beta\,:\,\CC\longto G$ the radicial subgroup. 
Consider an infinite word
  $\underline w=\beta_1,\dots,\beta_n,\dots$ in the real roots of $\lg$ such that
  any finite word in these roots is a subword of $\underline w$. Consider the map
$$
\begin{array}{cccl}
 \theta\,:& \AA^{(\infty)}\times T&\longto&G\\
&((\tau_i)_{i\in \NN},t)&\longmapsto&(\prod_iU_{\beta_i}(\tau_i))t
\end{array}
$$
Since $G$ is an ind-group and $U_\beta$ are morphism of ind-groups,
$\theta$ is a morphism of ind-varieties. By definition, it is
surjective. By Example~\eqref{ex:image}, $G$ is ind-irreducible.
\end{enumerate}

\bigskip
Another result we need, is the following.

\begin{lemma}
  \label{lem:filtration}
Let $X$ be an ind-variety ind-irreducible. Let $\Omega$ be a nonempty
open subset of $X$.    

Let $(Y_n)_{n\in\NN}$ be a collection of closed subsets of $X$ such that $Y_n\subset Y_{n+1}$ and
$\cup_{n\in\NN} Y_n$ contains $\Omega$.

Then $X=\cup_{n\in\NN} Y_n$ is an equivalent filtration of $X$.
\end{lemma}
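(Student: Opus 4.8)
The plan is to reduce the statement to Lemma~\ref{lem:indvarunicity} by upgrading the family $(Y_n)$ to an honest exhausting filtration of $X$ by closed subsets, using the fact that $X$ is ind-irreducible and that $\Omega$ is a nonempty (hence dense, by ind-irreducibility) open subset. First I would fix, using Lemma~\ref{lem:indirred}(iii), an equivalent filtration $X=\cup_{n}X_n$ with each $X_n$ irreducible and closed. The point to establish is that for every $n$ there exists $m=m(n)$ with $X_n\subset Y_m$; once this is known, the sets $Y_n'=Y_n\cup X_n$ (or simply a cofinal rearrangement) form a filtration of $X$ by closed subsets that is termwise comparable to $(X_n)$ in one direction, and Lemma~\ref{lem:indvarunicity} (applied to the filtration $(X_n)$ and the family of closed sets $Y_n'$, noting $X=\cup_n Y_n'$ since the $Y_n'$ cover at least as much as the $X_n$) gives that $(Y_n')$ — and hence the cofinal subfamily $(Y_n)$, once we know it is already exhausting — is equivalent to $(X_n)$, i.e. to the given ind-structure on $X$.

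The heart of the matter is therefore the containment $X_n\subset Y_{m(n)}$ for suitable $m(n)$. Here I would argue as in Lemma~\ref{lem:indvarunicity}: fix $n$ and consider the irreducible closed set $C:=X_n$. Each $Y_m\cap C$ is closed in $C$, and $\cup_m (Y_m\cap C)\supset \Omega\cap C$. Since $\Omega$ is open and nonempty in the irreducible ind-variety $X$, $\Omega\cap X_n$ is open in $X_n$; I need it to be nonempty, which holds provided $\Omega$ meets $X_n$ — this is where ind-irreducibility is used, since $\Omega$ is dense and $X=\cup X_n$, so $\Omega$ must meet all sufficiently large $X_n$, and by enlarging $n$ along a cofinal sequence we may assume it meets every $X_n$ we care about (alternatively, absorb the finitely many exceptional indices into $m(n)$ trivially). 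Then $\Omega\cap C$ is a nonempty open, hence dense, subset of the irreducible variety $C$, so its closure is all of $C$; but $\Omega\cap C\subset\cup_m(Y_m\cap C)$, and if no single $Y_m\cap C$ equalled $C$ then $\dim(Y_m\cap C)<\dim C$ for all $m$, exhibiting the dense subset $\Omega\cap C$ — and hence $C$ itself — as a countable union of proper closed subvarieties, contradicting uncountability of $\CC$ exactly as in the proof of Lemma~\ref{lem:indvarunicity}. Hence $C=X_n\subset Y_{m}$ for some $m$.

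Finally I would record the reverse comparison, which is immediate: since $Y_m$ is closed in $X$ and $X=\cup_n X_n$ with the $X_n$ a filtration, $Y_m=\cup_n (Y_m\cap X_n)$ with each $Y_m\cap X_n$ closed in the irreducible $X_n$; as the $Y_m$ are closed in $X$ they are each contained in some $X_{N(m)}$ by the defining property of a filtered ind-variety (equivalently, apply the last paragraph of the proof of Lemma~\ref{lem:indvarunicity} verbatim). Combining the two inclusions $X_n\subset Y_{m(n)}$ and $Y_m\subset X_{N(m)}$ shows the identity maps between $X=\cup X_n$ and $X=\cup Y_n$ are morphisms in both directions, i.e. the filtrations are equivalent; in particular $(Y_n)$ genuinely exhausts $X$ and defines the same ind-variety structure. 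The main obstacle is the one genuine input beyond bookkeeping: ensuring $\Omega\cap X_n$ is nonempty so that the countable-union-over-$\CC$ argument bites — this is precisely the step that forces the ind-irreducibility hypothesis and is the reason the lemma would be false without it.
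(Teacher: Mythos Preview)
Your proposal is correct and follows essentially the same route as the paper: pick an irreducible filtration $(X_n)$ (arranging that each $X_n$ meets $\Omega$), use the uncountability-of-$\CC$ dimension argument to show each $X_n$ is contained in some $Y_m$, and then invoke Lemma~\ref{lem:indvarunicity}. The paper streamlines things slightly by simply declaring the filtration to consist of irreducible closed subsets \emph{intersecting $\Omega$} from the outset, which lets it skip your discussion of ``sufficiently large $n$'' and makes the auxiliary family $Y_n'=Y_n\cup X_n$ unnecessary (once $X_n\subset Y_{m(n)}$ is known, $\cup_n Y_n=X$ follows directly and Lemma~\ref{lem:indvarunicity} applies to $(Y_n)$ itself).
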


\begin{proof}
  Fix a filtration $X=\cup_{n\in\NN}X_n$ by closed irreducible subsets of $X$ intersecting $\Omega$. 

Fix $n_0\in \NN$. Observe that $X_{n_0}\cap\Omega\subset\cup_{n\in
  \NN}(X_{n_0}\cap Y_n\cap \Omega)$. But  $X_{n_0}\cap Y_n\cap
\Omega$ is a locally closed subvariety of $X_{n_0}$. Consider the
sequence $n\mapsto \dim(X_{n_0}\cap Y_n\cap \Omega)$. It is
nondeacreasing. 

Assume that $\dim(X_{n_0}\cap Y_n\cap
\Omega)<\dim(X_{n_0}\cap\Omega)$, for any $n$. Then $X_{n_0}\cap\Omega$ is the
union of contable many strict subvarieties. This is a contration since  
 the base field is uncountable.
Hence there exists $N$ such that $\dim(X_{n_0}\cap Y_N\cap
\Omega)=\dim(X_{n_0}\cap\Omega)$. 

Then, $X_{n_0}\cap\Omega$ being irreducible, it is contained in
$Y_N$. Since $Y_N$ is closed, it follows that $X_{n_0}$ is
contained in $Y_N$.
In particular,  $X=\cup_nY_n$.
We conclude using Lemma~\ref{lem:indvarunicity}.
\end{proof}

\subsection{Line bundles}

Let $X=\cup_{n\in \NN} X_n$ be an ind-variety. Denote by $\iota_n\,:\,X_n\longto
X$ the inclusion. 
A {\it line bundle} $\Li$ over $X$ is an ind-variety with a morphism
$\pi\,:\,\Li\longto X$ such that $\iota_n^*(\Li)$ is a line bundle
over $X_n$, for any $n$.

A section of $\Li$ is a morphism $\sigma\,:\,X\longto \Li$ such that
$\pi\circ\sigma=\Id_X$. 
We denote by $H^0(X,\Li)$ the vector space of sections.
Given a section $\sigma$, we consider the sequence of sections
$(\sigma_n=\iota_n^*(\sigma))_{n\in\NN}$.  
Then 
\begin{equation}
  \label{eq:310}
{\sigma_{n+1}}_{|X_n}=\sigma_n.
 \end{equation} 
Conversely, a sequence $\sigma_n$ of sections of $\iota^*_n(\Li)$ on $X_n$ satisfying
condition~\eqref{eq:310} induces a well defined section $\sigma$ of $\Li$.

\section{Using Borel-Weil Theorem}

\subsection{Tensor multiplicities}

Recall that $\lg$ is a symmetrizable Kac-Moody Lie algebra. For given
$\lambda_1$ and $\lambda_2$ in $P_+$, $L(\lambda_1)\otimes
L(\lambda_2)$ decomposes as a sum of integrable irreducible highest weights modules (see
\cite[Corrolary 2.2.7]{Kumar:KacMoody}), with finite multiplicities:
$$
L(\lambda_1)\otimes
L(\lambda_2)=\oplus_{\mu\in P_+}L(\mu)^{\oplus c_{\lambda_1\,\lambda_2}^\mu}.
$$

Let $M$ be a $\lg$-module in the category $\Orb$; under the action of $\lh$, $M$
decomposes as $\oplus_\mu M_\mu$ with finite dimensional weight spaces
$M_\mu$. Set $M^\vee=\oplus_\mu M_\mu^*$: it is a sub-$\lg$-module of
the dual space $M^*$. 

\subsection{Multiplicities as dimensions}

Recall that  $G$ is the minimal Kac-Moody group for a given irreducible
symmetrizable GCM $A$. 
Let $B$ be the standard Borel subgroup of $G$ and $B^-$ be the
opposite Borel subgroup. Consider $G/B$ and $G/B^-$ endowed with the
usual ind-variety structures.
Let $\underline{o}=B/B$ (resp.  $\underline{o}^-=B^-/B^-$) denote the
base point of $G/B$ (resp. $G/B^-$).
For $\lambda\in\lh_\ZZ^*=\Hom(T,\CC^*)=\Hom(B,\CC^*)=\Hom(B^-,\CC^*)$,
we consider the $G$-linearized line bundle $\Li(\lambda)$ (resp. $\Li_-(\lambda)$) on
$G/B$ (resp. $G/B^-$) such that $B$ (resp. $B^-$) acts on the fiber
over $\underline{o}$ (resp. $\underline{o}^-$) with weight $-\lambda$
(resp. $\lambda$). For $\lambda\in P_+$, we have 
$G$-equivariant isomorphisms
(see \cite[Section~VIII.3]{Kumar:KacMoody})
$$
\begin{array}{lcl}
  \Ho^0(G/B,\Li(\lambda))&\simeq&\Hom(L(\lambda),\CC),\\
\Ho^0(G/B^-,\Li_-(\lambda))&\simeq&\Hom(L(\lambda)^\vee,\CC).
\end{array}
$$
Set 
$$
\dX=(G/B^-)^2\times G/B.
$$
A significant part of the following lemma is contained in \cite[Proof
of Theorem~3.2]{BrownKumar}.
\begin{lemma}\label{lem:BW}
  Let $\lambda_1,\,\lambda_2$, and $\mu$ in $P_+$. Then the
  space 
$$
\Ho^0(\dX,\Li_-(\lambda_1)\otimes \Li_-(\lambda_2)\otimes \Li(\mu))^G
$$
of $G$-invariant sections has dimension
$c_{\lambda_1\,\lambda_2}^\mu$. In particular, this dimension is finite.
\end{lemma}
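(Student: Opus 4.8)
The plan is to reduce the computation of $G$-invariant sections on $\dX$ to the multiplicity $c_{\lambda_1\lambda_2}^\mu$ by identifying the three tensor factors of sections with representation-theoretic objects and then taking $G$-invariants. First I would use the $G$-equivariant isomorphisms recalled just before the statement, namely
$$
\Ho^0(G/B,\Li(\mu))\simeq\Hom(L(\mu),\CC)=L(\mu)^*,\qquad
\Ho^0(G/B^-,\Li_-(\lambda_i))\simeq\Hom(L(\lambda_i)^\vee,\CC),
$$
but the subtlety is that these are infinite-dimensional spaces, so the naive Künneth statement $\Ho^0(\dX,\boxtimes\Li)\simeq\bigotimes\Ho^0$ must be handled with care on the ind-variety $\dX$. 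I would therefore work with the filtration of $\dX$ by products of (unions of) finite-dimensional Schubert and opposite Schubert varieties, apply the finite-dimensional Künneth theorem on each piece, and pass to the limit using the description of $\Ho^0$ of a line bundle on an ind-variety via compatible sequences of sections, equation~\eqref{eq:310}. The upshot should be that a section of $\Li_-(\lambda_1)\otimes\Li_-(\lambda_2)\otimes\Li(\mu)$ on $\dX$ is the same as an element of the completed tensor product, but because the $G$-action is locally finite and we will ultimately extract $G$-invariants (equivalently, as below, $L(\lambda_1)\otimes L(\lambda_2)$-isotypic vectors in a restricted dual), the completion issue disappears once invariants are taken.

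Next I would rewrite the space of sections as a $\Hom$-space of $\lg$-modules. Using $\Li(\mu)^*$-type identifications and the fact that $L(\lambda_i)^\vee$ is the restricted dual in category $\Orb$, one gets
$$
\Ho^0(\dX,\Li_-(\lambda_1)\otimes\Li_-(\lambda_2)\otimes\Li(\mu))^G
\simeq \Hom_{\lg}\bigl(L(\lambda_1)\otimes L(\lambda_2),\,L(\mu)\bigr),
$$
or its transpose; the precise bookkeeping of which module is dualized (and hence whether one lands on $\Hom_\lg(L(\lambda_1)\otimes L(\lambda_2),L(\mu))$ or $\Hom_\lg(L(\mu),L(\lambda_1)\otimes L(\lambda_2))$) is a matter of chasing the signs in the definitions of $\Li(\lambda)$ versus $\Li_-(\lambda)$. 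Either way, since $L(\lambda_1)\otimes L(\lambda_2)$ decomposes with finite multiplicities as $\bigoplus_{\nu}L(\nu)^{\oplus c_{\lambda_1\lambda_2}^\nu}$ (as recalled via \cite[Corollary 2.2.7]{Kumar:KacMoody}) and the $L(\nu)$ are irreducible, Schur's lemma gives that this $\Hom$-space has dimension exactly $c_{\lambda_1\lambda_2}^\mu$, which is finite. This also yields the ``in particular'' clause for free.

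The main obstacle I expect is the passage to the limit in the Künneth step: on an ind-variety, $\Ho^0$ of a tensor product of line bundles pulled back from the factors is only an inverse limit of the finite tensor products $\Ho^0(\text{Schubert piece})\otimes\cdots$, and one must argue that taking $G$-invariants (equivalently, restricting to the locally finite part under the diagonal $G$-action) commutes with this inverse limit and cuts it down to the honest algebraic tensor product. I would control this by noting that a $G$-invariant section, when restricted to the $G$-orbit of the product of base points, is determined by finitely much data because its ``value'' lies in a one-dimensional space and its $G$-translates span a sub-$\lg$-module that, being a quotient of $L(\lambda_1)\otimes L(\lambda_2)$ paired against $L(\mu)$, is finitely generated in each weight; more concretely, the multiplicity-finiteness from \cite[Corollary 2.2.7]{Kumar:KacMoody} forces any $G$-invariant section to be supported, in the filtration sense, on a single finite stage, so the limit is attained and the identification with $\Hom_\lg$ is exact rather than merely up to completion. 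The rest is routine diagram-chasing through the definitions of $\Li(\lambda)$, $\Li_-(\lambda)$, and $M^\vee$.
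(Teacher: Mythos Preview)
Your strategy is correct and is essentially the paper's: identify $\Ho^0(\dX,\Li)^G$ with $\Hom_G(L(\mu),L(\lambda_1)\otimes L(\lambda_2))$ via Borel--Weil and duality, then apply Schur. Two points of comparison are worth making. First, the paper dispatches your ``completion worry'' in one line rather than via a filtration argument: after the tensor--hom adjunction it has $\Hom(L(\mu),(L(\lambda_1)^\vee\otimes L(\lambda_2)^\vee)^*)^G$, and then observes that by $\lh$-equivariance any such map must land in the restricted dual $(L(\lambda_1)^\vee\otimes L(\lambda_2)^\vee)^\vee\simeq L(\lambda_1)\otimes L(\lambda_2)$. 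This is the precise mechanism by which ``the completion issue disappears once invariants are taken''; your phrasing that a $G$-invariant section is ``supported, in the filtration sense, on a single finite stage'' is not right (it is a genuine global section on all of $\dX$) and should be replaced by this $\lh$-weight argument. Second, the paper supplements the proof with an independent geometric argument for finiteness that you do not attempt: it restricts along the $T$-equivariant slice $\iota:G/B^-\hookrightarrow\dX$, $x\mapsto(\underline{o}^-,x,\underline{o})$, shows $\iota^*$ is injective on $G$-invariants because $G\cdot(\underline{o}^-,\underline{o})$ is dense in $G/B^-\times G/B$, and identifies the target with the single weight space $L(\lambda_2)_{\mu-\lambda_1}$, which is finite-dimensional since $L(\lambda_2)$ lies in category~$\Orb$. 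Your appeal to the already-known finiteness of $c_{\lambda_1\lambda_2}^\mu$ is logically sufficient for the lemma as stated, but the paper's argument reproves that finiteness from scratch.
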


\begin{proof}
Set $\Li=\Li_-(\lambda_1)\otimes
  \Li_-(\lambda_2)\otimes\Li(\mu)$.
 We have the following canonical isomorphisms:
$$
\begin{array}{r@{\,}cll}
  \Ho^0(\dX,\Li)^G&
\simeq&\Hom(L(\lambda_1)^\vee\otimes L(\lambda_2)^\vee\otimes
                      L(\mu),\CC)^G&\\
 &\simeq&\Hom(
                      L(\mu),(L(\lambda_1)^\vee\otimes L(\lambda_2)^\vee)^*)^G&\\
 &\simeq&\Hom(
                      L(\mu),(L(\lambda_1)^\vee\otimes L(\lambda_2)^\vee)^\vee)^G&\mbox{by $\lh$-invariance}\\
 &\simeq&\Hom(
                      L(\mu),L(\lambda_1)\otimes L(\lambda_2))^G&
\end{array}
$$
 Thus this space of invariant sections has dimension
 $c_{\lambda_1\,\lambda_2}^\mu$.
We already mentioned that $c_{\lambda_1\,\lambda_2}^\mu$ is finite. 
Nevertheless, we prove independently that $\Ho^0(\dX,\Li)^G$ is finite
dimensional, reproving that $c_{\lambda_1\,\lambda_2}^\mu$ is finite.

Consider the $T$-equivariant map
  $\iota\,:\,G/B^-\longto \dX, x\longmapsto (\underline{o}^-,x,\underline{o})$. Then
  $\iota^*(\Li)$ is a $T$-linearized line bundle on $G/B^-$. Consider
$$
\iota^*\,:\,\Ho^0(\dX,\Li)\longto \Ho^0(G/B^-,\iota^*(\Li)).
$$
Since $G.(\underline{o}^-,\underline{o})$ is dense in $G/B^-\times
G/B$, the restriction of $\iota^*$
to $\Ho^0(\dX,\Li)^G$ is injective. Since $\iota$ is $T$-equivariant, $\iota^*(\Ho^0(\dX,\Li)^G)$
is contained in $\Ho^0(G/B^-,\iota^*(\Li))^T$. % In particular,
% $$
% c_{\lambda_1\,\lambda_2}^\mu\leq \dim\left(\Ho^0(G/B^-,\iota^*(\Li))^T
% \right).
% $$
But $\iota^*(\Li)\simeq\Li_-(\lambda_2)\otimes(\lambda_1-\mu)$. 
Then 
$$
\begin{array}{l@{\,}l}
\Ho^0(G/B^-,\iota^*(\Li))^T&\simeq \Ho^0(G/B^-,\Li_-(\lambda_2))^{(T)_{\mu-\lambda_1}}\\[0.5em]
&\simeq\Hom(L(\lambda_2)^\vee,\CC)^{(T)_{\mu-\lambda_1}}\\[0.7em]
&\simeq L(\lambda_2)^{(T)_{\mu-\lambda_1}}
\end{array}
$$
Since $L(\lambda_2)$ belongs to the category ${\mathcal O}$, the
dimension of $L(\lambda_2)^{(T)_{\mu-\lambda_1}}$ is finite.
It follows that $\iota^*$ embeds $\Ho^0(\dX,\Li)^G$ in a finite
dimensional vector space.
\end{proof}

\section{Enumerative meaning of structure constants of
  \texorpdfstring{$H^*(G/P,\ZZ)$}{cohomology rings}}

\subsection{Richardson varieties}

Let $U$ be the usual ind-subgroup of $B$.
Let $P$ be a standard parabolic subgroup of $G$ and $X=G/P$ be the flag ind-variety.

For $u,v\in W^P$, 
set $X^u_\GP=\overline{B^-uP/P}$ and $X_v^\GP =\overline{BvP/P}$. 
Set also $\cX^u_\GP=B^-uP/P$ and $\cX_v^\GP=BvP/P$.
We sometimes forget the $G/P$ if there is no risk of confusion.
For $u,v\in W^P$, we denote by $u\bleq v$ the Bruhat order if
$X_u^\GP\subset X_v^\GP$.  
In the following lemma, we collect some well known facts about the
Schubert and Richardson varieties.

\begin{lemma}\label{lem:rappelsRichardson}
  \begin{enumerate}
  \item $X_v^\GP$ is a projective variety of dimension $l(v)$ and $\Aut(X_v^\GP)^\circ$ is an algebraic
    group of finite dimension.
\item The image of $U$ in $\Aut(X_v^\GP)^\circ$ is an unipotent group denoted by
  $U_v$.
\item The intersection $X^u_v=X^u_\GP\cap X_v^\GP$ is an irreducible
  closed normal subvariety (called Richardson variety) of $X_v^\GP$ of dimension
  $l(v)-l(u)$, if $u\bleq v$. It is empty otherwise.
\item  If $u\bleq v$ then $\cX^u_\GP\cap \cX_v^\GP$ is a nonempty open subset
 contained in the smooth locus of $X^u_v$. 
\item Let $u\bleq v$ such that $l(v)=l(u)+1$. Then the Richardson
  variety $X^u_v$ is isomorphic to $\PP^1$ and  $\cX^u_\GP\cap
  \cX_v^\GP$ is isomorphic to $\CC^*$.
\item Assume that $u\bleq v$ and $x\in\cX^u_\GP\cap \cX_v^\GP$. Then
  the sequence induced by the inclusions

\tikzset{square matrix/.style={outer sep=-6pt,matrix of nodes,
nodes={draw,
minimum height=0.7cm,
anchor=center,
text width=0.7cm,
align=center,
outer sep=-6pt}}}

\begin{center}
\begin{tikzpicture}
  \matrix (m) [matrix of math nodes,row sep=1.5em,column sep=1.7em,minimum width=2em,anchor=center]%,align=center]
  {
     0&T_x(\cX^u_\GP\cap \cX_v^\GP)&
T_x \cX_v^\GP&\frac{T_x\GP}{T_x\cX^u_\GP}&0\\};
%\draw (m-1-2) -- (m-1-2-|m-1-3.west);
% \draw[->] (m-1-4-|m-1-3.east) -- (m-1-4-|m-1-4.west);
% \draw[->] (m-1-4-|m-1-1.east) -- (m-1-4-|m-1-2.west);
% \draw[->] (m-1-4-|m-1-2.east) -- (m-1-4-|m-1-3.west);
% \draw[->] (m-1-4-|m-1-4.east) -- (m-1-4-|m-1-5.west);

\draw[->] (m-1-5-|m-1-3.east) -- (m-1-5-|m-1-4.west);
\draw[->] (m-1-5-|m-1-1.east) -- (m-1-5-|m-1-2.west);
\draw[->] (m-1-5-|m-1-2.east) -- (m-1-5-|m-1-3.west);
\draw[->] (m-1-5-|m-1-4.east) -- (m-1-5-|m-1-5.west);
 %  \path[-stealth]
%     (m-1-1) edge  (m-1-2)
%     (m-1-2) edge  (m-1-3)
% %    (m-1-3) edge  (m-1-4)
%     (m-1-4) edge  (m-1-5);
\end{tikzpicture}  
\end{center} 
is exact.

 \end{enumerate}
\end{lemma}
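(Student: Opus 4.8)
The plan is to derive assertions (i)--(v) from standard properties of Schubert and Richardson varieties in Kac--Moody flag varieties, and to prove the transversality statement (vi) directly.

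For (i), the variety $X_v^\GP$ is a closed subvariety of the finite union of cells $\bigcup_{w\bleq v}\cX_w^\GP$, hence projective, and $\dim X_v^\GP=l(v)$ because $\cX_v^\GP\cong\AA^{l(v)}$ is dense in it (see \cite{Kumar:KacMoody}); that $\Aut(X_v^\GP)^\circ$ is a connected algebraic group of finite dimension is the general fact that the automorphism functor of a projective variety is representable by a group scheme locally of finite type whose identity component is of finite type (Grothendieck, Matsumura--Oort). For (ii), since $U\subset B$ stabilises $X_v^\GP$, the action gives a morphism $U\to\Aut(X_v^\GP)^\circ$ with connected image; as $X_v^\GP$ meets only finitely many cells, a sufficiently deep term of the standard filtration of the prounipotent group $U$ acts trivially, so the action factors through a unipotent algebraic quotient and the image $U_v$ is unipotent.

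For (iii), that $X^u_\GP\cap X_v^\GP$ is nonempty precisely when $u\bleq v$, and is then irreducible, normal, of dimension $l(v)-l(u)$, is known for Kac--Moody flag varieties (Kumar--Schwede; see also Billey--Coskun); emptiness when $u\not\bleq v$ follows since a nonempty $T$-stable intersection contains a $T$-fixed point $wP/P$, forcing $u\bleq w\bleq v$. For (iv), $\cX^u_\GP$ is the open stratum of $X^u_\GP$ and $\cX_v^\GP$ the open cell of $X_v^\GP$, so $\cX^u_\GP\cap\cX_v^\GP$ is open in $X^u_v$; nonemptiness for $u\bleq v$ is standard, and smoothness (hence the inclusion in the smooth locus of $X^u_v$) will come out of (vi). For (v), when $l(v)=l(u)+1$ part (iii) makes $X^u_v$ an irreducible normal, hence smooth, projective curve, which is $T$-stable with exactly the two fixed points $uP/P$ and $vP/P$; the $T$-action is therefore nontrivial, and a smooth projective curve admitting a nontrivial torus action is rational, so $X^u_v\cong\PP^1$. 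Since $uP/P\in\cX_u^\GP$ and $vP/P\in\cX^v_\GP$, neither of these points lies in $\cX^u_\GP\cap\cX_v^\GP$ while every other point of $X^u_v$ does, whence $\cX^u_\GP\cap\cX_v^\GP\cong\PP^1$ minus two points, i.e. $\CC^*$.

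For (vi), the heart of the matter, I would identify the tangent spaces group-theoretically: $\cX_v^\GP=B\cdot(vP/P)$ is a single $B$-orbit, so $T_x\cX_v^\GP$ is the image of $\lb$ under the surjective differential $\lg\to T_x(\GP)$ of the transitive $G$-action, and likewise $\cX^u_\GP=B^-\cdot(uP/P)$ gives that $T_x\cX^u_\GP$ is the image of $\lb^-$. As $\lb+\lb^-=\lg$, this yields $T_x\cX_v^\GP+T_x\cX^u_\GP=T_x(\GP)$, so the right-hand map $T_x\cX_v^\GP\to T_x(\GP)/T_x\cX^u_\GP$ is onto, and its kernel $T_x\cX_v^\GP\cap T_x\cX^u_\GP$ has dimension $l(v)-l(u)$ because $X^u_\GP$ has codimension $l(u)$ in $\GP$. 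By (iii) and (iv), $\cX^u_\GP\cap\cX_v^\GP$ is smooth at $x$ of dimension $l(v)-l(u)$, so $T_x(\cX^u_\GP\cap\cX_v^\GP)$ has the same dimension; being contained in $T_x\cX_v^\GP\cap T_x\cX^u_\GP$, it equals it, which gives exactness in the middle, while exactness on the left is clear since $\cX^u_\GP\cap\cX_v^\GP\hookrightarrow\cX_v^\GP$ is a locally closed immersion of varieties smooth at $x$. The main obstacle is the infinite-dimensional bookkeeping in (vi): one must justify the identifications $T_x\cX_v^\GP=\lb\cdot x$, $T_x\cX^u_\GP=\lb^-\cdot x$ and the equality $\dim T_x(\GP)/T_x\cX^u_\GP=l(u)$ in the ind-variety setting (all following from the local triviality of $G\to\GP$ and the finite codimension of the Birkhoff varieties), and to arrange the logical order so that the smoothness claimed in (iv) is obtained from this transversality rather than presupposed.
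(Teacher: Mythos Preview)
Your proposal is correct and aligns with the paper's approach, which simply cites \cite{Kumar:KacMoody} for the ``banal'' assertions, \cite[Proposition~6.5]{Kumar:positivity} for normality in (iii), and \cite[Lemma~7.3.10]{Kumar:KacMoody} for (vi); your direct transversality argument for (vi) via $\lb+\lb^-=\lg$ is exactly what that last reference encapsulates. The only point to tidy is the one you already flag: as written, your proof of (vi) invokes smoothness of $\cX^u_\GP\cap\cX_v^\GP$ from (iv), while (iv) defers smoothness to (vi); the clean order is to first establish $T_x\cX_v^\GP+T_x\cX^u_\GP=T_x(\GP)$ from the orbit descriptions, deduce smoothness of the scheme-theoretic intersection (both cells being smooth), and then read off $T_x(\cX^u_\GP\cap\cX_v^\GP)=T_x\cX^u_\GP\cap T_x\cX_v^\GP$.
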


\begin{proof}
The normality of Richardson's varieties is proved in \cite[Proposition~6.5]{Kumar:positivity}).
The last assertion is an easy consequence of
\cite[Lemma~7.3.10]{Kumar:KacMoody}).
The others assertions are banal (see \cite{Kumar:KacMoody}).
\end{proof}

\bigskip
Consider the homology group $\Ho_*(X,\ZZ)=\oplus_{v\in W^P}\ZZ[X_v^\GP]$. Then
$\Ho^*(X,\ZZ)\simeq\Hom(\Ho_*(X,\ZZ),\ZZ)$ has a ``basis'' $(\epsilon_u)_{u\in
  W^P}$ defined by
$$
\epsilon_u([X_v^\GP])=\delta^u_v,\qquad\forall v\in W^P.
$$
For $u_1,\,u_2,$ and $v\in W^P$, define $n_{u_1u_2}^v\in \ZZ$ by
$$
\epsilon_{u_1}.\epsilon_{u_2}=\sum_{v\in W^P} n_{u_1u_2}^v\epsilon_v.
$$
By \cite{KumarNori}, $n_{u_1u_2}^v\geq 0$. 

\subsection{Kleiman's lemma}

\begin{lemma}
\label{lem:Kl}  
Let $u_1,u_2$, and $v$ in $W^P$ such that $u_1\bleq v$ and $u_2\bleq v$. Assume
  that $l(v)=l(u_1)+l(u_2)$.

For general $h\in U_v$, $X^{u_1}_v\cap hX_v^{u_2}=\cX^{u_1}_\GP\cap
h(\cX^{u_2}_\GP\cap \cX_v^\GP)$ is finite and transverse.
More precisely, for any $x\in X^{u_1}_v\cap hX_v^{u_2}$ the following map
induced by inclusions
$$
T_xX_v^\GP\longto \frac{T_x\GP}{T_xX^{u_1}_\GP}\oplus
\frac{T_x\GP}{T_x\tilde hX^{u_2}_\GP}
$$
is an isomorphism, where  $\tilde h\in U$ satisfies $r(\tilde h)=h$. 
\end{lemma}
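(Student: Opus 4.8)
The plan is to reduce a statement about the infinite-dimensional ind-variety $X = G/P$ to a finite-dimensional transversality statement, and there to invoke the classical Kleiman transversality theorem together with the concrete description of tangent spaces of Richardson varieties recorded in Lemma~\ref{lem:rappelsRichardson}. The key observation is that everything happens inside the projective variety $X_v^\GP$ of dimension $l(v)$, on which, by Lemma~\ref{lem:rappelsRichardson}(i)--(ii), the group $U_v$ acts algebraically through a finite-dimensional unipotent quotient of $U$; so $U_v$ is a connected algebraic group acting on the projective variety $X_v^\GP$. The two subvarieties in play are $X^{u_1}_v$ (which is $U_v$-stable, being $B$-stable on the appropriate side after intersecting with $X_v^\GP$) and the translates $hX^{u_2}_v$ for $h\in U_v$.

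First I would set up the enumerative part. Since $X^{u_1}_v$ and $X^{u_2}_v$ are irreducible closed subvarieties of the irreducible projective variety $X_v^\GP$ of dimensions $l(v)-l(u_1)$ and $l(v)-l(u_2)$ respectively, and $U_v$ acts on $X_v^\GP$, Kleiman's transversality theorem applied to the connected group $U_v$ gives that for general $h\in U_v$ the intersection $X^{u_1}_v\cap hX^{u_2}_v$ is either empty or of pure dimension $(l(v)-l(u_1)) + (l(v)-l(u_2)) - l(v) = l(v)-l(u_1)-l(u_2) = 0$, hence finite; and moreover for general $h$ this intersection is transverse at each of its points, meaning that at each such $x$ the tangent spaces satisfy $T_x X^{u_1}_v + T_x(hX^{u_2}_v) = T_x X_v^\GP$. (One subtlety: Kleiman's theorem in its usual form needs a transitive action or at least enough translates to sweep out the variety; here I would argue that $U_v$-translates of the generic point of $X^{u_2}_v$ already cover a dense subset of $X_v^\GP$ because $\cX_v^\GP$ is a single $B$-orbit meeting $\cX^{u_2}_\GP$, or instead invoke the version of Kleiman available for the $B$-action and then note the relevant translates lie in the image $U_v$.)

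Next I would upgrade ``transverse, finite'' to the precise claim that the intersection equals $\cX^{u_1}_\GP\cap h(\cX^{u_2}_\GP\cap\cX_v^\GP)$ and that the displayed map is an isomorphism. For the scheme-theoretic equality: a transverse (hence reduced, zero-dimensional) intersection point $x$ must be a smooth point of both $X^{u_1}_v$ and $hX^{u_2}_v$; by Lemma~\ref{lem:rappelsRichardson}(iv) the smooth locus of $X^{u_1}_v$ contains $\cX^{u_1}_\GP\cap\cX_v^\GP$, and more to the point any point of $X^{u_1}_v$ lying in the open cell $\cX^{u_1}_\GP$ is in the smooth locus; a dimension/Bruhat-order count (using that $X^{u_1}_v = \overline{\cX^{u_1}_\GP}\cap\overline{\cX_v^\GP}$ and that the boundary strata $X^{u'}_{v'}$ with $u\bleq u', v'\bleq v$, $(u',v')\ne(u,v)$ have positive dimension) forces every isolated transverse point to lie in $\cX^{u_1}_\GP\cap\cX_v^\GP$, and symmetrically in $h(\cX^{u_2}_\GP\cap\cX_v^\GP)$. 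This gives the asserted set-theoretic description. For the map being an isomorphism: by Lemma~\ref{lem:rappelsRichardson}(vi) applied to $x\in\cX^{u_1}_\GP\cap\cX_v^\GP$ we have the short exact sequence identifying $T_x X_v^\GP / T_x X^{u_1}_v \cong T_x\GP/T_x X^{u_1}_\GP$, and symmetrically for $hX^{u_2}_v$; feeding these two identifications into the transversality relation $T_x X^{u_1}_v + T_x(hX^{u_2}_v) = T_x X_v^\GP$ (together with the dimension count $\dim T_x X_v^\GP = \dim(T_x\GP/T_x X^{u_1}_\GP) + \dim(T_x\GP/T_x X^{u_2}_\GP)$, which is just $l(v) = l(u_1) + l(u_2)$) yields exactly that the natural map $T_x X_v^\GP \to (T_x\GP/T_x X^{u_1}_\GP)\oplus(T_x\GP/T_x\tilde h X^{u_2}_\GP)$ is an isomorphism.

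The main obstacle I anticipate is the first step: making Kleiman's transversality theorem genuinely applicable. The classical theorem requires a connected algebraic group acting on a variety, which we do have ($U_v$ on $X_v^\GP$), but it requires that the group act with enough orbits — in the cleanest form, transitively, or at least so that generic translates of the subvarieties are in general position. Here $U_v$ is unipotent and does not act transitively on $X_v^\GP$, so I would need to either (a) cite a version of Kleiman for non-transitive actions that suffices here (e.g. using that $X^{u_2}_v$ meets the open cell, whose $U_v$-translates are ``large enough''), or (b) argue by hand: choose a smooth point $y$ in the intersection of the open cells, show the orbit map $U_v \to X_v^\GP$, $h\mapsto hy$ is smooth/dominant onto $\cX_v^\GP$, and then run the standard generic-smoothness argument on the incidence variety $\{(h,x) : x\in X^{u_1}_v\cap hX^{u_2}_v\}$ to extract a generic $h$ for which the fiber is finite and reduced. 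Option (b) is essentially a rerun of the proof of Kleiman's theorem adapted to the $U_v$-action and is the route I would write out in detail. Once transversality at the open-cell points is in hand, the rest is the bookkeeping with Lemma~\ref{lem:rappelsRichardson} sketched above.
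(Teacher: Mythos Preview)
Your overall architecture is right, and the final step---using Lemma~\ref{lem:rappelsRichardson}(vi) to pass from transversality inside $X_v^\GP$ to the displayed isomorphism in $T_x\GP$---matches the paper exactly. The gap is in how you reach transversality and how you force the intersection into the open cells.

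The key insight you are missing is that $U_v$ acts \emph{transitively} on each Bruhat cell $\cX_\sigma^\GP$ for $\sigma\bleq v$, because $\cX_\sigma^\GP$ is a single $B$-orbit and the action of $B$ on $X_v^\GP$ factors through $U_v$ (up to the torus, which fixes $\sigma P/P$). So instead of trying to apply Kleiman once on the non-homogeneous $X_v^\GP$, the paper stratifies $X_v^\GP=\bigcup_{\sigma\bleq v}\cX_\sigma^\GP$ and applies classical Kleiman on each homogeneous piece separately. On a proper cell $\cX_\sigma^\GP$ with $\sigma\neq v$, the codimensions of $X^{u_1}_\sigma$ and $X^{u_2}_\sigma$ already sum to more than $\dim\cX_\sigma^\GP$, so Kleiman gives emptiness of $X^{u_1}_v\cap hX^{u_2}_v\cap\cX_\sigma^\GP$ for general $h$; similarly one kills the contributions from $\partial X^{u_1}$ and $\partial X^{u_2}$. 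Only then, on the open cell $\cX_v^\GP$ itself, does Kleiman give the finite transverse intersection. Your option~(b) is headed toward this for the top cell but never addresses the boundary cells.

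Your alternative route---arguing that a transverse zero-dimensional intersection point must be smooth on each Richardson variety and hence must lie in the open stratum---does not work: Richardson varieties can and do have smooth points in their boundary (Lemma~\ref{lem:rappelsRichardson}(iv) says the open stratum is \emph{contained in} the smooth locus, not equal to it). So smoothness alone cannot force $x$ into $\cX^{u_1}_\GP\cap\cX_v^\GP$. The cell-by-cell emptiness argument is what actually does this job.
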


\begin{proof}
  It is an application of Kleiman's theorem.
Indeed, consider $X_v=\cup_{\sigma\bleq v} \cX_\sigma$.

Fix $\sigma\bleq v$ such that $\sigma\neq v$.
For any $h\in U_v$, we have
$X^{u_1}_v\cap hX_v^{u_2}\cap X_\sigma=X^{u_1}_\sigma\cap hX_\sigma^{u_2}$.

Assume first that $u_1\bleq \sigma$ and $u_2\bleq \sigma$.  Since
$l(\sigma)<l(v)$, $(\dim X_\sigma-\dim X^{u_1}_\sigma) + (\dim
X_\sigma-\dim X^{u_2}_\sigma)>\dim X_\sigma$. Kleiman's
theorem applied in the $U_v$-homogeneous space $\cX_\sigma$ shows
that  $X^{u_1}_v\cap hX_v^{u_2}\cap \cX_\sigma$ is empty for general
$h\in U_v$.

Otherwise, $X^{u_1}_\sigma$ or $X_\sigma^{u_2}$ is empty.

Similarly, for general $h\in U_v$, $\partial X^{u_1}\cap h X^{u_2}_v$
and $X^{u_1}\cap h (\partial X^{u_2}\cap X_v)$ is empty. Here
$\partial X^{u}=X^{u}-\cX^{u}$.
Indeed, only finitely many $B^-$-orbits $\cX^{u_1}\subset X^u$ intersect $X_v$.

Hence, for general $h\in U_v$, we have $X^{u_1}_v\cap hX_v^{u_2}=\cX^{u_1}\cap
h(\cX^{u_2}\cap \cX_v)$.\\

Now, by Kleiman's theorem in the
$U_v$-homogeneous space $\cX_v$, for general $h\in U_v$, $X^{u_1}_v\cap hX_v^{u_2}=\cX^{u_1}_\GP\cap
h(\cX^{u_2}_\GP\cap \cX_v^\GP)$ is finite and for any $x\in X^{u_1}_v\cap hX_v^{u_2}$ the map
$$
T_xX_v^\GP\longto \frac{T_xX_v^\GP}{T_xX^{u_1}_v}\oplus \frac{T_xX_v^\GP}{T_xhX^{u_2}_v}
$$
is an isomorphism.
Since $x\in \cX_v\cap\cX^{u_1}$, Lemma~\ref{lem:rappelsRichardson}
implies that the natural map $\frac{T_xX_v^\GP}{T_xX^{u_1}_v}\longto
\frac{T_x\GP}{T_xX^{u_1}_\GP}$ is an isomorphisms.
Similarly, 
$\frac{T_{h\inv x}X_v^\GP}{T_{h\inv x}X^{u_2}_v}\longto
\frac{T_{h\inv x}\GP}{T_{h\inv x}X^{u_2}_\GP}$ is an isomorphism.
Since both $\GP$ and $X_v$ are $\tilde h$-stable, by applying $\tilde
h$, one deduces that
$\frac{T_xX_v^\GP}{T_xhX^{u_2}_v}\longto
\frac{T_x\GP}{T_x\tilde hX^{u_2}_\GP}$ is an  isomorphism.
\end{proof}

\begin{lemma}
  \label{lem:cardcohom}
Let $h\in U_v$ satisfying  Lemma~\ref{lem:Kl}. 
Then
$$
\sharp (X^{u_1}_v\cap hX_v^{u_2})=n_{u_1u_2}^v.
$$
\end{lemma}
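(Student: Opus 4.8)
The plan is to compute the number of points in $X^{u_1}_v\cap hX_v^{u_2}$ by an Euler characteristic / intersection-theoretic argument, using the transversality of the intersection established in Lemma~\ref{lem:Kl}. First I would recall that since $X_v^\GP$ is a projective variety of dimension $l(v)$ and, by hypothesis, $l(v)=l(u_1)+l(u_2)$, the two Richardson subvarieties $X^{u_1}_v$ and $X^{u_2}_v$ have complementary dimensions inside $X_v^\GP$. By Lemma~\ref{lem:Kl}, for general $h\in U_v$ the scheme-theoretic intersection $X^{u_1}_v\cap hX_v^{u_2}$ is finite and transverse at each of its points, so its cardinality equals the length of the structure sheaf of the intersection, which in turn equals the degree of the zero-cycle $[X^{u_1}_v]\cdot h_*[X^{u_2}_v]$.

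The key step is to identify this intersection number with the cohomological pairing. Since $X_v^\GP$ is a (possibly singular) projective variety, I would work inside it as follows: the fundamental classes $[X^{u_1}_v]$ and $[X^{u_2}_v]$ in $\Ho_*(X_v^\GP,\ZZ)$ are, under the restriction maps, the images of the Schubert classes on $\GP$, and the pairing of complementary Richardson classes on $X_v^\GP$ is governed by the structure constants $n_{u_1u_2}^v$ of $\Ho^*(\GP,\ZZ)$. Concretely, the fact that $U_v$ acts on $X_v^\GP$ (Lemma~\ref{lem:rappelsRichardson}(ii)) and moves $X_v^{u_2}$ into general position means the homology class $h_*[X^{u_2}_v]$ is independent of the generic choice of $h$ and equals the ``expected'' class, so $\sharp(X^{u_1}_v\cap hX_v^{u_2})$ is the coefficient extracted by pairing $\epsilon_{u_1}$ against $\epsilon_{u_2}$ in the top-degree part, which is exactly $n_{u_1u_2}^v$ by the definition of the structure constants together with Poincaré-type duality between $[X_v^\GP]$ and $\epsilon_v$.

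I expect the main obstacle to be the careful bookkeeping of cohomology/homology on the singular, finite-dimensional Schubert variety $X_v^\GP$ rather than on the ambient ind-variety $\GP$: one needs that the relevant degree computation on $X_v^\GP$ faithfully records the structure constant $n_{u_1u_2}^v$ of $\Ho^*(\GP,\ZZ)$. This is handled by restricting from $\GP$ to $X_v^\GP$ and using that $X^{u_1}_v=X^{u_1}_\GP\cap X_v^\GP$ and $X^{u_2}_v=X^{u_2}_\GP\cap X_v^\GP$ together with $l(v)=l(u_1)+l(u_2)$; transversality from Lemma~\ref{lem:Kl} then guarantees that all intersection multiplicities are $1$, so the count is literally the cardinality, and the projection formula identifies it with $\langle\epsilon_{u_1}\cup\epsilon_{u_2},[X_v^\GP]\rangle=n_{u_1u_2}^v$.
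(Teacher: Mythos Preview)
Your overall strategy is natural, and in the classical finite-dimensional case it would be essentially complete. But in this Kac-Moody setting there is a genuine gap at the step you yourself flag: identifying the transverse intersection count on the \emph{singular} projective variety $X_v^{\GP}$ with the structure constant $n_{u_1u_2}^v$ of $\Ho^*(\GP,\ZZ)$. On a singular variety there is no Poincar\'e duality, so the passage from ``transverse zero-dimensional intersection of cycles'' to ``cup product pairing'' is not automatic. Your proposed fix via the projection formula and restriction of $\epsilon_{u_i}$ to $X_v^{\GP}$ does not resolve this: you would need that $\epsilon_{u_i}|_{X_v^\GP}$ is in a precise sense dual to the cycle $[X^{u_i}_v]$ on $X_v^\GP$, and that the cap product with a transverse intersection gives the point count. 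Neither statement is available for free on a singular variety, and the infinite-dimensionality of the Birkhoff varieties $X^{u_i}_\GP$ blocks any direct appeal to ambient intersection theory on $\GP$.

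The paper circumvents exactly this difficulty by a different route. It first reduces to $P=B$, and then invokes the $K$-theoretic framework of \cite{Kumar:positivity} and \cite{BrownKumar}: there one has a pairing $\langle\,,\,\rangle:K^0(\tilde X)\otimes K_0(X)\to\ZZ$ on the thick flag variety $\tilde X$, sheaves $\xi^u=\co_{\tilde X^u}(-\partial\tilde X^u)$ dual to $[\co_{X_w}]$, and a formula expressing $n_{u_1u_2}^v$ as an alternating sum of Euler characteristics of $\tor_i^{\co_{\tilde X\times\tilde X}}(\xi^{u_1}\boxtimes\xi^{u_2},(g_1^{-1},g_2^{-1})\Delta_*\co_{X_v})$, valid for \emph{any} $g_1,g_2\in G$. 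This formula is the substitute for the missing Poincar\'e duality. The transversality from Lemma~\ref{lem:Kl} is then used, together with a local trivialization of the thick Birkhoff cells as $\AA^\infty$, to show that the higher $\tor_i$ vanish and that $\tor_0$ is the structure sheaf of the finite reduced intersection, giving the cardinality. So the missing ingredient in your plan is precisely this $K$-theoretic identity (or some equivalent rigorous intersection-theoretic input adapted to the ind-variety/singular setting); without it, the final identification remains unjustified.
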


\begin{proof}
We first prove that one may assume that $B=P$.
Consider the projection $\pi\,:\,G/B\longto G/P$ and the associated
morphism $\pi^*\,:\,{\operatorname H}^*(G/P,\ZZ)\longto {\operatorname
  H}^*(G/B,\ZZ)$ in cohomology. Then, for any $u\in W^P$, 
$\pi^*(\epsilon_u(G/P))=\epsilon_u(G/B)$. 
In particular, $n_{u_1u_2}^v(G/B)=n_{u_1u_2}^v(G/P)$, for any $u_1$,
$u_2$ and $v$ in $W^P$.

Note that, for any $u\in W^P$, $X^u_\GB=\pi\inv (X^u_\GP)$ and $\pi$
maps $\cX_u^\GB$ bijectively onto $\cX_u^\GB$.
Then, for  any $h$  in $U_v$, $\pi$ maps $X^{u_1}_\GB\cap
h(X_v^\GB\cap X^{u_2}_\GB)$ bijectively onto $X^{u_1}_\GP\cap h(X_v^\GP\cap X^{u_2}_\GP)$.
In particular, if the lemma holds for $G/B$ it holds for $G/P$.

\bigskip
Let $\tilde G$ be the  Kac-Moody group completed
along
 the negative roots (as opposed to completed along the positive
 roots).  
Let $\tilde B^-$ be the Borel subgroup of $\tilde G$.
Let
$
\tilde{X} = \tilde G/B
$
be the `thick' flag variety which contains the standard KM-flag
variety $X=G/B$.
If $G$ is not of finite type, $\tilde{X}$ is an infinite
dimensional  non quasi-compact  scheme (cf. [Ka, \S4]). 
For $w\in W$, denote by $\tilde X^w$ the closure of $\tilde B^-w\underline
o$ in $\tilde X$. 
Observe that $\tilde X^w\cap X=X^w$. 
Let $K^0(\tilde{X})$ denote the Grothendieck group of
 coherent $\co_{\tilde{X}}$-modules  (see \cite[\S3.5]{BrownKumar}
 for details).
 Similarly,
define $K_0(X) := \lim_{n\to\infty} K_0(X_n)$, where $\{
X_n\}_{n\geq 1}$ is the filtration of $X$ and
$K_0(X_n)$ is the Grothendieck group of  coherent
sheaves on the projective variety $X_n$.
Then, $\bigl\{ [\co_{X_w}]\bigr\}_{w\in W}$ is a basis of $K_0(X)$ as a $\ZZ$-module.
Define a pairing
$$
\langle \, ,\, \rangle : K^0(\tilde{X}) \otimes K_0(X) \to \ZZ,\,\,
\langle[\cs], [\cf]\rangle  = \sum_i (-1)^i \chi \bigl (X_n, \tor_
i^{\co_{\tilde{X}}}
 (\cs,\cf ) \bigr),$$
if $\cs$ is a coherent sheaf on $\tilde{X}$ and $\cf$
is a coherent sheaf on ${X}$ supported in $X_n$ (for
some $n$), where $\chi$ denotes the 
Euler-Poincar\'{e} characteristic.
Let $\varpi_{\alpha_0},\dots, \varpi_{\alpha_l}$ be characters of $T$
dual of the coroots $\alpha_0^\vee,\dots,\alpha_l^\vee$.
Set $\rho=\sum_{i=0}^l\varpi_{\alpha_i}$.
Set the  sheaf on $\tilde{X}$ (see~\cite[Theorem~10.4]{Kumar:positivity}
$$
\xi^u = \Li(-\rho ) \ext^{\ell (u)}_{\co_{\tilde{X}}}
(\co_{X^u}, \co_{\tilde{X}} )=\Orb_{\tilde X^u}(-\partial \tilde X^u),
$$
where $\partial \tilde X^u=\tilde X^u-\tilde B^-uB/B$. 
Then,  as proved in  \cite[Proposition~3.5]{Kumar:positivity},  for any $u,w\in W$,
$$
\langle[\xi^u], [\co_{X_w}]\rangle = \delta_{u,w}.
$$

Let $\Delta: X \to X\times X$ be the diagonal map. Then, by
\cite[Proposition 4.1]{Kumar:positivity} and \cite[\S3.5]{BrownKumar},
for any $g_1,\,g_2\in G$
\begin{equation}
  \label{eq:nchi}
 n^w_{u_1\,u_2} %&= \langle [\xi^{u_1}\boxtimes\xi^{u_2}], [(g_1^{-1}, g_2^{-1}) \cdot (\Delta_*\co_{X_v})]\rangle
 = \sum_i (-1)^i \chi (\tilde{X}\times \tilde{X}, \tor_i^{\co_{\tilde{X}\times \tilde{X}}} \Bigl(\xi^{u_1}\boxtimes\xi^{u_2},(g_1^{-1}, g_2^{-1}) \cdot \Delta_* (\co_{X_v})\Bigr) 
\end{equation}
Let $\tilde h$ in $U$ such that $r(\tilde h)=h$.

The support of  
$\tor_i^{\co_{\tilde{X}\times \tilde{X}}}
\Bigl(\xi^{u_1}\boxtimes\xi^{u_2},(1,\tilde h\inv) \cdot \Delta_*
(\co_{X_v})\Bigr) $
is contained in $(\tilde X^{u_1}\times \tilde X^{u_2})\cap
(1,\tilde h\inv)\Delta(X_v)$. The assumptions on $\tilde h$ implies that
this support is contained in 
$(\cX^{u_1}\cap \cX_v)\times  (\cX^{u_2}\cap \cX_v)$.
In particular, this $\tor$-sheaf is equal to 
\begin{equation}
  \label{eq:tori}
\tor_i^{\co_{\tilde{X}\times
     \tilde{X}}}
 \Bigl(\Orb_{{X}^{u_1}}\boxtimes\Orb_{{X}^{u_2}},(1,
     h\inv) \cdot \Delta_* (\co_{X_v})\Bigr). 
\end{equation}

The support of the $\tor$-sheaves in formula~\eqref{eq:tori} are
% contained in $(\tilde{\cX}_{u_1}\times \tilde{\cX}_{u_2})\cap
% (1,h)\Delta X_w$. 
% In particular, it is 
contained in $u_1\tilde B^-\underline o\times
u_2\tilde B^-\underline o$.
By \cite[Section~8]{KS:affflagman}, there exists an isomorphism 
$\iota\,:\,u_1\tilde B^-\underline o\times u_2\tilde B^-\underline o\longto
\AA^\infty=\CC^\NN$ such that 
$\tilde B^-u_1\underline{o}\times \tilde B^-u_2\underline{o}$ maps onto $\CC^{\NN_{\geq
    l(u_1)+l(u_2)}}$. Here $\CC^\NN$ denote the set $\CC$-valued
sequences view as $\Spec(\CC[T_0,\dots,T_n,\dots])$ and  $\CC^{\NN_{\geq
    l(u_1)+l(u_2)}}$ is the set of sequences starting with $
l(u_1)+l(u_2)$ zeros. 
We also set $\CC^{\NN_{\leq m}}:=\{(u_k)\in\CC^\NN\,:\,u_k=0\ \forall
k>m\}$. 
Then, there exists $m\geq l(u_1)+l(u_2)$ such that $(u_1\tilde B^-\underline
o\times u_2\tilde B^-\underline o)\cap (X_v\times X_v)$ is contained in
$\CC^{\NN_{\leq m}}$.
Now, for any $i\geq 0$,
$$
\tor_i^{\co_{\tilde{X}\times
     \tilde{X}}}
 \Bigl(\Orb_{\tilde{\cX}_{u_1}}\boxtimes\Orb_{\tilde{\cX}_{u_2}},(1,
     h\inv) \cdot \Delta_* (\co_{X_v})\Bigr)
$$
is the pullback of 
$$
\tor_i^{\Orb_{\CC^{\NN_{\leq m}}}}
 \Bigl(\Orb_{\CC^{\NN_{\leq m}}\cap    \CC^{\NN_{\geq
    l(u_1)+l(u_2)}}  },((1,
     h\inv) \cdot (\Delta_* (\co_{X_v}))_{|\CC^{\NN_{\leq m}}}\Bigr).
$$
Since the intersection $\iota((1,h\inv )\Delta(X_v)\cap (u_1\tilde B^-\underline o\times
u_2\tilde B^-\underline o))\cap (\CC^{\NN_{\leq m}}\cap    \CC^{\NN_{\geq
    l(u_1)+l(u_2)}})$ is transverse in $\CC^{\NN_{\leq m}}$ it follows
that these $\tor$-sheaves vanish for $i\geq 1$ and that
$n^w_{u_1\,u_2}$ is the cardinality of this intersection. The lemma is
proved.
\end{proof}

\subsection{The case \texorpdfstring{$n_{u_1\,u_2}^v=1$}{n=1}}
\begin{lemma}
  \label{lem:intermorph}
We keep notation and assumptions of Lemma~\ref{lem:Kl}. Moreover we
assume that $n_{u_1,\,u_2}^v=1$.

Then there exist a non empty open subset $\Omega$ of $U_v$ and a
regular map
$$
\psi\,:\,\Omega\longto \cX_v^\GP
$$
such that
$$
\forall h\in \Omega\qquad X_v^{u_1}\cap h  X_v^{u_2}=\{\psi(h)\},
$$
and 
$$
\psi(h)\in \cX^{u_1}_\GP\cap h(\cX_\GP^{u_2}\cap \cX_v^\GP).
$$
\end{lemma}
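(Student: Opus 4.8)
The plan is to exhibit the finite set $X_v^{u_1}\cap hX_v^{u_2}$ as the fibre over $h$ of a proper morphism $p\,:\,Z\longto U_v$ from a well-chosen \emph{finite-dimensional} variety $Z$, to prove that $p$ restricts to an isomorphism over a nonempty open subset $\Omega\subseteq U_v$, and to take for $\psi$ the inverse of this restriction followed by the second projection $Z\longto X_v^\GP$. Working with a finite-dimensional model is the essential simplification here: by Lemma~\ref{lem:rappelsRichardson} the group $U_v$ is a finite-dimensional algebraic group acting algebraically on the projective variety $X_v^\GP$, so the usual tools (properness, quasi-finiteness, generic flatness) are available with no ind-variety complications.

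Concretely, I would set
$$
Z=\{(h,x)\in U_v\times X_v^\GP\ :\ x\in X_v^{u_1}\ \text{ and }\ h\inv x\in X_v^{u_2}\},
$$
which, since the $U_v$-action is algebraic, is closed in $U_v\times X_v^\GP$ and hence a quasiprojective variety. Let $p\,:\,Z\longto U_v$ and $q\,:\,Z\longto X_v^\GP$ be the projections, so that $p\inv(h)=X_v^{u_1}\cap hX_v^{u_2}$ for every $h\in U_v$, and $p$ is proper because $X_v^\GP$ is projective. Let $\Omega_0\subseteq U_v$ be a nonempty open subset over which the conclusions of Lemma~\ref{lem:Kl} hold; over $\Omega_0$ each fibre of $p$ is finite and transverse, hence a finite set of reduced points, and by Lemma~\ref{lem:cardcohom} each such fibre has cardinality $n_{u_1u_2}^v=1$. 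Being proper with finite fibres, $p$ is finite over $\Omega_0$; by generic flatness it is moreover flat over some nonempty open $\Omega\subseteq\Omega_0$, where it is therefore finite and flat of rank $1$, hence an isomorphism $p\inv(\Omega)\longto\Omega$. (We use here that $U_v$, being a connected algebraic group, is irreducible, so $\Omega$ is indeed nonempty.)

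It then remains to put $\psi=q\circ\bigl(p|_{p\inv(\Omega)}\bigr)\inv\,:\,\Omega\longto X_v^\GP$; this is a regular map. For $h\in\Omega$ the fibre $p\inv(h)$ is the single point $(h,\psi(h))$, so $X_v^{u_1}\cap hX_v^{u_2}=\{\psi(h)\}$, and since $h\in\Omega_0$, Lemma~\ref{lem:Kl} identifies this one-point set with $\cX^{u_1}_\GP\cap h(\cX_\GP^{u_2}\cap\cX_v^\GP)$. In particular $\psi(h)\in\cX_v^\GP$, so $\psi$ is actually a morphism $\Omega\longto\cX_v^\GP$, and $\psi(h)\in\cX^{u_1}_\GP\cap h(\cX_\GP^{u_2}\cap\cX_v^\GP)$, as required. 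The only point needing care is the very first one, namely reducing everything to a genuine finite-dimensional scheme so that ``proper $+$ quasi-finite $\Rightarrow$ finite'' and ``finite flat of rank one $\Rightarrow$ isomorphism'' may be applied literally; once Lemma~\ref{lem:rappelsRichardson}(i)--(ii) is used to arrange this, the rest of the argument is routine.
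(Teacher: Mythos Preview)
Your proof is correct and takes a genuinely different route from the paper's. The paper defines the analogous incidence variety over the \emph{open} cell $\cX_v$ rather than the projective $X_v$, then proves it is irreducible by analysing the \emph{other} projection (to $X^{u_1}\cap\cX_v$) and showing its fibres are irreducible, using that stabilisers in the unipotent group $U_v$ are connected; once the incidence variety is irreducible with generic fibre a singleton, the projection to $U_v$ is birational over $\CC$ and a partial inverse yields $\psi$. You instead compactify to $X_v$ so that $p$ is proper, and thereby sidestep irreducibility entirely: proper $+$ quasi-finite gives finite, generic flatness together with the reducedness of the transverse fibres gives rank $1$, hence an isomorphism. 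Your route is more machinery-driven but avoids the hands-on irreducibility argument; the paper's is more elementary but needs the observation about unipotent stabilisers. One small remark: your deduction ``rank $1$'' from ``single reduced point'' implicitly uses that the scheme-theoretic fibre of $p$ coincides with the scheme-theoretic intersection $X_v^{u_1}\cap hX_v^{u_2}$, which is where transversality enters; alternatively, having established that $p$ is finite and bijective onto the smooth (hence normal) variety $\Omega_0$ in characteristic zero, you could conclude directly that $p$ is an isomorphism there and dispense with the flatness step.
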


\begin{proof}
  Consider 
$$
\Chi=\{(x,h)\in \cX_v\times U_v\,:\,x\in X^{u_1}\cap h  X_v^{u_2}\},
$$
and the two projections $p\,:\,\Chi\longto X^{u_1}\cap \cX_v$ and
$\pi\,:\,\Chi\longto U_v$. 

\bigskip
We first prove that $\Chi$ is irreducible. 

Fix $x_0\in X^{u_1}\cap \cX_v$ and set $\orb\,:\,U_v\longto \cX_v,
h\longto hx$. 
The stabilizer of $x$ in the unipotent group
$U_v$ is unipotent, and hence connected.
Hence the fibers of $\orb$ are irreducible. 
Since  $X^{u_2}\cap\cX_v$
is irreducible, so is $\orb\inv(X^{u_2}\cap\cX_v)$.

Moreover for any $h\in U_v$, $(x_0,h)\in p\inv(x_0)$ if and only if $x_0\in hX^{u_2}$   if and only if 
$h\inv\in \orb\inv(X^{u_2}\cap \cX_v)$.
Hence $p\inv(x_0)$ is nonempty and irreducible. 

Since $X^{u_1}\cap \cX_v$ is also irreducible, we deduce that $\Chi$ is. 

\bigskip
By Lemmas~\ref{lem:Kl} and \ref{lem:cardcohom}, the general fiber of $\pi$ is a singleton. Since
we are working over the complex numbers, this implies that $\pi$ is
birational. Then, a partial converse map $\psi$ of $\pi$ satisfies the
lemma (at least, its restriction to an open subset of $h\in U_v$
satisfying Lemma~\ref{lem:Kl}).
\end{proof}

\section{Inequalities for \texorpdfstring{$\Gamma(A)$}{the tensor cone}}

In this section, we reprove \cite[Theorem~1.1]{BrownKumar} by similar methods in
the goal to introduce some useful notation. 
Fix once for all, a family
$\varpi_{\alpha_0^\vee},\dots,\varpi_{\alpha_l^\vee}$ in $\lh_\QQ$ such that
$$
\langle \varpi_{\alpha_i^\vee},\alpha_j\rangle=\delta_i^j,
$$
for any $i,j\in\{0,\dots,l\}$.
Similarly fix fundamental weights $\varpi_{\alpha_0},\dots,\varpi_{\alpha_l}$.

Let $\tau\,:\,\CC^*\longto G$ be a morphism of ind-groups. 
Let $\Li$ be a $G$-linearized line bundle on $\dX$ and $x\in\dX$. 
Since $\dX$ is ind-projective and the action of $\CC^*$ on $\dX$ is
given by a morphism of ind-varieties, $\lim_{t\to 0}\tau(t)x$
exists (i.e. the morphism $\CC^*\longto\dX,\,t\longmapsto \tau(t)x$
extends to $\CC$). Set $z=\lim_{t\to 0}\tau(t)x$. 
The point $z$ is fixed by $\tau(\CC^*)$ and $\tau(\CC^*)$ acts linearly on the
fiber $\Li_z$. There exists $m\in\ZZ$ such that
$$
\forall t\in\CC^*\qquad\forall \tilde z\in\Li_z\qquad \tau(t)\tilde
z=t^m\tilde z.
$$
Set $\mu^\Li(x,\tau)=-m$. 

\begin{prop}(see \cite[Theorem~1.1]{BrownKumar})
  \label{prop:ineg}
Let $P$ be a standard parabolic subgroup of $G$. Let $\alpha_i$ be a
simple root that does not belong to $\Delta(P)$. 
Let $u_1,u_2$, and $v$
in $W^P$ such that $n_{u_1,u_2}^v\neq 0$ in $\Ho^*(G/P,\ZZ)$.
  
If  $(\lambda_1,\lambda_2,\mu)\in \Gamma(A)$ then
\begin{eqnarray}
  \label{eq:4}
 \langle\mu,v\varpi_{\alpha_i^\vee}\rangle\leq  \langle\lambda_1,u_1\varpi_{\alpha_i^\vee}\rangle+\langle\lambda_2,u_2\varpi_{\alpha_i^\vee}\rangle.
\end{eqnarray}
\end{prop}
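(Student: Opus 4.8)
The plan is to prove Proposition~\ref{prop:ineg} by the standard numerical-criterion argument, using the easy implication ``semistable $\Rightarrow$ numerically semistable'' of the Hilbert--Mumford theory that survives in the ind-variety setting. First I would observe that, by definition of $\Gamma(A)$ and Lemma~\ref{lem:BW}, there is an $N>0$ and a nonzero $G$-invariant section $\sigma$ of $\Li:=\Li_-(N\lambda_1)\otimes\Li_-(N\lambda_2)\otimes\Li(N\mu)$ on $\dX=(G/B^-)^2\times G/B$. Such a section is, in particular, a nonzero semi-invariant, so the set of points of $\dX$ where $\sigma$ does not vanish is a nonempty $G$-stable open subset; any point in it is a semistable point for the $G$-linearized bundle $\Li$. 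Since the inequality~\eqref{eq:4} only involves $N\lambda_i$, $N\mu$ and is homogeneous, it suffices to prove it after replacing $(\lambda_1,\lambda_2,\mu)$ by $(N\lambda_1,N\lambda_2,N\mu)$; so I may assume a nonzero invariant section on $\Li(\lambda_1),\Li(\lambda_2),\Li(\mu)$ themselves exists.

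Next I would construct the relevant one-parameter subgroup and test point. Let $P=P_i$ be the maximal parabolic attached to $\alpha_i$ and let $\tau$ be the cocharacter $\CC^*\to T\subset G$ with $\langle\tau,\alpha_j\rangle$ chosen so that $\tau$ is (a multiple of) the fundamental coweight $\varpi_{\alpha_i}$ up to the center — more precisely I would take $\tau=N'\varpi_{\alpha_i^\vee}$ viewed in $\lh_\ZZ$, rescaled to land in the cocharacter lattice. The point to test is $x=(u_1^{-1}\underline o^-,u_2^{-1}\underline o^-, v^{-1}\underline o)$ built from Weyl group representatives $u_1,u_2,v\in W^{P}$, but actually translated by a general $h$ coming from the Richardson-variety picture of Lemma~\ref{lem:intermorph}/Lemma~\ref{lem:Kl}: because $n_{u_1u_2}^v\neq 0$, the intersection $X_v^{u_1}\cap hX_v^{u_2}$ is nonempty (indeed has cardinality $n_{u_1u_2}^v$ by Lemma~\ref{lem:cardcohom}) for general $h\in U_v$, and a point $x$ in this intersection gives a point of $\dX$ whose flags are in the correct relative position for the three Schubert/Birkhoff varieties attached to $u_1,u_2,v$. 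Since $x$ lies in the orbit of a semistable point under a dense subgroup — here I would argue that the $G$-orbit of such an $x$ meets the semistable locus because $G\cdot(\underline o^-,\underline o)$ is dense in $G/B^-\times G/B$ (cf. the proof of Lemma~\ref{lem:BW}) and the nonvanishing locus of $\sigma$ is dense and $G$-stable — the point $x$ is itself semistable, hence $\mu^{\Li}(x,\tau)\geq 0$.

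Then the proof reduces to a weight computation: compute $\mu^{\Li}(x,\tau)$ explicitly. Since $\Li=\Li_-(\lambda_1)\otimes\Li_-(\lambda_2)\otimes\Li(\mu)$ is a product, $\mu^{\Li}(x,\tau)=\mu^{\Li_-(\lambda_1)}(x_1,\tau)+\mu^{\Li_-(\lambda_2)}(x_2,\tau)+\mu^{\Li(\mu)}(x_3,\tau)$, and each term is computed by letting $\tau(t)$ act, taking $\lim_{t\to0}$, and reading the torus weight on the fiber over the limit $T$-fixed point. For the $G/B$ factor, the limit of $v^{-1}\underline o$ under $\tau=\varpi_{\alpha_i^\vee}$ is the $T$-fixed point $w\underline o$ for the appropriate $w$, and the resulting number is $\langle\mu,v\varpi_{\alpha_i^\vee}\rangle$ up to sign; for the two $G/B^-$ factors, the analogous limits give $-\langle\lambda_k,u_k\varpi_{\alpha_i^\vee}\rangle$ (the sign flip coming from the definition of $\Li_-$ vs $\Li$, and from the fact that the Birkhoff variety $X^{u_k}$ is cut out by the opposite Borel so the attracting/repelling directions for $\tau$ are reversed). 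Assembling, $\mu^{\Li}(x,\tau)= -\langle\lambda_1,u_1\varpi_{\alpha_i^\vee}\rangle-\langle\lambda_2,u_2\varpi_{\alpha_i^\vee}\rangle+\langle\mu,v\varpi_{\alpha_i^\vee}\rangle\geq 0$, which is exactly~\eqref{eq:4} after multiplying by $-1$. The main obstacle I expect is the bookkeeping in this last step: getting all the signs right (because one bundle is on $G/B$ and two are on $G/B^-$, so the defining weights of $\Li(\mu)$ and $\Li_-(\lambda)$ carry opposite signs, and simultaneously the Schubert conditions for $v$ live in $G/B$ while the Birkhoff conditions for $u_1,u_2$ live in $G/B^-$), and making sure that the limit point under $\tau$ is really the $T$-fixed point pairing with $u_k$ resp.\ $v$, which is where the condition $u_k,v\in W^{P_i}$ (minimal coset representatives) is used so that $\langle u_k\varpi_{\alpha_i^\vee},\cdot\rangle$ and $\langle v\varpi_{\alpha_i^\vee},\cdot\rangle$ are the correct Kempf-type invariants; I would also need the general-position input from Lemma~\ref{lem:Kl} to know such an $x$ with all three incidences holding simultaneously actually exists, which is where the hypothesis $n_{u_1u_2}^v\neq 0$ genuinely enters.
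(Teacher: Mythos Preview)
Your strategy is the paper's: produce a $G$-invariant section $\sigma$ via Lemma~\ref{lem:BW}, find a test point $y\in C^+=Pu_1^{-1}\underline{o}^-\times Pu_2^{-1}\underline{o}^-\times Pv^{-1}\underline{o}$ with $\sigma(y)\neq 0$, and read off inequality~\eqref{eq:4} from $\mu^{\Li}(y,\tau)$. But the step where you argue that your test point is semistable does not go through as written.

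You try to show $\sigma(x)\neq 0$ by saying the $G$-orbit of $x$ meets the nonvanishing locus, invoking that $G\cdot(\underline{o}^-,\underline{o})$ is dense in $G/B^-\times G/B$. That density is a statement about a \emph{two}-factor space; it tells you nothing about $G\cdot x$ in the \emph{three}-factor space $\dX=(G/B^-)^2\times G/B$. For a specific $x$ (even one ``translated by a general $h$''), $G\cdot x$ need not be dense in $\dX$, so there is no reason it must meet the (merely dense open) set $\{\sigma\neq 0\}$. Relatedly, you blur the distinction between a point of the intersection $X_v^{u_1}\cap hX_v^{u_2}\subset G/P$ and a point of $\dX$; these live in different varieties and the passage between them is exactly what needs to be made precise.

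The paper runs the argument in the opposite direction, which closes the gap. It builds the incidence variety $\GPCp\subset G/P\times\dX$ with projections $p$ and $\eta$, and shows (Lemma~\ref{lem:fibreeta} together with Lemmas~\ref{lem:Kl}--\ref{lem:cardcohom}) that $\eta$ hits a nonempty open set $\Omega_1\subset\dX$. Now pick $\dx\in\Omega_1$ with $\sigma(\dx)\neq 0$ (possible since $\dX$ is irreducible), take any $(gP/P,\dx)\in\eta^{-1}(\dx)$, and set $y=g^{-1}\dx\in C^+$. By $G$-invariance $\sigma(y)\neq 0$, so $y$ is the desired semistable point in $C^+$. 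This is where the hypothesis $n_{u_1u_2}^v\neq 0$ is actually used: it guarantees the fiber of $\eta$ over a general $\dx$ is nonempty. Your proposal uses $n_{u_1u_2}^v\neq 0$ in the same place conceptually, but without this incidence-variety packaging you cannot conclude semistability.

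A secondary point: with the paper's sign convention $\mu^{\Li}(x,\tau)=-m$, the easy Hilbert--Mumford implication gives $\mu^{\Li}(y,\tau)\le 0$, not $\ge 0$. With your stated inequality $\mu^{\Li}\ge 0$ and your computed value, you would obtain~\eqref{eq:4} with the inequality reversed. You anticipated sign trouble; this is where it bites.
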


\begin{proof}
 Consider $C^+=Pu_1\inv \underline{o}^-\times Pu_2\inv \underline{o}^-\times Pv\inv
  \underline{o}$. As a locally closed subset of  $\dX$, it is an ind-variety. Set
$$
\GPCp:=\{(gP/P,x)\in G/P\times \dX\,:\, g\inv x\in  C^+\}.
$$
As a locally closed subset of  $G/P\times \dX$, it is an ind-variety.
Consider the maps
$$
\begin{array}{cccc}
  \eta\,:&\GPCp&\longto&\dX\\
&(gP/P,x)&\longmapsto&x
\end{array}
$$ 
and 
$$
\begin{array}{cccc}
  p\,:&\GPCp&\longto&G/P\\
&(gP/P,x)&\longmapsto&gP/P.
\end{array}
$$

\begin{lemma}
\label{lem:fibreeta}
Let $g_1,g_2$, and $g_3$ in $G$.
Then 
$$
p(\eta\inv
(g_1\underline{o}^-, g_2\underline{o}^-,g_3\underline{o}))=
g_1\cX^{u_1}_\GP\cap g_2 \cX^{u_2}_\GP\cap g_3 \cX_v^\GP.   
$$
\end{lemma}

\begin{proof}
The point $(gP/P,(g_1\underline{o}^-, g_2\underline{o}^-,g_3\underline{o})
)$ belongs to the fiber of $\eta$ if and only if
$$
\begin{array}{ll}
  &(g\inv g_1\underline{o}^-,g\inv  g_2\underline{o}^-,g\inv g_3\underline{o})\mbox{ belongs
    to}\\
& Pu_1\inv \underline{o}^-\times Pu_2\inv
\underline{o}^-\times Pv\inv \underline{o}\\[3pt]
\iff&(g\inv g_1,g\inv  g_2,g\inv g_3)\in Pu_1\inv B^-\times Pu_2\inv
B^-\times Pv\inv B\\[3pt]
\iff&g_1\inv g\in B^-u_1P,\, g_2\inv g\in
      B^-u_2P\mbox{ and }g_3\inv g\in BvP\\[3pt]
\iff&gP/P\in g_1B^-u_1P/P\cap
      g_2B^-u_2P/P\cap g_3BvP/P.
\end{array}
$$
\end{proof}

\bigskip
 Consider the morphism of ind-varieties
$r\,:\, U\longto U_v\subset\Aut(X_v^\GP)$ given by the action.  
Set $\mathring{G/B^-}=U\underline{o}^-$. Since $U\longto G/B,\,u\longmapsto
u\underline{o}^-$ is an open immersion, we have a morphism
$p^+\,:\,\mathring{G/B^-}\longto U$ such that
$p^+(x)\underline{o}^-=x$. Similarly, define  $\mathring{G/B}=U^-\underline{o}$ and
$p^-\,:\,\mathring{G/B}\longto U^-$.

Let $\Omega\subset U_v$ be a nonempty open subset of $h$'s satisfying
Lemma~\ref{lem:Kl}. 
Set
$$
\Omega_1=
\left\{
\begin{array}{ll}
 (x_1,x_2,g_3\underline{o})\in \dX\,: &g_3\inv x_1\in\mathring{G/B^-},
g_3\inv x_2\in\mathring{G/B^-}\mbox{ and }\\
&r(p^+( g_3\inv x_1)\inv p^+(g_3\inv x_2))\in\Omega
\end{array}
\right\}.$$
It is open and nonempty in $\dX$. Moreover, for
$(x_1,x_2,g_3\underline{o})\in\Omega_1$, Lemma~\ref{lem:fibreeta}
implies that 
\begin{equation}
  \label{eq:413}
  p(\eta\inv(x_1,x_2,g_3\underline{o}))=g_3 p^+( g_3\inv x_1)[\cX^{u_1}\cap
h(\cX^{u_2}\cap \cX_v)],
\end{equation}
where
$h=r(p^+( g_3\inv x_1)\inv p^+(g_3\inv x_2))$.
By Lemma~\ref{lem:cardcohom} this fiber is nonempty. 

\bigskip
Let $\Li$ be the line bundle on $\dX$ considered in Lemma~\ref{lem:BW}.
Since $(\lambda_1,\lambda_2,\mu)$ belongs to  $\Gamma(A)$, there exist $N>1$ and
$\sigma\in \Ho^0(\dX,\Li^{\otimes N})^G$ nonzero. 
Since $\dX$ is irreducible, $\Omega_1$ intersects the nonzero locus of
$\sigma$: there exists $\dx\in\dX$ such that $\sigma(\dx)\neq
0$. Since the fiber~\eqref{eq:413} is not empty, there exists $g\in G$
such that $(gP/P,\dx)$ belongs to $\GPCp$. 
Set $y=g\inv \dx$. 
Since $\sigma$ is $G$-invariant, $\sigma(y)\neq 0$. 

Let $\tau$ be a one parameter subgroup of $T$ belonging to $\oplus_{\alpha_j\not\in\Delta(P)}\ZZ_{>0}\varpi_{\alpha_j^\vee}$.
Consider 
$$
\theta\,:\,\AA^1\longto X,\, t\in \CC^*\longmapsto
\tau(t)y,\,0\longmapsto\lim_{t\to 0}\tau(t)y.
$$ 
Then $\theta^*(\sigma)$ is a $\CC^*$-invariant section of
$\theta^*(\Li)$ on $\AA^1$. It follows that $\mu^\Li(y,\tau)\leq 0$. 

Let $L$ denote the standard Levi subgroup of $P$. Set
$C=Lu_1\inv \underline{o}^-\times Lu_2\inv \underline{o}^-\times Lv\inv \underline{o}$. 
Since $y\in C^+$, $\theta(0)$ belongs to $C$.  
Then, a direct computation shows that 
\begin{eqnarray}
  \label{eq:6}
\mu^\Li(y,\tau)=-\langle\lambda_1,u_1\tau\rangle-\langle\lambda_2,u_2\tau\rangle+
\langle\mu,v\tau\rangle\leq 0.
\end{eqnarray}

Since inequality~\eqref{eq:6} is fulfilled for any sufficiently large $\tau\in
\oplus_{\alpha_j\not\in\Delta(P)}\ZZ_{>0}\varpi_{\alpha_j^\vee}$, the
inequality of the theorem follows by continuity.
\end{proof}

\begin{NB}
  We use notation of the proposition. Since
  $n_{u_1\,u_2}^v(G/P)=n_{u_1\,u_2}^v(G/B)$, Proposition~\ref{prop:ineg}
implies that inequality~\ref{eq:4} is fulfilled for any simple root
$\alpha_i$, even in $\Delta(P)$. 
\end{NB}

\section{Multiplicities on the boundary}
\label{sec:boundary}

In this section, we are interested in the multiplicity
$c_{\lambda_1\,\lambda_2}^\mu$ for some triple
$(\lambda_1,\lambda_2,\mu)$ of dominant weights such that
inequality~\eqref{eq:4} is an equality. If moreover
$n_{u_1\,u_2}^v=1$, we prove that $c_{\lambda_1\,\lambda_2}^\mu$  is
a multiplicity for the tensor product decomposition
for some strict Levi subgroup of $G$.

\begin{theo}\label{th:egalite}
 We use notation of Proposition~\ref{prop:ineg} and assume, in addition, that
 $n_{u_1,u_2}^v=1$. Let $L$ be the standard Levi subgroup of $P$.
Let $\tau\in \oplus_{\alpha_j\not\in\Delta(P)}\ZZ_{>0}\varpi_{\alpha_j^\vee}$. 

Let $(\lambda_1,\lambda_2,\mu)\in (P_+)^3$ such that
\begin{equation}
  \label{eq:egalite}
\langle\lambda_1,u_1\tau\rangle+\langle\lambda_2,u_2\tau\rangle=
\langle\mu,v\tau\rangle.
\end{equation}
Consider the line bundle
$$
\Li=\Li_-(\lambda_1)\otimes \Li_-(\lambda_2)\otimes \Li(\mu)
$$ on $\dX$, and 
$$
C=Lu_1\inv \underline{o}^-\times Lu_2\inv
\underline{o}^-\times Lv\inv \underline{o}$$ be the closed subset of $\dX$.

Then, the restriction map induces an isomorphism
\begin{eqnarray}
  \label{eq:5}
  \Ho^0(\dX,\Li)^G\longto  \Ho^0(C,\Li_{|C})^L.
\end{eqnarray}
\end{theo}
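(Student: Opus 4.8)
The plan is to exhibit the map in \eqref{eq:5} as a composition of restrictions along a chain of subvarieties of $\dX$, and to show injectivity and surjectivity separately, using the geometry of the incidence variety $\GPCp$ from Proposition~\ref{prop:ineg} together with the $n_{u_1u_2}^v=1$ hypothesis via Lemma~\ref{lem:intermorph}.

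\medskip
\textbf{Injectivity.} First I would note that $C$ meets the dense $G$-orbit closure structure of $\dX$ in the following sense: the subvariety $C^+ = Pu_1\inv\underline o^-\times Pu_2\inv\underline o^-\times Pv\inv\underline o$ retracts onto $C$ under the $\CC^*$-action by a generic $\tau\in\oplus_{\alpha_j\notin\Delta(P)}\ZZ_{>0}\varpi_{\alpha_j^\vee}$, and $G\cdot C^+$ is open dense in $\dX$ because $\eta\colon\GPCp\to\dX$ is dominant (its generic fiber is nonempty by \eqref{eq:413} and Lemma~\ref{lem:cardcohom}). Hence a $G$-invariant section of $\Li$ vanishing on $C$ vanishes on $G\cdot C^+$, which is dense, so it vanishes everywhere; this gives injectivity of \eqref{eq:5}. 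One subtlety to address: restriction to $C$ lands in $\Ho^0(C,\Li_{|C})$, and I must check it lands in the $L$-invariants — this is immediate since $C$ is $L$-stable and $\Li$ is $G$-, hence $L$-, linearized.

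\medskip
\textbf{Surjectivity.} This is the heart of the matter, and where the hypothesis $n_{u_1u_2}^v=1$ and equation~\eqref{eq:egalite} are essential. Given $\sigma_C\in\Ho^0(C,\Li_{|C})^L$ I want to extend it to a $G$-invariant section on $\dX$. The strategy: extend first to a $P$-invariant section on $C^+$, using that the $\CC^*$-retraction $C^+\to C$ (by $\tau$) together with the weight-zero condition coming from \eqref{eq:egalite} — precisely the statement that $\mu^\Li(y,\tau)=0$ on $C$, cf.\ \eqref{eq:6} — means $\Li$ is trivial along the retraction fibers in a $P$-equivariant way, so sections pull back; the unipotent radical of $P$ acts trivially on the relevant data by \eqref{eq:6} being an equality. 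Then I would extend from $C^+$ to the open dense $G$-saturation $G\cdot C^+\subset\dX$ by $G$-equivariance, checking this is well-defined (two elements $g,g'$ with $g\inv x, g'\inv x\in C^+$ differ by an element of $P$, using the $P$-invariance just established, and here is exactly where Lemma~\ref{lem:intermorph} — the fiber of $\eta$ over a generic point being a single point $\psi(h)$ in $\cX^{u_1}\cap h(\cX^{u_2}\cap\cX_v)$ — guarantees the ambiguity is controlled and the resulting section on $G\cdot C^+$ is a morphism). Finally extend across the boundary $\dX\setminus G\cdot C^+$: since $\dX$ is ind-projective and ind-irreducible and the section is bounded (it extends the given $\sigma_C$, and $\Ho^0(\dX,\Li)^G$ is finite-dimensional by Lemma~\ref{lem:BW}), it extends to a global section by normality/properness arguments applied filtration-piece by filtration-piece.

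\medskip
\textbf{Main obstacle.} I expect the genuinely delicate step to be the well-definedness of the $G$-equivariant extension on $G\cdot C^+$ and its extension across the boundary — i.e.\ verifying that the section produced is actually a \emph{morphism} of ind-varieties and not merely a set-theoretic section. The $n_{u_1u_2}^v=1$ assumption enters precisely here through Lemma~\ref{lem:intermorph}: the regular map $\psi\colon\Omega\to\cX_v^\GP$ provides, locally on $\dX$, an algebraic choice of the point in the triple intersection, which is what makes the extended section regular. Without $n_{u_1u_2}^v=1$ one would only get a multi-valued correspondence. I would therefore organize the proof so that the open set $\Omega_1\subset\dX$ from the proof of Proposition~\ref{prop:ineg}, on which $\psi$ trivializes everything, is the locus where regularity of the extension is checked directly, and then invoke ind-properness and finite-dimensionality of $\Ho^0(\dX,\Li)^G$ to pass to all of $\dX$; comparing dimensions at the end (both sides have the same dimension, namely the relevant Littlewood–Richardson-type multiplicity for $L$) then also re-confirms the isomorphism.
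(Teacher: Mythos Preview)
Your skeleton is close to the paper's, which also factors the map \eqref{eq:5} through the chain $\dX \leadsto \GPCpbar \leadsto \bar C^+ \leadsto C^+ \leadsto C$ (Lemmas~\ref{lem:rest1}--\ref{lem:rest4}), and you correctly identify that $n_{u_1u_2}^v=1$ enters via the regular inverse $\psi$ of Lemma~\ref{lem:intermorph} (the paper packages this as Lemma~\ref{lem:inveta}, giving a morphism $\zeta$ splitting $\bar\eta$ over an open set, which makes $\bar\eta^*$ an isomorphism on sections by Zariski's main theorem).

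However, you have misidentified the main obstacle, and your treatment of the genuine one is a gap. The well-definedness and regularity of the $G$-equivariant extension on the open saturation is \emph{not} the hard step --- the paper dispatches it quickly (Lemmas~\ref{lem:rest1} and \ref{lem:rest2}). The real work is your throwaway line ``extend across the boundary \ldots\ by normality/properness arguments''. This is not enough: the complement of $G\cdot C^+$ (equivalently, $\bar C^+ \setminus C^+$) contains codimension-one components --- translates of Schubert divisors --- and a section on a dense open set of a normal variety extends if and only if it has no poles along each such divisor. Neither ind-projectivity nor finite-dimensionality of $\Ho^0(\dX,\Li)^G$ gives you this; the latter is information about global sections, not about whether your locally-defined candidate is one of them, so invoking it here is circular. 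The paper's Lemma~\ref{lem:rest3} is by far the longest part of the argument: it enumerates the boundary divisors of $\bar C^+$ (types (a), (a$'$), (b)), builds for each a $T$-stable affine neighborhood, and applies a GIT lemma (Lemma~\ref{lem:GIT}) after verifying four delicate hypotheses about $\tau$-flows, to conclude that $\tau$-invariant sections acquire no pole. This is where the equality~\eqref{eq:egalite} (i.e.\ $\mu^\Li(C,\tau)=0$) is truly consumed: it forces the relevant $\CC^*$-weight on the fiber at each boundary fixed point to have the correct sign. Your proposal does not have an analogue of this step.

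A second, smaller point: comparing dimensions ``at the end'' is not available to you --- the equality of the two multiplicities is Corollary~\ref{cor:G2L}, which is deduced \emph{from} Theorem~\ref{th:egalite}, not an independent input.
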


\bigskip
Before proving the theorem, we state a consequence.
Set $P_+(L)=\{\bar\lambda\in
X(T)\,:\,\langle\bar\lambda,\alpha_i^\vee\rangle\geq 0\quad\forall
\alpha_i\in\Delta(L)\}$. 
For any $\bar\lambda\in P_+(L)$, we have an irreducible $L$-module
$L(\bar\lambda)$ of highest weight $\bar\lambda$. 
Let ${\bar c}_{\bar\lambda_1\,\bar\lambda_2}^{\bar\mu}$ denote the
multiplicities of the tensor product decomposition for the group $L$.

\begin{coro}
  \label{cor:G2L}
With the notation and assumptions of the theorem, set
$\bar\lambda_1=u_1\inv\lambda_1$,  $\bar\lambda_2=u_2\inv\lambda_2$
and $\bar\mu=v\inv\mu$.
We have
$$
{c}_{\lambda_1\,\lambda_2}^{\mu}={\bar c}_{\bar\lambda_1\,\bar\lambda_2}^{\bar\mu}.
$$
\end{coro}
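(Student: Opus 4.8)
The plan is to deduce the corollary directly from Theorem~\ref{th:egalite} by identifying the target space $\Ho^0(C,\Li_{|C})^L$ as the space computing the Littlewood--Richardson-type coefficient ${\bar c}_{\bar\lambda_1\,\bar\lambda_2}^{\bar\mu}$ for $L$. First I would observe that equation~\eqref{eq:egalite} is exactly the hypothesis of Theorem~\ref{th:egalite}, so that theorem gives an isomorphism $\Ho^0(\dX,\Li)^G\simeq\Ho^0(C,\Li_{|C})^L$; by Lemma~\ref{lem:BW} the left-hand side has dimension $c_{\lambda_1\,\lambda_2}^\mu$, so it remains only to show that the right-hand side has dimension ${\bar c}_{\bar\lambda_1\,\bar\lambda_2}^{\bar\mu}$.

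Next I would analyze $C=Lu_1\inv\underline{o}^-\times Lu_2\inv\underline{o}^-\times Lv\inv\underline{o}$ as an $L$-variety. Since $L$ is a (finite-type) reductive group, $Lu_1\inv\underline{o}^-$ is the $L$-orbit of $u_1\inv\underline{o}^-$ in $G/B^-$; it is an affine bundle over the flag variety $L/(B^-\cap L)$ of $L$, and the restriction of a line bundle on $G/B^-$ to it is pulled back from $L/(B^-\cap L)$. Concretely, $Lu_1\inv\underline{o}^-\cong L/(B^-\cap L)\times(\text{affine space})$, with $L$ acting trivially on the affine factor, so that taking sections over $Lu_1\inv\underline{o}^-$ recovers sections of the corresponding line bundle on $L/(B^-\cap L)$; and likewise for the other two factors. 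The point is that $C$ is $L$-equivariantly an affine bundle over $L/(B^-\cap L)\times L/(B^-\cap L)\times L/(B\cap L)$, and $\Li_{|C}$ is pulled back from the base. One then computes the weights of the $B^-\cap L$- (resp.\ $B\cap L$-) action on the fibers over the base points, which are $u_1\inv\lambda_1=\bar\lambda_1$, $u_2\inv\lambda_2=\bar\lambda_2$ and $v\inv\mu=\bar\mu$ respectively (these are dominant for $L$ precisely because $u_i,v\in W^P$). Applying the finite-dimensional Borel--Weil theorem and the argument of Lemma~\ref{lem:BW} in the reductive group $L$ then identifies $\Ho^0(C,\Li_{|C})^L$ with $\Hom_L(L(\bar\mu),L(\bar\lambda_1)\otimes L(\bar\lambda_2))$, which has dimension ${\bar c}_{\bar\lambda_1\,\bar\lambda_2}^{\bar\mu}$.

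The main obstacle I anticipate is the bookkeeping in the previous step: one must check that $u_i\inv\lambda_i$ and $v\inv\mu$ are genuinely $P_+(L)$-dominant, that the affine-bundle structure of $Lw\inv\underline{o}^\pm$ interacts correctly with the $G$-linearization of $\Li_{\pm}$ (so that no extra character intervenes when restricting and taking invariants), and that the chain of isomorphisms in Lemma~\ref{lem:BW} goes through verbatim over the base $L/(B^-\cap L)\times L/(B^-\cap L)\times L/(B\cap L)$, which is an honest projective variety and so poses no ind-variety subtleties. Modulo these routine verifications, combining the dimension count ${c}_{\lambda_1\,\lambda_2}^{\mu}=\dim\Ho^0(\dX,\Li)^G=\dim\Ho^0(C,\Li_{|C})^L={\bar c}_{\bar\lambda_1\,\bar\lambda_2}^{\bar\mu}$ finishes the proof.
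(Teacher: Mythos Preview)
Your strategy is exactly the paper's: combine Theorem~\ref{th:egalite} with Lemma~\ref{lem:BW} on the $G$ side, and identify $(C,\Li_{|C})$ with the analogous data $(\dX_L,\Li_-(\bar\lambda_1)\otimes\Li_-(\bar\lambda_2)\otimes\Li(\bar\mu))$ for $L$ so that Lemma~\ref{lem:BW} applied to $L$ gives the other multiplicity. Two small corrections, though. First, your affine-bundle description is an unnecessary detour and in fact slightly off: because $u_i\in W^P$ one has $u_i^{-1}B^-u_i\cap L=B^-\cap L$ (and $v^{-1}Bv\cap L=B\cap L$), so the stabilizer in $L$ of $u_i^{-1}\underline{o}^-$ is exactly $B^-\cap L$ and the orbit $Lu_i^{-1}\underline{o}^-$ \emph{is} $L/(B^-\cap L)$ --- there is no nontrivial affine factor, and $C\cong\dX_L$ on the nose. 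Second, in the generality of Theorem~\ref{th:egalite} the Levi $L$ need not be of finite type, so you should invoke Lemma~\ref{lem:BW} for $L$ (which is stated for any symmetrizable Kac--Moody group) rather than the classical Borel--Weil theorem.
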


\begin{proof}
Since $u_i\in W^P$, $u_i\inv B^-u_i \cap L=B^-\cap L$. 
Similarly, $v\inv Bv \cap L=B\cap L$. Then, the action of $L^3$ on $C$ allows
to identify $C$ with $\dX_L:=(L/(B^-\cap L))^2\times L /(B\cap L)$. 
But $T$ acts with weight $u_1\inv\lambda_1$ on the fiber in
$\Li_-(\lambda_1)$ over $u_1\inv \underline{o}^-$. We deduce that $\Li_{|C}$
identifies with $\Li_-(\bar\lambda_1)\otimes
\Li_-(\bar\lambda_2)\otimes\Li(\bar\mu)$ on $\dX_L$. 
Now the corollary is obtained by applying Theorem~\ref{th:egalite} and Lemma~\ref{lem:BW}.
\end{proof}

Note that Theorem~\ref{th:reductionintro} in the introduction is a
particular case of Corollary~\ref{cor:G2L}.

\begin{proof}
[Proof of Theorem~\ref{th:egalite} up to the 5 lemmas below]
We use notation of Proposition~\ref{prop:ineg}. Let $\bar C^+$ be
  the closure of $C^+$ in $\dX$. Set
$$
\GPCpbar:=\{(gP/P,\dx)\in G/P\times \dX\,:\, g\inv \dx\in \bar C^+\}.
$$
As a closed subset of  $G/P\times \dX$, it is an ind-variety.
Consider the maps
$$
\begin{array}{cccc}
  \bar \eta\,:&\GPCpbar&\longto&\dX\\
&(gP/P,\dx)&\longmapsto&\dx
\end{array}
$$ 
and 
$$
\begin{array}{cccc}
\bar  p\,:&\GPCpbar&\longto&G/P\\
&(gP/P,\dx)&\longmapsto&gP/P.
\end{array}
$$ 

Consider the following commutative diagram

\begin{center}
\begin{tikzpicture}
  \matrix (m) [matrix of math nodes,row sep=2em,column sep=4em,minimum width=2em]
  {
     \Ho^0(\dX,\Li)^G&&  \Ho^0(C,\Li_{|C})^L \\
\Ho^0(\GPCpbar,\bar\eta^*(\Li))^G &  \Ho^0(\bar C^+,\Li_{|\bar C^+})^P
     & \Ho^0(C^+,\Li_{|C^+})^P\\};
  \path[-stealth]
    (m-1-1) edge  (m-1-3)
    
    (m-1-1) edge node [right]{$\bar\eta^*$} node [left] {\ref{lem:rest1}}(m-2-1) 
    (m-2-1)   edge node [below]{\rm rest.} node [above] {\ref{lem:rest2}}(m-2-2) 
    (m-2-2) edge node [below]{\rm rest.} node [above] {\ref{lem:rest3}} (m-2-3)  
 (m-2-3)   edge node [left]{\rm rest.} node [right] {\ref{lem:rest4}} (m-1-3);
\end{tikzpicture}  
\end{center}

It remains to prove that the top horizontal edge is an
isomorphism.
But, Lemmas~\ref{lem:rest1} to \ref{lem:rest4} below show that the four
other morphisms are isomorphisms.
\end{proof}

Before proving the four  mentioned lemmas, we construct a
partial converse map to $\bar\eta$.

\begin{lemma}
  \label{lem:inveta}
There exists a nonempty open subset $\Omega_1$ of $\dX$ such that the
restriction of $\bar\eta$ to $\bar\eta\inv(\Omega_1)$ is a bijection
onto $\Omega_1$. Moreover, the converse map $\zeta$ is a morphism of
ind-varieties, mapping $\Omega_1$ on
$\GPCp$.
\end{lemma}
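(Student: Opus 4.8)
\textbf{Proof plan for Lemma~\ref{lem:inveta}.}
The plan is to build the open set $\Omega_1$ as the locus where the fiber of $\bar\eta$ is a single reduced point lying in the ``generic'' stratum, and then to produce the converse morphism $\zeta$ from a birationality statement analogous to the one already used in Lemma~\ref{lem:intermorph}. First I would recall from the proof of Proposition~\ref{prop:ineg} the open set $\Omega_1\subset\dX$ defined there (the locus of triples $(x_1,x_2,g_3\underline o)$ for which $g_3\inv x_1,g_3\inv x_2\in\mathring{G/B^-}$ and the resulting element $h=r(p^+(g_3\inv x_1)\inv p^+(g_3\inv x_2))$ lies in the open set $\Omega$ of Lemma~\ref{lem:Kl}); I will work inside this set, possibly shrinking it further. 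On this set formula~\eqref{eq:413} identifies $\bar p(\bar\eta\inv(x_1,x_2,g_3\underline o))$ with a translate of $\cX^{u_1}\cap h(\cX^{u_2}\cap\cX_v)$, and since $n_{u_1,u_2}^v=1$, Lemmas~\ref{lem:Kl} and \ref{lem:cardcohom} tell us this set is a single point. The key subtlety is that the fiber of $\bar\eta$ is a priori a fiber over the \emph{closed} ind-variety $\GPCpbar$, not over $\GPCp$; so I must also check that for $h\in\Omega$ the intersection does not meet the boundary $\bar C^+\setminus C^+$. This is exactly the content of the ``$\partial X^{u}$'' part of the proof of Lemma~\ref{lem:Kl}: for general $h\in U_v$ the intersection avoids $\partial X^{u_1}\cup\partial X^{u_2}$ as well as $\partial\cX_v$, so after shrinking $\Omega$ I may assume $\bar\eta\inv(\Omega_1)\subset\GPCp$ and the fiber consists of the single point of $\cX^{u_1}_\GP\cap h(\cX^{u_2}_\GP\cap\cX_v^\GP)$.

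Next I would assemble these pointwise fibers into a morphism. Exactly as in the proof of Lemma~\ref{lem:intermorph}, Lemma~\ref{lem:intermorph} itself gives a nonempty open $\Omega\subset U_v$ and a regular map $\psi:\Omega\longto\cX_v^\GP$ with $X_v^{u_1}\cap hX_v^{u_2}=\{\psi(h)\}$ and $\psi(h)\in\cX^{u_1}_\GP\cap h(\cX^{u_2}_\GP\cap\cX_v^\GP)$. Pulling $\psi$ back along the morphism $\Omega_1\longto\Omega$, $(x_1,x_2,g_3\underline o)\longmapsto r(p^+(g_3\inv x_1)\inv p^+(g_3\inv x_2))$ and conjugating by $g_3p^+(g_3\inv x_1)$ as in~\eqref{eq:413}, I obtain a regular map $\Omega_1\longto G/P$ sending each point $\dx\in\Omega_1$ to the unique $gP/P$ with $(gP/P,\dx)\in\GPCp$. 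This is the desired map $\zeta$: it is a morphism of ind-varieties (it is built from the ind-morphisms $p^+$, $r$, $\psi$ and the action map), and $\bar\eta\circ\zeta=\Id_{\Omega_1}$ while $\zeta\circ\bar\eta=\Id$ on $\bar\eta\inv(\Omega_1)$ because the fiber is a single point. Finally the image of $\zeta$ lands in $\GPCp$ by the boundary-avoidance established above.

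I expect the main obstacle to be the bookkeeping needed to keep everything genuinely a morphism of ind-varieties, rather than a set-theoretic map: one must check that the graph $\{(\zeta(\dx),\dx)\}$ agrees with $\GPCp\cap\bar\eta\inv(\Omega_1)$ as an ind-subvariety and that $\bar\eta$ restricted there is an isomorphism (not just a bijection) onto $\Omega_1$. This should follow because on $\Omega_1$ the intersection is transverse (Lemma~\ref{lem:Kl}), so the fiber is a reduced point and $\bar\eta$ is étale of degree one there; hence it is an open immersion onto its image $\Omega_1$, and its inverse is a morphism. A secondary technical point is to verify that after all the shrinkings $\Omega_1$ is still nonempty and open in $\dX$, which is clear since each condition imposed (lying in $\mathring{G/B^-}$, $h\in\Omega$, avoiding the boundary strata) is open and nonempty, and $\dX$ is ind-irreducible by the Examples in Section~\ref{sec:indirred}.
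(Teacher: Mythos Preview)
Your approach is essentially the same as the paper's, and the overall strategy is correct. There is, however, one point you gloss over that the paper handles explicitly: in your formula for $\zeta$ you write ``conjugating by $g_3\,p^+(g_3\inv x_1)$'', but $g_3$ is only a lift of $x_3=g_3\underline{o}$ and is determined only up to right multiplication by $B$. Hence $(x_1,x_2,x_3)\mapsto g_3$ is not a morphism, and neither is $(x_1,x_2,x_3)\mapsto p^+(g_3\inv x_1)$; your composite is therefore not obviously regular. The paper fixes this by imposing the further open condition $x_3\in\mathring{G/B}=U^-\underline{o}$ and replacing $g_3$ by the canonical lift $p^-(x_3)\in U^-$. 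With that choice all the ingredients $p^-$, $p^+$, $r$, $\psi$ are genuine morphisms of ind-varieties, and $\zeta$ is manifestly regular.

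Your closing paragraph about transversality and ``\'etale of degree one'' is unnecessary and potentially delicate in the ind-variety setting. Once $\zeta$ is constructed as a morphism with $\bar\eta\circ\zeta=\Id_{\Omega_1}$, and the fiber of $\bar\eta$ over each point of $\Omega_1$ is shown set-theoretically to be a singleton (which follows from the closed-cell analogue of Lemma~\ref{lem:fibreeta} together with Lemma~\ref{lem:intermorph}), the bijection with regular inverse is established; that is all the lemma asserts.
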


\begin{proof}
Recall that $r\,:\,U\longto U_v\subset\Aut(X_v^\GP)^\circ$ denote the
action. 
By Lemma~\ref{lem:intermorph}, there exist a nonempty open subset
$\Omega\subset U_v$ and a morphism $\psi\,:\,\Omega\longto \cX_v$ such
that 
$$
\{\psi(h)\}=X^{u_1}\cap h (X^{u_2}\cap X_v).
$$

Consider
$$
  \Omega_2=
\left\{
  \begin{array}{l@{\,}l}
(x_1,x_2,g_3\underline{o})\in\dX\,: & g_3\inv x_1\in
\mathring{G/B^-},\,g_3\inv x_2\in \mathring{G/B^-}
\mbox{, and }\\
&g_3\underline{o}\in \mathring{G/B}
\end{array}
\right\}.
$$
Let $(x_1,x_2,x_3)\in\Omega_2$. Let $g_i\in G$ such that
$g_1\underline{o}^-=x_1$,
$g_2\underline{o}^-=x_2$ and $g_3\underline{o}=x_3$.
Observe that
$$
\begin{array}{r@{\,}l}
 &g_1X^{u_1}_\GP\cap g_2X^{u_2}_\GP\cap g_3X_v^\GP\\
=&g_1X^{u_1}_\GP\cap g_2X^{u_2}_\GP\cap
                                        p^-(x_3)X_v^\GP\\
=&p^-(x_3)\bigg(
(p^-(x_3)\inv g_1)X^{u_1}_\GP\cap (p^-(x_3)\inv g_2)X^{u_2}_\GP\cap X_v^\GP
\bigg)\\
=&p^-(x_3)\bigg(h_1X^{u_1}_\GP\cap h_2X^{u_2}_\GP\cap X_v^\GP
\bigg)\\
=&(p^-(x_3)h_1).\left[X^{u_1}_\GP\cap h X^{u_2}_\GP\cap X_v^\GP\right],
\end{array}
$$
where
$$
\begin{array}{l}
h_1=p^+(p^-(x_3)\inv x_1)\\
h_2=p^+(p^-(x_3)\inv x_2)\\
  h=h_1\inv h_2.
\end{array}
$$
Since $h\in U$,
$$
g_1X^{u_1}_\GP\cap g_2X^{u_2}_\GP\cap g_3X_v^\GP =(p^-(x_3)h_1).
\left[X^{u_1}_\GP\cap r(h) (X^{u_2}_\GP\cap X_v^\GP)\right].
$$
Let $\Omega_1$ be the set of $(x_1,x_2,x_3)\in\Omega_2$ such that $r(h)\in\Omega$.
It is a nonempty open subset of $\dX$.
Moreover, by Lemma~\ref{lem:intermorph}, for $(x_1,x_2,x_3)\in\Omega_1$, 
\begin{equation}
  \label{eq:210}
g_1X^{u_1}_\GP\cap g_2X^{u_2}_\GP\cap g_3X_v^\GP =(p^-(x_3)h_1).\{\psi\circ r(h)\},
\end{equation}
and 
$$
(p^-(x_3)h_1).\psi\circ r(h)\in g_1\cX^{u_1}_\GP\cap g_2\cX^{u_2}_\GP\cap g_3\cX_v^\GP.
$$
Then the formula
$$
(x_1,x_2,x_3)\in\Omega_1\longmapsto
((p^-(x_3)h_1).\psi\circ r(h),(x_1,x_2,x_3))
$$
defines a morphism $\zeta$ from $\Omega_1$ to $\GPCp$ such that
$\eta\circ\zeta$ is the identity of $\Omega_1$.

Lemma~\ref{lem:fibreeta} with $\bar\eta$ in place of $\eta$ and
formula~\eqref{eq:210} show that the fiber of $\bar\eta$ over any point
of $\Omega_1$ is a singleton.   
\end{proof}

\bigskip
We now go from $\dX$ to $\GPCpbar$. 

\begin{lemma}
  \label{lem:rest1}
The linear map
$$
\bar\eta^*\,:\,\Ho^0(\dX,\Li)\longto \Ho^0(\GPCpbar,\bar\eta^*(\Li))
$$
is a $G$-equivariant isomorphism.
\end{lemma}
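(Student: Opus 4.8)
The statement asserts that $\bar\eta^*$ is a $G$-equivariant isomorphism on global sections. The idea is that $\bar\eta\,:\,\GPCpbar\longto\dX$ is a fibration whose generic fiber is a Schubert variety, hence (by Kumar's vanishing/connectedness results for Schubert varieties) the direct image of $\co$ is $\co$, and so pushing forward a line bundle pulled back from the base does not change its sections. Since everything in sight is an ind-variety, the argument has to be organized through the defining filtrations.

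First I would note that $\bar\eta$ is $G$-equivariant for the natural $G$-actions, so $\bar\eta^*$ automatically carries $G$-invariant sections to $G$-invariant sections, and more generally is $G$-equivariant; it therefore suffices to check it is a linear isomorphism. For \emph{injectivity}: a section $\sigma\in\Ho^0(\dX,\Li)$ with $\bar\eta^*\sigma=0$ vanishes on $\bar\eta(\GPCpbar)$; but $\bar\eta$ is surjective (given any $\dx\in\dX$, the intersection $g_1\cX^{u_1}\cap g_2\cX^{u_2}\cap g_3\cX_v$ is nonempty by Lemma~\ref{lem:cardcohom}, using $n_{u_1u_2}^v=1\neq 0$), so $\sigma=0$ on a dense set, hence $\sigma=0$. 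For \emph{surjectivity}, the key input is Lemma~\ref{lem:inveta}: there is a nonempty open $\Omega_1\subset\dX$ over which $\bar\eta$ has a regular section $\zeta$, so that $\bar\eta$ restricts to an isomorphism $\bar\eta\inv(\Omega_1)\xrightarrow{\sim}\Omega_1$. Given $s\in\Ho^0(\GPCpbar,\bar\eta^*\Li)$, the section $(\zeta|_{\Omega_1})^*s$ is a section of $\Li$ over $\Omega_1$; the plan is to show it extends to all of $\dX$ and that its pullback is $s$.

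To make this rigorous on ind-varieties I would proceed filtration by filtration. Fix a filtration $\dX=\cup_n Y_n$ by projective varieties intersecting $\Omega_1$, and a compatible filtration $\GPCpbar=\cup_n Z_n$ with $\bar\eta(Z_n)\subset Y_{m(n)}$; each restricted $\bar\eta\,:\,Z_n\longto Y_{m(n)}$ is a projective morphism. The two facts to establish at finite level are: (a) $\bar\eta_*\co_{Z_n}=\co_{Y'}$ onto its (normal, by Lemma~\ref{lem:rappelsRichardson}(iii)) image $Y'$, because the fibers are Richardson/Schubert-type varieties which are irreducible with rational singularities, so that by the projection formula $\Ho^0(Z_n,\bar\eta^*\Li)=\Ho^0(Y',\Li|_{Y'})$; and (b) restriction $\Ho^0(Y_{m(n)},\Li)\to\Ho^0(Y',\Li|_{Y'})$ is compatible in $n$. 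Passing to the inverse limit over $n$, using the section-gluing principle \eqref{eq:310} recalled in the excerpt, one gets $\Ho^0(\dX,\Li)\simeq\Ho^0(\GPCpbar,\bar\eta^*\Li)$. I would double-check the equivariance statement by noting $G$ acts compatibly on all the $Y_n$, $Z_n$ up to re-indexing the filtration, which is harmless.

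The main obstacle I expect is controlling the fibers of $\bar\eta$ precisely enough to conclude $\bar\eta_*\co=\co$ onto the image — i.e.\ verifying that over the locus where the triple intersection $g_1X^{u_1}\cap g_2X^{u_2}\cap g_3X_v$ is positive-dimensional the fibers are still nice (connected with rational singularities), not merely over the open set $\Omega_1$ where the fiber is a single reduced point. An alternative, perhaps cleaner, route that sidesteps this is to avoid cohomology-and-base-change on the non-generic fibers entirely: use injectivity of $\bar\eta^*$ together with Lemma~\ref{lem:inveta} to realize $\Ho^0(\GPCpbar,\bar\eta^*\Li)$ as sitting between $\Ho^0(\dX,\Li)$ and $\Ho^0(\Omega_1,\Li)$, and then invoke the fact that a section of a line bundle on an ind-variety that is regular on a nonempty open subset and bounded (extends over each $Y_n$ by normality of $\dX$ and Hartogs, since the complement has codimension $\geq 2$ or the section is already known to be regular on the preimage which is all of $\GPCpbar$) must be global. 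This reduces the whole lemma to the purely formal statement that $\bar\eta^*$ is injective with image closed under the gluing condition \eqref{eq:310}, which is the robust way to phrase it in the ind-setting.
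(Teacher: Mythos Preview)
Your two approaches each have a genuine gap, and the paper's proof supplies exactly the missing idea.

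In your main approach you want $\bar\eta_*\co=\co$ by analyzing the fibers. But the fibers of $\bar\eta$ over a point $(g_1\underline{o}^-,g_2\underline{o}^-,g_3\underline{o})$ are the triple intersections $g_1X^{u_1}_\GP\cap g_2X^{u_2}_\GP\cap g_3X_v^\GP$ (the closed analog of Lemma~\ref{lem:fibreeta}), not Schubert or Richardson varieties. Over $\Omega_1$ these are single reduced points, but over the boundary there is no general structure theorem guaranteeing they are even connected, let alone have rational singularities. You correctly flag this as the obstacle; it is not easily overcome.

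Your alternative is closer in spirit to what the paper does, but the Hartogs step is wrong: the complement $\dX\setminus\Omega_1$ is cut out by open conditions of the type ``$g_3\inv x_1\in\mathring{G/B^-}$'' and ``$r(h)\in\Omega$'', so it contains divisors. Knowing that $s$ is regular on all of $\GPCpbar$ does not by itself bound the pole order of $\zeta^*s$ along such a divisor $D\subset\dX$, because $\bar\eta\inv(D)$ may well have components mapping with positive-dimensional fibers, and then regularity of $s$ upstairs gives no information about growth of $\zeta^*s$ downstairs.

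The paper circumvents both problems by a clever choice of filtration. Instead of $Z_n=\bar\eta\inv(\dX_n)$, it takes $Y_n:=\overline{\zeta(\dX_n\cap\Omega_1)}$, the closure of the graph of $\zeta$ over each finite piece. Then $\bar\eta_n\,:\,Y_n\to\dX_n$ is proper and \emph{birational} by construction (it has the section $\zeta$ over the dense open $\dX_n\cap\Omega_1$), and $\dX_n$ is normal; Zariski's main theorem gives connected fibers and hence $\bar\eta_n^*$ is an isomorphism on sections of any line bundle. The price to pay is that one must check the $Y_n$'s really exhaust $\GPCpbar$: this is where ind-irreducibility of $\GPCpbar$ (it is the image of $G\times\bar C^+$) and Lemma~\ref{lem:filtration} are invoked. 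Once $\GPCpbar=\cup_n Y_n$ is established, the sections $\tilde\sigma_n:=(\bar\eta_n^*)^{-1}(\sigma_{|Y_n})$ agree on overlaps (checked on the dense $\Omega_1\cap\dX_n$) and glue via~\eqref{eq:310} to the desired preimage. This avoids any discussion of non-generic fibers and any codimension estimate.
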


\begin{proof}
Since the image of $\bar\eta$ contains the dense subset $\Omega_1$ of $\dX$,
$\bar\eta^*$ is injective.
  Fix a filtration $\dX=\cup_{n\in\ZZ_{\geq 0}}\dX_n$ such that each $\dX_n$
  is a product of three finite dimensional Schubert varieties
  (i.e. $(B^-\times B^-\times B)$-orbit closures), $\dX_n$
  intersects $\Omega_1$ and $\dX_n\subset \dX_{n+1}$.
Set $Y=\GPCpbar$ and
$$
\mathring{Y_n}:=\bar\eta\inv(\dX_n\cap \Omega_1)=\zeta(\dX_n\cap\Omega_1).
$$
Let $Y_n$ be the closure of $\mathring{Y_n}$ in $Y$. Then $Y_n$ is
closed, irreducible, of finite dimension and projective. 

As the image of 
$G\times \bar C^+\longto\GPCpbar,\, (g,x)\longmapsto (gP/P,gx)$,
$Y$ is ind-irreducible (see examples of Section~\ref{sec:indirred}).
Moreover,  
$\cup_{n\in\ZZ_{\geq 0}}\mathring{Y}_n$  and hence
$\cup_{n\in\ZZ_{\geq 0}}Y_n$ contains the nonempty open subset
$\bar\eta\inv(\Omega_1)$. 
Then, Lemma~\ref{lem:filtration} implies that 
$Y=\cup_{n\in\ZZ_{\geq 0}}Y_n$.  

\bigskip
We now prove the surjectivity of $\bar\eta^*$. 
Let $\sigma\in \Ho^0(\GPCpbar,\bar\eta^*(\Li))$.
Consider the restriction $\bar\eta_n\,:\,Y_n\longto \dX_n$ of $\bar\eta$.
Then $\bar\eta_n$ is proper, birational and $\dX_n$ is normal. Zariski's
main theorem implies that the fibers of $\bar\eta_n$ are connected. Then
$$
\bar\eta_n^*\,:\,\Ho^0(\dX_n,\Li_{|\dX_n})\longto \Ho^0(Y_n,\bar\eta^*(\Li)_{|Y_n})
$$
is an isomorphism (see e.g. \cite[Chap IV, Corollary
5]{Peters:coursAG}).

Let $\tilde\sigma_n\in \Ho^0(\dX_n,\Li_{|\dX_n})$ such that $\bar\eta_n^*(\tilde\sigma_n)=\sigma_{|Y_n}$.
Since $\Omega_1\cap \dX_n$ is dense in $\dX_n$, the restriction of
$\tilde\sigma_{n+1}$ to $\dX_n$ is equal to $\tilde\sigma_n$.
Hence $(\tilde\sigma_n)_{n\in\NN}$ is a global section $\tilde\sigma$
of $\Li$ on $\dX$. Moreover, $\bar\eta^*(\tilde\sigma)=\sigma$.
\end{proof}

\bigskip
We now go from $\GPCpbar$ to $\bar C^+$. 

\begin{lemma}
  \label{lem:rest2}
The linear map
$$
\Ho^0(\GPCpbar,\bar\eta^*(\Li))^G\longto  \Ho^0(\bar C^+,\Li_{|\bar C^+})^P
$$
is an isomorphism.
\end{lemma}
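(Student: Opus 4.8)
The statement is the standard fact that, for the associated bundle $\GPCpbar=G\times_P\bar C^+$ with $G$-equivariant projection $\bar p$ onto $G/P$, $G$-equivariant sections of a $G$-linearized line bundle correspond to $P$-equivariant sections of its restriction to a fibre. Here the map of the lemma is precisely restriction along the inclusion $\bar C^+\hookrightarrow\GPCpbar$, $\dx\mapsto(eP/P,\dx)$, whose image is the fibre $\bar p\inv(eP/P)$; since $\bar\eta(eP/P,\dx)=\dx$ and $\bar C^+$ is $P$-stable in $\dX$, the restriction of $\bar\eta^*(\Li)$ to this fibre is $\Li_{|\bar C^+}$ with its natural $P$-linearization, so restriction does send $G$-invariant sections into $P$-invariant ones. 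I plan to prove injectivity and surjectivity separately.

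Injectivity is immediate: $\bar p$ is $G$-equivariant and $G$ acts transitively on $G/P$, so $\bar p\inv(gP/P)=g\cdot\bar p\inv(eP/P)$ for every $g$; hence a $G$-invariant section vanishing on $\bar p\inv(eP/P)$ vanishes on every fibre of $\bar p$, i.e.\ on all of $\GPCpbar$. For surjectivity, given $\sigma\in\Ho^0(\bar C^+,\Li_{|\bar C^+})^P$ I would define $\tilde\sigma(gP/P,\dx):=g\cdot\sigma(g\inv\dx)$. Independence of the chosen representative $g$ of the coset is a one-line check: replacing $g$ by $gp$, $p\in P$, changes $g\cdot\sigma(g\inv\dx)$ into $gp\cdot\sigma(p\inv g\inv\dx)=gp\cdot\bigl(p\inv\cdot\sigma(g\inv\dx)\bigr)=g\cdot\sigma(g\inv\dx)$, using the $P$-equivariance of $\sigma$ and the $G$-linearization of $\Li$. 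By construction $\tilde\sigma$ is $G$-invariant and restricts to $\sigma$ on $\bar p\inv(eP/P)$.

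The one genuinely non-formal point — which I expect to be the main obstacle — is verifying that this set-theoretic $\tilde\sigma$ is a morphism of ind-varieties, hence an honest section. This is local over $G/P$: the projection $\pi\,:\,G\longto G/P$ admits Zariski-local sections (over translates of the big cell, cf.\ \cite[Section~7.4]{Kumar:KacMoody}), and over an open $V\subset G/P$ carrying a section $s\,:\,V\longto G$ one has an isomorphism $\bar p\inv(V)\simeq V\times\bar C^+$ via $(gP/P,\dx)\mapsto(gP/P,s(gP/P)\inv\dx)$, while the $G$-linearization identifies $(\bar\eta^*\Li)_{|\bar p\inv(V)}$ with the pullback of $\Li_{|\bar C^+}$; under these identifications $\tilde\sigma$ becomes $(v,x)\mapsto(v,\sigma(x))$, manifestly a morphism. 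Since each finite-dimensional term of the filtration of $\GPCpbar$ (as a closed sub-ind-variety of $G/P\times\dX$) is quasi-compact, it is covered by finitely many such $\bar p\inv(V)$, and the local morphisms $\tilde\sigma_{|\bar p\inv(V)}$ agree on overlaps (all being equal to $\tilde\sigma$ as a map of sets), so they glue on each term and yield a global section $\tilde\sigma$ of $\bar\eta^*(\Li)$ mapping to $\sigma$. This completes surjectivity, and the lemma.
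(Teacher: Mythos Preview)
Your proof is correct and follows essentially the same approach as the paper: both define the inverse by $\tilde\sigma(gP/P,\dx)=g\cdot\sigma(g\inv\dx)$, check well-definedness via $P$-invariance, and argue regularity from Zariski-local triviality of $G\to G/P$ (the paper invokes Birkhoff's decomposition for this, where you cite \cite[Section~7.4]{Kumar:KacMoody}). Your treatment of the regularity step is more detailed than the paper's one-line justification, but the content is the same.
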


\begin{proof}
Embed $\bar C^+$ in $\GPCpbar$ by mapping $x\in \bar C^+$ to
$(P/P,x)$. 
  Since $\bar\eta^*(\Li)_{|\bar C^+}=\Li_{|\bar C^+}$, the map of the lemma
  is well defined. 

Since $\bar C^+$ intersects any $G$-orbit in $\GPCpbar$, the map is
injective. 
Let $\sigma\in  \Ho^0(\bar C^+,\Li_{|\bar C^+})^P$. Set
$$
\begin{array}{lccl}
\tilde\sigma\,:&\GPCpbar&\longto& \bar\eta^*(\Li)\\
 &(gP/P,x)&\longmapsto& g\sigma(g\inv x).
\end{array}
$$
Note that $\tilde\sigma$ is well defined as a map since $\bar\eta^*(\Li)$ is
$G$-linearized and $\sigma$ is $P$-invariant. Moreover $\tilde
\sigma$ is $G$-invariant.
It is regular, since the morphism $G\longto G/P$ is locally trivial in
Zariski topology, because of Birkhoff's decomposition.
Hence $\tilde\sigma\in \Ho^0(\GPCpbar,\bar\eta^*(\Li))^G$ and 
${\tilde\sigma}_{|\bar C^+}=\sigma$.
\end{proof}

\bigskip
We now go from $\bar C^+$ to $C^+$. 
Let $\tilde\tau\,:\,\CC^*\longto T$ such that $\tilde\tau\in\ZZ_{>0}\tau$.

\begin{lemma}\label{lem:rest3}
Recall that $\mu^\Li(C,\tau)=0$. Then
  the restriction map
$$
\Ho^0(\bar C^+,\Li_{|\bar C^+})^{\tilde\tau(\CC^*)}\longto  \Ho^0(C^+,\Li_{|C^+})^{\tilde\tau(\CC^*)}
$$
is an isomorphism.
\end{lemma}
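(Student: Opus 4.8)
The plan is to get injectivity for free and spend the effort on surjectivity. Injectivity is immediate: $C^+=P\cdot(u_1\inv\underline{o}^-,u_2\inv\underline{o}^-,v\inv\underline{o})$ is a single orbit of the ind-irreducible ind-group $P$, hence locally closed and ind-irreducible, and it is dense in its closure $\bar C^+$; so a section of $\Li_{|\bar C^+}$ restricting to $0$ on $C^+$ is $0$, and this passes to $\tilde\tau(\CC^*)$-invariants. So let $\sigma\in\Ho^0(C^+,\Li_{|C^+})^{\tilde\tau(\CC^*)}$; I must produce a $\tilde\tau(\CC^*)$-invariant extension of $\sigma$ to $\bar C^+$.

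First I would reduce to a finite-dimensional statement, exactly as in the proof of Lemma~\ref{lem:rest1}. Fix a filtration $\bar C^+=\cup_{n}Y_n$ by finite-dimensional irreducible projective subvarieties that are moreover $\tilde\tau(\CC^*)$-stable and each meet $C^+$, so that $Y_n^0:=Y_n\cap C^+$ is open dense in $Y_n$; such a filtration exists by the device already used for $G/P$-type ind-varieties (take an irreducible exhaustion and replace each term by the closure of its $\tilde\tau(\CC^*)$-orbit, which stays irreducible, projective and finite-dimensional), together with Lemmas~\ref{lem:filtration} and~\ref{lem:indvarunicity}. Then $\sigma$ restricts to $\sigma_n\in\Ho^0(Y_n^0,\Li)^{\tilde\tau(\CC^*)}$; if each $\sigma_n$ extends to a $\tilde\tau(\CC^*)$-invariant section $\tilde\sigma_n\in\Ho^0(Y_n,\Li)$ — the extension being unique by density of $Y_n^0$ — then $\tilde\sigma_{n+1}$ restricts to $\tilde\sigma_n$ on $Y_n$ and the $\tilde\sigma_n$ glue to the required section on $\bar C^+$.

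It remains to prove that a $\tilde\tau(\CC^*)$-invariant section of $\Li$ on the open dense $\tilde\tau(\CC^*)$-stable subset $Y_n^0$ of the projective variety $Y_n$ extends over $Y_n$. The geometric input is that $Y_n^0$ is exactly the Bialynicki--Birula attracting cell of the fixed locus $Z_n:=C\cap Y_n$: for $x\in C^+$ one has $\lim_{t\to0}\tilde\tau(t)x\in C$ because $\tilde\tau\in\ZZ_{>0}\tau$ is central in $L$ and contracts the unipotent radical of $P$, and conversely a point of $\bar C^+$ whose $\tilde\tau$-limit lies in $C$ must lie in $C^+$. Let $p\colon Y_n^0\to Z_n$ be the attracting morphism, an affine fibration whose fibres $\AA^r$ carry strictly positive $\CC^*$-weights. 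The hypothesis $\mu^\Li(C,\tau)=0$ is precisely the statement that $\tilde\tau(\CC^*)$ acts trivially on $\Li$ over $Z_n$; hence on each fibre of $p$ the bundle $\Li$ is equivariantly trivial with weight-zero linearization at the origin, its only invariant sections are constants, and $\sigma_n$ is therefore fibrewise constant, i.e. $\sigma_n=p^*({\sigma_n}_{|Z_n})$. Finally, $p^*({\sigma_n}_{|Z_n})$ extends over all of $Y_n$: assuming $Y_n$ normal (pass to the normalization otherwise), it suffices to check there is no pole along a codimension-one component $\Delta$ of $Y_n\setminus Y_n^0$, and this is done by restricting to a general $\PP^1\subset Y_n$ joining $Y_n^0$ to $\Delta$ and using that $\tilde\tau(\CC^*)$-invariance of $\sigma_n$, together with a sign condition on the $\CC^*$-weight of $\Li$ at the $\tilde\tau$-limit fixed point of that $\PP^1$ (extracted from $\mu^\Li(C,\tau)=0$ and the structure of the boundary cells of $\bar C^+$), forces regularity.

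The hard part is this last step — ruling out a pole along the boundary $\bar C^+\setminus C^+$ — and it is exactly here that the hypothesis $\mu^\Li(C,\tau)=0$ enters in an essential way: with a nonzero value a $\tilde\tau(\CC^*)$-invariant section would in general acquire a pole along a boundary divisor. Controlling this requires a precise analysis of the Bialynicki--Birula strata of the finite-dimensional pieces of $\bar C^+$ and of the $\CC^*$-weights of $\Li$ along the fixed loci they attract to; the rest — the adapted finite-dimensional filtration and the identification of $C^+$ with the full attracting cell of $C$ — is routine but should be carried out with care.
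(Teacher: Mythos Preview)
Your strategy is right in outline and close to the paper's: reduce to a finite-dimensional filtration, then extend a $\tilde\tau(\CC^*)$-invariant section on $Y_n\cap C^+$ across each boundary divisor of $Y_n$ using normality. But the substance of the lemma is precisely what you flag as ``the hard part'' and leave as a sketch. The paper does not treat this as routine. It takes a very specific filtration $\bar C^+_n$ by products of Richardson varieties (which are known to be normal by Lemma~\ref{lem:rappelsRichardson}; your abstract filtration does not supply normality without further work), classifies the boundary divisors into explicit types according to which factor degenerates, and for each type constructs an affine $T$-stable neighborhood $X$ of the divisor on which it verifies, case by case via Lemmas~\ref{lem:flow1} and~\ref{lem:flow2}, the four dynamical hypotheses of the bespoke GIT Lemma~\ref{lem:GIT}. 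The ``sign condition'' you invoke is then made precise: one exhibits an explicit $\tilde\tau$-stable $\PP^1$ joining a fixed point in $C$ to the relevant boundary fixed point, uses that $\Li$ restricts nonnegatively, and concludes that the $\CC^*$-weight on $\Li$ at the boundary fixed point is nonpositive, so that $\Ho^0(X,\Li)^{\tilde\tau}\simeq\CC[X]^{(k)}$ with $k\geq 0$ and Lemma~\ref{lem:GIT} applies.

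Two specific points. First, your claim that $Y_n\cap C^+$ is \emph{exactly} the Bialynicki--Birula attracting cell of $C\cap Y_n$ in $Y_n$---in particular the converse direction, that a point of $\bar C^+$ whose $\tilde\tau$-limit lies in $C$ already lies in $C^+$---is not automatic; this is essentially the content of Lemma~\ref{lem:flow1}, and the paper only proves (and only needs) its local, codimension-one analogue near each boundary divisor. Second, your observation that $\sigma_n=p^*(\sigma_n{}_{|Z_n})$ is correct and is essentially Lemma~\ref{lem:rest4}, but it does not by itself control the pole along $Y_n\setminus Y_n^0$, since the attracting map $p$ does not extend there; you are still forced into the sign computation, which is where all the work lies.
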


\begin{proof}
  Since $C^+$ is dense in $\bar C^+$, the restriction is injective. 
For $w\in W$, we recall that 
$$
%\begin{array}{ll}
  X_w^{G/B}=\overline{Bw\underline{o}}%&
\qquad X^w_{G/B}=\overline{B^-w\underline{o}},
$$
and set
$$
X_w^{G/B^-}=\overline{B^-w\underline{o}^-}%&
\qquad X^w_{G/B^-}=\overline{Bw\underline{o}^-}
%\end{array}
$$
in such a way $\dim(X_w^{G/B})=\dim(X_w^{G/B^-})=l(w)$ and
$\codim(X^w_{G/B})=\codim(X^w_{G/B^-})=l(w)$.
Note also that $\overline{Pu_1\inv \underline{o}^-}=X^{u_1\inv}_{G/B^-}$ and
$\overline{Pu_2\inv \underline{o}^-}=X^{u_2\inv}_{G/B^-}$, whereas
$X_{v\inv}^\GB$ is finite dimensional and closed in $\overline{Pv\inv
  \underline o}$.

Consider a sequence $(w_1^n,w_2^n,w_3^n)\in W^3$ such that, for $i=1,2$
and for any $n\in\NN$,
$u_i\inv\bleq w_i^n\bleq w_i^{n+1}$ and $v\inv\bleq w_3^n\bleq w_3^{n+1}$  and
$$
\bar C^+_n:=(X_{w_1^n}^{G/B^-}\cap X^{u_1\inv}_{G/B^-})\times (X_{w_2^n}^{G/B^-}\cap
X^{u_2\inv}_{G/B^-})\times X_{w_3^n}^{G/B}
$$ 
is a filtration of $\bar C^+$.
In particular $X_{w_3}^\GB\subset \overline{Pv\inv \underline{o}}$.
Note that if $P$ has finite type, $(w_3^n)_{n\in\NN}$ can be chosen to be
constant. 
Note also that $C^+\cap\bar C_n^+$ is open in $\bar C_n^+$ and
nonempty for any $n$.
Let $\sigma\in \Ho^0(C^+,\Li_{|C^+})^{\tilde\tau}$ and let $\sigma_n$ denote its
restriction to $C^+\cap \bar C^+_n$. 
We have to prove that $\sigma$ extends to a section $\bar\sigma$ on
$\bar C^+$. 
It remains to prove that each $\sigma_n$ extends to a section $\bar\sigma_n$ on
$\bar C^+_n$. 
Indeed, then $\bar \sigma=(\bar\sigma_n)_{n\in\NN}\in
\Ho^0(\bar C^+,\Li_{|\bar C^+})^{\tilde\tau}$ extends $\sigma$.

\bigskip
Fix $n\in \NN$. 
By Lemma~\ref{lem:rappelsRichardson}, $\bar C^+_n$ is normal. Then, to prove that
$\sigma_n$ extends to $\bar C^+_n$ it is sufficient to prove that it has
no pole along the divisors of   $\bar C^+_n-C^+$. 
Let $D_n$ be such a divisor. Then, either
\begin{enumerate}
\item[(a)] $D_n=(X_{w_1^n}\cap X^{\tilde u_1\inv})\times (X_{w_2^n}\cap
X^{u_2\inv})\times X_{w_3^n}$, for some $\tilde u_1\in W^P$ such that $u_1\inv
\bleq \tilde u_1\inv\bleq w_1^n$ and $l(\tilde u_1)=l(u_1)+1$; or 
\item[(a')] $D_n=(X_{w_1^n}\cap X^{u_1\inv})\times (X_{w_2^n}\cap
X^{\tilde u_2\inv})\times X_{w_3^n}$, for some $\tilde u_2\in W^P$ such that $u_2\inv
\bleq \tilde u_2\inv\bleq w_2^n$ and $l(\tilde u_2)=l(u_2)+1$; or
\item[(b)] $D_n=(X_{w_1^n}\cap X^{u_1\inv})\times (X_{w_2^n}\cap
X^{u_2\inv})\times X_{\tilde w_3}$ for some $\tilde w_3\in W$ such that
$\tilde w_3
\bleq w_3^n$,  $l(\tilde w_3)=l(w_3^n)-1$ and $X_{\tilde w_3}\subset
X_{w_3^n}-Pv\inv \underline{o}$.
\end{enumerate}

In each case, we will apply Lemma~\ref{lem:GIT} to an affine neighborhood
of $D_n$ in   $\bar C_n^+$. We first construct such  neighborhoods and
check the assumptions of  Lemma~\ref{lem:GIT}. We do not
consider Case (a') that is similar to the first one.  We also skip the
power $n$ from $w_i^n$. Note that the action of $\CC^*$ in
Lemma~\ref{lem:GIT} is given by $\tilde\tau$.

\bigskip
\noindent
\underline{Case (a).} 
Set
$$
X=(X_{w_1}\cap X^{u_1\inv}\cap \tilde u_1\inv B\underline{o}^-)\times (X_{w_2}\cap
\cX^{u_2\inv})\times \cX_{w_3},
$$
$D=D_n\cap X$ and $U=X-D$.

The fact that $X$ is $T$-stable is obvious.
Since $\tilde u_1\inv B\underline{o}^-$, $\cX^{u_2\inv}$ and $\cX_{w_3}$ are open in 
$G/B^-$, $X^{u_2\inv}$ and $X_{w_3}$ respectively, $X$
is open in $\bar C_n^+$.
By
\cite[Lemma~7.3.5]{Kumar:KacMoody}, $X^{u_1\inv}\cap \tilde u_1\inv
B\underline{o}^-$ is affine. Then  the first factor of $X$ is affine. But,
\cite[Lemma~7.3.5]{Kumar:KacMoody} also implies that the two other
factors are affine, and $X$ is affine.

Moreover, by Lemma~\ref{lem:rappelsRichardson}, $X$ is normal. 

\bigskip
{\it Check Assumption $(i)$.}
It is sufficient to prove it for the first factor. 
Let $x\in (X_{w_1}\cap X^{u_1\inv}\cap \tilde u_1\inv
B\underline{o}^-)-X^{\tilde u_1\inv}$.
Set $y=\lim_{t\to 0}\tilde\tau(t)x$. We have to prove that $y\not\in (X_{w_1}\cap X^{u_1\inv}\cap \tilde u_1\inv
B\underline{o}^-)$. 

Let $w\in W$ such that $x\in \cX^w_{G/B^-}$. Since $x\in X^{u_1\inv}$,
$u_1\inv\leq w$. Since $x\in \tilde u_1\inv
B\underline{o}^-$, $\tilde u_1\inv \underline{o}^-$ belongs to $X^w_\GBm$ and
$w\bleq\tilde u_1\inv$. But $l(\tilde u_1)=l(u_1)+1$, hence $w=\tilde
u_1\inv$ or $u_1\inv$. Now, $x\not\in X^{\tilde u_1\inv}$ and
$w=u_1\inv$.

Now, Lemma~\ref{lem:flow1} implies that $y$ does not belong to $\tilde u_1\inv
B\underline{o}^-$. The claim is proved.

\bigskip
{\it Check Assumption $(ii)$.}
We work successively on each factor of $X$.

Let $x\in \cX_{w_3}$ such that $\lim_{t\to \infty}\tilde\tau(t)x\in
\cX_{w_3}$.
For any $b\in B$, $\lim_{t\to \infty}\tilde\tau(t)b\tilde\tau(t\inv)$ exists in
$B$. 
Then $\lim_{t\to 0}\tilde\tau(t)x\in\cX_{w_3}$.
If $x$ is not fixed by $\tau(\CC^*)$, $\overline{\tilde\tau(\CC^*)x}$ is
isomorphic to $\PP^1$. Hence it cannot be contained in the affine
variety $\cX_{w_3}$. Hence $y\not\in \cX_{w_3}$. Contradiction.
It follows that $x$ is fixed by $\tilde\tau(\CC^*)$. 

Similarly for $x\in X_{w_2}\cap
\cX^{u_2\inv}$, if $\lim_{t\to \infty}\tilde\tau(t)x\in \cX^{u_2\inv}$ then
$x$ is fixed by $\tilde\tau(\CC^*)$. 

Now, it is sufficient to prove Assumption $(ii)$ for the first factor
of $X$. Let $x_1$, $x_2\in (X_{w_1}\cap X^{u_1\inv}\cap \tilde u_1\inv
B\underline{o}^-)-X^{\tilde u_1\inv}$ such that $y:=\lim_{t\to
  \infty}\tilde\tau(t)x_1=\lim_{t\to \infty}\tilde\tau(t)x_2\in D$. 

We already noticed that $x_1$ and $x_2$ belong to $\cX^{u_1\inv}$.
By assumption, $y\in \tilde u_1\inv B\underline{o}^-\cap X^{\tilde u_1\inv}=
\cX^{\tilde u_1\inv}$. 
Then Lemma~\ref{lem:flow2} shows that $\tilde\tau(\CC^*)x_1=\tilde\tau(\CC^*)x_2$.

\bigskip
{\it 
Check Assumption $(iii)$.} 
This can be proved component by component.

Let $x\in \cX_{w_3}$ (resp. $X_{w_2}\cap
\cX^{u_2\inv}$). We already observe that $\lim_{t\to 0}\tilde\tau(t)x$
belongs to $ \cX_{w_3}$ (resp. $X_{w_2}\cap
\cX^{u_2\inv}$).

Let now $x\in (X_{w_1}\cap X^{\tilde u_1\inv}\cap \tilde u_1\inv
B\underline{o}^-)=(X_{w_1}\cap \cX^{\tilde u_1\inv})$. 
Hence, we are in the situation of the second factor. 

Finally,  for any $x\in D$, $\lim_{t\to 0}\tilde\tau(t)x$
belongs to $D$.

\bigskip
{\it 
Check Assumption $(iv)$.} 
It is sufficient to consider the first factor. Let $y\in (X_{w_1}\cap X^{\tilde u_1\inv}\cap \tilde u_1\inv
B\underline{o}^-)^{\tilde\tau}=(X_{w_1}\cap \cX^{\tilde u_1\inv})^{\tilde\tau}$. We have to find $x\in
 (X_{w_1}\cap X^{u_1\inv}\cap \tilde u_1\inv
B\underline{o}^-)-X^{\tilde u_1\inv}$ such that $\lim_{t\to\infty}\tilde\tau(t)x=y$.

Assume first that $y=y_0=\tilde u_1\inv \underline{o}^-$. 
Since $l(\tilde u_1)=l(u_1)+1$, $X_{\tilde u_1\inv}\cap X^{u_1\inv}$
is a Richardson variety of dimension one. 
By Lemma~\ref{lem:rappelsRichardson},   $\cX_{\tilde u_1\inv}\cap
\cX^{u_1\inv}$ is dense in $X_{\tilde u_1\inv}^{u_1\inv}$. 
Let $x_0\in \cX_{\tilde u_1\inv}\cap
\cX^{u_1\inv}$.

Since $\tilde u_1\in W^P$, $\tilde u_1\inv B^-\tilde u_1\cap L=B^-\cap
L$. Moreover, $B^-=P^{u,-}(B^-\cap L)$. Hence
$$
\cX_{\tilde u_1\inv}=B^-\tilde u_1\inv \underline{o}^-=P^{u,-}(B^-\cap L) \tilde u_1\inv \underline{o}^-=P^{u,-}\tilde u_1\inv \underline{o}^-.
$$
In particular, $\lim_{t\to\infty}\tilde\tau(t)x_0=y_0$.

Since $\tilde u_1\inv\bleq w_1$, $x_0\in X_{w_1}$.
Moreover, $x_0\in\cX^{u_1\inv}$; thus $x_0\not\in X^{\tilde u_1\inv}$.
And $x_0\in\cX_{\tilde u_1\inv}\subset \tilde u\inv B\underline{o}^-$. 
Finally, $x_0\in  (X_{w_1}\cap X^{u_1\inv}\cap \tilde u_1\inv
B\underline{o}^-)-X^{\tilde u_1\inv}$.

\bigskip
Now $y\in (\cX^{\tilde u_1\inv})^{\tilde\tau}=(B\cap L) \tilde u_1\inv
\underline{o}^-$. Let $l\in B\cap L$ such that $y=l y_0$. Set $x=lx_0$.
Since $l\in L$, $\lim_{t\to\infty}\tilde\tau(t)x=y$.
Since $l\in L$ and $X^{u_1\inv}$ and $X^{u_1\inv}$ are $L$-stable, $x\in
X^{u_1\inv}-X^{\tilde u_1\inv}$. 
Since $\tilde u_1\in W^P$, $\tilde u_1\inv B\tilde u_1\cap L=B\cap
L$. Hence $x\in \tilde u_1\inv B \underline{o}^-$.
Recall that $x_0\in P^{u,-}y_0$. 
Since $l$ normalizes $P^{u,-}$, this emplies that $x\in P^{u,-}y$. 
But $X_{w_1}$ is $B^-$-stable, and $x\in X_{w_1}$.
Finally, $x$ works. 

\bigskip
\noindent
\underline{Case (b).} 
Set
$$
X=(X_{w_1}\cap \cX^{u_1\inv})\times (X_{w_2}\cap
\cX^{u_2\inv})\times (X_{w_3}\cap \tilde w_3 B^-\underline{o}),
$$
$D=D_n\cap X$ and $U=X-D$.

 Lemma~\ref{lem:rappelsRichardson} and
 \cite[Lemma~7.3.5]{Kumar:KacMoody} imply that $X$ is open in $\bar
 C^+$, $T$-stable, affine and normal.

\bigskip
{\it 
Check Assumption $(i)$.}
It is sufficient to prove it for the third factor. 
Let $x\in (X_{w_3}\cap \tilde w_3
B^-\underline{o})-X^{\tilde w_3}$.
Set $y=\lim_{t\to 0}\tilde\tau(t)x$. 
We have to prove that $y\not\in \tilde w_3 B^-\underline{o}$. 

Like in Case a - $(i)$, one has
$x\in\cX_{w_3}$.
Hence $y\in (B\cap L).w_3\underline{o}$.
But $(\tilde w_3 B^-\underline{o} )^{\tilde\tau}=((\tilde w_3B^-\tilde w_3\inv)\cap L)
\tilde w_3\underline{o}$.
In particular, if $y\in  \tilde w_3 B^-\underline{o}$ then $Pv\inv
\underline{o}=P w_3\underline{o}
=P \tilde w_3\underline{o}$. Contradiction.

\bigskip
{\it 
Check Assumption $(ii)$.}
Let $x_1,x_2\in U$ such that $\lim_{t\to \infty}\tilde\tau(t)x_1=\lim_{t\to
  \infty}\tilde\tau(t)x_2\in D$.
In the proof of Case a - $(ii)$, we proved that the two first
components of each $x_i$ is fixed by $\CC^*$. We assume now that
$x_1,x_2\in (X_{w_3}\cap \tilde w_3
B^-\underline{o})-X^{\tilde w_3}$.

Since $X_{w_3}\cap \tilde w_3
B^-\underline{o}$ is contained in $\cX_{w_3}\cup\cX_{\tilde w_3}$, $x_1,x_2\in
\cX_{w_3}$.
Similarly $y\in\cX_{\tilde w_3}$.
Then Lemma~\ref{lem:flow2} implies that 
$\tilde\tau(\CC^*)x_1=\tilde\tau(\CC^*)x_2$. 

\bigskip
{\it 
Check Assumption $(iii)$.}
We can work on each component separately. The two first one was
treated in Case a-$(iii)$. The last one works since $X_{\tilde
  w_3}\cap\tilde w_3 B^-\underline{o}=\cX_{\tilde w_3}$.

\bigskip
{\it 
Check Assumption $(iv)$.}
 We can work in the last factor. 
Let $y\in \cX_{\tilde w_3}^{\tilde\tau}$. 

Assume first that $y=y_0=\tilde w_3\underline{o}$. 
Consider the Richardson variety $C=X_{w_3}\cap X^{\tilde w_3}$ of
dimension one.
Pick $x_0\in \cX_{w_3}\cap \cX^{\tilde w_3}$. 
Then $x_0\in X_{w_3}\cap \tilde w_3B^-\underline{o}$. Let $\gamma$ be the
character of the action of $T$ on $T_{y_0}C$. It is a root of
$\lg$. 
Since $\overline{Py_0}$ does not contain $w_3\underline{o}$, $\gamma$ is not a
root of $P$. Then $\gamma$ is a root of $P^{u,-}$. 
This implies that $\lim_{t\to\infty}\tilde\tau(t)x_0=y_0$. 

Now $y\in\cX_{\tilde w_3}$ and there exists $l\in B\cap L$ such that
$y=ly_0$. 
Consider the curve $lC$: it is $\tilde\tau(\CC^*)$-stable, contained in $X_{w_3}$ and contains
$y$. Since $X$ is open in $X_{w_3}$, $C\cap X_{w_3}\cap \tilde w_3 B^-\underline{o}$ is a nonempty open
subset of $C$. Then $lx\in X$. But $\lim_{t\to\infty}\tilde\tau(t)x=y$. 
  
\bigskip
\noindent{\sc The line bundle on the affine subvarieties.}
Since $\Li_-(\lambda_1)$ is $G$-linearized and the action of $U$ on
$\underline{o}^-$ is free, $\Li_-(\lambda_1)$ is trivial as a line bundle on
$U\underline{o}^-$. Similarly $\Li(\mu)$ is trivial on $U^-\underline{o}$. 
As a consequence, $\Li$ is trivial as a line bundle, on each affine
variety $X$ we have considered. 

To determine $\Li_{|X}$ as a $\CC^*$-linearized line bundle, it is
sufficient to determine the action of $\CC^*$ on one $\CC^*$-fixed
point. 
Set
$$
x_0=(u_1\inv \underline{o}^-,u_2\inv \underline{o}^-,w_3 \underline{o}),
$$
and 
$$
\begin{array}{ll}
x_a=(\tilde u_1\inv \underline{o}^-,u_2\inv \underline{o}^-,w_3
  \underline{o}),&\mbox{and}\\
x_b=(u_1\inv
\underline{o}^-,u_2\inv \underline{o}^-,\tilde w_3 \underline{o}).
\end{array}
$$
By assumption, $\CC^*$ acts trivially on the fiber $\Li_{x_0}$. 
In Case~(a), we have constructed a copy of $\PP^1$, containing $x_0$
and $x_a$ such that for $x\in\PP^1-\{x_0\}$, $\lim_{t\to\infty}\tilde\tau(t)x=x_a$.
Moreover, $\Li_{\PP^1}$ is nonnegative as a line bundle. 
Now, a computation in $\PP^1$ shows that the action of $\CC^*$ on
$\Li_{x_a}$ is given by a nonpositive weight $k_a$.

Similarly, the action of $\CC^*$ on
$\Li_{x_b}$ is given by a nonpositive weight $k_b$.

Since $\Li$ is trivial on $X$, we deduce that, for any consider affine
open subsets $X$, we have
\begin{equation}
  \label{eq:H0open}
  H^0(X,\Li)^{\tilde\tau}\simeq \CC[X]^{(k)},
\end{equation}
for some nonnegative integer $k$.

\bigskip
We are now in position to apply Lemma~\ref{lem:GIT}.
By assumption the restriction $(\sigma_n)_{|U}$ belongs to
$H^0(U,\Li)^{\tilde\tau}\simeq \CC[U]^{(k)}$. 
By  Lemma~\ref{lem:GIT}, $\CC[U]^{(k)}=\CC[X]^{(k)}$. Hence
$(\sigma_n)_{|U}$ extends to a regular section on $X$. In particular,
it has no pole along $D_n$. Then $\sigma_n$ extends to a regular section on $\bar
C^+_n$ by normality. This ends the proof of the lemma.
\end{proof}

\bigskip
The last step goes from $C^+$ to $C$.

\begin{lemma}\label{lem:rest4}
Recall that $\mu^\Li(C,\tilde\tau)=0$. Then
  the restriction map
$$
\Ho^0(C^+,\Li_{| C^+})^{\tilde\tau}\longto  \Ho^0(C,\Li_{C})^{\tilde\tau}
$$
is an isomorphism.
\end{lemma}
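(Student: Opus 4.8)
The plan is to realise $C$ as the attracting fixed-point subvariety of $C^+$ for the flow $t\mapsto\tilde\tau(t)$, $t\to 0$, and then to use the elementary fact that a $\tilde\tau(\CC^*)$-invariant section of a line bundle is constant along the affine-space fibres of the attraction. Because $\tilde\tau\in\oplus_{\alpha_j\not\in\Delta(P)}\ZZ_{>0}\varpi_{\alpha_j^\vee}$, the image $\tilde\tau(\CC^*)$ is central in the Levi $L$ of $P$ and $\langle\tilde\tau,\beta\rangle>0$ for every root $\beta$ of the unipotent radical $P^u$; moreover $\tilde\tau(t)$ fixes each of $u_1\inv\underline{o}^-$, $u_2\inv\underline{o}^-$, $v\inv\underline{o}$. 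Writing $p=\ell q$ with $\ell\in L$, $q\in P^u$, one gets $\tilde\tau(t)(p\,u_i\inv\underline{o}^-)=\ell\,(\tilde\tau(t)q\tilde\tau(t)\inv)\,u_i\inv\underline{o}^-$ with $\tilde\tau(t)q\tilde\tau(t)\inv\to 1$ as $t\to 0$, and likewise on the third factor. Hence
$$
\rho\,:\,C^+\longto C,\qquad x\longmapsto\lim_{t\to 0}\tilde\tau(t)x
$$
is a well-defined morphism, equivariant for $P$ acting on $C^+$ through $P\to L=P/P^u$, restricting to $\Id$ on $C$; its fibre over a point of $C$ is an (ind-)affine space — a translate under $L$ of $P^u u_1\inv\underline{o}^-\times P^u u_2\inv\underline{o}^-\times P^u v\inv\underline{o}$ — on which $\tilde\tau(\CC^*)$ acts linearly with strictly positive weights, that point of $C$ being the unique fixed point. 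In particular $C=(C^+)^{\tilde\tau(\CC^*)}$.

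Next I would use the hypothesis $\mu^\Li(C,\tilde\tau)=0$, which says precisely that $\tilde\tau(\CC^*)$ acts trivially on $\Li_z$ for each $z\in C$. In a $\tilde\tau$-equivariant local trivialisation of $\Li$ near a point of $C$ this makes $v\mapsto\lim_{t\to 0}\tilde\tau(t)v$ a well-defined, hence bijective, linear map $\Li_x\to\Li_{\rho(x)}$ for $x$ near $C$; these maps use only the $G$-linearisation, so they patch to a $\tilde\tau$-equivariant isomorphism of line bundles $\Li_{|C^+}\simeq\rho^*(\Li_{|C})$ that is the identity over $C$. For a nonzero weight the limit would fail to exist or be identically zero, so the vanishing $\mu^\Li(C,\tilde\tau)=0$ is essential here.

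Since $\tilde\tau(\CC^*)$ fixes $C$ pointwise and acts trivially on $\Li_{|C}$, we have $\Ho^0(C,\Li_{|C})^{\tilde\tau}=\Ho^0(C,\Li_{|C})$, and under the above identification the restriction map of the lemma becomes the comparison $\Ho^0(C^+,\rho^*(\Li_{|C}))^{\tilde\tau}\to\Ho^0(C,\Li_{|C})$. On a fibre $\rho\inv(z)$, a $\tilde\tau$-invariant section of $\rho^*(\Li_{|C})$ is a $\tilde\tau$-invariant $\Li_z$-valued regular function on an affine space with only strictly positive weights, hence a constant; so the section is $\rho^*$ of its restriction to $C$. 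This yields injectivity, and surjectivity follows since $\rho^*$ of any section of $\Li_{|C}$ is $\tilde\tau$-invariant and restricts to it. Finally, because $\rho$ — hence the isomorphism — is equivariant for $P$ acting through $P\to L$, and $\tilde\tau(\CC^*)\subset P$, the same reasoning identifies $\Ho^0(C^+,\Li_{|C^+})^P$ with $\Ho^0(C,\Li_{|C})^L$, the form used in the proof of Theorem~\ref{th:egalite}.

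The one point I expect to require genuine care is deducing the global statement from the fibrewise one over the ind-variety filtrations: one covers $C^+$ by $\tilde\tau$-stable affine opens over which $\rho$ is a trivial affine-space bundle and $\Li$ is $\tilde\tau$-equivariantly trivial, checks there the elementary identity $\CC[\AA^{(\infty)}\times Z]^{\tilde\tau}=\CC[Z]$ (valid since all weights on the $\AA^{(\infty)}$-factor are positive), and patches — routine when $L$ has finite type and a matter of bookkeeping in general.
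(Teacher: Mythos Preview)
Your approach is essentially the same as the paper's: both define the retraction $\rho=\pi\,:\,C^+\to C$, $x\mapsto\lim_{t\to 0}\tilde\tau(t)x$, both prove $\Li_{|C^+}\simeq\pi^*(\Li_{|C})$ using $\mu^\Li(C,\tilde\tau)=0$, and both deduce injectivity and surjectivity from this. The only noticeable difference is in how the line-bundle isomorphism is established: the paper argues factorwise via the Borel--Weil embedding $\iota_\mu:G/B\hookrightarrow\PP(L(\mu))$, writing $\Li(\mu)^*$ as the pullback of the tautological bundle and realising $\pi$ as the restriction of the linear projection $\cE\to\PP(L(\mu)^k)$ (where $k=\langle\tilde\tau,v\inv\mu\rangle$), which makes the isomorphism immediate and sidesteps the local-trivialisation and patching issues you flag in your final paragraph. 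Your abstract limit construction is correct in spirit but the paper's concrete model is cleaner here precisely because it avoids worrying about ind-variety covers.
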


\begin{proof}
We first prove the injectivity. Let $\sigma\in \Ho^0(C^+,\Li_{|
  C^+})^{\tilde\tau}$ such that $\sigma_{|C}=0$ and $x\in C^+$. Consider the
morphism
$$
\begin{array}{r@{\;}cclcl}
  \theta_x\,:&\CC&\longto&C^+\\
&t&\longmapsto&\tilde\tau(t)x&\mbox{if}&t\neq 0,\\
&0&\longmapsto&\lim_{t\to 0}\tilde\tau(t)x.
\end{array}
$$
It is $\CC^*$-equivariant for the natural actions of $\CC^*$. 
Moreover, $\theta_x^*(\Li)$ is trivial as $\CC^*$-linearized line bundle,
since $\mu^\Li(C,\tilde\tau)=0$.
But $\theta_x^*(\sigma)(0)=0$ and $\theta_x^*(\sigma)$ is $\CC^*$-invariant. 
Thus $\theta_x^*(\sigma)=0$. 
In particular $\sigma(x)=0$.

\bigskip
Consider now the map 
$\pi\,:\,C^+\longto C,\, x\longmapsto \lim_{t\to  0}\tilde\tau(t)x$.
We claim that $\pi^*(\Li_{|C})$ is isomorphic to $\Li$. 

We can work on each factor of $C^+$ separately. So assume for proving the claim
that $X=G/B$, $C^+=Pv\inv \underline{o}$ and $C=Lv\inv \underline{o}$. 
Let $\iota_\mu\,:\,X\longto\PP(L(\mu))$ induced by the action of $G$
on the highest weight line of $\PP(L(\mu))$.
Set $k=\langle\tilde\tau,v\inv \mu\rangle$. Recall that $L(\mu)$ has a
weight space decomposition $L(\mu)=\oplus_{\chi\in
  X(T)}L(\mu)_\chi$. Set
$$
L(\mu)^k=\bigoplus_{\langle\chi,\tilde\tau\rangle=k}L(\mu)_\chi
\qquad
L(\mu)^{>k}=\bigoplus_{\langle\chi,\tilde\tau\rangle>k}L(\mu)_\chi.
$$ 
Define $$
\cE=\{[v_0+v_+]\,:\, v_0\in L(\mu)^k-\{0\}\mbox{ and } v_+\in
L(\mu)^{>k}\}
$$
as a subset of $\PP(L(\mu)^k\oplus L(\mu)^{>k})$ and so of
$\PP(L(\mu))$.
Then $\iota_\mu(C)$ is contained in $\PP(L(\mu)^k)$ and $\iota_\mu(C^+)$ is contained in
$\cE$.
Moreover, $\pi$ is the restriction of the canonical projection  
$\cE\longto
\PP(L(\mu)^k)$.
But, $\Li(\mu)^*$ is the restriction to $X$ of the tautological bundle
on $\PP(L(\mu))$. The claim follows.

\bigskip
Let us prove the surjectivity. Let $\sigma\in
\Ho^0(C,\Li_{|C})^{\tilde\tau}$. By the claim, we have to prove that $\sigma$
extends to a $\CC^*$-invariant section of $\pi^*(\Li)$.
The morphism
$$
\begin{array}{ccl}
  C^+&\longto&\Li_{|C}\\
x&\longmapsto&\sigma(\pi(x))
\end{array}
$$
induces such an extension.
\end{proof}

\section{The Belkale-Kumar-Brown product}

In this section, we purpose a construction of the BKB product
$\bkprod$ (see \cite{BrownKumar}) and prove some properties of it.

\subsection{Preliminaries of linear algebra}

Let $E$ be a complex vector space filtered by % finite dimensional
linear subspaces
$$
\{0\}=E^0\subset E^1\subset E^2\subset\dots\subset E^n\subset\cdots
$$
such that
$E=\cup_n E^n$.
Let $T$ and $U$ be two linear subspaces of $E$ such that $T$ has finite
dimension, $U$ has finite codimension and $\dim(T)=\codim(U)$.
Consider the linear map
$$
\begin{array}{cccl}
  \Theta\,:&E&\longto&E/T\oplus E/U\\
&v&\longmapsto&(v+T,v+U).
\end{array}
$$
Define the induced filtrations from $E$ to $T$ and $E/U$: 
$$
T^n=E^n\cap T\quad\mbox{and}\quad (E/U)^n=E^n/(U\cap E^n).
$$
Set
$$
\delta=\sum_{n> 0}n\bigg (\dim(T^n/T^{n-1})-\dim((E/U)^n/(E/U)^{n-1})\bigg ).
$$

\begin{lemma}
\label{lem:deltanegLA}
  If $\Theta$ is an isomorphism then
$$
\dim(T^n)\leq \dim(E/U)^n\qquad\forall n\in \ZZ_{\geq 0}.
$$
In particular $\delta\geq 0$.
\end{lemma}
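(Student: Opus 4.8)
The plan is to extract finite-dimensional linear algebra from the infinite situation by truncating at level $n$ and using that $\Theta$ is an isomorphism. First I would observe that since $T$ is finite dimensional and $U$ has finite codimension, for $n$ large enough one has $T^n=T$ and $(E/U)^n=E/U$, so the two filtrations $(T^n)$ and $((E/U)^n)$ stabilize; moreover $\dim T=\codim U=\dim(E/U)$ by hypothesis. Thus the sum defining $\delta$ is finite, and a standard Abel-summation rearrangement gives
\[
\delta=\sum_{n>0}\Bigl(\dim(E/U)^n-\dim T^n\Bigr)\Bigl(\text{as a telescoping/summation-by-parts identity}\Bigr),
\]
so that $\delta\geq 0$ is an immediate consequence of the claimed inequalities $\dim T^n\leq\dim(E/U)^n$ for all $n$. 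Hence the whole statement reduces to proving those inequalities.

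Next I would fix $n\geq 0$ and produce the inequality $\dim(T\cap E^n)\leq\dim(E^n/(U\cap E^n))$ from injectivity and surjectivity of $\Theta$. The key remark is that $\Theta$ restricts to a map $E^n\to E^n/T^n\oplus (E/U)^n$ (since $E^n\cap T=T^n$ and the image of $E^n$ in $E/U$ is $(E/U)^n$), and this restriction is still injective because $\Theta$ itself is. From injectivity of $\Theta|_{E^n}$, the kernel $T^n\cap(U\cap E^n)=T\cap U\cap E^n$ must be zero — actually $T\cap U=\{0\}$ globally since $\Theta$ is injective on $E$ — so the composite $T^n\hookrightarrow E^n\twoheadrightarrow (E/U)^n$, i.e. $T^n\to E^n/(U\cap E^n)$, is injective. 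That map being injective gives precisely $\dim T^n\leq\dim(E/U)^n$. (Surjectivity of $\Theta$ is not even needed for this direction; it only matters for the complementary inequality at the stabilized level, to make the total $\delta$ computable, but here we only want $\delta\geq 0$.)

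The main obstacle, such as it is, is bookkeeping: one must be careful that the induced filtration $(E/U)^n=E^n/(U\cap E^n)$ is genuinely the image of $E^n$ under $E\to E/U$ and that its successive quotients $(E/U)^n/(E/U)^{n-1}$ have the dimensions one expects, so that the summation-by-parts identity for $\delta$ is valid; and one must check the stabilization claim carefully (that there is an $N$ with $T^N=T$ and $(E/U)^N=E/U$) to know the sum is finite and to justify rearranging it. None of this is deep — the real content is the single linear-algebra fact that an injective map $E^n\to E^n/T^n\oplus (E/U)^n$ with $T^n\cap(U\cap E^n)=0$ forces $T^n$ to inject into $(E/U)^n$ — so I expect the proof to be short once the filtration conventions are pinned down.
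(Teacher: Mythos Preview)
Your proposal is correct and follows essentially the same route as the paper: the paper defines $\bar\Theta:T\to E/U$, $v\mapsto v+U$, notes it is injective (since $\Theta$ injective forces $T\cap U=0$), and observes that $\bar\Theta(T^n)\subset (E/U)^n$ gives the inequality; your composite $T^n\hookrightarrow E^n\twoheadrightarrow (E/U)^n$ is precisely this map. The summation-by-parts computation of $\delta$ is also identical to the paper's.
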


\begin{proof}

  Consider 
$$
\begin{array}{cccl}
  \bar\Theta\,:&T&\longto& E/U\\
&v&\longmapsto&v+U.
\end{array}
$$
Since $\Theta$ is an isomorphism,  so is $\bar\Theta$.
Moreover $\bar\Theta(T^n)\subset (E/U)^n$.
Then the first inequality of the lemma is a consequence of the injectivity
of the restriction of $\bar\Theta$ to $T^n$.

Since $T$ and $E/U$ are finite dimensional, there exists $N$ such that
$T^n=T$ and $(E/U)^n=E/U$ for any $n\geq N$.
Then
$$
\begin{array}{ll}
\delta&=\sum_{n=1}^N n\bigg
    (\dim(T^n/T^{n-1})-\dim((E/U)^n/(E/U)^{n-1})\bigg )\\
&=\sum_{n=1}^N n\bigg
    (\dim(T^n)-\dim(T^{n-1})-\dim((E/U)^n)+\dim(E/U)^{n-1})\bigg )\\
&=\sum_{n=0}^{N-1} 
    \dim((E/U)^n)-\dim(T^n),
\end{array}
$$
since $\dim(T)=\dim(E/U)$.
In particular, $\delta\geq 0$.
\end{proof}

Consider the graded vector spaces
$$
\gr T=\oplus_{n\in\ZZ_{>0}}T^n/T^{n-1}\quad\mbox{and}\quad
\gr (E/U)=\oplus_{n\in\ZZ_{>0}}(E/U)^n/(E/U)^{n-1}.
$$
The map $\bar\Theta$ induces a graded linear map
$$
\gr\bar\Theta\,:\,\gr T\longto \gr(E/U).
$$

\begin{lemma}
\label{lem:deltanulLA}
   Assume that $\Theta$ is an isomorphism. The following assertions
   are equivalent 
   \begin{enumerate}
   \item $\gr\bar\Theta$ is an
   isomorphism;
\item 
\label{ass:02}
$
\dim(T^n)=\dim(E/U)^n\qquad\forall n\in \ZZ_{\geq 0};
$
\item $\delta=0$.
   \end{enumerate}
\end{lemma}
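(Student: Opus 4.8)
The statement is a purely linear‑algebraic equivalence, so I would prove it by a chain of implications using only finite‑dimensional linear algebra (recall $T$ and $E/U$ are finite dimensional, and by Lemma~\ref{lem:deltanegLA} the map $\bar\Theta$ is an isomorphism). The cleanest route is to show $(i)\Rightarrow(ii)\Rightarrow(iii)\Rightarrow(i)$, since each step is then nearly immediate from the definitions and from the inequalities already established in Lemma~\ref{lem:deltanegLA}.

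For $(i)\Rightarrow(ii)$: if $\gr\bar\Theta$ is an isomorphism, then in particular it is an isomorphism in each degree $n$, i.e. $\dim(T^n/T^{n-1})=\dim((E/U)^n/(E/U)^{n-1})$ for all $n$. Summing these equalities over the first $n$ degrees telescopes to $\dim(T^n)=\dim(E/U)^n$ for every $n$ (using $T^0=(E/U)^0=\{0\}$). For $(ii)\Rightarrow(iii)$: this is immediate from the last displayed formula in the proof of Lemma~\ref{lem:deltanegLA}, which rewrites $\delta=\sum_{n=0}^{N-1}\big(\dim((E/U)^n)-\dim(T^n)\big)$; each summand vanishes under assumption $(ii)$, so $\delta=0$. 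For $(iii)\Rightarrow(i)$: by Lemma~\ref{lem:deltanegLA} we have $\dim(T^n)\leq\dim(E/U)^n$ for all $n$, and $\delta=0$ forces all these inequalities to be equalities (again via the same rewriting of $\delta$ as a sum of nonnegative terms). Equality of dimensions in every degree, together with the fact that $\gr\bar\Theta$ restricted to each graded piece $T^n/T^{n-1}\to(E/U)^n/(E/U)^{n-1}$ is injective — which follows because $\bar\Theta$ is injective and respects the filtrations, so it cannot send a nonzero class in $T^n/T^{n-1}$ into $(E/U)^{n-1}$ — yields that each graded piece of $\gr\bar\Theta$ is an isomorphism, hence $\gr\bar\Theta$ is an isomorphism.

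The only point requiring a small argument is the injectivity of $\gr\bar\Theta$ in each degree: one must check that $\bar\Theta(T^n\setminus T^{n-1})$ does not land in $(E/U)^{n-1}$. This holds because $T^{n-1}=T\cap E^{n-1}$ and $(E/U)^{n-1}$ is the image of $E^{n-1}$ in $E/U$; if $v\in T$ had $\bar\Theta(v)=v+U\in (E/U)^{n-1}$, then $v\in E^{n-1}+U$, but since $v\in T$ and $\Theta$ (hence $\bar\Theta$) is injective with $T\cap U=\{0\}$, tracking $v$ through the filtration shows $v\in E^{n-1}\cap T=T^{n-1}$; I would spell this out by choosing a representative. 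I do not expect any serious obstacle here — the whole lemma is bookkeeping with filtration degrees, and the substantive input (the degreewise inequality $\dim T^n\le\dim(E/U)^n$ and the formula for $\delta$) is already available from Lemma~\ref{lem:deltanegLA}.
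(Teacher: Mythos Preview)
Your overall structure --- the cycle $(i)\Rightarrow(ii)\Rightarrow(iii)\Rightarrow(i)$, using the rewriting of $\delta$ from Lemma~\ref{lem:deltanegLA} --- is exactly the paper's approach, and your arguments for $(i)\Rightarrow(ii)$ and $(ii)\Rightarrow(iii)$ are correct and match the paper.

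The gap is in your argument for $(iii)\Rightarrow(i)$, specifically the claim that $\gr\bar\Theta$ is injective in each degree \emph{because $\bar\Theta$ is injective and filtration-preserving}. This is false in general. Concretely: take $E=\CC^2$ with $E^1=\CC e_1$, $T=\CC e_2$, $U=\CC(e_1+e_2)$. Then $\Theta$ is an isomorphism, $T^1=0$, but $\bar\Theta(e_2)=e_2+U=-e_1+U\in(E/U)^1$, so $\gr\bar\Theta$ kills the nonzero class of $e_2$ in degree $2$. Your ``tracking through the filtration'' breaks at the step $v\in E^{n-1}+U\Rightarrow v\in E^{n-1}$: knowing $v\in T$ and $T\cap U=\{0\}$ does not force the $U$-component of a decomposition $v=e+u$ to vanish, because $v-e$ need not lie in $T$.

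The fix is easy and is what the paper does: once $\delta=0$ gives $\dim T^n=\dim(E/U)^n$ for all $n$, the injective map $\bar\Theta|_{T^n}:T^n\to(E/U)^n$ is an isomorphism for each $n$; in particular $\bar\Theta(T^{n-1})=(E/U)^{n-1}$, so if $\bar\Theta(v)\in(E/U)^{n-1}$ then $v\in T^{n-1}$ by injectivity of $\bar\Theta$. Equivalently, an isomorphism restricting to an isomorphism on a subspace induces an isomorphism on the quotient. You had the dimension equality available but did not use it at the crucial step.
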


\begin{proof}
The second assertion implies the last one by the proof of Lemma~\ref{lem:deltanegLA}.
  If $\gr\bar\Theta$ is an isomorphism then for any $n$,
$\dim(T^n)-\dim(T^{n-1})=\dim(E/U)^n-\dim(E/U)^{n-1}$.
Since $T^0$ and $(E/U)^0$ are trivial the equalities of assertion~\eqref{ass:02}
follow.

Assume now that $\delta=0$.
Since $\delta=\sum_{n\geq 0}\dim(E/U)^n-\dim(T^n)$, 
Lemma~\ref{lem:deltanegLA} shows that $\dim(E/U)^n=\dim(T^n)$, 
for any $n$. 
Then, the injectivity of $\Theta$ implies that $\bar\Theta$ induces
isomorphisms from $T^n$ onto $(E/U)^n$, for any $n$. 
It follows that $\gr\bar\Theta$ is an isomorphism.
\end{proof}

\subsection{Definition of the BKB product}
\label{sec:bkbprod}

Let $P$ be a standard parabolic subgroup of $G$. Let $u_1$, $u_2$, and $v$
in $W^P$ such that $l(v)=l(u_1)+l(u_2)$ and $n_{u_1u_2}^v\neq 0$.
Set
$$
\begin{array}{ccc}
  &\Tau=T_{P/P}G/P&\\
\quad \Tau^{u_1}=T_{P/P}{u_1}\inv X^{u_1}_\GP&
\Tau^{u_2}=T_{P/P}u_2\inv X^{u_2}_\GP& \Tau_v=T_{P/P}v\inv X_v^\GP.
\end{array}
$$
Fix $\tau$ be a one parameter subgroup of $T$ belonging to $\oplus_{\alpha_j\not\in\Delta(P)}\ZZ_{>0}\varpi_{\alpha_j^\vee}$.
Observe that $P$ acts on $\Tau$. Under the action of $\tau$, $\Tau$
decomposes as $\Tau=\bigoplus_{n\in \ZZ}\Tau_{n}$, where
$\Tau_k=\{\xi\in\Tau\,:\,\tau(t)\xi=t^k\xi\quad\forall t\in \CC^*\}$.
Note %that each $\Tau_n$ is finite dimensional, and
 that $\Tau_n=\{0\}$
for all $n\geq 0$. 
Set
$$
\Tau^n=\oplus_{k\leq n}\Tau_{-k}.
$$
Then
 $(\Tau^n)_{n\in\ZZ_{\geq 0}}$ forms a 
$P$-stable filtration of $\Tau$. 
Moreover $\Tau^0=\{0\}$. Consider also the induced filtrations 
$(\Tau/\Tau^{u_1})^n$, $(\Tau/\Tau^{u_2})^n$ and $\Tau_v^n$ on $\Tau/\Tau^{u_1}$,
$\Tau/\Tau^{u_2}$, and $\Tau_v$.
Set
$$
\begin{array}{l@{}l}
\delta_{u_1\,u_2}^v=\sum_{n\geq 0}n\bigg(
\dim(\Tau_{v}^n/\Tau_{v}^{n-1})&-
                                 \dim(((\Tau/\Tau^{u_1})^n)/(\Tau/\Tau^{u_1})^{n-1})\\
&-\dim(((\Tau/\Tau^{u_2})^n)/(\Tau/\Tau^{u_2})^{n-1})
\bigg ).
\end{array}
$$

\begin{lemma}
\label{lem:deltaneg}
  If $n_{u_1u_2}^v\neq 0$ then $\delta_{u_1\,u_2}^v\geq 0$.
\end{lemma}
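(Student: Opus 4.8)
The plan is to reduce the inequality to the linear-algebra Lemma~\ref{lem:deltanegLA}, the geometric input being a point at which three suitably translated Schubert varieties of $G/P$ meet transversally; such a point exists exactly because $n_{u_1u_2}^v\neq 0$. First note that $n_{u_1u_2}^v\neq 0$ forces $X^{u_1}\cap X_v\neq\emptyset$ and $X^{u_2}\cap X_v\neq\emptyset$, since otherwise the cardinality computed in Lemma~\ref{lem:cardcohom} would vanish; hence $u_1\bleq v$ and $u_2\bleq v$ by Lemma~\ref{lem:rappelsRichardson}. As moreover $l(v)=l(u_1)+l(u_2)$, Lemma~\ref{lem:Kl} applies: for general $h\in U_v$, with $\tilde h\in U$ lifting $h$, the intersection $X^{u_1}_v\cap hX_v^{u_2}$ is transverse and, having cardinality $n_{u_1u_2}^v\neq 0$ by Lemma~\ref{lem:cardcohom}, nonempty. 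Fix such an $h$ and a point $x$ in the intersection; then $x\in\cX^{u_1}\cap\cX_v$, $\tilde h^{-1}x\in\cX^{u_2}\cap\cX_v$, and the inclusion-induced map
\[
T_xX_v\longrightarrow \frac{T_x(G/P)}{T_xX^{u_1}}\oplus\frac{T_x(G/P)}{T_x\tilde hX^{u_2}}
\]
is an isomorphism.

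Next I would transport this picture to the base point $o=P/P$. Pick $g\in G$ with $gx=o$. Since $x=g^{-1}o$ lies in $\cX^{u_1}=B^-u_1P/P$, one can write $g^{-1}=b_1u_1p_1$ with $b_1\in B^-$ and $p_1\in P$, so $gX^{u_1}=p_1^{-1}u_1^{-1}X^{u_1}$ ($X^{u_1}$ being $B^-$-stable) and $T_o(gX^{u_1})=\Ad(p_1^{-1})\Tau^{u_1}$; likewise $\tilde h^{-1}x\in\cX^{u_2}$ gives $T_o(g\tilde hX^{u_2})=\Ad(p_2^{-1})\Tau^{u_2}$ for some $p_2\in P$, and $x\in\cX_v=BvP/P$ gives $T_o(gX_v)=\Ad(p_3^{-1})\Tau_v$ for some $p_3\in P$. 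Applying to the displayed isomorphism the differential of the action of $g$, and then that of $p_3$, I obtain that for $g_1:=p_3p_1^{-1}\in P$ and $g_2:=p_3p_2^{-1}\in P$ the canonical map
\[
\Tau_v\longrightarrow \frac{\Tau}{\Ad(g_1)\Tau^{u_1}}\oplus\frac{\Tau}{\Ad(g_2)\Tau^{u_2}}
\]
is an isomorphism.

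The remaining point is that $\delta_{u_1u_2}^v$ is unchanged when $\Tau^{u_i}$ is replaced by $\Ad(g_i)\Tau^{u_i}$, which comes from the fact that $\Ad(P)$ preserves the weight filtration $\Tau^\bullet$. Writing $P=L\ltimes P^u$: the one-parameter subgroup $\tau$ lies in $\oplus_{\alpha_j\notin\Delta(P)}\ZZ_{>0}\varpi_{\alpha_j^\vee}$, hence pairs to $0$ with every simple root of $L$, so $\Ad(L)$ commutes with the $\tau$-grading of $\Tau=\lg/\lp$ and preserves each $\Tau^n$; and $\Lie P^u$ has strictly positive $\tau$-weights, so for $p\in P^u$ the operator $\Ad(p)-\Id$ strictly raises the $\tau$-weight on $\Tau$, all of whose $\tau$-weights are negative, whence $\Ad(p)$ preserves each $\Tau^n$ and acts trivially on $\gr\Tau$. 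Consequently $(\Ad(g)\Tau^{u_i})^n=\Ad(g)\big((\Tau^{u_i})^n\big)$ for $g\in P$, so $\gr^n(\Tau/\Ad(g_i)\Tau^{u_i})$ and $\gr^n(\Tau/\Tau^{u_i})$ have equal dimensions, and therefore $\delta_{u_1u_2}^v$ equals the analogous quantity formed from $\Tau_v$, $\Ad(g_1)\Tau^{u_1}$ and $\Ad(g_2)\Tau^{u_2}$.

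Finally, the canonical map of the second paragraph sends $\Tau_v^n=\Tau_v\cap\Tau^n$ into $(\Tau/\Ad(g_1)\Tau^{u_1})^n\oplus(\Tau/\Ad(g_2)\Tau^{u_2})^n$, so, being injective, it yields $\dim\Tau_v^n\leq\dim(\Tau/\Ad(g_1)\Tau^{u_1})^n+\dim(\Tau/\Ad(g_2)\Tau^{u_2})^n$ for every $n$; since $\dim\Tau_v=l(v)=\dim(\Tau/\Ad(g_1)\Tau^{u_1})+\dim(\Tau/\Ad(g_2)\Tau^{u_2})$, the computation in the proof of Lemma~\ref{lem:deltanegLA} (applied with $E=\Tau$, $T=\Tau_v$ and $U=\Ad(g_1)\Tau^{u_1}\cap\Ad(g_2)\Tau^{u_2}$, whose $\Theta$ is an isomorphism by the second paragraph) converts this into nonnegativity of the quantity attached to the translated triple, hence $\delta_{u_1u_2}^v\geq 0$. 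I expect the main obstacle to be the geometric bookkeeping of the middle steps: extracting the transverse point from $n_{u_1u_2}^v\neq 0$, identifying the transported tangent spaces with $\Ad(P)$-translates of $\Tau^{u_1},\Tau^{u_2},\Tau_v$, and checking that $\Ad(P)$ respects $\Tau^\bullet$ — this last being precisely where the special shape of $\tau$ is used.
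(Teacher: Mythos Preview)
Your proof is correct and follows essentially the same route as the paper: produce a transverse intersection point via Lemmas~\ref{lem:Kl} and~\ref{lem:cardcohom}, transport to the base point to exhibit an isomorphism $\Tau_v\to\Tau/p_1\Tau^{u_1}\oplus\Tau/p_2\Tau^{u_2}$ for some $p_i\in P$, use $P$-stability of the filtration $\Tau^\bullet$, and feed this into Lemma~\ref{lem:deltanegLA}. One small point of packaging: your parenthetical choice $E=\Tau$, $U=\Ad(g_1)\Tau^{u_1}\cap\Ad(g_2)\Tau^{u_2}$ does not quite make $(E/U)^n$ match $(\Tau/\Ad(g_1)\Tau^{u_1})^n\oplus(\Tau/\Ad(g_2)\Tau^{u_2})^n$ without an extra level-wise transversality check; the paper avoids this by taking $E=\Tau\oplus\Tau$, $U=p_1\Tau^{u_1}\oplus p_2\Tau^{u_2}$, and $T$ the diagonal copy of $p_3\Tau_v$, which makes $(E/U)^n$ split on the nose --- but your direct argument via injectivity plus Abel summation already suffices, so this is only a cosmetic improvement.
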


\begin{proof}
  Consider the map

$$
\eta\,:\,G\times_P C^+\longto \dX,
$$ 
as in the proof of Proposition~\ref{prop:ineg}.

By Lemma~\ref{lem:Kl}, there exists $b\in B$ such that $X_v^{u_1}\cap bX_v^{u_2}=\cX_v^\GP\cap {\cX^{u_1}_\GP}\cap
b{\cX^{u_2}}_\GP$ is transverse and nonempty by Lemma~\ref{lem:cardcohom}.
Let $g\in G$ such that $gP/P$ belongs to this intersection. 
There exist $p_1$, $p_2$ and $p_3$ in $P$ such that
$$
(g P/P,(g p_1u_1\inv \underline{o}^-, g p_2u_2\inv \underline{o}^-,g p_3v\inv
\underline{o}))
$$ 
belongs to the fiber $\eta^{-1}(\underline{o}^-,b\underline{o}^-,\underline{o})$.
Observe that 
$$
g(p_1u_1\inv X^{u_1}_\GP\cap p_2u_2\inv X^{u_2}_\GP\cap p_3v\inv X_v^\GP)=X_v^{u_1}\cap b X_v^{u_2}
$$
is transverse. By Lemma~\ref{lem:Kl}, the canonical map
\begin{eqnarray}
  \label{eq:isoTau}
p_3\Tau_v\longto \frac\Tau{p_1\Tau^{u_1}}\oplus \frac\Tau{p_2\Tau^{u_2}}
  \end{eqnarray}
is an isomorphism.

The lemma follows by applying Lemma~\ref{lem:deltanegLA} with
$E=\Tau\oplus\Tau$, $E^n=\Tau^n\oplus\Tau^n$, $U=p_1\Tau^{u_1}\oplus p_2\Tau^{u_2}$
and $T\simeq \Tau_v$ embedded in $E$ by $x\mapsto (p_3x,p_3x)$.
\end{proof}

\bigskip
For $w\in W$, we denote by $\Phi_w=w\inv\Phi^+\cap\Phi^-$ the
inversion set of $w$. 
Then, $\Phi_w$ consists in $l(w)$ real roots.
Recall that $\rho=\sum_{i=0}^l\varpi_{\alpha_i}$.

\begin{lemma}
  \label{lem:calculdelta}
With above notation, we have
$$
\delta_{u_1\,u_2}^v=\langle -v\inv\rho +u_1\inv\rho+u_2\inv \rho-\rho,\tau\rangle.
$$
\end{lemma}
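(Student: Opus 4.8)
The plan is to compute $\delta_{u_1\,u_2}^v$ directly from its definition, after making all three filtered spaces explicit as direct sums of root spaces. As a $T$-module, $\Tau=T_{P/P}(G/P)$ is $\bigoplus_{\gamma\in\Phi^-\setminus\Phi_L^-}\lg_\gamma$, where $\Phi_L$ denotes the root system of the Levi $L$; the summand $\lg_\gamma$ has $\tau$-weight $\langle\gamma,\tau\rangle$, and (as recalled in the construction of the product) $\Tau_n=0$ for $n\ge 0$, so $\Tau^n=\bigoplus_{\langle\gamma,\tau\rangle\ge -n}\lg_\gamma$. First I would check that $\Tau_v=T_{P/P}(v\inv X_v^\GP)$ and $\Tau^{u_i}=T_{P/P}(u_i\inv X^{u_i}_\GP)$ are $T$-submodules of $\Tau$: the base point $P/P$ is a smooth $T$-fixed point of each of these $T$-stable varieties, so the tangent spaces are spanned by root spaces. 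An orbit-map computation — the tangent space at $P/P$ of $u_i\inv X^{u_i}_\GP=\overline{(u_i\inv B^-u_i)P/P}$ is the image of $\operatorname{Ad}(u_i\inv)\lb^-$ in $\lg/\lp$, and similarly for $v\inv X_v^\GP$ with $\operatorname{Ad}(v\inv)\lb$ — combined with the minimality of $u_i,v$ in $W^P$ (which gives $u_i(\Phi_L^-),v(\Phi_L^-)\subset\Phi^-$, hence confines the relevant inversion sets to $\Phi^-\setminus\Phi_L^-$) identifies
$$\Tau_v=\bigoplus_{\gamma\in\Phi_v}\lg_\gamma,\qquad \Tau/\Tau^{u_i}\;\simeq\;\bigoplus_{\gamma\in\Phi_{u_i}}\lg_\gamma\quad(T\text{-equivariantly}),$$
where $\Phi_w=w\inv\Phi^+\cap\Phi^-$. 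Since $\Phi_w$ consists only of real roots, every $\lg_\gamma$ occurring here is one-dimensional, which is what makes the dimension counts below elementary.

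With these identifications the induced filtrations are transparent: $\lg_\gamma$ lies in the $n$-th filtration step exactly for $n\ge -\langle\gamma,\tau\rangle$. Hence, for any finite set $S$ of real roots, equipping $\bigoplus_{\gamma\in S}\lg_\gamma$ with the filtration induced by intersection with the $\Tau^n$, one gets $\sum_{n\ge 0}n\cdot\#\{\gamma\in S:-\langle\gamma,\tau\rangle=n\}=\sum_{\gamma\in S}(-\langle\gamma,\tau\rangle)=-\langle\sum_{\gamma\in S}\gamma,\ \tau\rangle$. Applying this with $S=\Phi_v,\Phi_{u_1},\Phi_{u_2}$ and splitting the defining sum of $\delta_{u_1\,u_2}^v$ (a finite sum) into its three parts yields
$$\delta_{u_1\,u_2}^v=-\Big\langle\;\sum_{\gamma\in\Phi_v}\gamma-\sum_{\gamma\in\Phi_{u_1}}\gamma-\sum_{\gamma\in\Phi_{u_2}}\gamma,\;\tau\;\Big\rangle.$$

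To conclude I would invoke the classical identity $\sum_{\gamma\in\Phi_w}\gamma=w\inv\rho-\rho$ for every $w\in W$, valid for any $\rho\in\lh^*$ with $\langle\rho,\alpha_i^\vee\rangle=1$ for all $i$ — in particular for $\rho=\sum_i\varpi_{\alpha_i}$; it follows by induction on a reduced expression from $s_i\rho=\rho-\alpha_i$, equivalently from $w\rho=\rho-\sum_{\beta\in\Phi^+,\,w\inv\beta\in\Phi^-}\beta$ upon applying $w\inv$. Substituting,
$$\delta_{u_1\,u_2}^v=-\big\langle(v\inv\rho-\rho)-(u_1\inv\rho-\rho)-(u_2\inv\rho-\rho),\ \tau\big\rangle=\langle-v\inv\rho+u_1\inv\rho+u_2\inv\rho-\rho,\ \tau\rangle,$$
which is the claimed formula. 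The only genuinely delicate point is the bookkeeping in the first paragraph — verifying that $\Tau_v$ and $\Tau/\Tau^{u_i}$ are supported exactly on the inversion sets $\Phi_v$ and $\Phi_{u_i}$ and that no root space of $\lp$ intrudes — after which everything reduces to a one-line weight count. As a consistency check, combined with Lemma~\ref{lem:deltaneg} this shows $\langle u_1\inv\rho+u_2\inv\rho-v\inv\rho-\rho,\ \tau\rangle\ge0$ whenever $n_{u_1u_2}^v\ne0$.
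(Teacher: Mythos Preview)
Your proof is correct and follows essentially the same route as the paper: identify $\Tau_v$ and $\Tau/\Tau^{u_i}$ with $\bigoplus_{\gamma\in\Phi_v}\lg_\gamma$ and $\bigoplus_{\gamma\in\Phi_{u_i}}\lg_\gamma$, reduce the defining sum for $\delta_{u_1\,u_2}^v$ to $-\sum_{\Phi_v}\langle\gamma,\tau\rangle+\sum_{\Phi_{u_1}}\langle\gamma,\tau\rangle+\sum_{\Phi_{u_2}}\langle\gamma,\tau\rangle$, and conclude via $\sum_{\gamma\in\Phi_w}\gamma=w^{-1}\rho-\rho$ (which the paper cites as \cite[Lemma~1.3.22]{Kumar:KacMoody}). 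Your version simply supplies more justification for the tangent-space identifications that the paper states without comment.
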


\begin{proof}
Since 
$$
\Tau/\Tau^{u_1}\simeq\oplus_{\alpha\in\Phi_{u_1}}\lg_\alpha\quad
  \Tau/\Tau^{u_2}\simeq\oplus_{\alpha\in\Phi_{u_2}}\lg_\alpha 
\quad{\rm and}\quad
\Tau_v \simeq\oplus_{\alpha\in\Phi_v}\lg_\alpha, 
$$
we have
$$
\delta_{u_1\,u_2}^v=-\sum_{\alpha\in\Phi_v}\langle\alpha,\tau\rangle+\sum_{\alpha\in\Phi_{u_1}}\langle\alpha,\tau\rangle+\sum_{\alpha\in\Phi_{u_2}}\langle\alpha,\tau\rangle.
$$
But by \cite[Lemma~1.3.22]{Kumar:KacMoody},
$w\inv\rho-\rho=\sum_{\alpha\in\Phi_w}\alpha$ and the lemma follows.
\end{proof}

\bigskip
Because of Lemma~\ref{lem:calculdelta}, Lemma~\ref{lem:deltaneg} can
be restated as: by assigning the degree $\langle
v\inv\rho-\rho,\tau\rangle\in\ZZ$ to $\epsilon_v$, one obtains a
filtration of the cohomology ring  $\Ho^*(G/P,\ZZ)$.
Then $\bkprod$ is defined to be the product of the associated graded
ring.

Then $\bkprod$ satisfies (with obvious identifications):
$$
\forall u_1,u_2\in W^P\qquad
\epsilon_{u_1}\bkprod\epsilon_{u_2}=\sum_{v\in
  W^P} {}^\bkprod n_{u_1\,u_2}^v\,\epsilon_v,
$$
where
$$
\begin{array}{lll}
  {}^\bkprod n_{u_1\,u_2}^v&=0&\mbox{ if $\delta_{u_1\,u_2}^v\neq 0$},\\[0.8em]
&=n_{u_1\,u_2}^v&\mbox{ if $\delta_{u_1\,u_2}^v=0$}.
\end{array}
$$

\bigskip
Let $Z(L)$ denote the center of $L$ and $Z(L)^\circ$ denote its
neutral component.
Given an $L$-module and a character $\chi\in X(Z(L)^\circ)$ we denote
by $V_\chi=\{v\in V\,:\,\forall t\in Z(L)^\circ\quad tv=\chi(t)v\}$
the associated weight space.
A priori, this construction of $\bkprod$ depends on our choice of an element 
$\tau\in
\oplus_{\alpha_j\not\in\Delta(P)}\ZZ_{>0}\varpi_{\alpha_j^\vee}$.
Actually, it does not depends on:

\begin{lemma}\label{lem:indeptau}
  % Let $\tau'\in
  % \oplus_{\alpha_j\not\in\Delta(P)}\ZZ_{>0}\varpi_{\alpha_j^\vee}$.
   Let $u_1$, $u_2$, and $v$
in $W^P$ such that ${}^\bkprod n_{u_1u_2}^v\neq 0$. 
Then, for any $\chi\in X(Z(L)^\circ)$ 
$$
\dim((\Tau_v)_\chi)=\dim(\left(
\frac\Tau{\Tau^{u_1}}
\right)_\chi)+\dim(\left(
\frac\Tau{\Tau^{u_2}}
\right)_\chi).
$$
\end{lemma}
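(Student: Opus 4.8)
The plan is to pass to the associated graded of the isomorphism~\eqref{eq:isoTau} furnished by the proof of Lemma~\ref{lem:deltaneg}, and to observe that, unlike~\eqref{eq:isoTau} itself, this $\tau$-graded version is automatically $Z(L)^\circ$-equivariant. By the definition of $\bkprod$, the hypothesis ${}^\bkprod n_{u_1u_2}^v\neq 0$ forces $n_{u_1u_2}^v\neq 0$ and $\delta_{u_1u_2}^v=0$, hence also $l(v)=l(u_1)+l(u_2)$. Running the beginning of the proof of Lemma~\ref{lem:deltaneg} produces elements $p_1,p_2,p_3\in P$ and the isomorphism
\[
\Psi\,:\,p_3\Tau_v\xrightarrow{\sim}\frac{\Tau}{p_1\Tau^{u_1}}\oplus\frac{\Tau}{p_2\Tau^{u_2}}
\]
of~\eqref{eq:isoTau}. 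Since $\delta_{u_1u_2}^v=0$, Lemma~\ref{lem:deltanulLA}, applied in the situation of that proof (with $E=\Tau\oplus\Tau$ and the filtration by the $\Tau^n$), shows that $\gr\Psi$ is again an isomorphism.

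The core of the argument is the computation of $\gr\Psi$. Because $\tau$ lies in $Z(L)^\circ$, the $\tau$-grading of $\Tau=\lg/\lp$ refines to the $Z(L)^\circ$-weight decomposition, $L$ preserves every $\tau$-eigenspace $\Tau_{-n}$ (it centralises $\tau(\CC^*)$), and the unipotent radical $P^u$ of $P$ acts trivially on $\gr^n\Tau=\Tau_{-n}$ because $\lu$ has strictly positive $\tau$-weights; thus $P$ acts on $\gr^n\Tau$ through the projection $P\to L$. Writing $\bar\ell_i\in L$ for the image of $p_i$, a direct computation using the $P$-stability of the filtration identifies, as $\tau$-graded subspaces resp.\ quotients of $\Tau$,
\[
\gr(p_3\Tau_v)=\bar\ell_3\Tau_v,\qquad \gr\Big(\frac{\Tau}{p_i\Tau^{u_i}}\Big)=\frac{\Tau}{\bar\ell_i\Tau^{u_i}}\quad(i=1,2),
\]
and shows that under these identifications $\gr\Psi$ is the tautological map $w\mapsto(w+\bar\ell_1\Tau^{u_1},\,w+\bar\ell_2\Tau^{u_2})$.

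To conclude, note that $\Tau_v,\Tau^{u_1},\Tau^{u_2}$ are all $T$-stable, hence $Z(L)^\circ$-stable, and $\bar\ell_i\in L$ commutes with the central torus $Z(L)^\circ$; therefore $\bar\ell_3\Tau_v$, $\bar\ell_1\Tau^{u_1}$, $\bar\ell_2\Tau^{u_2}$ are $Z(L)^\circ$-stable and the tautological map $\gr\Psi$ is $Z(L)^\circ$-equivariant. Being an isomorphism, it yields $\dim((\bar\ell_3\Tau_v)_\chi)=\dim((\Tau/\bar\ell_1\Tau^{u_1})_\chi)+\dim((\Tau/\bar\ell_2\Tau^{u_2})_\chi)$ for every $\chi\in X(Z(L)^\circ)$; and since $\bar\ell_i$ induces a $Z(L)^\circ$-equivariant isomorphism $\Tau_v\cong\bar\ell_3\Tau_v$, resp.\ $\Tau/\Tau^{u_i}\cong\Tau/\bar\ell_i\Tau^{u_i}$, this is precisely the asserted equality.

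The only genuine obstacle is the middle paragraph: the isomorphism~\eqref{eq:isoTau} is really twisted by the (pro)unipotent-type elements $p_i\in P$ and is not $Z(L)^\circ$-equivariant before passing to $\gr$. Equivariance emerges only on the associated graded, where each $p_i$ collapses to its Levi component $\bar\ell_i$, which commutes with $Z(L)^\circ$. Making the two displayed identifications of $\gr$ precise, and checking that $\gr\Psi$ indeed becomes the tautological map, is where the work lies; the rest is bookkeeping with the $\tau$-weight decomposition of $\lg/\lp$.
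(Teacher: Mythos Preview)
Your proposal is correct and follows essentially the same approach as the paper: both start from the isomorphism~\eqref{eq:isoTau}, use $\delta_{u_1u_2}^v=0$ together with Lemma~\ref{lem:deltanulLA} to pass to the associated graded, observe that on the graded level the unipotent parts of the $p_i$ act trivially (the paper records this as $g^u_ix\in x+\Tau^{k+1}$, you phrase it as $P^u$ acting trivially on $\gr^n\Tau$), and then exploit that the resulting map twisted only by Levi elements is $Z(L)^\circ$-equivariant. The paper normalises $p_3=e$ and compares matrices of $\gr\bar\Theta$ with a map $\tilde\Theta$ built from the Levi parts $l_i$, whereas you keep $p_3$ and speak of $\gr\Psi$ directly, but the content is identical.
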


\begin{proof}
As in the proof of Lemma~\ref{lem:deltaneg}, choose $p_1$, $p_2$, and
$p_3$ in $P$ such that the linear map \eqref{eq:isoTau} is an
isomorphism. Up to multiplying by $p_3\inv$ we assume that $p_3$ is
trivial. 
Then
$$
\bar \Theta\,:\,\Tau_v\longto \frac\Tau{p_1\Tau^{u_1}}\oplus \frac\Tau{p_2\Tau^{u_2}}
$$
is an isomorphism.
For $i=1,2$, write $p_i=g^u_il_i$ with $g^u_i\in P^u$ and $l_i\in L$. 

Let $k\in \ZZ$ and $x\in \Tau_{-k}$. Then 
\begin{eqnarray}
  \label{eq:actionn}
  g^u_ix\in x+\Tau^{k+1}. 
\end{eqnarray}
Fix bases of $\Tau_v$, $\frac\Tau{p_1\Tau^{u_1}}$ and
$\frac\Tau{p_2\Tau^{u_2}}$ adapted to the filtrations. 
Using~\eqref{eq:actionn}, one can check that the matrices of
$\gr\bar\Theta$ and of 
$$
\tilde \Theta\,:\,\Tau_v\longto \frac\Tau{l_1\Tau^{u_1}}\oplus \frac\Tau{l_2\Tau^{u_2}}
$$
coincide.
Now Lemma~\ref{lem:deltanulLA} shows that $\delta_{u_1\,u_2}^v=0$ if
and only if $\tilde\Theta$ is an isomorphism. Since $\tilde\Theta$ is
$Z(L)^\circ$-equivariant, we have 
$$
\dim((\Tau_v)_\chi)=\dim(\left(
\frac\Tau{l_1\Tau^{u_1}}
\right)_\chi)+\dim(\left(
\frac\Tau{l_2\Tau^{u_2}}
\right)_\chi),
$$
for any $\chi$.
Since $Z(L)^\circ$ is central in $L$, we have for $i=1,2$
$$
\dim(\left(
\frac\Tau{l_i\Tau^{u_i}}
\right)_\chi=\dim(\left(
\frac\Tau{\Tau^{u_i}}
\right)_\chi.
$$
The lemma is proved.
\end{proof}

\subsection{On Levi movability}

\begin{prop}
  \label{prop:LmovBKBprod}
We keep notation and assumptions of Section~\ref{sec:bkbprod} and
assume in addition that $P$ is of finite type. Recall that
$n_{u_1\,u_2}^v\neq 0$. 
We assume that there exist $l_1,l_2,l\in L$ such that 
$$
l_1u_1\inv\cX^{u_1}\cap l_2u_2\inv\cX^{u_2}\cap lv\inv\cX_v 
$$
is finite. Then
$$
%\langle v\inv\rho -u_1\inv\rho-u_2\inv\rho+\rho,\tau\rangle
\delta_{u_1\,u_2}^v
=0.
$$
\end{prop}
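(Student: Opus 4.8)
The plan is to reduce the statement to the linear-algebra criterion in Lemma~\ref{lem:deltanulLA}, exactly as Lemma~\ref{lem:indeptau} does, but starting from a finiteness hypothesis on a product of \emph{translated cells} by elements of $L$ rather than from the a priori existence of a transverse intersection over $G$. First I would exploit that $P$ is of finite type, so $G/P$ and all the relevant Schubert and Richardson varieties are ordinary finite-dimensional projective varieties; in particular $L$ is a reductive group acting on the finite-dimensional affine space $\Tau = T_{P/P}G/P$ and its subspaces $\Tau^{u_1},\Tau^{u_2},\Tau_v$ are $L$-stable subspaces, with the $\tau$-filtration being the filtration by $Z(L)^\circ$-weights. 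The key geometric input is the identification (from Lemma~\ref{lem:Kl} and the discussion in Lemma~\ref{lem:deltaneg}) of the tangent spaces at a point of a triple intersection of translated Schubert/opposite-Schubert cells with the subspaces $\Tau^{u_i}$, $\Tau_v$ translated by the corresponding group elements.

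The main steps, in order: (1) Translate the hypothesis: if $l_1 u_1\inv\cX^{u_1}\cap l_2 u_2\inv\cX^{u_2}\cap l v\inv\cX_v$ is finite, pick a point $x$ in it; by $P$-equivariance (and absorbing $l$ by acting on the whole picture) we may arrange $x = P/P$, so that $l_i u_i\inv\cX^{u_i}$ and $l v\inv \cX_v$ all pass through the base point. (2) Compute the tangent space condition at $x$: finiteness of the intersection forces the dimensions to add up, $l(v) = l(u_1)+l(u_2)$ (already assumed), and moreover the natural map
$$
\Tau_v \longrightarrow \frac{\Tau}{l_1\Tau^{u_1}} \oplus \frac{\Tau}{l_2\Tau^{u_2}}
$$
is \emph{injective}, hence an isomorphism by the dimension count. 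Here one uses that the tangent space at $x$ to $l_i u_i\inv \cX^{u_i}$ is $l_i\Tau^{u_i}$ (the translate of the cell's tangent space, which is what Lemma~\ref{lem:rappelsRichardson}(vi) and the setup in Section~\ref{sec:bkbprod} give), and similarly for $\cX_v$. (3) Now run the argument of Lemma~\ref{lem:indeptau}: writing $l_i \in L$, the associated graded map $\gr\bar\Theta$ has, in bases adapted to the $\tau$-filtration, the same matrix as the map built from the $L$-parts — but here the $l_i$ are already in $L$, so $\gr\bar\Theta$ is literally $Z(L)^\circ$-equivariant and is the graded map attached to $\bar\Theta$. Isomorphism of $\bar\Theta$ plus finite-dimensionality lets us invoke Lemma~\ref{lem:deltanulLA}: $\bar\Theta$ an isomorphism together with the equivariance forces $\gr\bar\Theta$ to be an isomorphism, hence $\delta_{u_1\,u_2}^v = 0$.

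The subtle point — and the step I expect to be the main obstacle — is step (2)—(3): knowing only that the \emph{cell} intersection $l_1 u_1\inv\cX^{u_1}\cap l_2 u_2\inv\cX^{u_2}\cap l v\inv\cX_v$ is finite, one must deduce that $\bar\Theta$ (the honest, ungraded map $\Tau_v\to\Tau/l_1\Tau^{u_1}\oplus\Tau/l_2\Tau^{u_2}$) is an isomorphism, i.e. that the scheme-theoretic intersection is transverse at the chosen point and not merely set-theoretically finite. This requires a local argument: finiteness of the intersection means the three subvarieties meet in dimension $0$, which by dimension-counting in the smooth ambient $G/P$ forces the intersection of tangent spaces $\Tau_v\cap l_1\Tau^{u_1}\cap l_2\Tau^{u_2}$ to be $\{0\}$ at $x$ (a point of a $0$-dimensional intersection inside a smooth variety, with the codimensions adding to $\dim G/P$, has transverse tangent spaces provided the cells are smooth there — which holds since $x$ lies in the open cells). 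That vanishing of the triple tangent intersection is precisely injectivity of $\bar\Theta$, and the dimension count $\dim\Tau_v = l(v) = l(u_1)+l(u_2) = \codim\Tau^{u_1}+\codim\Tau^{u_2}$ upgrades it to an isomorphism. Once this is in hand, the rest is the verbatim application of Lemmas~\ref{lem:deltanulLA} and the bookkeeping already done in Lemma~\ref{lem:indeptau}.
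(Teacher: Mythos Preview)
Your overall plan is sound, and once step~(2) is secured your step~(3) is exactly right: since $\tau\in Z(L)^\circ$ commutes with each $l_i\in L$, the subspaces $l_i\Tau^{u_i}$ and $l\Tau_v$ are $\tau$-stable, so the map $\bar\Theta\colon l\Tau_v\to \Tau/l_1\Tau^{u_1}\oplus\Tau/l_2\Tau^{u_2}$ is $\tau$-equivariant, and an equivariant isomorphism between finite-dimensional spaces forces equality of all weight multiplicities, whence $\delta_{u_1u_2}^v=0$ by Lemma~\ref{lem:deltanulLA}. There are, however, two problems upstream.

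First, a minor one: ``$P$ of finite type'' does \emph{not} make $G/P$ finite-dimensional. For affine $G$ and $P=P_0$, the quotient is the affine Grassmannian. What finite type buys you is that $L$ is a genuine reductive group and that $P^{u,-}$ admits finite-dimensional unipotent quotients $P^{u,-}/P^{u,-}_{\ge N}$. This slip is harmless for your linear algebra (since $\Tau_v$ and $\Tau/\Tau^{u_i}$ are finite-dimensional regardless), but your phrase ``dimension-counting in the smooth ambient $G/P$'' is not meaningful as stated.

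Second, and this is the real gap: your justification of transversality in step~(2) is wrong. A $0$-dimensional set-theoretic intersection of smooth subvarieties of complementary dimensions need \emph{not} have trivial tangent intersection. In $\AA^2$ the smooth curves $\{y=0\}$ and $\{y=x^2\}$ meet only at the origin, yet their tangent lines there coincide. So finiteness of $l_1u_1^{-1}\cX^{u_1}\cap l_2u_2^{-1}\cX^{u_2}\cap lv^{-1}\cX_v$ does not by itself give $l\Tau_v\cap l_1\Tau^{u_1}\cap l_2\Tau^{u_2}=\{0\}$.

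The paper closes this gap by using a structural feature you have not invoked: in the big cell $P^{u,-}\!\cdot P/P\cong P^{u,-}$, each translated cell is (the orbit of $P/P$ under) a \emph{closed subgroup} of $P^{u,-}$, namely $l_i(u_i^{-1}U^-u_i\cap P^{u,-})l_i^{-1}$ and $l(v^{-1}Uv\cap P^{u,-})l^{-1}$. After passing to the finite-dimensional unipotent quotient $P^{u,-}/P^{u,-}_{\ge N}$ (this is where ``$P$ of finite type'' enters), the exponential map is an isomorphism of varieties onto the Lie algebra, and closed connected subgroups correspond to Lie subalgebras. Thus the three cells become three \emph{linear} subspaces $\lk_1,\lk_2,\lk_3$, and for linear subspaces a finite intersection is automatically $\{0\}$. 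That is precisely the tangent-space transversality you need; from there your step~(3) applies verbatim.
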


\begin{proof}
Identify $P^{u,-}$ with an $L$-stable open subset of $G/P$.
% For $i=1$ or $2$, $u_i\inv \cX^{u_i}$ is contained in $P^{u,-}$, closed
% in $P^{u,-}$ and of codimension $l(u_i)$. 
% Similarly, $v\inv \cX_{v}$ is contained in $P^{u,-}$, closed
% and of dimension $l(v)$. 
% Then, any irreducible component of 
% $l_1u_1\inv\cX^{u_1}\cap lv\inv\cX_v$ (resp. $
% l_2u_2\inv\cX^{u_2}\cap lv\inv\cX_v $) has dimension at least
% $l(v)-l(u_1)$ (resp. $l(v)-l(u_2)$). 
% Applying the inequality on the dimension of an intersection in the
% affine variety $v\inv \cX_v$, one gets
% $$
% \begin{array}{l@{\,}c@{\,}l}
% l(v)-\dim(l_1u_1\inv\cX^{u_1}\cap l_2u_2\inv\cX^{u_2}\cap
% lv\inv\cX_v)&\leq& 2l(v)\\
% &-&\dim (l_1u_1\inv\cX^{u_1}\cap lv\inv\cX_v)\\
% &-&\dim(l_2u_2\inv\cX^{u_2}\cap lv\inv\cX_v)\\
% &\leq &2l(v)-l(u_1)-l(u_2)\\
% &\leq &l(v).
% \end{array}
% $$
% Since $
% l_1u_1\inv\cX^{u_1}\cap l_2u_2\inv\cX^{u_2}\cap lv\inv\cX_v 
% $
% is finite, all these inequalities have to be equalities. In
% particular, for $i=1,2$:
% $$
% \dim_{P/P}(l_1u_i\inv\cX^{u_i}\cap lv\inv\cX_v)=l(v)-l(u_i).
% $$
For any $n\geq 2$, consider the normal group $P^{u,-}_{\geq n}$ of
$P^{u,-}$ such that $P^{u,-}/P^{u,-}_{\geq n}$ is a finite dimensional
unipotent algebraic group with Lie algebra
$$
\oplus_{\alpha\in\Phi,\,1\leq
  \langle\alpha,\tau\rangle<n}\lg_\alpha$$
(see~\cite[Lemma~6.1.11]{Kumar:KacMoody}). 
Let $\pi_n\,:\,P^{u,-}\longto P^{u,-}/P^{u,-}_{\geq n}$ be the quotient
map.  
Observe that the actions of $\tau$ and $L$ commute and that  $P^{u,-}_{\geq n}$ is $L$-stable.

Since $\Phi_{u_i}$  and $\Phi_v$ are finite, there exists $N$ such
that for any $i=1,2$, 
$$u_i\inv B^-u_i\cap P\supset P^{u,-}_{\geq N}$$
and 
$$
v\inv Bv\cap P^{u,-}_{\geq N}=\{e\}.
$$
Then $\pi_N(u_i\inv\cX^{u_i})$ has codimension $l(u_i)$, for $i=1,2$;
and $\pi_N(v\inv\cX_v)$ has dimension $l(v)$.
Moreover, $\pi_N$ maps bijectively $l_1u_1\inv\cX^{u_1}\cap l_2u_2\inv\cX^{u_2}\cap
lv\inv\cX_v$ onto
\begin{eqnarray}
  \label{eq:18}
\pi_N( l_1u_1\inv\cX^{u_1})\cap \pi_N( l_2u_2\inv\cX^{u_2})\cap \pi_N( lv\inv\cX_v).
\end{eqnarray}

Consider now the exponential map
$$
\Exp\,:\,\oplus_{\alpha\in\Phi,\,1\leq
  \langle\alpha,\tau\rangle<n}\lg_\alpha\longto 
 P^{u,-}/P^{u,-}_{\geq N}.
$$
Since $P^{u,-}/P^{u,-}_{\geq N}$ is unipotent, $\Exp$ is an isomorphism of
varieties. 
Let $\lk_1$, $\lk_2$ and $\lk_3$ denote the Lie algebras of $\pi_N(
l_1u_1\inv\cX^{u_1}=l_1u_1\inv U^-u_1l_1\inv\cap P^{u,-})$,  $\pi_N(
l_1u_2\inv\cX^{u_2})$ and $\pi_N(
lv\inv\cX_v)$ respectively.
These subspaces are stable by the action of $\tau$ and decompose as $\lk_i=\oplus_{n<0}\lk_i^n$.
Since the intersection~\eqref{eq:18} is finite, $\lk_1\cap
\lk_2\cap\lk_3=\{0\}$.
Since $\dim(\lk_3)=\codim(\lk_1)+\codim(\lk_2)$, it follows that the
natural map
$$
\lk_3\longto\frac{\oplus_{\alpha\in\Phi,\,1\leq
  \langle\alpha,\tau\rangle<n}\lg_\alpha}{\lk_2}
\oplus
\frac{\oplus_{\alpha\in\Phi,\,1\leq
  \langle\alpha,\tau\rangle<n}\lg_\alpha}{\lk_3}
$$
is an isomorphism $\tau$-equivariant. 
Then for any integer $n$ the $\tau$-eigenspace 
$
\lk_3^n$
has dimension $codim(\lk_1^n)+\codim(\lk_2^n)
$.
Since the actions of $L$ and $\tau$ commute, one deduces that $\delta_{u_1\,u_2}^v=0$.
\end{proof}

\section{Multiplicativity in cohomology}

\subsection{The multiplicativity}

Let $B\subset P\subset Q$ be two standard parabolic subgroups of $G$.
Let $L^P$ and $L^Q$ denote the Levi subgroups of $P$ and $Q$
containing $T$.
Then $L^Q\cap P$ is a parabolic subgroup of $L^Q$ and $Q/P=L^Q/(L^Q\cap
P)$. 

In this section, we study relations between structure constants of
$\Ho^*(G/P,\ZZ)$, $\Ho^*(G/Q,\ZZ)$ and $\Ho^*(Q/P,\ZZ)$.
To be more precise, we extend results of \cite{Rich:mult,multi} from
the classical case to the Kac-Moody case.

Let $W_Q^P$ be the
set of minimal length representative in $W_Q$ of the classes
$W_Q/W_P$.

\begin{lemma}
  \label{lem:WQP}
The map
$$
\begin{array}{ccc}
  W^Q\times W_Q^P&\longto&W^P\\
(\bar w,\tilde w)&\longmapsto&\bar w\tilde w
\end{array}
$$
is bijective.
\end{lemma}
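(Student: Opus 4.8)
The claim is a standard parabolic factorization of minimal-length coset representatives, here in the Kac–Moody (Coxeter) generality, so the plan is to prove it purely at the level of Coxeter groups, using only the theory of minimal coset representatives. Recall the three relevant facts: for a standard parabolic $W_P$ in a Coxeter group $W$, every $w\in W$ factors uniquely as $w=w^Pw_P$ with $w^P\in W^P$ and $w_P\in W_P$, and this factorization is \emph{length-additive}: $\ell(w)=\ell(w^P)+\ell(w_P)$; moreover $w^P$ is the unique shortest element of $wW_P$. I would apply this twice, with the nesting $W_P\subset W_Q\subset W$.

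\textbf{Step 1 (well-definedness).} Given $(\bar w,\tilde w)\in W^Q\times W_Q^P$, I claim $\bar w\tilde w\in W^P$. Since $\tilde w\in W_Q^P\subset W_Q$ and $\bar w\in W^Q$, the product $\bar w\tilde w$ lies in the coset $\bar w W_Q$, and because $\bar w$ is the minimal representative of $\bar w W_Q$ and $\tilde w\in W_Q$, one has $\ell(\bar w\tilde w)=\ell(\bar w)+\ell(\tilde w)$ (this is the key length-additivity for $W^Q\cdot W_Q$). To see $\bar w\tilde w\in W^P$, take any $s$ a simple reflection in $W_P\subset W_Q$; I must check $\ell(\bar w\tilde w s)>\ell(\bar w\tilde w)$. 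Since $\tilde w\in W_Q^P$, we have $\ell(\tilde w s)>\ell(\tilde w)$, i.e.\ $\tilde w s\in W_Q$ has a reduced word extending that of $\tilde w$; combined with $\bar w\in W^Q$ one gets $\ell(\bar w\tilde w s)=\ell(\bar w)+\ell(\tilde w s)=\ell(\bar w)+\ell(\tilde w)+1=\ell(\bar w\tilde w)+1$. Hence $\bar w\tilde w\in W^P$.

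\textbf{Step 2 (surjectivity and injectivity).} Given $w\in W^P$, first write $w=\bar w\,w'$ with $\bar w\in W^Q$, $w'\in W_Q$, length-additively. Then factor $w'$ inside the Coxeter group $W_Q$ with respect to its parabolic $W_P$: $w'=\tilde w\,w''$ with $\tilde w\in W_Q^P$, $w''\in W_P$, again length-additively. Then $w=(\bar w\tilde w)w''$ with $\bar w\tilde w\in W^P$ by Step 1, $w''\in W_P$, and the two factorizations are length-additive, so $\ell(w)=\ell(\bar w\tilde w)+\ell(w'')$; by uniqueness of the $W^P$–$W_P$ factorization of $w$ and the fact that $w\in W^P$ already (so its $W_P$-part is $e$), we get $w''=e$ and $w=\bar w\tilde w$. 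This gives surjectivity. For injectivity: if $\bar w_1\tilde w_1=\bar w_2\tilde w_2$ with $\bar w_i\in W^Q$, $\tilde w_i\in W_Q^P$, then both sides lie in the same $W_Q$-coset, so $\bar w_1=\bar w_2$ (minimal representative of that coset) by Step 1's length-additivity, whence $\tilde w_1=\tilde w_2$.

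\textbf{Main obstacle.} There is no deep obstacle; the only thing to be careful about is invoking the length-additivity $\ell(\bar w\tilde w)=\ell(\bar w)+\ell(\tilde w)$ for $\bar w\in W^Q,\ \tilde w\in W_Q$ (not merely $\tilde w\in W_Q^P$) and its companion inside $W_Q$. Both are standard Coxeter-group facts (see e.g.\ \cite[Proposition~2.4.4]{BjoBre:Coxgrp}), and everything else is bookkeeping with reduced words; no properties specific to Kac–Moody groups beyond the combinatorics of $W$ are needed.
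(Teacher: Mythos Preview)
Your proof is correct and is the standard Coxeter-combinatorial argument, relying only on length-additivity of the factorization $W^Q\cdot W_Q$ (and its analogue $W_Q^P\cdot W_P$ inside $W_Q$) together with uniqueness of minimal coset representatives. The paper's proof reaches the same conclusion by a different route: it uses the root-theoretic characterization $W^P=\{w\in W: w^{-1}\Phi^-\cap\Phi^+(L^P)=\emptyset\}$ and verifies well-definedness and surjectivity by direct manipulation of inversion sets (e.g.\ rewriting $w^{-1}\Phi^-\cap\Phi^+(L^P)$ as $\tilde w^{-1}\Phi^-(L^Q)\cap\Phi^+(L^P)$ via the fact that $\bar w^{-1}\Phi^-\cap\Phi(L^Q)=\Phi^-(L^Q)$ when $\bar w\in W^Q$). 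The injectivity step is essentially identical in both. Your approach has the advantage of being purely Coxeter-theoretic and hence manifestly valid for any Coxeter group, with no appeal to roots; the paper's approach is closer to the Lie-theoretic language used elsewhere in that section and makes the inversion-set bookkeeping explicit, which is convenient when those sets reappear in the subsequent cohomology computations.
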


\begin{proof}
  Recall that (see \cite[Exercice 1.3.E]{Kumar:KacMoody}) 
$W^P=\{w\in W\,:\,w\inv \Phi^-\cap
  \Phi^+(L^P)=\emptyset\}$.
We first check that $w=\bar w\tilde w$ belongs to $W^P$; this shows that the
map of the lemma is well defined.
Write 
$$w\inv \Phi^-\cap  \Phi^+(L^P)=w\inv\bar w(\bar w\inv\Phi^-\cap
\tilde w\Phi^+(L^P)).$$
Note that $\Phi^+(L^P)\subset\Phi(L^Q)=\tilde w\Phi(L^Q)$ and that 
$\bar w\inv\Phi^-\cap\Phi(L^Q)=\Phi^-(L^Q)$
(since $\bar w\in W^Q$). Hence
$$
\begin{array}{ccl}
  w\inv \Phi^-\cap  \Phi^+(L^P)&=&\tilde w\inv\bigg(
\bar w\inv \Phi^-\cap\Phi(L^Q)\cap\tilde w \Phi^+(L^P)
\bigg)\\
&=&\tilde w\inv\bigg(\Phi^-(L^Q)\cap\tilde w \Phi^+(L^P) \bigg)\\
&=&\tilde w\inv\Phi^-(L^Q)\cap\Phi^+(L^P).
\end{array}
$$
This last intersection is empty 
since $\tilde w\in W_Q^P$. Then $w\in W^P$.

Fix now $w\in W^P$. If $w=\bar w\tilde w$ (with $\bar w\in W^Q$ and
$\tilde w\in W_Q^P$) then $wW_Q=\bar wW_Q$ and $\bar w$ is necessarily
the unique
representative of $wW_Q$ in $W^Q$. Since $\tilde w=\bar w\inv w$, this
proves that the map is injective. 

Consider now the
representative $\bar w$ of $wW_Q$ in $W^Q$ and set $\tilde w=\bar
w\inv w$.
To prove the surjectivity, it remains to prove that $\tilde w\in W_Q^P$.
The equality $\bar wW_Q=wW_Q$ implies that $\tilde w\in W_Q$. Moreover
$$
\begin{array}{c@{\,}c@{\,}l}
  \tilde w\inv \Phi^-(L^Q)\cap  \Phi^+(L^P)&=&w\inv\bar w\Phi^-(L^Q)\cap\Phi^+(L^P)\\
&\subset&w\inv\Phi^-\cap\Phi^+(L^P),%&\mbox{}\\
%&=&\emptyset&\mbox{since }
\end{array}
$$
since $\bar w\Phi^-(L^Q)\subset \Phi^-$ $(\bar w\in W^Q)$. 
This last intersection is empty since $w\in W^P$.
The lemma is proved.
\end{proof}

\bigskip
Recall that, for $w\in W$
$$
\begin{array}{l@{\hspace{2cm}}l}
  X_w^\GP=\overline{BwP/P}& \cX_w^\GP=BwP/P\\
X^w_\GP=\overline{B^-wP/P}& \cX^w_\GP=B^-wP/P
\end{array}
$$

\begin{lemma}(See \cite[Lemma~1]{BK})
  \label{lem:gtop}
Let $w\in W^P$ and $g\in G$.
\begin{enumerate}
\item If $g\cX_w^\GP$ contains $P/P$ then there exists $p\in P$ such
  that $g\cX_w^\GP=pw\inv \cX_w^\GP$.
\item If $g\cX^w_\GP$ contains $P/P$ then there exists $p\in P$ such
  that $g\cX^w_\GP=pw\inv \cX^w_\GP$.
\end{enumerate}
\end{lemma}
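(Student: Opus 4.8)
The plan is to reduce both statements to elementary double-coset bookkeeping in $G$; the hypothesis $w\in W^P$ will not actually be needed, it only serves to pin down $w$ as the canonical representative of its coset in $W/W_P$.

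For part (i): recall $\cX_w^\GP=BwP/P$, and $G$ acts on $G/P$ by left translation, so $g\cX_w^\GP=gBwP/P$. The base point $P/P$ lies in this set if and only if $P\cap gBwP\neq\emptyset$; since $BwP$ is a union of right $P$-cosets, so is $gBwP$, hence this is equivalent to $e\in gBwP$, i.e.\ to $g^{-1}\in BwP$, i.e.\ to $g\in (BwP)^{-1}=Pw^{-1}B$ (inverting a double coset: $(b\dot w p)^{-1}=p^{-1}\dot w^{-1}b^{-1}$ for a representative $\dot w$, so no Bruhat decomposition is invoked). Thus I would write $g=pw^{-1}b$ with $p\in P$ and $b\in B$, and then, using $bB=B$,
\[
g\cX_w^\GP=pw^{-1}bBwP/P=pw^{-1}BwP/P=pw^{-1}\cX_w^\GP,
\]
which is the asserted equality.

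For part (ii): the argument is verbatim the same with $B$ replaced by $B^-$, using $\cX^w_\GP=B^-wP/P$. The hypothesis $P/P\in g\cX^w_\GP$ becomes $g\in Pw^{-1}B^-$, one writes $g=pw^{-1}b^-$ with $p\in P$, $b^-\in B^-$, and $b^-B^-=B^-$ gives $g\cX^w_\GP=pw^{-1}\cX^w_\GP$.

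I do not anticipate a genuine obstacle here: the argument is purely formal. The only points needing (minor) care are that the set-membership statement $P/P\in gBwP/P$ translates to $g^{-1}\in BwP$ precisely because $BwP$ is right-$P$-stable, and that one must keep track of which of $P$, $B$ lands on which side after taking inverses.
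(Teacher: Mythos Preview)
Your argument is correct and is essentially the same as the paper's: both translate the hypothesis $P/P\in g\cX_w^\GP$ into $g\in Pw^{-1}B$ (the paper writes this as $gb\dot w=p$ for some $b\in B$, $p\in P$) and then use $bBwP=BwP$ to conclude. Your write-up is a bit more explicit about why $P/P\in gBwP/P$ is equivalent to $e\in gBwP$, but the underlying manipulation is identical.
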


\begin{proof}
Fix a representative $\dot w$ of $w$ in $N(T)$.
  Let $b\in B$ and $p\in P$ such that $gb\dot w=p$. Then
  $g\cX_w^\GP=p\dot w\inv b\inv \cX_w^\GP=pw\inv \cX_w^\GP$.
The second assertion works similarly.
\end{proof}

\bigskip
\begin{lemma}
\label{lem:Xinter}
  Let $\tilde w\in W_Q^P$ and $\bar w\in W^Q$. Set $w=\bar w\tilde w$.
Then
\begin{enumerate}
\item $\bar w\inv \cX^{G/P}_w\cap Q/P=\cX^{Q/P}_{\tilde w}$;
\item $\bar w\inv \cX_{G/P}^w\cap Q/P=\cX_{Q/P}^{\tilde w}$.
\end{enumerate}
\end{lemma}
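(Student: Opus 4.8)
The plan is to prove both equalities by a direct computation in $G$, using only two standard inputs: the root subgroup decompositions of the pro-unipotent radicals attached to $B\subset P\subset Q$, and the characterization $W^Q=\{w\in W:w\inv\Phi^-\cap\Phi^+(L^Q)=\emptyset\}$ recalled in the proof of Lemma~\ref{lem:WQP}. First I would fix notation: write $U=U_Q\cdot U_{L^Q}$ and $U^-=U_Q^-\cdot U_{L^Q}^-$, where $U_Q$ is the pro-unipotent radical of $Q$ (normal in $Q$), $U_Q^-$ is the opposite pro-unipotent subgroup, $U_{L^Q}=U\cap L^Q$ and $U_{L^Q}^-=U^-\cap L^Q$. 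Since $B\subset Q$ one has $B\cap Q=B$ and, since the only negative roots of $Q$ are those of $\Phi^-(L^Q)$, one has $B^-\cap Q=B^-\cap L^Q=:B_{L^Q}^-$; hence, viewing $Q/P=L^Q/(L^Q\cap P)$ inside $G/P$, $\cX^{Q/P}_{\tilde w}=B\tilde wP/P$ and $\cX_{Q/P}^{\tilde w}=B_{L^Q}^-\tilde wP/P$ (here $U_Q\tilde wP=\tilde wP$ because $\tilde w\in W_Q$ normalizes $U_Q\subset P$). From $\bar w\in W^Q$ I would extract: $\bar w\Phi^+(L^Q)\subset\Phi^+$ and $\bar w\Phi^-(L^Q)\subset\Phi^-$; the inversion set $\Phi_{\bar w}=\bar w\inv\Phi^+\cap\Phi^-$ contains no root of $Q$ (it lies in $\Phi^-$ and, by minimality of $\bar w$ in $\bar wW_Q$, is disjoint from $\Phi(L^Q)$); and symmetrically $\Phi^+\cap\bar w\inv\Phi^-$ consists of roots of $U_Q$ (a positive root $\beta$ with $\bar w\beta<0$ cannot lie in $\Phi^+(L^Q)$).

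For the inclusions $\supseteq$: a point of the right-hand side is $x=b\tilde wP/P$ with $b\in B$ in case~(i) and $b\in B_{L^Q}^-$ in case~(ii); in either case $x\in Q/P$. In case~(i) I would write $b=u_{L^Q}u_Qt$ along $U=U_{L^Q}U_Q$, $t\in T$, and absorb $u_Q$ and $t$ into $P$ (using $\tilde w\inv u_Q\tilde w\in U_Q\subset P$ and $\tilde w\inv t\tilde w\in T\subset P$), reducing to $b=u_{L^Q}\in L^Q$; in case~(ii), $b\in B_{L^Q}^-\subset L^Q$ already. Then $\bar wx=(\bar wb\bar w\inv)\,wP/P$, and $\bar wb\bar w\inv$ lies in $UT=B$ in case~(i), its root groups being indexed by $\bar w\Phi^+(L^Q)\subset\Phi^+$, and in $U^-T=B^-$ in case~(ii), being indexed by $\bar w\Phi^-(L^Q)\subset\Phi^-$. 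Hence $\bar wx\in BwP/P=\cX^{G/P}_w$, resp.\ $\bar wx\in B^-wP/P=\cX_{G/P}^w$, so $x$ lies in the left-hand side.

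For the inclusions $\subseteq$: let $x=\bar w\inv\gamma wP/P\in Q/P$, with $\gamma\in B$ in case~(i) and $\gamma\in B^-$ in case~(ii). As $\tilde w\in W_Q\subset Q$ and $w=\bar w\tilde w$, the condition $x\in Q/P$ is equivalent to $\bar w\inv\gamma\bar w\in Q$; writing $\gamma=ut$, $t\in T$, and absorbing $t$, this reads $\bar w\inv u\bar w\in Q$. In case~(i) I would decompose $U=(U\cap\bar wU\bar w\inv)\cdot(U\cap\bar wU^-\bar w\inv)$: conjugation by $\bar w\inv$ sends the first factor into $U\subset Q$ and the second isomorphically onto the pro-unipotent subgroup with root set $\Phi_{\bar w}$, which meets $Q$ trivially by the first paragraph; therefore $\bar w\inv u\bar w\in U$, so $\bar w\inv\gamma\bar w\in B$ and $x\in B\tilde wP/P=\cX^{Q/P}_{\tilde w}$. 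In case~(ii) the analogous decomposition of $U^-$ shows that $\bar w\inv u\bar w$ factors as $a\cdot c$ with $a$ in the pro-unipotent subgroup $A$ with root set $\Phi^+\cap\bar w\inv\Phi^-$ and $c$ in the subgroup with root set $\bar w\inv\Phi^-\cap\Phi^-$; intersecting with $Q$ forces $c$ into the root subgroup with roots $(\bar w\inv\Phi^-\cap\Phi^-)\cap\Phi^-(L^Q)=\Phi^-(L^Q)$, i.e.\ $c\in U_{L^Q}^-$, while $A\subset U_Q\subset P$. Pushing $a$ past $c$ (which lies in $L^Q$ and normalizes $U_Q$) and then past $\tilde w$ into $P$, one gets $x=c\tilde wP/P\in B_{L^Q}^-\tilde wP/P=\cX_{Q/P}^{\tilde w}$.

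The only genuinely delicate point is the bookkeeping of these pro-unipotent manipulations — the decompositions $U=(U\cap\bar wU\bar w\inv)(U\cap\bar wU^-\bar w\inv)$ and its $U^-$-analogue, and the fact that a closed pro-unipotent subgroup whose root set avoids $\Phi(Q)$ meets $Q$ trivially (equivalently, that such intersections are again the expected root subgroups). These are available in the needed Kac--Moody generality from \cite[Chapters~6--7]{Kumar:KacMoody}, so the computation goes through unchanged in the ind-variety setting.
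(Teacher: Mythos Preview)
Your proof is correct, but it takes a genuinely different route from the paper's. You argue by explicit root-subgroup factorizations: in each direction you decompose an element of $B$ or $B^-$ along $U=U_{L^Q}U_Q$ or along $U=(U\cap\bar wU\bar w\inv)(U\cap\bar wU^-\bar w\inv)$ (and the $U^-$-analogues), then track which factors survive conjugation by $\bar w$ and absorption into $P$ via $\tilde w$. The paper instead gives a two-line structural argument: since $\bar w\in W^Q$, the set $\bar w\inv\cX^{G/P}_w\cap Q/P$ is $(L^Q\cap B)$-stable; every $(L^Q\cap B)$-orbit in $Q/P$ meets $(Q/P)^T$, so the set is the union of the $(L^Q\cap B)$-orbits through its $T$-fixed points; but $(\bar w\inv\cX^{G/P}_w)^T=\{\tilde wP/P\}$, whence the equality. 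Case~(ii) is identical with $B^-$ in place of $B$.

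What each approach buys: the paper's fixed-point argument is shorter and sidesteps entirely the pro-unipotent bookkeeping you flag as delicate (the decompositions of $U$, $U^-$ and the fact that a root subgroup whose root set misses $\Phi(Q)$ meets $Q$ trivially). Your computation, on the other hand, is self-contained at the level of elements and makes the mechanism completely explicit --- one sees precisely which piece of $\gamma$ is forced to be trivial and which piece lands in $L^Q$. Both are valid; if you want to streamline, the key identity you could have used instead of the case~(ii) factorization is simply $U^-\cap Q=U^-_{L^Q}$ (from $Q\cap Q^-=L^Q$), which immediately pins down $c$ once you know $a\in U_Q\subset Q$.
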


\begin{proof}
  Note that $\bar w\inv \cX^{G/P}_w\cap Q/P$ is stable by the action
  of $\bar w\inv B\bar w\cap Q$. Since $\bar w\in W^Q$, $\bar w\inv
  B\bar w\cap Q$ contains $L^Q\cap B$.
Moreover each $(L^Q\cap B)$-orbit in $Q/P$ contains a
$T$-fixed point. Hence
$$
\bar w\inv \cX^{G/P}_w\cap Q/P=\bigcup_{x\in (\bar w\inv \cX^{G/P}_w\cap Q/P)^T}(L^Q\cap B).x.
$$
But $(\bar w\inv \cX^{G/P}_w\cap Q/P)^T\subset \bar w\inv(
\cX^{G/P}_w)^T=\{\tilde wP/P\}$. The first assertion of the lemma follows. 
The second one works similarly.
\end{proof}

\bigskip
We can now prove the main result of this section.

\begin{prop}
\label{prop:multiplicative}
  Let $u_1$, $u_2$, and $v$ in $W^P$. Write
$u_1=\bar u_1\tilde u_1$, $u_2=\bar u_2\tilde u_2$, and $v=\bar
v\tilde v$
as in Lemma~\ref{lem:WQP}.
We assume that $l(v)=l(u_1)+l(u_2)$ and  $l(\bar v)=l(\bar u_1)+l(\bar u_2)$.

Consider the structure constants $n_{u_1\,u_2}^v$, $n_{\bar u_1\bar u_2}^{\bar v}$,
and $n_{\tilde u_1\,\tilde u_2}^{\tilde v}$ in $\Ho^*(G/P,\ZZ)$,
$\Ho^*(G/Q,\ZZ)$, and $\Ho^*(Q/P,\ZZ)$ respectively.

Then
$$
n_{u_1\,u_2}^v=n_{\bar u_1\bar u_2}^{\bar v}n_{\tilde u_1\,\tilde u_2}^{\tilde v}.
$$
\end{prop}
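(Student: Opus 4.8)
The plan is to prove the identity enumeratively, realising all three structure constants as numbers of points in transverse intersections of translated Schubert varieties and comparing them through the $G$-equivariant projection $\pi\,:\,G/P\longto G/Q$, whose fibres are isomorphic to $Q/P=L^Q/(L^Q\cap P)$. Write $w=\bar w\tilde w$ as in Lemma~\ref{lem:WQP}, so that $l(w)=l(\bar w)+l(\tilde w)$; the two length hypotheses then give $l(\tilde v)=l(v)-l(\bar v)=l(\tilde u_1)+l(\tilde u_2)$. Using $l(v)=l(u_1)+l(u_2)$, translating the intersection of Lemma~\ref{lem:cardcohom} by $B^-\times B^-\times B$ and using $G$-homogeneity, one gets
$$
n_{u_1\,u_2}^v=\#\bigl(g_1X^{u_1}_\GP\cap g_2X^{u_2}_\GP\cap g_3X_v^\GP\bigr)
$$
for general $(g_1,g_2,g_3)\in G^3$ (if $u_i\not\bleq v$ for some $i$, this intersection is empty and $n_{u_1\,u_2}^v=0$). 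Since $\pi$ is $G$-equivariant with $\pi(X^u_\GP)=X^{\bar u}_{G/Q}$ and $\pi(X_v^\GP)=X^{G/Q}_{\bar v}$, it maps $Z:=g_1X^{u_1}_\GP\cap g_2X^{u_2}_\GP\cap g_3X_v^\GP$ into $Z':=g_1X^{\bar u_1}_{G/Q}\cap g_2X^{\bar u_2}_{G/Q}\cap g_3X^{G/Q}_{\bar v}$. Applying Lemmas~\ref{lem:Kl} and~\ref{lem:cardcohom} inside $G/Q$ — this is where the hypothesis $l(\bar v)=l(\bar u_1)+l(\bar u_2)$ is used — for general $g_i$ the set $Z'$ consists of $n_{\bar u_1\bar u_2}^{\bar v}$ reduced points, each lying in the open cell $g_1\cX^{\bar u_1}_{G/Q}\cap g_2\cX^{\bar u_2}_{G/Q}\cap g_3\cX^{G/Q}_{\bar v}$ (the boundary strata of $G/Q$ contribute nothing to a transverse zero-dimensional intersection, exactly by the Kleiman argument in the proof of Lemma~\ref{lem:Kl}).

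Next, fix $\bar y\in Z'$ and set $F=\pi\inv(\bar y)$, identified with $Q/P$ via a choice of $\bar q\in Q$ with $\bar q\,(Q/Q)=\bar y$. The key point is that each of $g_iX^{u_i}_\GP\cap F$ and $g_3X_v^\GP\cap F$ is, under this identification, a translate by an element of $Q$ of the relevant Schubert variety of $Q/P$. For the first factor: $X^{u_1}_\GP$ is $B^-$-stable and $g_1\inv\bar y\in\cX^{\bar u_1}_{G/Q}=B^-\bar u_1\,Q/Q$, say $g_1\inv\bar y=b^-\bar u_1\,Q/Q$ with $b^-\in B^-$, whence
$$
g_1X^{u_1}_\GP\cap F=g_1b^-\bar u_1\bigl(\bar u_1\inv X^{u_1}_\GP\cap Q/P\bigr);
$$
Lemma~\ref{lem:Xinter}(ii), promoted from open cells to closures by the coset-wise compatibility of the Bruhat order along $W^P\simeq W^Q\times W_Q^P$ (a $B^-$-orbit $\bar w\inv\cX^w_\GP$ with $u_1\bleq w$ meets $Q/P$ only when $\bar w=\bar u_1$), identifies $\bar u_1\inv X^{u_1}_\GP\cap Q/P$ with $X^{\tilde u_1}_{Q/P}$. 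The factor for $u_2$ is identical, and $g_3X_v^\GP\cap F$ is treated the same way using Lemma~\ref{lem:Xinter}(i) and the $B$-stability of $X_v^\GP$. Thus, under $F\simeq Q/P$, the set $Z\cap F$ is an intersection $\gamma_1X^{\tilde u_1}_{Q/P}\cap\gamma_2X^{\tilde u_2}_{Q/P}\cap\gamma_3X^{Q/P}_{\tilde v}$ with $\gamma_i\in Q$ (acting through $L^Q$), of complementary dimension because $l(\tilde v)=l(\tilde u_1)+l(\tilde u_2)$.

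Finally I would check that, for general $(g_1,g_2,g_3)\in G^3$, the translating elements $\gamma_i$ so produced are in sufficiently general position in $\Aut(Q/P)$ that Lemma~\ref{lem:cardcohom} applies inside $Q/P$ and gives $\#\bigl(Z\cap\pi\inv(\bar y)\bigr)=n_{\tilde u_1\,\tilde u_2}^{\tilde v}$ for each of the finitely many $\bar y\in Z'$. Only finitely many Schubert cells of $Q/P$ and finitely many points $\bar y$ are involved, so this amounts to a finite list of dense open conditions on $(g_1,g_2,g_3)$, which can all be met simultaneously since $\CC$ is uncountable — the device used throughout Section~2. Summing over the fibres,
$$
n_{u_1\,u_2}^v=\#Z=\sum_{\bar y\in Z'}\#\bigl(Z\cap\pi\inv(\bar y)\bigr)=\#Z'\cdot n_{\tilde u_1\,\tilde u_2}^{\tilde v}=n_{\bar u_1\bar u_2}^{\bar v}\,n_{\tilde u_1\,\tilde u_2}^{\tilde v};
$$
and in the degenerate cases $n_{\bar u_1\bar u_2}^{\bar v}=0$ or $n_{\tilde u_1\,\tilde u_2}^{\tilde v}=0$ one has $Z=\emptyset$, so both sides vanish and the formula still holds.

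The two steps I expect to cost real work are exactly the last ones: first, making precise the closure identity $\bar w\inv X^w_\GP\cap Q/P=X^{\tilde w}_{Q/P}$ beyond the open-cell statement of Lemma~\ref{lem:Xinter}, which rests on the coset-wise compatibility of the Bruhat order with the factorisation $W^P\simeq W^Q\times W_Q^P$; and second, propagating genericity from $G$ down the fibration, so that the induced triple of translated Schubert varieties in $Q/P$ is in general position over every base point of $Z'$. Once these are settled, the counting above is mechanical.
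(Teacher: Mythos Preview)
Your overall strategy coincides with the paper's: count $n_{u_1u_2}^v$ by fibering the transverse intersection in $G/P$ over the transverse intersection in $G/Q$ via $\pi$, and identify each fibre contribution with a Schubert intersection in $Q/P$. The paper also uses Lemma~\ref{lem:Xinter} to rewrite the fibre over a base point $gQ/Q$ as $l\cX^{Q/P}_{\tilde v}\cap l_1\cX^{\tilde u_1}_{Q/P}\cap l_2\cX^{\tilde u_2}_{Q/P}$ for suitable $l,l_1,l_2\in L^Q$ (after killing the unipotent radical $Q^u$, which acts trivially on $Q/P$). Two points of execution differ.

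First, your concern about promoting Lemma~\ref{lem:Xinter} from open cells to closures is unnecessary. The paper chooses a single $b\in B$ generic simultaneously for $G/P$ and $G/Q$ (using that $B$ is irreducible, so the two open conditions from Lemma~\ref{lem:Kl} can be met at once). Then the $G/P$ intersection already lies in open cells, so the fibre intersection $I$ automatically equals the open-cell version; the paper records that it also equals the closed version $lX^{Q/P}_{\tilde v}\cap l_1X^{\tilde u_1}_{Q/P}\cap l_2X^{\tilde u_2}_{Q/P}$ as a byproduct, not as an input.

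Second --- and this is where your proposal has a genuine gap --- you plan to obtain transversality in the fibre by arguing that the induced translates $\gamma_i$ are ``in sufficiently general position'' in $\Aut(Q/P)$. This is delicate: the $\gamma_i$ depend on the moving base point $\bar y\in Z'$, which itself depends on $(g_1,g_2,g_3)$, so the locus you want to avoid is not visibly the preimage of a proper closed set under a dominant map. The paper sidesteps this entirely. It \emph{deduces} fibre transversality from the ambient transversality in $G/P$: since
\[
T_x\cX_v^{G/P}\longto \frac{T_xG/P}{T_x\cX^{u_1}_{G/P}}\oplus\frac{T_xG/P}{T_xb\cX^{u_2}_{G/P}}
\]
is an isomorphism (Lemma~\ref{lem:Kl}), intersecting with $T_xQ/P$ gives an injection, and then the inclusions $T_xl\cX^{Q/P}_{\tilde v}\subset T_x\cX_v^{G/P}\cap T_xQ/P$ (and analogously for $u_i$) together with the length identity $l(\tilde v)=l(\tilde u_1)+l(\tilde u_2)$ force
\[
T_xl\cX^{Q/P}_{\tilde v}\longto \frac{T_xQ/P}{T_xl_1\cX^{\tilde u_1}_{Q/P}}\oplus\frac{T_xQ/P}{T_xl_2\cX^{\tilde u_2}_{Q/P}}
\]
to be an isomorphism. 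This is exactly the hypothesis of Lemma~\ref{lem:cardcohom} in $Q/P$, so each fibre contributes $n_{\tilde u_1\tilde u_2}^{\tilde v}$ with no further genericity needed. Replacing your last step by this tangent-space argument closes the gap and makes the proof go through cleanly.
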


\begin{proof}
  Since $B$ is irreducible, Lemma~\ref{lem:Kl} implies
  that there exists $b\in B$ such that 
  \begin{enumerate}
  \item $X^{u_1}_{G/P}\cap b X_v^{u_2}=\cX^{u_1}_{G/P}\cap \cX_v^{G/P}\cap
    b\cX^{u_2}_{G/P}$ is transverse, and
\item $X^{\bar u_1}_{G/Q}\cap b X_{\bar v}^{\bar u_2}=\cX^{\bar
    u_1}_{G/Q}\cap \cX_{\bar v}^{G/Q}\cap
    b\cX^{\bar u_2}_{G/Q}$ is transverse.
  \end{enumerate}
By Lemma~\ref{lem:cardcohom}, it remains to determine the cardinality
of the intersection~(i). We do this by counting in each fiber of the
$G$-equivariant projection $\pi\,:\,G/P\longto G/Q$.

Fix $g\in G$ such that $gQ/Q\in X^{\bar u_1}_{G/Q}\cap b X_{\bar v}^{\bar u_2}$. Then
$Q/Q\in g\inv \cX^{\bar
    u_1}_{G/Q}\cap g\inv \cX_{\bar v}^{G/Q}\cap
    g\inv b\cX^{\bar u_2}_{G/Q}$.
As in Lemma~\ref{lem:gtop},
there exist  $q_1$, $q_2$, and $q$ in $Q$ such that 
$g\inv\in q\dot{\bar v}\inv B$, 
$g\inv\in q_1\dot{\bar u}_1\inv B^-$, and
$g\inv b\in q_2\dot{\bar u}_2\inv B^-$. 
Let $l_1$, $l_2$, and $l$ in $Q$ such that $q_1l_1\inv$, $q_2l_2\inv$, and
$ql\inv$ belong to $Q^u$.  
Then
$$
\begin{array}{r@{\;}ll}
  I:=&g\inv (\cX^{u_1}_{G/P}\cap b\cX^{ u_2}_{G/P} \cap  \cX_{v}^{G/P}
   \cap \pi\inv(gQ/Q))\\[0.8em]
=&
q_1\bar u_1\inv\cX_{G/P}^{u_1}\cap q_2\bar u_2\inv\cX_{G/P}^{u_2}\cap q\bar
                                         v\inv\cX^{G/P}_{v}\cap
                                         Q/P\\[0.8em]
=&q\cX^{Q/P}_{\tilde v}\cap
q_1\cX_{Q/P}^{\tilde u_1}\cap q_2\cX_{Q/P}^{\tilde u_2}
 &\mbox{by Lemma~\ref{lem:Xinter}}\\[0.8em]
=&l\cX^{Q/P}_{\tilde v}\cap
l_1\cX_{Q/P}^{\tilde u_1}\cap l_2\cX_{Q/P}^{\tilde u_2}
\end{array}
$$
The last equality holds since $Q^u$ acts trivially on $Q/P$.
Moreover, since $X^{u_1}_{G/P}\cap b X_v^{u_2}=\cX^{u_1}_{G/P}\cap \cX_v^{G/P}\cap
    b\cX^{u_2}_{G/P}$, we also have 
$$I=l X^{Q/P}_{\tilde v}\cap
l_1 X_{Q/P}^{\tilde u_1}\cap l_2 X_{Q/P}^{\tilde u_2}.
$$

% Here the intersection is said to be transverse at $x\in G/P$ if the
%     natural map
% $$
% T_xG/P\longto \frac{T_xG/P}{T_xX_v^{G/P}}\oplus  \frac{T_xG/P}{T_xX^{u_1}_{G/P}}\oplus \frac{T_xG/P}{T_xbX^{u_2}_{G/P}}
% $$
% is an isomorphism.
% Since $T_xX_v^{u_1}=T_xX_v^{G/P}\cap T_xX^{u_1}_{G/P}$, this is equivalent to
% the fact that the natural map
% $$
% T_xX_v^{G/P}\longto \frac{T_xX_v^{G/P}}{T_xX^{u_1}_{G/P}}\oplus \frac{T_xX_v^{G/P}}{T_xbX^{u_2}_{G/P}} 
% $$
% is an isomorphism.

\bigskip
\noindent
\underline{Claim}. The intersection $l\cX^{Q/P}_{\tilde v}\cap
l_1\cX_{Q/P}^{\tilde u_1}\cap l_2\cX_{Q/P}^{\tilde u_2 }$ is
transverse.\\

Let $x$ be a point in this intersection. 
By Lemma~\ref{lem:Kl}, 
the map

$$
T_xg\inv\cX_v^{G/P}\longto 
\frac{T_xG/P}{T_xg\inv\cX^{u_1}_{G/P}}\oplus \frac{T_xG/P}{T_xg\inv b\cX^{u_2}_{G/P}}
$$
is  an  isomorphism.
Hence, the natural map
$$
T_xg\inv\cX_v^{G/P}\cap T_xQ/P \longto 
\frac{T_xQ/P}{T_xg\inv\cX^{u_1}_{G/P}\cap T_xQ/P}\oplus \frac{T_xQ/P}{T_xg\inv b\cX^{u_2}_{G/P}\cap T_xQ/P}
$$
is injective. 
Since $T_xl\cX_{\tilde v}^{Q/P}\subset
T_xg\inv\cX_v^{G/P}\cap T_xQ/P$ (and similar inclusions hold for $u_1$
and $u_2$), we deduce that the natural map
$$
T_xl\cX_{\tilde v}^{Q/P}\longto 
\frac{T_xQ/P}{T_xl_1\cX^{\tilde u_1}_{Q/P}}\oplus \frac{T_xQ/P}{T_xl_2\cX^{\tilde u_2}_{Q/P}}
$$
is injective. 
The assumption on the length of elements of $W_Q^P$ implies that it is
in fact an isomorphism. The claim is proved.

\bigskip
The claim and Lemma~\ref{lem:cardcohom} imply that the cardinal of $I$
is $n_{\tilde u_1\,\tilde u_2}^{\tilde v}$. Since this holds for any
of the $n_{\bar u_1\,\bar u_2}^{\bar v}$ points in $X^{\bar
  u_1}_{G/Q}\cap b X_{\bar v}^{\bar u_2}$, we get that
$X^{u_1}_{G/P}\cap b X_v^{u_2}$ has cardinality $n_{\tilde u_1\,\tilde
  u_2}^{\tilde v}n_{\bar u_1\,\bar u_2}^{\bar v}$. 
We conclude by applying Lemma~\ref{lem:cardcohom} in $G/P$.
\end{proof}

\subsection{Application to the BKB-product}

The following lemma allows to apply
Proposition~\ref{prop:multiplicative} to any structure constants of
the BKB-product.

\begin{lemma}
\label{lem:bkhyp}
  Let $P\subset Q$ be two standard parabolic subgroups of $G$.
 Let $u_1$, $u_2$, and $v$
in $W^P$ such that ${}^\bkprod n_{u_1u_2}^v\neq 0$.

Then $
l(\bar v)=l(\bar u_1)+l(\bar u_2)$.
\end{lemma}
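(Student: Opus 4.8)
The plan is to unwind the definitions of the deformed product and of the quantities $\delta^v_{u_1u_2}$, reducing the statement $l(\bar v) = l(\bar u_1) + l(\bar u_2)$ to the vanishing of a single ``$Q$-part'' of $\delta$. The key tool will be the description of the tangent spaces $\Tau^{u}$ and $\Tau_v$ in terms of inversion sets, exactly as in Lemma~\ref{lem:calculdelta}: under the action of $\tau$ we have $\Tau/\Tau^{u_i} \simeq \oplus_{\alpha\in\Phi_{u_i}}\lg_\alpha$ and $\Tau_v \simeq \oplus_{\alpha\in\Phi_v}\lg_\alpha$. Since ${}^\bkprod n^v_{u_1u_2}\neq 0$, by definition both $n^v_{u_1u_2}\neq 0$ (so $l(v) = l(u_1)+l(u_2)$, as part of the setup in Section~\ref{sec:bkbprod}) and $\delta^v_{u_1u_2} = 0$.

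**Refining the filtration via the center of $L^Q$.** The main idea is that the filtration defining $\bkprod$ on $\Ho^*(G/P,\ZZ)$ can be refined using a bigger torus than $\tau(\CC^*)$: choose $\tau$ not merely in $\oplus_{\alpha_j\notin\Delta(P)}\ZZ_{>0}\varpi_{\alpha_j^\vee}$ but so that it also lies in the subgroup $\oplus_{\alpha_j\notin\Delta(Q)}\ZZ_{>0}\varpi_{\alpha_j^\vee}$ — i.e.\ pair every root of $Q^u$ positively and every root of $L^Q$ trivially against a companion cocharacter. Concretely, I would invoke Lemma~\ref{lem:indeptau}, which says precisely that the condition $\delta^v_{u_1u_2}=0$ is equivalent to the $Z(L^P)^\circ$-equivariant map $\tilde\Theta$ being an isomorphism, hence to the equality of $Z(L^P)^\circ$-weight multiplicities $\dim((\Tau_v)_\chi) = \dim((\Tau/\Tau^{u_1})_\chi) + \dim((\Tau/\Tau^{u_2})_\chi)$ for every $\chi\in X(Z(L^P)^\circ)$. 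Now decompose each of these $\chi$-weight spaces further under $Z(L^Q)^\circ\subset Z(L^P)^\circ$ (note $P\subset Q$ forces $L^P\subset L^Q$, hence $Z(L^Q)^\circ\subset Z(L^P)^\circ$), and sum over all $\chi$ restricting to a fixed character of $Z(L^Q)^\circ$: one gets $\dim((\Tau_v)_\psi) = \dim((\Tau/\Tau^{u_1})_\psi) + \dim((\Tau/\Tau^{u_2})_\psi)$ for every $\psi\in X(Z(L^Q)^\circ)$.

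**Extracting the $Q$-part.** The final step is to isolate, inside each $\psi$-weight space, the contribution of roots that are roots of $L^Q$ versus roots of $Q^u$. A root $\alpha$ of $\lg$ lies in $L^Q$ exactly when its $Z(L^Q)^\circ$-weight is trivial; so taking $\psi = 0$ in the displayed equality and using the root-space descriptions gives
\[
\#(\Phi_v\cap\Phi(L^Q)) = \#(\Phi_{u_1}\cap\Phi(L^Q)) + \#(\Phi_{u_2}\cap\Phi(L^Q)).
\]
It remains to identify $\#(\Phi_w\cap\Phi(L^Q))$ with $l(\tilde w)$ for $w = \bar w\tilde w$ as in Lemma~\ref{lem:WQP}: indeed $\Phi_w = w\inv\Phi^+\cap\Phi^-$, and since $\bar w\in W^Q$ sends $\Phi^-(L^Q)$ into $\Phi^-$ and $\Phi^+(L^Q)$ into $\Phi^+$ (it preserves $\Phi(L^Q)$ as a set, permuting positives among themselves), one checks $\tilde w\inv(\Phi_w\cap\Phi(L^Q)) = \Phi^+(L^Q)\cap\tilde w\inv\Phi^- \cap \cdots$; more cleanly, $\Phi_w\cap\Phi(L^Q) = w\inv(\Phi^+\cap\Phi(L^Q))\cap\Phi^-\cap\Phi(L^Q)$, and since $\bar w$ preserves $\Phi^-(L^Q)$ this equals $\tilde w\inv\Phi^+(L^Q)\cap\Phi^-(L^Q) = \Phi_{\tilde w}$ (the inversion set of $\tilde w$ computed inside $L^Q$), which has cardinality $l(\tilde w)$. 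Plugging in $w\in\{v, u_1, u_2\}$ yields $l(\tilde v) = l(\tilde u_1)+l(\tilde u_2)$; combined with $l(v) = l(u_1)+l(u_2)$ and the additivity of length along the decomposition $w = \bar w\tilde w$ (so $l(w) = l(\bar w)+l(\tilde w)$, which follows from $\bar w\in W^Q$, $\tilde w\in W_Q^P\subset W_Q$), we get $l(\bar v) = l(\bar u_1)+l(\bar u_2)$, as desired.

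**Main obstacle.** The delicate point is the bookkeeping in the second paragraph: making precise that refining from $Z(L^P)^\circ$-weights to $Z(L^Q)^\circ$-weights (and then summing back over fibers of the restriction $X(Z(L^P)^\circ)\to X(Z(L^Q)^\circ)$) is legitimate — i.e.\ that $\tilde\Theta$ being an isomorphism, which a priori respects only the $Z(L^P)^\circ$-grading, still respects the coarser $Z(L^Q)^\circ$-grading, so that one can read off the $\psi$-weight equality for $\psi\in X(Z(L^Q)^\circ)$. This is immediate once one observes $Z(L^Q)^\circ\subseteq Z(L^P)^\circ$ and that any $Z(L^P)^\circ$-equivariant isomorphism is automatically $Z(L^Q)^\circ$-equivariant, but stating it carefully (and correctly identifying ``$\psi = 0$'' with ``root of $L^Q$'') is where the argument must be watertight. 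Everything else is the combinatorics of inversion sets, which is routine given Lemma~\ref{lem:WQP} and its proof.
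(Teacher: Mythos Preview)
Your proof is correct and follows essentially the same route as the paper's: both invoke Lemma~\ref{lem:indeptau} together with the inclusion $Z(L^Q)^\circ\subset Z(L^P)^\circ$ to split the weight-by-weight equality according to whether a root lies in $\Phi(L^Q)$ or in $\lq^{u,-}$. The only cosmetic difference is that you take the $\psi=0$ piece (roots of $L^Q$), obtain $l(\tilde v)=l(\tilde u_1)+l(\tilde u_2)$, and finish via length additivity $l(w)=l(\bar w)+l(\tilde w)$, whereas the paper sums the complementary piece (characters negative on a cocharacter $\tau_Q$ of $Z(L^Q)$) and reads off $l(\bar v)=l(\bar u_1)+l(\bar u_2)$ directly from $l(\bar w)=\dim(\Tau_w\cap\lq^{u,-})$; these are two halves of the same computation.

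One small wording issue: your sentence about choosing $\tau$ simultaneously in $\oplus_{\alpha_j\notin\Delta(P)}\ZZ_{>0}\varpi_{\alpha_j^\vee}$ and in $\oplus_{\alpha_j\notin\Delta(Q)}\ZZ_{>0}\varpi_{\alpha_j^\vee}$ is impossible when $P\subsetneq Q$, but this is harmless since you immediately drop it and work with the full $Z(L^P)^\circ$-grading via Lemma~\ref{lem:indeptau} instead.
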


\begin{proof}
Let $Q^-$ be the opposite subgroup of $Q$ and $Q^{u,-}$ its
``unipotent'' subgroup.
Let $\lq^{u,-}$ be the Lie algebra of  $Q^{u,-}$. 
Note that 
$$
l(\bar v)=\dim(\Tau_v\cap \lq^{u,-})\mbox{ and }
l(\bar u_i)=\dim(\frac{\lq^{u,-}}{\Tau^{u_i}\cap \lq^{u,-}})
\ \forall i=1,2.
$$
There exists a one parameter subgroup $\tau_Q$ of $Z(L^Q)$ such that $\lq^{u,-}$
is the sum of the negative weight spaces for $\tau_Q$. 
Since $Z(L^Q)^\circ$ is contained in $Z(L^P)^\circ$:
 $$\Tau_w\cap
\lq^{u,-}=\bigoplus_{\chi\in X(Z(L^P))^\circ\ \langle
  \chi,\tau_Q\rangle<0}(\Tau_w)_\chi$$
for any $w\in W$.
Now the equality of the lemma is a direct consequence of Lemma~\ref{lem:indeptau}.
\end{proof}

\section{The untwisted  affine case}

\subsection{Notation}
\label{sec:defaff}

Let $\dot \lg$ be a complex finite dimensional simple Lie algebra with
Cartan subalgebra $\dlh$ and Borel subalgebra $\dlb\supset\dlh$.
Let $\dot\alpha_1,\dots,\dot\alpha_l$ denote the simple roots,
$\dot\alpha_1^\vee,\dots,\dot\alpha_l^\vee$  the simple coroots,
$\dtheta$ the highest root and $\dtheta^\vee$ the highest coroot.
For any simple root $\dot\alpha$, we denote by $\varpi_{\dot\alpha}$ the
corresponding fundamental weight and by $\varpi_{\dot\alpha^\vee}$ the
corresponding fundamental coweight.
Let $\dot P_+$ denote the set of dominant integral weights for $\dot\lg$.
Set $\dot\rho=\sum_{i=1}^l\varpi_{\dot \alpha_i}$.

Endow $\lg=\dlg\otimes \CC[z,z\inv]\oplus\CC c\oplus
\CC d$ with the usual Lie bracket (see e.g. \cite[Chap XIII]{Kumar:KacMoody}).
Set $\lh=\dlh\oplus\CC c\oplus\CC d$.
Define $\Lambda$ and $\delta$ in $\lh^*$ by
$$
\begin{array}{l}
\delta\,:\, \dot\lh\longmapsto 0, c\longmapsto 0,
d \longmapsto 1;\\
  \Lambda\,:\, \dot\lh\longmapsto 0, c\longmapsto 1,
d \longmapsto 0.
\end{array}
$$
We identify $\dlh^*$ with the orthogonal of $\CC c\oplus\CC d$ in
$\lh^*$ in such a way that $
\lh^*=\dlh^*\oplus\CC\Lambda\oplus\CC\delta$.
The simple roots of $\lg$ are 
$$
\alpha_0=\delta-\dtheta,\dot\alpha_1,\dots,\dot\alpha_l.
$$
The simple coroots of $\lg$ are 
$$
\alpha_0^\vee=c-\dtheta^\vee,\dot\alpha_1^\vee,\dots,\dot\alpha_l^\vee.
$$
For any simple root $\dot\alpha_i$ of $\dlg$, set
$\varpi_{\alpha_i}=\varpi_{\dot\alpha_i}+\varpi_{\dot\alpha_i}(\dtheta^\vee)\Lambda\in\lh^*$.
Set $\varpi_{\alpha_0}=\Lambda$.
A choice of fundamental weights for $\lg$ is
$\varpi_{\alpha_0},\dots,\varpi_{\alpha_l}$.
In particular
\begin{equation}
  \label{eq:312}
  \rho=\dot\rho+\bh^\vee\Lambda,
\end{equation}
where $\bh^\vee=1+\langle\dot\rho,\dot\theta^\vee\rangle$ is the dual
Coxeter number. 
Set
$$
\lh_\ZZ^*=\ZZ \varpi_{\alpha_0}\oplus\cdots\oplus
\ZZ \varpi_{\alpha_l}\oplus\ZZ\delta,
$$
and
$$
\begin{array}{l@{\,}l}
P_+&=\ZZ_{\geq 0}\varpi_{\alpha_0}\oplus\cdots\oplus
\ZZ_{\geq 0}\varpi_{\alpha_l}\oplus\ZZ\delta,\\[0.6em]
&=\{\dot\lambda+\bl\Lambda+b\delta\,:\,\dot\lambda\in\dot P_+\mbox{
  and }
\langle\dot\lambda,\dot\theta^\vee\rangle\leq \bl
\}.
\end{array}
$$
Denote by $P_{++}=\ZZ_{> 0}\varpi_{\alpha_0}\oplus\cdots\oplus
\ZZ_{> 0}\varpi_{\alpha_l}\oplus\ZZ\delta$, the set of {\it regular}
dominant weights.
The chosen fundamental coweights are
$$
\varpi_{\alpha_0^\vee}=d\qquad   \varpi_{\alpha_i^\vee}=\varpi_{\dot\alpha_i^\vee}+\langle
\varpi_{\dot\alpha_i^\vee},\dtheta\rangle d.
$$
Set $\dot Q^\vee=\oplus_{i=1}^l\ZZ\dot\alpha_i^\vee$. Then $W=\dot
Q^\vee.\dot W$. Moreover $h\in\dot Q^\vee$ acts on %$\lh^*$ and 
$\lh$
by
% $$
% T_h(\lambda)=\lambda+\lambda(c)\nu(h)-\bigg (
% \lambda(h)+\frac 1 2 (h,h)\lambda(c)
% \bigg )\delta
% $$ 
% where $\nu\,:\,\dot \lh\longto \dot\lh^*$ and 
\begin{equation}
  \label{eq:actionWh}
h\cdot(x+kd+\bl c)=x+kh+kd+\bigg(\bl-(x,h)-k\frac{(h,h)}2\bigg )c.
\end{equation}

\subsection{Essential inequalities and BKB-product}

We are now interested in the inequlities~\eqref{eq:4} of
Proposition~\ref{prop:ineg} that are equalities for some regular
elements of $\Gamma(\lg)$. 
We prove that such inequalities necessarily appear in Theorem~\ref{th:mainintro}.

\begin{theo}\label{th:essineqBKBprod}
We use notation of Proposition~\ref{prop:ineg} and assume that
 $n_{u_1,u_2}^{v}=1$.
Let $\tau\in \oplus_{\alpha_j\not\in\Delta(P)}\ZZ_{>0}\varpi_{\alpha_j^\vee}$.
%Let $L$ be the standard Levi subgroup of $P$. 
Let $(\lambda_1,\lambda_2,\mu)\in (P_{+})^3$ such that
\begin{equation}
  \label{eq:424}
\langle\lambda_1,u_1\tau\rangle+\langle\lambda_2,u_2\tau\rangle=
\langle\mu,v\tau\rangle.
\end{equation}
Assume that %$\lambda_1$, $\lambda_2$ and 
$\mu$ is regular and that
\begin{eqnarray}
  \label{eq:425}
  \exists N>0\qquad L(N\mu)\subset L(N\lambda_1)\otimes L(N\lambda_2).
\end{eqnarray}

Then, $\epsilon_v$ appears with multiplicity 1 in $\epsilon_{u_1}\bkprod\epsilon_{u_2}$.
\end{theo}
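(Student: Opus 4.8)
The statement says: if $n^v_{u_1u_2}=1$, the equality \eqref{eq:424} holds for some $\tau$ in the open cone $\oplus_{\alpha_j\notin\Delta(P)}\ZZ_{>0}\varpi_{\alpha_j^\vee}$, $\mu$ is regular, and $L(N\mu)\subset L(N\lambda_1)\otimes L(N\lambda_2)$ for some $N$, then $\delta^v_{u_1u_2}=0$, i.e. $\epsilon_v$ occurs in $\epsilon_{u_1}\bkprod\epsilon_{u_2}$. The strategy I would follow is to push the hypothesis down to a statement about a Levi subgroup $L$ of $P$ and then invoke Proposition~\ref{prop:LmovBKBprod} (Levi movability $\Rightarrow$ $\delta=0$), whose hypothesis is the finiteness of an intersection of three $L$-translated open cells.

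\textbf{Step 1: Reduce to the boundary situation of Section~\ref{sec:boundary}.} Condition~\eqref{eq:424} is exactly the equality hypothesis~\eqref{eq:egalite} of Theorem~\ref{th:egalite} (with the same $\tau$, $u_1,u_2,v$, $P$). Combined with $n^v_{u_1u_2}=1$, Theorem~\ref{th:egalite} (equivalently Corollary~\ref{cor:G2L}) applies: with $\bar\lambda_i=u_i\inv\lambda_i\in P_+(L)$ and $\bar\mu=v\inv\mu$, we get $c^\mu_{\lambda_1\lambda_2}=\bar c^{\bar\mu}_{\bar\lambda_1\bar\lambda_2}$, and this holds after scaling by $N$ as well. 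So hypothesis~\eqref{eq:425} translates into $L(N\bar\mu)\subset L(N\bar\lambda_1)\otimes L(N\bar\lambda_2)$ for the Levi $L$; that is, $(\bar\lambda_1,\bar\lambda_2,\bar\mu)\in\Gamma(L)$. I must also check $\bar\mu$ is a \emph{regular} dominant weight for $L$: since $\mu$ is a regular dominant weight for $G$ and $v\in W^P$, $v\inv\mu$ is regular for $L$ (it has strictly positive pairing with every simple coroot of $L$, because $v\inv$ maps $\Delta(L)$-positivity to positivity on $\Phi^+(L)$).

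\textbf{Step 2: From $\Gamma(L)$-membership with regular $\bar\mu$, produce a finite triple-cell intersection.} This is the heart of the argument. The point is that when $(\bar\lambda_1,\bar\lambda_2,\bar\mu)\in\Gamma(L)$ with $\bar\mu$ \emph{regular}, the GIT / Borel--Weil picture forces the generic fiber of the analog of the map $\eta$ (now for the \emph{finite-dimensional} group $L$, which is legitimate since $P$ is of finite type) to be a single point, hence a finite intersection $l_1 u_1\inv\cX^{u_1}\cap l_2 u_2\inv\cX^{u_2}\cap l v\inv\cX_v$ for suitable $l_1,l_2,l\in L$. Concretely: $\bar c^{\bar\mu}_{\bar\lambda_1\bar\lambda_2}>0$ means there is a nonzero $L$-invariant section $\sigma$ of $\Li_-(\bar\lambda_1)\otimes\Li_-(\bar\lambda_2)\otimes\Li(\bar\mu)$ on $\dX_L=(L/(B^-\cap L))^2\times L/(B\cap L)$; regularity of $\bar\mu$ guarantees the line bundle is \emph{ample} on the last factor (and the $G/B^-$-type factors carry the opposite-sign behavior), so the numerical criterion $\mu^\Li(\text{generic }x,\tau')\le 0$ for all dominant $\tau'$ together with the equality case pins down the limit point $z=\lim_{t\to 0}\tau(t)x$ to lie in the closed orbit $C_L=Lu_1\inv\underline o^-\times Lu_2\inv\underline o^-\times Lv\inv\underline o$ and forces the relevant cell intersection to be zero-dimensional. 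This is parallel to — and should be extracted from — the proof of Proposition~\ref{prop:ineg} combined with the nonvanishing/boundary analysis; the cleanest route is: run the $\eta$-construction for $L$ in place of $G$, use that $n^v_{u_1u_2}=1$ so the generic fiber of $p$ (Lemma~\ref{lem:fibreeta}, Lemma~\ref{lem:intermorph}) is a singleton, and deduce that the intersection $l_1 u_1\inv\cX^{u_1}\cap l_2 u_2\inv\cX^{u_2}\cap lv\inv\cX_v$ in $L/(P_0)$ (for $P_0=L\cap P=L$, i.e.\ in $L/(B\cap L)$ restricted appropriately) is a single point for generic $l_i$, whence finite.

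\textbf{Step 3: Apply Proposition~\ref{prop:LmovBKBprod}.} With Step 2 providing $l_1,l_2,l\in L$ such that $l_1 u_1\inv\cX^{u_1}\cap l_2 u_2\inv\cX^{u_2}\cap l v\inv\cX_v$ is finite, and since $P$ is of finite type and $n^v_{u_1u_2}=1\ne 0$, Proposition~\ref{prop:LmovBKBprod} gives $\delta^v_{u_1u_2}=0$. By the definition of $\bkprod$ in Section~\ref{sec:bkbprod}, this means ${}^\bkprod n^v_{u_1u_2}=n^v_{u_1u_2}=1$, i.e.\ $\epsilon_v$ appears with multiplicity $1$ in $\epsilon_{u_1}\bkprod\epsilon_{u_2}$, which is the claim.

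\textbf{Main obstacle.} Step 2 is the crux. The subtlety is that producing a \emph{finite} (as opposed to merely nonempty) triple-cell intersection genuinely uses the \emph{regularity} of $\bar\mu$: without it, the invariant section could vanish on all the relevant limit points or the fiber of $\eta$ over the closed orbit could be positive-dimensional, and Proposition~\ref{prop:LmovBKBprod} would not apply. I expect the honest work to be in verifying that regularity of $\bar\mu$ makes $\Li_{|C_L}$ behave correctly under the $\tau$-flow so that a nonvanishing invariant section forces the generic $\eta$-fiber (which by Lemma~\ref{lem:intermorph}, using $n^v_{u_1u_2}=1$, is already a singleton over an open set) to survive into the closed orbit $C_L$, yielding the finite intersection. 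One must also be careful that the $\tau$ appearing in \eqref{eq:424} lies in the open cone so that $L$ is exactly the centralizer of $\tau$ (Levi of $P$), which is what makes the ``limit lands in $C_L$'' and the $Z(L)^\circ$-weight bookkeeping of Lemma~\ref{lem:indeptau}/Proposition~\ref{prop:LmovBKBprod} match up.
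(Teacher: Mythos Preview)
Your overall architecture is right --- reduce to semistability on $C\simeq\dX_L$ via Theorem~\ref{th:egalite}, then aim for a finite triple-cell intersection so that Proposition~\ref{prop:LmovBKBprod} fires. Steps~1 and~3 are fine. But Step~2 has a genuine gap, and the confusion shows in the sentence about ``the intersection \ldots\ in $L/(P_0)$ (for $P_0=L\cap P=L$)'': that quotient is a point. The intersection $l_1u_1^{-1}\cX^{u_1}\cap l_2u_2^{-1}\cX^{u_2}\cap lv^{-1}\cX_v$ you need finite lives in $G/P$, not in any flag variety of $L$, so there is no ``$\eta$-construction for $L$'' that addresses it. What you actually know from Step~1 is only that $C^{\mathrm{ss}}(\Li_{|C},L)\neq\emptyset$; by Lemma~\ref{lem:fibreeta}, the finiteness you want is equivalent to $\eta^{-1}(x_0)$ being finite for some $x_0\in C$, and generic-fiber information (Lemma~\ref{lem:intermorph}) says nothing about fibers over the closed set $C$.

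The paper bridges this as follows. First it passes to $\ul\dX=G/Q_1\times G/Q_2\times G/B$ (the third factor stays $G/B$ precisely because $\mu$ is regular) and perturbs $\mu$ within $X(T)^{Z(L)^\circ}$ so that $\ul C$ has genuinely \emph{stable} points, using abundance \`a la Dolgachev--Hu. Then it proves two technical lemmas: Lemma~\ref{lem:stabCs}, which says $G_{x_0}\cap P^{u,-}=\{e\}$ for $x_0\in\ul C^{\mathrm{ss}}$ (this is where regularity of $\mu$ is truly used --- the stabilizer sits inside a conjugate of $B$, hence is solvable, and the Jacobson--Morozov argument of Proposition~\ref{prop:JM} then excludes any nilpotent direction in $P^{u,-}$), and Lemma~\ref{lem:Cplushom}, which says the $\tau$-attracting set of $L.x_0$ inside $G.x_0$ is exactly $P.x_0$. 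Together these force $\eta^{-1}(x_0)\subset G_{x_0}P/P$ to be finite for a generic stable $x_0$, and Lemma~\ref{lem:fibreeta} converts that into the finite triple-cell intersection you want. Your intuition that ``regularity makes things ample'' is not wrong, but the decisive role of regularity is in the stabilizer argument, not in ampleness per se; and the passage from semistability on $C$ to finiteness of $\eta^{-1}(x_0)$ is not formal --- it needs both the perturbation to stability and the stabilizer control.
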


%******** REVOIR LA PREUVE ********

Consider the line bundle
$\Li=\Li_-(\lambda_1)\otimes \Li_-(\lambda_2)\otimes \Li(\mu)$ on
$\dX$, and  the closed subset
$C=Lu_1\inv \underline{o}^-\times Lu_2\inv
\underline{o}^-\times Lv\inv \underline{o}$ of $\dX$. 

For $i=1, 2$, consider the maximal parabolic subgroup $Q_i$ containing $B^-$
such that $\lambda_i$ extends to $Q_i$. 
Set $\ul\dX=G/Q_1\times G/Q_2\times G/B$ and
$\pi\,:\,\dX\longto\ul\dX$.
Set $\ul C=\pi(C)$ and $\ul C^+=\pi(C^+)$.
Let $\ul\Li$ be the $G$-linearized line bundle on $\underline\dX$ such that $\pi^*(\ul\Li)=\Li$.

In this paper, we denote by $C^{\rm ss}(\Li_{|C},L)$ the set of points
$x\in C$ such that there exists a $L$-invariant section $\sigma$ of
some positive power $\Li_{|C}^{\otimes N}$ of $\Li_{|C}$ such that
$\sigma(x)\neq 0$. Note that this definition is the standard one
(see~\cite{GIT}) only if $\Li_{|C}$ is ample.
Moreover, $C^{\rm ss}(\Li_{|C},L)=C\cap\pi\inv(\ul C^{\rm ss}(\ul\Li_{|\ul C},L))$.

The following lemma is a consequence of Theorem~\ref{th:egalite}. 
We include here a more direct proof. 

\begin{lemma}
\label{lem:ZLtrive}
With the assumptions of Theorem~\ref{th:essineqBKBprod}, the set $
C^{\rm ss}(\Li_{|C},L)$
is not empty. In particular, $Z(L)^\circ$ acts trivially on $\Li_{|C}$.
\end{lemma}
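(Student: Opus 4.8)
The plan is to reduce the statement to an application of Proposition~\ref{prop:ineg} combined with the multiplicativity of sections established in Lemma~\ref{lem:BW}, and then to exploit the regularity of $\mu$ to force nonemptiness of $C^{\rm ss}(\Li_{|C},L)$. First I would record the translation $C\simeq \dX_L=(L/(B^-\cap L))^2\times L/(B\cap L)$ already used in Corollary~\ref{cor:G2L}, under which $\Li_{|C}$ becomes $\Li_-(\bar\lambda_1)\otimes\Li_-(\bar\lambda_2)\otimes\Li(\bar\mu)$ with $\bar\lambda_i=u_i\inv\lambda_i$ and $\bar\mu=v\inv\mu$; by Lemma~\ref{lem:BW} applied to $L$, the dimension of $\Ho^0(C,\Li_{|C}^{\otimes N})^L$ equals the tensor multiplicity $\bar c_{N\bar\lambda_1\,N\bar\lambda_2}^{N\bar\mu}$ for $L$. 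So it suffices to show this multiplicity is positive for some $N>0$, i.e.\ that $(\bar\lambda_1,\bar\lambda_2,\bar\mu)$ lies in $\Gamma(L)$.

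Next, I would argue that the hypotheses force equality, for $(\lambda_1,\lambda_2,\mu)$, in \emph{every} inequality \eqref{eq:4} coming from a triple that appears in $\epsilon_{u_1}\cdot\epsilon_{u_2}$ together with $\tau$. Indeed \eqref{eq:424} is one such equality (the one attached to $(u_1,u_2,v)$ and $\tau$), and assumption \eqref{eq:425} puts $(\lambda_1,\lambda_2,\mu)$ in $\Gamma(\lg)$; since $\mu$ is regular, the point lies in the relative interior in the $\mu$-directions, which together with \eqref{eq:424} pins down the relevant face. The key structural input is the enumerative description of $n_{u_1u_2}^v=1$ via Lemma~\ref{lem:cardcohom} and Lemma~\ref{lem:intermorph}: there is an open $\Omega\subset U_v$ and a morphism $\psi\colon\Omega\to\cX_v$ with $X_v^{u_1}\cap hX_v^{u_2}=\{\psi(h)\}\subset\cX^{u_1}\cap h(\cX^{u_2}\cap\cX_v)$. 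Running the argument of the proof of Proposition~\ref{prop:ineg} but now tracking the section $\sigma\in\Ho^0(\dX,\Li^{\otimes N})^G$ at the distinguished point $\dx$ with $\sigma(\dx)\neq 0$, and using that the limit point $\theta(0)$ lands in $C$ with $\mu^\Li(y,\tau)=0$ by \eqref{eq:424}: the $\CC^*$-invariant section $\theta^*(\sigma)$ on $\AA^1$ is then nowhere vanishing, so its value at $0$ is nonzero, i.e.\ $\sigma$ does not vanish at the point of $C$ obtained as $\lim_{t\to 0}\tau(t)y$. Restricting $\sigma$ to $C$ thus gives a nonzero $L$-invariant section, so $C^{\rm ss}(\Li_{|C},L)\neq\emptyset$.

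Finally, once $C^{\rm ss}(\Li_{|C},L)\neq\emptyset$, I would conclude that $Z(L)^\circ$ acts trivially on $\Li_{|C}$: any $L$-invariant section nonvanishing at a point $x$ is in particular $Z(L)^\circ$-invariant, and since $Z(L)^\circ$ is a torus fixing no nonzero vector in a nontrivial character space of the fiber $\Li_x$ unless the weight is $0$, the linearization of $\Li_{|C}$ restricted to $Z(L)^\circ$ must be by the trivial character on the (connected) $L$-orbit through $x$, hence on all of $C$ since $C$ is a single $L$-orbit (three flag varieties for $L$). I expect the main obstacle to be the bookkeeping in the second paragraph: carefully justifying that the point $\dx$ in the proof of Proposition~\ref{prop:ineg} can be chosen so that the unique point in the relevant Richardson intersection (guaranteed by $n_{u_1u_2}^v=1$ and Lemma~\ref{lem:intermorph}) is exactly the point at which $\sigma$ is nonzero, and that the associated one-parameter degeneration limits into $C$ rather than merely into $\bar C^+$ — this is where the hypothesis $n_{u_1,u_2}^v=1$, as opposed to merely $\neq 0$, is essential, and it is cleanest to invoke Lemma~\ref{lem:intermorph} directly to produce the open set $\Omega_1\subset\dX$ on which the fiber of $\eta$ is a single point lying in $\cX^{u_1}\cap h(\cX^{u_2}\cap\cX_v)$.
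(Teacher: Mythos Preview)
Your core argument in the second paragraph is exactly the paper's proof: take a nonzero $G$-invariant section $\sigma$ (from assumption~\eqref{eq:425} and Lemma~\ref{lem:BW}), find $y\in C^+$ with $\sigma(y)\neq 0$ (because $\eta$ has dense image), and flow to $C$ via $\tau$; since $\mu^\Li(C,\tau)=0$ by~\eqref{eq:424}, the limit $\sigma(\lim_{t\to 0}\tau(t)y)$ is nonzero, exhibiting a semistable point. So the approach is the same.

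A few corrections and simplifications. First, the detour through $\Gamma(L)$ in your first paragraph and the invocation of regularity of $\mu$ in the second are unnecessary: neither is used once you run the flow argument directly, and the paper does not use them either. Second, your worry that the limit might land only in $\bar C^+$ rather than $C$ is unfounded: you start from $y\in C^+$ (not $\bar C^+$), and for $y\in C^+=P\cdot(u_1\inv\underline o^-,u_2\inv\underline o^-,v\inv\underline o)$ the limit under $\tau$ automatically lies in $C$. Third, and this is an actual slip, $C$ is \emph{not} a single $L$-orbit; it is a product of three $L$-flag varieties. The correct argument for the last sentence is that $Z(L)^\circ$ acts trivially on $C$ (being central in $L$), so it acts on $\Li_{|C}$ fiberwise by a character that is locally constant, hence constant on the irreducible variety $C$; a nonzero $L$-invariant section at one point forces this character to be trivial on $\Li^{\otimes N}_{|C}$, hence trivial on $\Li_{|C}$ since $Z(L)^\circ$ is connected.
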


\begin{proof}
  By Lemma~\ref{lem:BW}, there exists a $G$-invariant section
  $\sigma\in\Ho^0(\dX,\Li^{\otimes N})^G$.
By Lemma~\ref{lem:inveta}, the image of $\eta\,:\,\GPCp\longto\dX$ contains a nonempty open
subset of $\dX$. Since $\dX$ is irreducible and $G$-invariant, we
deduce that there exists $x\in C^+$ such that $\sigma(x)\neq 0$.

Set $y=\lim_{t\to 0}\tau(t)x$. 
Consider the map $\theta_x$ defined in the proof of
Lemma~\ref{lem:rest4}.
Since $\mu^\Li(C,\tau)=0$, $\theta_x^*(\Li)$ is trivial as a
$\CC^*$-linearized line bundle.
We deduce that  
$\tilde
y:=\lim_{t\to 0}\tau(t)\sigma(x)$ exists and belongs to
$\Li_{y}-\{y\}$. 
But, $\sigma$ being $G$-invariant, $\tilde y=\sigma(y)$. 
In particular $\tilde y$ belongs to $C^{\rm ss}(\Li_{|C},L)$.

Since $Z(L)$ acts trivially on $C$ and $\sigma$ is $G$-invariant, it
fixes $\tilde y$.
Then $Z(L)$ acts trivially on $\Li^{\otimes N}_{|C}$ and $Z(L)^\circ$ acts
trivially on $\Li _{|C}$.
\end{proof}

We now prove a lemma on the points of $\ul C^\rss(\ul \Li_{|\ul C},L)$.

\begin{lemma}
  \label{lem:stabCs}
Let $x\in \ul C^\rss(\ul \Li_{|\ul C},L)$. Then
$G_x\cap P^{u,-}$ is trivial.
\end{lemma}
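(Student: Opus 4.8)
The plan is to analyze the geometry of a semistable point $x\in\ul C^\rss(\ul\Li_{|\ul C},L)$ and show that any element of the stabilizer $G_x$ lying in $P^{u,-}$ must fix $x$ in a way forcing it to act trivially. Recall that $\ul C=\pi(C)$ where $C=Lu_1\inv\underline o^-\times Lu_2\inv\underline o^-\times Lv\inv\underline o$, so under the identification $C\simeq \dX_L$ (as in the proof of Corollary~\ref{cor:G2L}), the point $x$ corresponds to a triple of base points in flag varieties for $L$. Write $x=(x_1,x_2,x_3)$ with, after moving by $L$, representatives close to $(u_1\inv\underline o^-, u_2\inv\underline o^-, v\inv\underline o)$. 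An element $g\in G_x\cap P^{u,-}$ fixes each component; I will use that $P^{u,-}$ is the unipotent radical of the opposite parabolic $P^-$, so that the relevant stabilizer computation reduces to a Lie algebra statement inside $\lg$, using the root space decomposition.

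**Key steps.** First I would reduce to a tangent-space criterion: if $g\in G_x\cap P^{u,-}$ is nontrivial, then the one-parameter subgroup $\tau$ (chosen from $\oplus_{\alpha_j\notin\Delta(P)}\ZZ_{>0}\varpi_{\alpha_j^\vee}$) conjugates $g$ toward the identity, i.e. $\lim_{t\to 0}\tau(t)g\tau(t)\inv = e$, since all roots of $P^{u,-}$ are negative on $\tau$. Meanwhile $\tau(t)x$ converges to a point $y=\lim_{t\to 0}\tau(t)x$ lying in $C$ (because $x\in C^+$ and the limit stays in the closed piece $C$), and by Lemma~\ref{lem:ZLtrive} combined with the hypothesis $\mu^\Li(C,\tau)=0$, the section $\sigma$ witnessing semistability of $x$ has $\sigma(y)\neq 0$: the limit point $y$ is again semistable. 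Second, $g$ fixes $x$, so $\tau(t)g\tau(t)\inv$ fixes $\tau(t)x$ for all $t$, and passing to the limit, $e$ fixes $y$ — which is vacuous — but more usefully, differentiating gives that $\Lie(G_x)$ contains $\Ad(\tau(t))\xi$ for all $t$, where $\xi=\log g\in\lq^{u,-}$; taking $t\to 0$ shows each graded piece $\xi_{-k}\in\lg_{-k}$ (for the $\tau$-grading) lies in $\Lie(G_y)$. Third, and this is the crux, I would show $G_y\cap P^{u,-}$ is already trivial by a direct computation at the $T$-fixed point $(u_1\inv\underline o^-, u_2\inv\underline o^-, v\inv\underline o)$: the stabilizer in $G$ of this point is contained in $u_1\inv B^-u_1\cap u_2\inv B^-u_2\cap v\inv Bv$, and intersecting with $P^{u,-}$ and using $u_i\in W^P$ (so $u_i\inv B^-u_i\cap L=B^-\cap L$, etc.) forces the intersection down to a subgroup of $L\cap P^{u,-}=\{e\}$; the key input here is $n_{u_1u_2}^v=1$ together with Lemma~\ref{lem:intermorph}, which pins down the intersection $X_v^{u_1}\cap hX_v^{u_2}$ to a single point whose stabilizer in the relevant unipotent group is trivial by transversality (Lemma~\ref{lem:Kl}).

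**Main obstacle.** The hard part will be handling the fact that $C$ is a product of flag varieties for $L$, where $L$ itself may be infinite-dimensional (affine type), so I cannot simply invoke finite-dimensional GIT stabilizer arguments. I expect the resolution to be: work with the thick affinization or with the finite-dimensional Schubert quotients $P^{u,-}/P^{u,-}_{\geq n}$ as in the proof of Proposition~\ref{prop:LmovBKBprod}, so that the transversality statement of Lemma~\ref{lem:Kl} can be applied on a finite-dimensional model, and then observe that triviality of the stabilizer in each truncation, together with the compatibility of the truncations, yields triviality of $G_x\cap P^{u,-}$. A secondary subtlety is verifying that the limit point $y$ is genuinely semistable — but this is exactly what the equality $\mu^\Li(C,\tau)=0$ buys, via the argument already used in Lemma~\ref{lem:ZLtrive} and Lemma~\ref{lem:rest4}: the pullback $\theta_x^*(\Li)$ along $t\mapsto\tau(t)x$ is $\CC^*$-trivially linearized, so $\sigma$ extends over $t=0$ without vanishing.
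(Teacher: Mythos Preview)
Your reduction has a genuine gap at the ``third step.'' You attempt to show that the stabilizer of the base $T$-fixed point $x_0=(u_1\inv\underline o^-,u_2\inv\underline o^-,v\inv\underline o)$ already satisfies $G_{x_0}\cap P^{u,-}=\{e\}$, invoking $u_i\in W^P$ and transversality. This is false in general. At $x_0$ the Lie algebra of $G_{x_0}\cap P^{u,-}$ is spanned by the root spaces $\lg_\alpha$ for $\alpha\in\Phi_v\setminus(\Phi_{u_1}\cup\Phi_{u_2})$ (using $Q_1=Q_2=B^-$ for simplicity), and this set need not be empty even when $n_{u_1u_2}^v=1$: already for $G=\SL_3$, $P=B$, $u_1=s_1$, $u_2=s_2$, $v=s_1s_2$ one has $n_{u_1u_2}^v=1$ while $-\alpha_1-\alpha_2\in\Phi_v\setminus(\Phi_{u_1}\cup\Phi_{u_2})$, so $U_{-\alpha_1-\alpha_2}\subset G_{x_0}\cap U^-$. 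Transversality in Lemma~\ref{lem:Kl} is a tangent-space condition on the intersection of Schubert cells and says nothing about the stabilizer of a point in $P^{u,-}$. Your limit argument is also vacuous here: since $\tau$ lies in $Z(L)^\circ$ and $x\in\ul C$, one has $\tau(t)x=x$, so there is no genuine passage to a simpler point; you are really claiming triviality of $G_x\cap P^{u,-}$ directly, which you have not established.

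The paper's proof uses the semistability hypothesis in an essential and entirely different way. By Theorem~\ref{th:egalite} the $L$-invariant section on $\ul C$ extends to a $G$-invariant section $\ul\sigma$ on $\ul\dX$, so $x$ lies in the $G$-stable \emph{affine} open $\ul\dX_\sigma=\{\ul\sigma\neq 0\}$. One then argues by contradiction: a nonzero $\xi\in\Lie(G_x\cap P^{u,-})$ is locally nilpotent, and Proposition~\ref{prop:JM} (a Jacobson--Morozov statement in the Kac--Moody setting) produces a morphism $\phi:\SL_2(\CC)\to G$ with $T_e\phi(E)=\xi$. The unipotent $U_2\subset\SL_2$ fixes $x$, giving a map $\CC^2\setminus\{0\}\simeq\SL_2/U_2\to\ul\dX_\sigma$; Hartogs on the affine target extends it to $\CC^2$, and the image of the origin is an $\SL_2(\CC)$-fixed point. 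But its third coordinate lies in $G/B$ with isotropy a conjugate of $B$, whose Lie algebra contains no copy of $\sl_2$---contradiction. The crux is precisely the affineness of $\ul\dX_\sigma$, which is where semistability enters; your sketch never uses this and hence cannot rule out the nontrivial unipotent stabilizers that do occur at non-semistable points.
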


\begin{proof}
Fix $\tilde x\in C$ such that $\pi(\tilde x)=x$.
By Theorem~\ref{th:egalite}, one can find a $G$-invariant section $\sigma$ of
    some positive power $\Li^{\otimes N}$ of $\Li$ such that
    $\sigma(\tilde x)\neq 0$.
For any $(w_1,w_2,w_3)\in W$, $H^0(\pi(X_{w_1}^\GBm\times
X_{w_3}^\GB),\ul \Li)$ is isomorphic to  $H^0(X_{w_1}^\GBm\times
X_{w_3}^\GB,\Li)$.
In particular, $\sigma$ descends to a $G$-invariant section
$\ul\sigma$ of $\ul\Li$ on $\ul\dX$. 
The set $\ul\dX_\sigma=\{y\in \ul\dX\,:\,\ul\sigma(y)\neq 0\}$ is a $G$-stable affine
ind-variety containing $x$. 

Write $x=(l_1u_1\inv Q_1/Q_1,l_2u_2\inv Q_2/Q_2,lv\inv \underline{o})$, with
$l_1$, $l_2$ and $l$ in $L$. Then 
 $G_{x}\cap P^{u,-}$ is contained in $l(v\inv Bv\cap P^{u,-})l\inv$. 
By \cite[Example~6.1.5.b]{Kumar:KacMoody}, $v\inv Bv\cap P^{u,-}$ is a
finite dimensional unipotent group. In particular, $G_{x}\cap P^{u,-}$
is connected and it is sufficient to prove that its Lie algebra is
trivial.

Assume that there exists a nonzero vector $\xi\in\Lie(G_{x}\cap P^{u,-})$.
Consider a morphism
$$
\phi\,:\,\SL_2(\CC)\longto G,
$$
such that $T_1\phi(E)=\xi$, given by Proposition~\ref{prop:JM}.
Look the induced $\SL_2(\CC)$-action on $\ul\dX$. The unipotent subgroup
$
U_2=\begin{pmatrix}
  1&*\\0&1
\end{pmatrix}
$
of $\SL_2(\CC)$ fixes the point $ x$.
Since $\SL_2(\CC)/U_2\simeq\CC^2-\{(0,0)\}$, one gets a
regular map 
$$\bar\phi\,:\,\CC^2-\{(0,0)\}\longto \ul\dX.
$$
Since $\ul\dX_\sigma$ is $G$-stable, the image of $\bar\phi$ is contained
in $\ul\dX_\sigma$. 
Since $\ul\dX_\sigma$ is an affine ind-variety,
Arthog's lemma implies that $\bar\phi$ extends to a regular map
$$\tilde\phi\,:\,\CC^2\longto \ul\dX.
$$
By density, $\tilde\phi$ is $\SL_2(\CC)$-equivariant. 
In particular the point
$\tilde\phi(0,0)$ is fixed by $\SL_2(\CC)$. This is a contradiction, since
$v\inv \lb v$ contains no copy of $\sl_2(\CC)$.
  \end{proof} 

\begin{proof}[Proof of Theorem~\ref{th:essineqBKBprod}]
Consider the group
$X(T)^{Z(L)^\circ}$ of characters $\chi$ of $T$ such that
$\chi_{|Z(L)^\circ}$ is trivial. 
By Lemma~\ref{lem:ZLtrive}, $\Li_-(\lambda_1)\otimes
                   \Li_-(\lambda_2)\otimes\Li(\mu)$ belongs to
                   $\Pic^{L/Z(L)^\circ}(\underline C)$. 
Set
$$
\begin{array}{r@{\,}ccl}
  \gamma\,:&X(T)^{Z(L)^\circ}\otimes
             \QQ&\longto&\Pic^{L/Z(L)^\circ}(\ul C)\otimes\QQ\\
&\mu'&\longmapsto&(\Li_-(\lambda_1)\otimes
                   \Li_-(\lambda_2)\otimes\Li(\mu+\mu'))_{|\ul C}.
\end{array}
$$
The set of $\mu'$ such that some positive power of $\gamma(\mu')$ is
ample and $\ul C^\rss(\gamma(\mu'),L/Z(L)^\circ)$ is not empty is a
convex set denoted $\Cun^L(\ul C)$.
But, the image of $\gamma$ is abundant in the sense of
\cite[Section~4.1]{DH}. 
As a consequence, for $\mu'$ general in $\Cun^L(\ul C)$, there exist
stable points in $\ul C$ for $\gamma(\mu')$ and the action of
$L/Z(L)^\circ$.
Fix such a $\mu'$ and $N'$ such that $N'\mu'\in X(T)$.  Then,
$(N'\lambda_1,N'\lambda_2,N'(\mu+\mu'))$ still satisfies
equality~\eqref{eq:424}. 
Moreover, Theorem~\ref{th:egalite} and the descent argument at the
beginning of the proof of Lemma~\ref{lem:stabCs} show that it also satisfies
condition~\eqref{eq:425} for some $N$.
Then, to prove the theorem, one may assume that 
$\ul C^\rs(\ul\Li,L/Z(L)^\circ)$ is not empty.

  The graded algebra $\oplus_k\Ho^0(\ul C,\ul\Li_{|\ul C}^{\otimes k})^L$ is finitely
  generated. Fix $d>0$ such  that  $\Ho^0(\ul C,\ul\Li_{|\ul C}^{\otimes 
d})^L$ generates $\oplus_k\Ho^0(\ul C,\ul\Li_{|\ul C}^{\otimes dk})^L$.
Let $\sigma_0,\dots,\sigma_N$ be a $\CC$-basis of  $\Ho^0(\ul C,\ul
\Li_{|\ul C}^{\otimes 
d})^L$.
By Theorem~\ref{th:egalite}, for any $i$, $\sigma_i$ extends to a
$G$-invariant section $\tilde\sigma_i$ on $\ul\dX$. Set
$$
\ul\dX^{\rm ss}(\Li):=\{x\in \ul\dX\,:\,\exists i \quad\tilde\sigma_i(x)\neq 0\},
$$
and 
$$
\begin{array}{lccl}
  \pi\,:&\ul\dX^{\rm ss}(\ul\Li)&\longto&\CC\PP^N\\
&x&\longmapsto&[\tilde\sigma_0(x):\cdots:\tilde\sigma_N(x)].
\end{array}
$$
Consider now $\ul\eta\,:\,G\times_P \ul C^+\longto \ul\dX$.
Let $\ul C^{+,\,\rm ss}(\ul \Li,L)$ denote the set of points $y\in \ul C^+$ such that
$\lim_{t\to 0}\tau(t)y\in \ul C^{\rm ss}(\ul \Li,L)$. 
Since the $\tilde\sigma_i$ are $G$-invariant, $\ul\eta(G\times_P\ul
C^{+,\,\rm  ss}(\ul \Li,L))$ is contained in $\dX^{\rm ss}(\Li)$. By the proof of
Proposition~\ref{prop:ineg}, this set is dense in $\ul\dX^{\rm ss}(\ul\Li)$, and
hence in $\ul\dX$ by irreducibility.
But $\pi(\ul\eta((gP/P,x)))=\pi(g\inv x)$ for any $g\in G$, $x\in\ul\dX^{\rm
  ss}(\ul\Li,L)$ such that $ (gP/P,x)\in G\times_P\ul C$. 
Then $\pi\circ\ul\eta(G\times_P\ul C^{+,\rm
  ss}(\ul \Li),L)$ is contained in $\pi(\ul C^{\rm ss}(\ul\Li,L))=\ul C^{\rm
  ss}(\ul\Li,L)\quot L$. 
Since $\ul C^{\rm
  ss}(\ul\Li,L)\quot L$ is projective, this implies that 
$$
\pi(\ul\dX^{\rm ss}(\ul\Li))=\ul C^{\rm
  ss}(\ul\Li,L)\quot L:=\Proj(\oplus_k\Ho^0(\ul C,\ul\Li_{|\ul C}^{\otimes k})^L).
$$ 

Let us fix $\xi\in \ul C^{\rm ss}(\ul\Li)\quot L$ general. Since
$\ul C^s(\ul\Li_{|\ul C},L/Z(L))$ is not empty, $\pi\inv(\xi)\cap \ul C$ is an
$L$-orbit $L.x_0$. 
Since $L.(u_2\inv Q_2/Q_2,v\inv \underline{o})$ is open in 
$Lu_2\inv Q_2/Q_2\times Lv\inv \underline{o}$, one may assume that
$x_0=(l_1u_1\inv Q_1/Q_1,u_2\inv Q_2/Q_2,v\inv
\underline{o})$, for some $l_1\in L$.
Let $i$ such that $\sigma_i(x_0)\neq 0$.

We claim that $\ul\eta\inv(x_0)$ is finite.
Let $g\in G$ such that $(gP/P,x_0)\in \ul\eta\inv(x_0)$. 
Set $y=g\inv x_0\in \ul C^+$ and $z=\lim_{t\to 0}\tau(t)y$.
Consider 
$\tilde y:=\tilde\sigma_i(y)=g\inv \tilde\sigma_i(x_0)\in (\ul\Li^{\otimes
  d})_y-\{y\}$.
Since $\mu^\Li(C,\tau)=0$, $\tilde z:=\lim_{t\to 0}\tau(t)\tilde y\in (\Li^{\otimes
  d})_z-\{z\}$.
  But $\tilde\sigma_i$ being $G$-invariant, $\tilde\sigma_i(z)=\tilde z$.
Hence $z\in \ul\dX^{\rm ss}(\ul\Li)\cap \ul C=\ul C^{\rm ss}(\Li)$.

Moreover, $\pi$ being $G$-invariant, we have $\pi(z)=\pi(x_0)$. 
Hence $z\in L.x_0$. Then Lemma~\ref{lem:Cplushom} implies that
$y=g\inv x_0\in Px_0$. Hence $g\in G_{x_0}P$. 

Note that $G_{x_0}\subset u_2\inv Q_2u_2\cap v\inv Bv$ is finite
dimensional. Moreover, it contains $\tau(\CC^*)$. Then, the Lie
algebra of $G_{x_0}$ decomposes as 
$\Lie(G_{x_0})=(\Lie(G_{x_0})\cap \Lie(P^{u,-})\oplus
(\Lie(G_{x_0})\cap Lie(P))$. Using Lemma~\ref{lem:stabCs}, we deduce
that the neutral component $G_{x_0}^\circ$ of $G_{x_0}$ is contained
in $P$. Then $G_{x_0}P/P$ is finite and the claim is proved.

\bigskip
Consider now  $\eta\,:\,G\times_P C^+\longto \dX$.
Let $\tilde x_0\in C$ such that $\pi(\tilde x_0)=x_0$. 
Since $\pi\inv(\ul C^+)=C^+$, the claim implies that $\eta\inv(\tilde
x_0)$ is finite.
With Lemma~\ref{lem:fibreeta}, the claim implies that
$lu_1\inv\cX_{u_1}^{G/P}\cap u_2\inv\cX_{u_2}^{G/P}\cap
v\inv\cX_v^{G/P}$ is finite. 
Then Proposition~\ref{prop:LmovBKBprod} allows to conclude.
\end{proof}

\subsection{About \texorpdfstring{$\Gamma(\lg)$}{the tensor cone}}

For any $n\in \ZZ$, $L(n\delta)$ is one dimensional acted on by the
character $n\delta$ of $\lg$. It follows that 
$$\Gamma(\lg)=\Gamma_{\rm red}(\lg)+\QQ(\delta,0,\delta)+\QQ(0,\delta,\delta),$$ 
where
$$
\Gamma_{\rm red}(\lg)=\{(\lambda_1,\lambda_2,\mu)\in\Gamma(\lg)\,:\,\lambda_1(d)=\lambda_2(d)=0\}.
$$
For any $\lambda\in P_+$,
the center $\CC c$ of $\lg$ acts on $L(\lambda)$ with weight
$\lambda(c)\in\ZZ$. Then
$$
\Gamma_{\rm red}(\lg)\subset
\Gamma(\lg)\subset\{(\lambda_1,\lambda_2,\mu)\in(\lh^*_\QQ)^3\,:\, \mu(c)=\lambda_1(c)+\lambda_2(c)\}.
$$

As an application of the GKO construction \cite{GKO} of
representations of Virasoro algebras, Kac-Wakimoto obtained in \cite{KW} the following
properties of decomposition of $L(\lambda_1)\otimes
L(\lambda_2)$.

\begin{lemma}
\label{lem:Gammaepigraph}
Let $\lambda_1$, $\lambda_2$ in $P_+$ such that
$\lambda_1(d)=\lambda_2(d)=0$, $\lambda_1(c)>0$ and $\lambda_2(c)>0$.
Let $\dot \mu\in\dot P$ and set
$\bar\mu:=\dot\mu+(\lambda_1(c)+\lambda_2(c))\Lambda\in P_+$.
 
Then, there exists $b\in\ZZ$ such that  $L(\bar \mu+b\delta)$ is a
submodule of $L(\lambda_1)\otimes L(\lambda_2)$ if and only if
$\bar\mu-\lambda_1-\lambda_2\in Q$.

Moreover, if
$\bar\mu-\lambda_1-\lambda_2\in Q$ then one of the following two assertions holds:
\begin{enumerate}
\item there exists $b_0\in \ZZ$ such that
   $
L(\bar \mu+b\delta)\subset L(\lambda_1)\otimes L(\lambda_2)
$
if and only if  
$
b\leq b_0
$;
\item   there exists $b_0\in \ZZ$ such that $
L(\bar \mu+b\delta)\subset L(\lambda_1)\otimes L(\lambda_2)
$
if and only if  
$
b=b_0$ or $b\in b_0-\ZZ_{>1}
$.
\end{enumerate}
Set
$$
b_0(\lambda_1,\lambda_2,\bar\mu)=\max\{b\in\ZZ\,:\,L(\bar \mu+b\delta)\subset L(\lambda_1)\otimes L(\lambda_2)\}.
$$
\end{lemma}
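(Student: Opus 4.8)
This statement is due to Kac--Wakimoto \cite{KW}; let me indicate how it follows from the Goddard--Kent--Olive coset construction \cite{GKO}. The implication ``$L(\bar\mu+b\delta)\subset L(\lambda_1)\otimes L(\lambda_2)$ for some $b$'' $\Rightarrow$ ``$\bar\mu-\lambda_1-\lambda_2\in Q$'' is the usual weight argument: every weight of $L(\lambda_1)\otimes L(\lambda_2)$ lies in $(\lambda_1+\lambda_2)-Q_+$, where $Q_+$ is the $\ZZ_{\geq 0}$-span of the simple roots, so the highest weight $\bar\mu+b\delta$ of any irreducible summand lies in $(\lambda_1+\lambda_2)+Q$, whence $\bar\mu-\lambda_1-\lambda_2\in Q$ because $\delta\in Q$. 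Pairing the same membership with $d$ and using $\alpha_0(d)=1$, $\dot\alpha_i(d)=0$, $\lambda_j(d)=\bar\mu(d)=0$ gives moreover $b\leq 0$; so the set of admissible $b$ is bounded above, and $b_0$ is a well defined finite integer once that set is nonempty.

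For the converse and the ``moreover'' part, put $m_j=\lambda_j(c)$, $m=m_1+m_2$, embed $\lg$ diagonally in $\lg\oplus\lg$, and form the coset Virasoro operators $L_n^{\mathrm{coset}}=L_n^{(m_1)}+L_n^{(m_2)}-L_n^{\mathrm{diag}}$ from the three Sugawara actions. They commute with the diagonal $\lg$, and since $L(\lambda_1)\otimes L(\lambda_2)$ is a unitary $\lg\oplus\lg$-module on which the Sugawara operators are formally self-adjoint, the coset operators act unitarily \cite{GKO}. Hence
$$
L(\lambda_1)\otimes L(\lambda_2)\ \cong\ \bigoplus_{\dot\mu}\bigl(L(\bar\mu)\big|_{[\lg,\lg]}\bigr)\otimes N_{\dot\mu},
$$
$\dot\mu$ running over the dominant weights of $\dlg$ with $\langle\dot\mu,\dot\theta^\vee\rangle\leq m$ and each $N_{\dot\mu}$ a (possibly zero) unitary positive-energy Virasoro module. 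On $L(\lambda_j)$ the relation $L_0^{(m_j)}=-d+h^{(m_j)}_{\dot\lambda_j}$ holds (because $\lambda_j(d)=0$), with $h^{(k)}_{\dot\nu}=\frac{(\dot\nu,\dot\nu+2\dot\rho)}{2(k+\bh^\vee)}$ for the normalized invariant form on $\dlh^*$, while on the summand $L(\bar\mu+b\delta)$ the diagonal Sugawara gives $L_0^{\mathrm{diag}}=-d+b+h^{(m)}_{\dot\mu}$. Subtracting, $L_0^{\mathrm{coset}}$ acts on $L(\bar\mu+b\delta)$ by the scalar $c_0-b$ with $c_0:=h^{(m_1)}_{\dot\lambda_1}+h^{(m_2)}_{\dot\lambda_2}-h^{(m)}_{\dot\mu}$. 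Consequently the multiplicity of $L(\bar\mu+b\delta)$ in $L(\lambda_1)\otimes L(\lambda_2)$ equals the dimension of the $L_0^{\mathrm{coset}}$-eigenspace of $N_{\dot\mu}$ at eigenvalue $c_0-b$, and since $c_0-b$ decreases by $1$ as $b$ increases by $1$, the admissible $b$ are, under the affine bijection $b\leftrightarrow c_0-b$, exactly the $L_0$-support of $N_{\dot\mu}$.

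It remains to analyse $N_{\dot\mu}$. By \cite{KW} one has $N_{\dot\mu}\neq 0$ precisely when $\dot\mu-\dot\lambda_1-\dot\lambda_2\in\dot Q$; since $\alpha_0=\delta-\dot\theta$ is the only simple root with a nonzero $\delta$-component, $Q\cap\dlh^*=\dot Q$, so (recall $\lambda_j=\dot\lambda_j+m_j\Lambda$ and $m=m_1+m_2$) this selection rule is equivalent to $\bar\mu-\lambda_1-\lambda_2\in Q$; together with the first paragraph this proves the first assertion and makes $b_0$ well defined in the two remaining cases. Decompose the nonzero unitary module $N_{\dot\mu}$ into irreducible unitary highest-weight Virasoro modules of a common central charge, with lowest weights $h_i$ that are pairwise congruent modulo $\ZZ$, and set $h=\min_i h_i$. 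The crucial point is that the first singular vector of a Verma module of highest weight $h'$ occurs at level $\geq 2$ when $h'>0$ and at level $1$ when $h'=0$; combined with positivity of the Rocha--Caridi/BGG character formula, this forces the $L_0$-support of an irreducible summand to be $\{h_i,h_i+1,h_i+2,\dots\}$ if $h_i>0$ and $\{h_i\}\cup\{h_i+k:k\geq2\}$ if $h_i=0$. Taking the union, the $L_0$-support of $N_{\dot\mu}$ is either $\{h,h+1,\dots\}$ --- when $h>0$, or when $h=0$ and some summand starts at level $1$ --- or else $\{h\}\cup\{h+k:k\geq2\}$. Via the bijection $b\leftrightarrow c_0-b$, with $b_0=c_0-h$, the first case is alternative (i) and the second is alternative (ii).

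The delicate step is this last one: establishing that the $L_0$-support of $N_{\dot\mu}$ carries at most the single gap at level $1$, whatever the decomposition of $N_{\dot\mu}$ into irreducibles --- this is exactly why only two alternatives occur --- together with the nonvanishing $N_{\dot\mu}\neq 0$ for each selection-rule-allowed $\dot\mu$. Both I would import from \cite{KW} rather than reprove; the nonvanishing is in any case visible from the explicit theta-function expressions for the branching functions $b^{\bar\mu}_{\lambda_1\lambda_2}$.
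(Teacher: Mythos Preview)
Your proof is correct and follows exactly the Kac--Wakimoto/GKO approach that the paper invokes; the paper's own proof is in fact just a list of pointers to \cite{KW} and \cite[Proposition~4.2]{BrownKumar}, so you have supplied strictly more detail than the paper does. One small sharpening: the claim that each irreducible unitary Virasoro summand with $h_i>0$ has full $L_0$-support $\{h_i,h_i+1,\dots\}$ follows more cleanly from unitarity than from Rocha--Caridi positivity---namely $\|L_{-1}v\|^2=2(h_i+n)\|v\|^2+\|L_1v\|^2>0$ whenever $h_i+n>0$, so $L_{-1}$ is injective on each positive $L_0$-eigenspace; and for $h_i=0$ the computation $\|L_{-2}v_{\mathrm{hw}}\|^2=(c/2)\|v_{\mathrm{hw}}\|^2>0$ (the coset central charge is strictly positive when both levels are) shows level~$2$ is nonempty, after which the same $L_{-1}$-argument propagates.
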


\begin{proof}
  The first assertion is proved in \cite[p.~194]{KW}. 
The fact that $\{b\in\ZZ\,:\,L(\bar \mu+b\delta)\subset L(\lambda_1)\otimes L(\lambda_2)\}
$ has an upper bound is proved on \cite[p.~171]{KW}.
Let $b_0$ be the maximum of such $b\in\ZZ$.
It remains to prove that $b_0-n\delta$ for all $n\geq 2$.
This is a direct consequence of \cite[Proof of Proposition~3.2]{KW}.
See also \cite[Proposition~4.2]{BrownKumar}.
\end{proof}

\begin{NB}
\begin{enumerate}
\item
 \cite[Inequality~2.4.1]{KW} implies that 
$$
b_0\leq
\frac{(\dot\lambda_1+2\dot\rho,\dot\lambda_1)}{2(\bl_1+\bh^\vee)}+
\frac{(\dot\lambda_2+2\dot\rho,\dot\lambda_2)}{2(\bl_2+\bh^\vee)}
-\frac{(\dot\mu+2\dot\rho,\dot\mu)}{2(\bl_1+\bl_2+\bh^\vee)}.
$$
This inequality is quadratic is $(\lambda_1,\lambda_2,\bar\mu)$. In
this paper, we show stronger linear inequalities.  
\item
 If one takes $\bl_1=0$ in Lemma~\ref{lem:Gammaepigraph}, one get
  $\dot\lambda_1=0$. 
Set $\lambda_1=\dot\lambda_1+\bl_1\Lambda=0$,
$\lambda_2=\dot\lambda_2+\bl_2\Lambda$ and
$\mu=\dot\mu+\bl_2\Lambda$. 
We have $L(N0)\otimes L(N\lambda_2)=L(N\lambda_2)$, for
  any positive integer $N$. Hence
  $(0,\lambda_2,\mu)$ belongs to $\Gamma(\lg)$ if and only if
  $\mu=\lambda_2$. 
In particular, the assumption ``$\bl_1$ positive'' is necessary in Lemma~\ref{lem:Gammaepigraph}.
Observe that this implies that $\Gamma(\lg)$ is not closed.
\end{enumerate}
\end{NB}

\bigskip
Set 
$$
\Gamma_{\rm red}^\circ(\lg)=\{(\lambda_1,\lambda_2,\mu)\in
\Gamma_{\rm red}(\lg)\,:\,\lambda_1(c)>0\quad{\rm and}\quad \lambda_2(c)>0\},
$$
and
$$
\begin{array}{lll}
\Ac=\{(\lambda_1,\lambda_2,\bar\mu)\in (X(\dot
T)_\QQ\oplus\QQ\Lambda)^3\,:&
\lambda_1,\lambda_2\mbox{ and }\bar\mu
\mbox{ are dominant}\\
&\lambda_1(c)>0,\,\lambda_2(c)>0\\
&\bar\mu(c)=\lambda_1(c)+\lambda_2(c)&\}.
\end{array}
$$
Define a function $\Psi\,:\,\Ac\longto\RR$ by
$$
\Psi (\lambda_1,\lambda_2,\bar\mu)=\sup_{
  \begin{array}{c}
    N\in\ZZ_{>0}\mbox{ s.t.}\\
N\lambda_1,N\lambda_2,N\bar \mu\in\lh^*_\ZZ\\
N\bar\mu-N\lambda_1-N\lambda_2\in Q
  \end{array}}
\frac{b_0(N\lambda_1,N\lambda_2,N\bar\mu)}{N},
$$
where $b_0$ is defined in Lemma~\ref{lem:Gammaepigraph}.
This lemma implies that $(\lambda_1,\lambda_2,\mu+b\delta)$ belongs to
the closure of  $\Gamma_{\rm red}^\circ(\lg)$ in $\Ac\times\RR$ if and only
if
$$
b\leq \Psi (\lambda_1,\lambda_2,\bar\mu).
$$

\subsection{A cone defined by inequalities}

Consider the cone $\Cun$ of points 
$(\lambda_1,\lambda_2,\mu)\in(\lh^*_\QQ)^3$
such that 
\begin{enumerate}
\item $\lambda_1(c)>0$ and $\lambda_2(c)>0$;
\item $\lambda_1,\lambda_2$, and $\mu$ are dominant;
\item $\lambda_1(d)=\lambda_2(d)=0$;
\item $\lambda_1(c)+\lambda_2(c)=\mu(c)$;
\item the inequality 
\begin{eqnarray}
  \label{eq:1}
  \langle\mu,v\varpi_{\alpha_i^\vee}\rangle\leq
\langle\lambda_1,u_1\varpi_{\alpha_i^\vee}\rangle+
\langle\lambda_2,u_2\varpi_{\alpha_i^\vee}\rangle
\end{eqnarray}
holds for any $i\in\{0,\dots,l\}$ and any $(u_1,u_2,v)\in W^{P_i}$ such that
\begin{eqnarray}
  \label{eq:2}
  {}^\bkprod n_{u_1u_2}^v=1 \qquad\mbox{in } \Ho^*(G/P_i,\ZZ).
\end{eqnarray}
\end{enumerate}

The aim of this section is to prove Theorem~\ref{th:mainintro} or equivalently
Theorem~\ref{th:GammaC} below. We first study the cone $\Cun$. 

%\bigskip
\subsection{Realisation of \texorpdfstring{$\Cun$}{the cone} as an hypograph}

For $\mu\in X(\dot
T)_\QQ\oplus\QQ\Lambda\oplus \QQ\delta$, we denote by $\dot\mu$
(resp. $\bar\mu$) its
projection on $X(\dot T)_\QQ$ (resp. $X(\dot
T)_\QQ\oplus\QQ\Lambda$). 

Let $\cI$ be the set of $(u_1,u_2,v,i)\in (W^{P_i})^3\times\{0,\dots
l\}$ satisfying condition~\eqref{eq:2}. 
Fix $(u_1,u_2,v,i)\in \cI$. Assume first that $i=0$. Let $h_1,h_2$ and $h$ in $\dot Q^\vee$ such that
$u_1W_{P_0}=h_1W_{P_0}$, $u_2W_{P_0}=h_2W_{P_0}$ and
$vW_{P_0}=hW_{P_0}$.
Define the (restriction of) linear function
$\varphi_{(u_1,u_2,v,0)}\,:\,\Ac\longto\QQ$ that maps 
$(\lambda_1,\lambda_2,\bar\mu)$ to 
\begin{multline}\label{eq:3}
\langle h_1,\dot\lambda_1\rangle+\langle
          h_2,\dot\lambda_2\rangle-\langle h,\dot\mu\rangle\\
+\frac{\bl_1}2(\Vert h\Vert^2-\Vert h_1\Vert^2)
+\frac{\bl_2}2(\Vert h\Vert^2-\Vert h_2\Vert^2),
\end{multline}
where $\bl_1=\lambda_1(c)$ and $\bl_2=\lambda_2(c)$.
Note  that $\varpi_{\alpha_0^\vee}=d$ and for $h\in \dot Q^\vee$, 
by equation~\eqref{eq:actionWh}, we have
$
h\cdot d=h+d-\frac{(h,h)}2c.
$
Let $(\lambda_1,\lambda_2,\bar\mu)\in \Ac$.
Then inequality~\eqref{eq:1} with $i=0$, is fulfilled by
$(\lambda_1,\lambda_2,\bar\mu+b\delta)$ if and only if   
$$
b\leq \varphi_{(u_1,u_2,v,0)}(\lambda_1,\lambda_2,\bar\mu).
$$
Assume now that $i\in\{1,\dots,l\}$. Write
$u_1=\dot u_1h_1$, $u_2=\dot u_2h_2$ and $v=\dot vh$ with $\dot
u_1,\dot u_2,\dot v\in \dot W$ and $h_1,h_2,h\in \dot Q^\vee$. 
Define the linear function
$\varphi_{(u_1,u_2,v,i)}\,:\,\Ac\longto\QQ$ that maps 
$(\lambda_1,\lambda_2,\bar\mu)$ to 
\begin{equation}
  \label{eq:33}
\begin{array}{l}
\langle \dot u_1(h_1+\frac{\varpi_{\alpha_i^\vee}}{\langle
\dvarpi_{\alpha_i^\vee},\dtheta\rangle}),\dot\lambda_1\rangle+\langle
          \dot u_2(h_2+\frac{\varpi_{\alpha_i^\vee}}{\langle
\dvarpi_{\alpha_i^\vee},\dtheta\rangle}),\dot\lambda_2\rangle-\langle \dot v(h+\frac{\varpi_{\alpha_i^\vee}}{\langle
\dvarpi_{\alpha_i^\vee},\dtheta\rangle}),\dot\mu\rangle\\
+\frac{\bl_1}2(\Vert h\Vert^2-\Vert
h_1\Vert^2+2\frac{(\varpi_{\alpha_i^\vee},h-h_1)}{\langle
\dvarpi_{\alpha_i^\vee},\dtheta\rangle})\\
+\frac{\bl_2}2(\Vert h\Vert^2-\Vert h_2\Vert^2+2\frac{(\varpi_{\alpha_i^\vee},h-h_2)}{\langle
\dvarpi_{\alpha_i^\vee},\dtheta\rangle}).
\end{array}
\end{equation}
Recall that $\varpi_{\alpha_i^\vee}=\dvarpi_{\alpha_i^\vee}+\langle
\dvarpi_{\alpha_i^\vee},\dtheta\rangle d$. Moreover, for $w=\dot wh\in W$, by equation~\eqref{eq:actionWh}, we have
$$
(\dot wh)\cdot \varpi_{\alpha_i^\vee}=\dot w \dvarpi_{\alpha_i^\vee}+\langle
\dvarpi_{\alpha_i^\vee},\dtheta\rangle\dot wh+
\langle
\dvarpi_{\alpha_i^\vee},\dtheta\rangle  d-\bigg (\langle
\dvarpi_{\alpha_i^\vee},\dtheta\rangle\frac{(h,h)}2+(\dvarpi_{\alpha_i^\vee},h)\bigg)c.
$$
Then inequality~\eqref{eq:1}, is fulfilled by
$(\lambda_1,\lambda_2,\bar\mu+b\delta)$ if and only if   
$$
b\leq \varphi_{(u_1,u_2,v,i)}(\lambda_1,\lambda_2,\bar\mu).
$$

Define
$$
\begin{array}{lccl}
  \varphi\,:&\Ac&\longto&\RR\cup\{-\infty\}\\
&(\lambda_1,\lambda_2,\bar\mu)&\longmapsto&\inf_{(u_1,u_2,v,i)\in \cI}\varphi_{(u_1,u_2,v,i)}(\lambda_1,\lambda_2,\bar\mu).
\end{array}
$$
Then $\varphi$ is a concave function and 
$\Cun$  is the hypograph of
$\varphi$:

$$
\Cun=\{(\lambda_1,\lambda_2,\bar\mu+b\delta)\,:\,(\lambda_1,\lambda_2,\bar\mu)\in
\Ac\mbox{ and }b\leq \varphi(\lambda_1,\lambda_2,\bar\mu)\}.
$$

%\bigskip
%\noindent
\subsection{The convex set \texorpdfstring{$\Cun$}{cone} is locally polyhedral} 

\begin{prop}
\label{prop:locpol}
Let $x_0\in\Ac$. Then
  $$\forall M\in \RR\quad\exists \mbox{ an open }U\ni x_0\quad \exists
  J\subset\cI\mbox{ finite}
$$
such that 
$$
\forall x\in U\qquad \forall a\in \cI\qquad
\varphi_a(x)<M\,\Rightarrow\,a\in J.
$$
\end{prop}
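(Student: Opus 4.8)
The plan is to show that for each $x_0\in\Ac$ and each bound $M\in\RR$, only finitely many of the affine-linear functions $\varphi_a$ ($a\in\cI$) can dip below $M$ on a small neighborhood of $x_0$. The mechanism is that the nonvanishing condition ${}^\bkprod n_{u_1u_2}^v=1$ forces $\delta_{u_1\,u_2}^v=0$, and by Lemma~\ref{lem:calculdelta} this is a \emph{linear equation} relating $u_1\inv\rho$, $u_2\inv\rho$, $v\inv\rho$; combined with the length condition $l(v)=l(u_1)+l(u_2)$ it pins down the combinatorics. First I would make the translation explicit: an index $a=(u_1,u_2,v,i)\in\cI$ contributes, via the formulas \eqref{eq:3} and \eqref{eq:33}, a function $\varphi_a(\lambda_1,\lambda_2,\bar\mu)$ whose value is essentially $\langle\lambda_1,u_1\varpi_{\alpha_i^\vee}\rangle+\langle\lambda_2,u_2\varpi_{\alpha_i^\vee}\rangle-\langle\mu,v\varpi_{\alpha_i^\vee}\rangle$ re-expressed on $\Ac$ (with the $\delta$-coordinate solved for). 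So the statement $\varphi_a(x)<M$ on a neighborhood of $x_0$ says that the Schubert-type inequality \eqref{eq:1} associated to $a$ is ``nearly violated'' near $x_0$.

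The key step is to bound, for fixed $x_0$ and $M$, the relevant ``size'' of $a$. Write $x_0=(\lambda_1^0,\lambda_2^0,\bar\mu^0)$ with central charges $\bl_1=\lambda_1^0(c)>0$, $\bl_2=\lambda_2^0(c)>0$. The point is that in each of \eqref{eq:3} and \eqref{eq:33}, the terms quadratic in the lattice elements $h_1,h_2,h\in\dot Q^\vee$ appear with the \emph{positive} coefficients $\bl_1/2$ and $\bl_2/2$ in the combination $\tfrac{\bl_1}{2}\Vert h\Vert^2-\tfrac{\bl_1}{2}\Vert h_1\Vert^2+\cdots$. But the constraint $\delta_{u_1\,u_2}^v=0$, i.e. $u_1\inv\rho+u_2\inv\rho-v\inv\rho=\rho$ paired against every $\tau$, together with $l(v)=l(u_1)+l(u_2)$, controls the difference $\Vert h\Vert^2-\Vert h_1\Vert^2-\Vert h_2\Vert^2$ (using \eqref{eq:actionWh} and \eqref{eq:312}, the $\Lambda$-components of $u_j\inv\rho$ are quadratic in the $h_j$); in fact this difference is a \emph{fixed} affine expression once the $\dot W$-parts are fixed. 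Plugging this in, $\varphi_a(x_0)$ becomes, up to bounded terms depending on the finitely many choices of $(\dot u_1,\dot u_2,\dot v,i)$, a quantity of the form (positive definite quadratic form in the $h_j$'s, with coefficients bounded below by $\min(\bl_1,\bl_2)/2>0$) plus (linear in the $h_j$'s). Such an expression tends to $+\infty$ as $\Vert h_1\Vert+\Vert h_2\Vert+\Vert h\Vert\to\infty$; hence $\varphi_a(x_0)<M+1$ forces $h_1,h_2,h$ to lie in a bounded — hence finite — subset of the lattice $\dot Q^\vee$. Since the $\dot W$-components and the index $i$ range over finite sets, only finitely many $a\in\cI$ satisfy $\varphi_a(x_0)<M+1$; call this finite set $J$.

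Finally I would upgrade from the single point $x_0$ to a neighborhood by continuity. Each $\varphi_a$ is affine-linear (indeed, the formulas \eqref{eq:3} and \eqref{eq:33} are linear in $(\dot\lambda_1,\dot\lambda_2,\dot\mu,\bl_1,\bl_2)$, the quadratic-looking terms being quadratic only in the \emph{fixed} data $h_j$), so $x\mapsto\varphi_a(x)$ varies continuously; but one must be careful that the gradients of the $\varphi_a$ are not uniformly bounded over all $a\in\cI$. This is handled by the same growth estimate: choosing $U$ a bounded neighborhood of $x_0$ on which $\bl_1,\bl_2$ stay $\geq c_0>0$, the estimate of the previous paragraph applied with $M$ replaced by $M' := \sup_{x\in U}(\text{the bounded correction}) + M$ shows that $\varphi_a(x)<M$ for some $x\in U$ still forces $h_1,h_2,h$ into a bounded set; enlarging $J$ to this (still finite) set finishes the proof. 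The main obstacle is the bookkeeping in the second step: one must verify carefully, using \eqref{eq:actionWh}, \eqref{eq:312} and Lemma~\ref{lem:calculdelta}, that after imposing $\delta_{u_1\,u_2}^v=0$ the residual dependence of $\varphi_a$ on the lattice elements is genuinely a positive-definite quadratic form (with a uniform positive lower bound on its eigenvalues in terms of $\bl_1,\bl_2$) plus a linear term, so that coercivity really does kick in.
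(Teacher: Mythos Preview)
Your approach is correct and genuinely different from the paper's. You exploit the \emph{equality} $\delta_{u_1u_2}^v=0$ (which holds because $a\in\cI$ means ${}^\bkprod n_{u_1u_2}^v=1$): via Lemma~\ref{lem:calculdelta} this makes $\Vert h\Vert^2-\Vert h_1\Vert^2-\Vert h_2\Vert^2$ \emph{linear} in $(h,h_1,h_2)$, and after substituting you get $\varphi_a=\tfrac{\bl_2}{2}\Vert h_1\Vert^2+\tfrac{\bl_1}{2}\Vert h_2\Vert^2+(\text{linear})$, with $\Vert h\Vert$ controlled by the same constraint; coercivity follows. One caution: the residual quadratic form is positive-definite only in $(h_1,h_2)$, not in all three of $(h_1,h_2,h)$; you need the extra step that $\delta=0$ bounds $\Vert h\Vert$ once $\Vert h_1\Vert,\Vert h_2\Vert$ are bounded, so that the linear-in-$h$ terms are harmless --- this is implicit in your sketch but should be stated. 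The paper instead uses only the \emph{inequality} $\delta_{u_1u_2}^v\ge0$, together with two external inputs: Proposition~\ref{prop:ineg} applied to the special point $(\bl\Lambda,0,\bl\Lambda)$ (giving $\Vert h\Vert^2\ge\Vert h_j\Vert^2$), and the length estimate of Lemma~\ref{lem:lvsnorm} (giving $\Vert h_j\Vert\le C(1+\Vert h\Vert)$ from $l(v)=l(u_1)+l(u_2)$); this yields a lower bound $\tfrac{\bl}{2}\Vert h\Vert^2-(\text{linear in }\Vert h\Vert)$. Your route is more self-contained (it needs neither Proposition~\ref{prop:ineg} nor Lemma~\ref{lem:lvsnorm}); the paper's route has the advantage of working verbatim under the weaker hypothesis $n_{u_1u_2}^v\neq0$ (ordinary cup product), as remarked after the proof, whereas your argument relies essentially on the exact vanishing $\delta=0$.
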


\begin{proof}
Fix $M\in\RR$.
Let $(u_1,u_2,v,0)\in \cI$.
By Proposition~\ref{prop:ineg}, the inequality~\eqref{eq:1} is satisfied
by $(\bl\Lambda,0,\bl\Lambda)$ for any $\bl>0$. Hence
$$
\Vert h\Vert^2-\Vert h_1\Vert^2\geq 0.
$$
Similarly, $
\Vert h\Vert^2-\Vert h_2\Vert^2\geq 0
$.
Hence, the image of $(\lambda_1,\lambda_2,\bar \mu)\in\Ac$ by
$\varphi_{(u_1,u_2,v,0)} $ is greater or equal to
\begin{equation}\label{eq:lb0}
\langle h_1,\dot\lambda_1\rangle+\langle
          h_2,\dot\lambda_2\rangle-\langle h,\dot\mu\rangle
+\frac{\bl}2(2\Vert h\Vert^2-\Vert h_1\Vert^2-\Vert h_2\Vert^2),
\end{equation}
where $\bl=\min(\lambda_1(c),\lambda_2(c))$.
Using Lemma~\ref{lem:calculdelta}, one gets
$\delta_{u_1\,u_2}^v=\langle\rho,-d-h\cdot d+h_1\cdot d+h_2\cdot d\rangle$.
Hence
$$
\delta_{u_1\,u_2}^v=\langle\dot\rho,h_1+h_2-h\rangle+
\frac{\bh^\vee}2
(\Vert h\Vert^2-\Vert h_1\Vert^2-\Vert h_2\Vert^2).
$$
Lemma~\ref{lem:deltaneg} implies that $
\delta_{u_1\,u_2}^v\geq 0$ and
$$
\Vert h\Vert^2-\Vert h_1\Vert^2-\Vert h_2\Vert^2\geq\frac
2{\bh^\vee}\langle h-h_1-h_2,\dot
\rho\rangle.
$$
Then the image of $(\lambda_1,\lambda_2,\bar\mu)\in\Ac$ by
$\varphi_{(u_1,u_2,v,0)} $ is greater or equal to
\begin{equation}
  \label{eq:lb1}
\frac{\bl}2\Vert h\Vert^2
-\Vert h_1\Vert. \Vert \dot\lambda_1\Vert-\Vert h_2\Vert. \Vert
\dot\lambda_2\Vert- \Vert h\Vert. \Vert \dot\mu\Vert-
\frac{2 \Vert \dot\rho\Vert}{\bh^\vee}
(\Vert h_1\Vert+ \Vert h_2\Vert+\Vert h\Vert).
\end{equation}
By construction there exist $\dot u_1$, $\dot u_2$ and $\dot v$ in
$\dot W$ such that $u_1=h_1\dot u_1$, $u_2=h_2\dot u_2$ and $v=h\dot v$.
But $l(v)=l(u_1)+l(u_2)$. Then Lemma~\ref{lem:lvsnorm} implies that 
$$
N+\sqrt 2N\Vert h\Vert\geq
l(v)\geq l(u_1)\geq
K\Vert h_1\Vert-N.
$$
Then
\begin{equation}
  \label{eq:maxh}
\max(\Vert h_1\Vert,\Vert h_2\Vert)\leq\frac{N} K (2+\sqrt 2 \Vert
  h\Vert).
\end{equation}
The point is that this implies that 
$\varphi_{(u_1,u_2,v,0)}(\lambda_1,\lambda_2,\bar\mu)$ is greater or equal
to $\frac{\bl}2\Vert h\Vert^2$ minus terms that are linear in $\Vert h\Vert$.
 One can easily deduce that there exist an open neighborhood $U_0$ of
 $x_0$ and $A_0\in\RR$ such that 
$$
\forall x\in U_0\quad\forall a=(u_1,u_2,v,0)\in\cI\qquad l(v)\geq A_0\,\Rightarrow\,
\varphi_a(x)\geq M.
$$ 

\bigskip
Let $(u_1,u_2,v,i)\in \cI$ with $i>0$ and consider the associated linear
function $\varphi_{(u_1,u_2,v,i)}$. Since
$(\bl\Lambda,0,\bl\Lambda)\in\Gamma(\lg)$ for any $\bl>0$,
Proposition~\ref{prop:ineg} implies that 
$$
\Vert h\Vert^2-\Vert h_1\Vert^2+2\frac{(\varpi_{\alpha_i^\vee},h-h_1)}{\langle
\dvarpi_{\alpha_i^\vee},\dtheta\rangle}\geq 0.
$$
Set
$C:=\frac{\Vert\dvarpi_{\alpha_i^\vee}\Vert}{\langle\dvarpi_{\alpha_i^\vee},\dot\theta\rangle}$
and 
$D=\langle\dvarpi_{\alpha_i^\vee},\dot\theta\rangle$.
Then, the image of $(\lambda_1,\lambda_2,\bar \mu)\in\Ac$ by
$\varphi_{(u_1,u_2,v,i)} $ is greater or equal to
\begin{equation}\label{eq:lb6}
  \begin{array}{r@{\,}l}
    -&(\Vert h_1\Vert .\Vert \dot\lambda_1\Vert+\Vert h_2\Vert .\Vert
\dot\lambda_2\Vert
+\Vert h\Vert .\Vert \dot\mu\Vert)
-C(\Vert \dot\lambda_1\Vert+\Vert \dot\lambda_2\Vert+\Vert
\dot\mu\Vert)\\
+\frac{\bl}2&(2\Vert h\Vert^2-\Vert h_1\Vert^2-\Vert h_2\Vert^2
-2C(\Vert h-h_1\Vert+\Vert h-h_2\Vert)).
  \end{array}
\end{equation}
where $\bl=\min(\lambda_1(c),\lambda_2(c))$.
But $
\delta_{u_1\,u_2}^v\geq 0$ implies that
$$
\begin{array}{r@{\,}l}

\Vert h\Vert^2-\Vert h_1\Vert^2-\Vert h_2\Vert^2\geq&
\frac 2{D\bh^\vee}(1-\langle\dot\rho,
-\dot v\dvarpi_{\alpha_i^\vee}+\dot u_1\dvarpi_{\alpha_i^\vee}+\dot u_2\dvarpi_{\alpha_i^\vee}
\rangle)\\[0.7em]
&
-\frac 2{\bh^\vee}(\langle\dot\rho,
-\dot vh+\dot u_1h_1+\dot u_2h_2
\rangle)\\[0.7em]
&
-\frac 2{D}(\dvarpi_{\alpha_i^\vee},h-h_1-h_2).
\end{array}
$$
Combining these inequalities with inequality~\eqref{eq:maxh} one can get
a lower bound for $\varphi_{(u_1,u_2,v,i)}(\lambda_1,\lambda_2,\bar\mu)$  equals
to $\frac{\bl}2\Vert h\Vert^2$ minus terms that are linear in $\Vert
h\Vert$.
One can easily deduce that there exist an open neighborhood $U$ of
 $x_0$ and $A\in\RR$ such that 
$$
\forall x\in U\quad\forall a=(u_1,u_2,v,i)\in\cI\qquad 
l(v)\geq A\,\Rightarrow\,
\varphi_a(x)\geq M.
$$ 

\bigskip
But there exist only finitely many triples $(h_1\dot
u_1,h_2\dot u_2,h\dot v)$ with $l(v)<A$ and 
$l(v)=l(
u_1)+l(u_2)$. 
The proposition follows.
\end{proof}

\bigskip
\begin{NB}
  Proposition~\ref{prop:locpol} is still true with the family of
  equations corresponding to $P$ any parabolic subgroup,
  $\alpha_i\in\Delta-\Delta(P)$ and $n_{u_1\,u_2}^v\neq 0$. The same
  proof works.  
\end{NB}

\bigskip
For any $(u_1,u_2,v,i)\in\cI$, we set
$$
\Ac_{(u_1,u_2,v,i)}=\{(\lambda_1,\lambda_2,\bar\mu)\in
\Ac\,:\,\varphi(\lambda_1,\lambda_2,\bar\mu)=\varphi_{(u_1,u_2,v,i)}(\lambda_1,\lambda_2,\bar\mu)
\}.
$$

The properties of these sets are summarized in the following lemma.
The {\it dimension} of a convex set is the dimension of the generated
affine space.

\begin{lemma}
  With above notation, 
  \begin{enumerate}
  \item The sets $\Ac_{(u_1,u_2,v,i)}$ are convex.
\item Set 
$
\cI_1=\{(u_1,u_2,v,i)\in \cI\,|\,\dim(\Ac_{(u_1,u_2,v,i)})=\dim(\Ac)\}.
$
Then
\begin{equation}
  \label{eq:decA}
  \Ac=\cup_{(u_1,u_2,v,i)\in \cI_1}\Ac_{(u_1,u_2,v,i)}.
\end{equation}
\item The sets associated to two different elements of $\cI_1$  only intersect along their boundaries. 
  \end{enumerate}
\end{lemma}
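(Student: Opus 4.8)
The plan is to exploit that $\varphi=\inf_{a\in\cI}\varphi_a$ is a concave function on $\Ac$ given as an infimum of affine functions, with the infimum locally finite by Proposition~\ref{prop:locpol}. For $a=(u_1,u_2,v,i)\in\cI$ write $\varphi_a:=\varphi_{(u_1,u_2,v,i)}$, $\Ac_a:=\Ac_{(u_1,u_2,v,i)}$, and set $d:=\dim\Ac$. Assertion (i) is then immediate: since $\varphi\le\varphi_a$ on $\Ac$, we have $\Ac_a=\{x\in\Ac:\varphi_a(x)\le\varphi_c(x)\ \forall c\in\cI\}=\Ac\cap\bigcap_{c\in\cI}\{\varphi_c-\varphi_a\ge 0\}$, an intersection of $\Ac$ with affine half-spaces, hence convex.

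For the remaining assertions, fix $x_0\in\Ac$ and first establish a local normal form. Applying Proposition~\ref{prop:locpol} with successive values of $M$, only finitely many $\varphi_a$ are $<M$ near $x_0$; as $\cI\neq\emptyset$ this forces $c_0:=\varphi(x_0)$ to be finite and attained at $x_0$. Now apply the proposition with $M=c_0+1$ to get an open neighbourhood $U\ni x_0$ in $\Ac$ and a finite $J\subset\cI$ with $\varphi_c\ge c_0+1$ on $U$ whenever $c\notin J$; discarding from $J$ the indices $a$ with $\varphi_a(x_0)>c_0$ and shrinking $U$ (using continuity of the finitely many remaining $\varphi_a$, whose minimum at $x_0$ equals $c_0<c_0+1$), we obtain a finite $J'\subset\cI$ with
\[
\varphi=\min_{a\in J'}\varphi_a\ \text{ on }U,\qquad \varphi_a(x_0)=c_0\ \text{ for all }a\in J'.
\]
Consequently $x_0\in\Ac_a$ for every $a\in J'$, and $U=\bigcup_{a\in J'}(\Ac_a\cap U)$.

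Assertion (ii) follows: $U$ contains a nonempty subset open in $\operatorname{aff}(\Ac)$ (because $\operatorname{relint}\Ac$ is dense in $\Ac$), so it has positive $d$-dimensional measure in $\operatorname{aff}(\Ac)$; since $U\subset\bigcup_{a\in J'}\Ac_a$ with $J'$ finite, some $\Ac_{a_0}$ ($a_0\in J'$) must have dimension $d$, for otherwise the finite union would be Lebesgue-null. Thus $a_0\in\cI_1$ and, since $a_0\in J'$, $x_0\in\Ac_{a_0}$; as $x_0$ was arbitrary and each $\Ac_a\subset\Ac$, this gives $\Ac=\bigcup_{a\in\cI_1}\Ac_a$. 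For assertion (iii), let $a,b\in\cI_1$ with $\Ac_a\neq\Ac_b$ and suppose some $x\in\Ac_a\cap\Ac_b$ lay in the relative interior of $\Ac_a$; since $\dim\Ac_a=d$ we have $\operatorname{aff}(\Ac_a)=\operatorname{aff}(\Ac)$, so $x$ has a neighbourhood $N$ in $\operatorname{aff}(\Ac)$ contained in $\Ac_a$, on which $\varphi_a\le\varphi_c$ for all $c$ and hence $\varphi=\varphi_a$. Then $g:=\varphi_b-\varphi_a$ is affine, satisfies $g=\varphi_b-\varphi\ge 0$ on $N$, and $g(x)=0$; an affine function that is nonnegative near an interior zero vanishes identically, so $\varphi_a=\varphi_b$ and $\Ac_a=\Ac_b$, a contradiction. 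Hence $\Ac_a\cap\Ac_b$ meets no relative interior, i.e. $\Ac_a\cap\Ac_b\subset\partial\Ac_a\cap\partial\Ac_b$.

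The main obstacle is assertion (ii): one must combine the local finiteness of Proposition~\ref{prop:locpol} with a measure (or Baire-category) argument in order to locate, at each point, a full-dimensional cell of the decomposition through it, and the delicate bookkeeping is the reduction to the clean local normal form $\varphi=\min_{a\in J'}\varphi_a$ in which every $\varphi_a$ attains the value $\varphi(x_0)$ at $x_0$.
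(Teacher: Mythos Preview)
Your proof is correct and follows essentially the same strategy as the paper's: use Proposition~\ref{prop:locpol} to obtain a local finite normal form $\varphi=\min_{a\in J'}\varphi_a$, then apply standard convex/affine arguments. The paper's own proof is a two-line appeal to Proposition~\ref{prop:locpol} and ``the theory of polyhedrons'', so you have simply written out the details it leaves implicit; there is no substantive difference in approach. One cosmetic remark: since $\Ac$ is defined over~$\QQ$, the Lebesgue-measure argument in~(ii) is slightly informal, but the underlying fact --- a finite union of convex sets of dimension $<d$ cannot contain a $d$-dimensional relatively open set --- holds over any infinite field and is all you actually use.
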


\begin{proof}
The first assertion follows from the linearity of functions
$\varphi_{(u_1,u_2,v,i)}$.
  Proposition~\ref{prop:locpol} implies that $\Cun$ is locally
  polyhedral and the two last assertions.
\end{proof}

For any $(u_1,u_2,v,i)\in \cI$, we set
\begin{equation}
  \label{eq:defFi}
  \begin{array}{ll}
  \Face_{(u_1,u_2,v,i)}=\{(\lambda_1,\lambda_2,\bar\mu+b\delta)\,:\,&(\lambda_1,\lambda_2,\bar\mu)\in\Ac_{(u_1,u_2,v,i)}\\
&b=\varphi_{(u_1,u_2,v,i)}(\lambda_1,\lambda_2,\bar\mu)\}.
  \end{array}
\end{equation}
Then
$$
\{\Face_{(u_1,u_2,v,i)}\,:\,(u_1,u_2,v,i)\in \cI_1\}
$$
is the collection of codimension one faces of $\Cun$.
Moreover,
$$
\begin{array}{ll}
\Cun=\{(\lambda_1,\lambda_2,\bar\mu+b\delta)\,:\,&(\lambda_1,\lambda_2,\bar\mu)\in\Ac\\
&b\leq
\varphi_{(u_1,u_2,v,i)}(\lambda_1,\lambda_2,\bar\mu)\qquad\forall
(u_1,u_2,v,i)\in\cI_1\}.
\end{array}
$$
Two convex sets $\Ac_{(u_1,u_2,v,i)}$ and $\Ac_{(u'_1,u'_2,v',i)}$ are
said to be {\it adjacent} if their intersection has codimension 1.

%\bigskip
%\noindent 
\subsection{An example of a face of codimension 1}

Consider the element $(e,e,e,0)\in\cI$. The associated
inequality~\eqref{eq:1} is $b\leq 0$.
Moreover, $G/P_0$ is the affine Grassmannian $\Gr_{\dot G}$, the
semi-simple component of the Levi
subgroup $L_0$ is $\dot G$. 

\begin{lemma}
  \label{lem:face0}
Let
$(\dot\lambda_1+\bl_1\Lambda,\dot\lambda_2+\bl_2\Lambda,\dot\mu+(\bl_1+\bl_2)\Lambda)\in
(P_+)^3$.
Then $L(\dot\mu+(\bl_1+\bl_2)\Lambda)$ is contained in
$L(\dot\lambda_1+\bl_1\Lambda)\otimes L(\dot\lambda_2+\bl_2\Lambda)$ if
and only if $L_{\dot G}(\dot\mu)$ is contained in
$L_{\dot G}(\dot\lambda_1)\otimes L_{\dot G}(\dot\lambda_2)$.

In particular, $\Ac_{(e,e,e,0)}$ has nonempty interior in $\Ac$.
 % the set of
% $(\lambda_1,\lambda_2,\mu)\in\Gamma_{\rm red}^\circ(\lg)$ such that
% $\mu(d)=0$ has codimension 1.
\end{lemma}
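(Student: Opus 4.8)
The plan is to obtain the first (``if and only if'') assertion as the special case of Corollary~\ref{cor:G2L} attached to $P = P_0$ and $u_1 = u_2 = v = e$, and then to deduce that $\Ac_{(e,e,e,0)}$ has nonempty interior by transporting a full-dimensional piece of the classical tensor cone $\Gamma(\dot G)$.

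First I would set $\tau = \varpi_{\alpha_0^\vee} = d$ and check the hypotheses of Corollary~\ref{cor:G2L} (equivalently of Theorem~\ref{th:egalite}) for $P = P_0$, $u_1 = u_2 = v = e$: the class $\epsilon_e$ is the unit of $\Ho^*(G/P_0,\ZZ)$, so $n_{e,e}^e = 1$; and the equality \eqref{eq:egalite} reads $\langle\lambda_1, d\rangle + \langle\lambda_2, d\rangle = \langle\mu, d\rangle$, which holds because all three weights lie in $\dot P_+ \oplus \ZZ_{\geq 0}\Lambda$ and hence have vanishing $d$-coordinate. Corollary~\ref{cor:G2L} then gives $c_{\lambda_1\,\lambda_2}^{\mu} = \bar c_{\lambda_1\,\lambda_2}^{\mu}$, the corresponding tensor multiplicity for the finite-dimensional Levi $L_0$ (here $u_i^{-1}\lambda_i = \lambda_i$ and $v^{-1}\mu = \mu$).

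Next I would identify the $L_0$-multiplicity with the $\dot G$-multiplicity. Since the derived subgroup of $L_0$ is $\dot G$ and $Z(L_0)^\circ$ is central, the module $L_{L_0}(\dot\nu + \bl\Lambda)$ restricts to $L_{\dot G}(\dot\nu)$, with $Z(L_0)^\circ$ acting by the character determined by $\bl$; tensoring two such modules adds the central characters, so $L_{L_0}(\dot\nu + (\bl_1 + \bl_2)\Lambda)$ occurs in $L_{L_0}(\dot\lambda_1 + \bl_1\Lambda) \otimes L_{L_0}(\dot\lambda_2 + \bl_2\Lambda)$ with multiplicity exactly $c_{\dot\lambda_1\,\dot\lambda_2}^{\dot\nu}$ (the central charges match automatically: $\bl_1 + \bl_2$ on $c$, and $0 = 0 + 0$ on $d$). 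Taking $\dot\nu = \dot\mu$ yields the first assertion.

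For the ``in particular'' clause, note that $\varphi_{(e,e,e,0)}$ is identically $0$ (take the representatives in $\dot Q^\vee$ equal to $0$ in \eqref{eq:3}) and $\varphi \leq \varphi_{(e,e,e,0)}$, so $\Ac_{(e,e,e,0)}$ is exactly the set of $(\lambda_1,\lambda_2,\bar\mu) \in \Ac$ with $(\lambda_1,\lambda_2,\bar\mu) \in \Cun$ (viewed with $b = 0$). The classical tensor cone $\Gamma(\dot G) \subset (X(\dot T)_\QQ)^3$ is a full-dimensional polyhedral cone, so I can pick $(\dot\lambda_1,\dot\lambda_2,\dot\mu)$ in its interior with $\dot\lambda_1, \dot\lambda_2$ regular, together with $\bl_1, \bl_2$ large enough that $\lambda_1 = \dot\lambda_1 + \bl_1\Lambda$, $\lambda_2 = \dot\lambda_2 + \bl_2\Lambda$, $\bar\mu = \dot\mu + (\bl_1 + \bl_2)\Lambda$ are dominant — here one uses $\langle\dot\mu,\dtheta^\vee\rangle \leq \langle\dot\lambda_1 + \dot\lambda_2,\dtheta^\vee\rangle$, valid since $\dtheta^\vee$ is a dominant coweight and $\dot\lambda_1 + \dot\lambda_2 - \dot\mu$ is a nonnegative combination of simple roots. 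Scaling by a suitable $N$ and applying the first part, $L(N\bar\mu) \subset L(N\lambda_1) \otimes L(N\lambda_2)$, so $(\lambda_1,\lambda_2,\bar\mu) \in \Gamma(\lg)$; by Proposition~\ref{prop:ineg} it satisfies all inequalities \eqref{eq:1}, and it obviously satisfies conditions (i)--(iv) defining $\Cun$, hence lies in $\Cun$, hence in $\Ac_{(e,e,e,0)}$. As $(\dot\lambda_1,\dot\lambda_2,\dot\mu,\bl_1,\bl_2)$ ranges over an open set of the expected dimension $\dim\Ac$, this exhibits an open subset of $\Ac$ contained in $\Ac_{(e,e,e,0)}$. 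I expect the only delicate bookkeeping to be the matching of central characters in the Levi reduction and the dominance/integrality constraints after scaling; both are routine.
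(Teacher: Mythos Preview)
Your proof is correct and follows essentially the same approach as the paper's: both derive the first assertion from Corollary~\ref{cor:G2L} applied to $P_0$ with $u_1=u_2=v=e$, and both deduce the second assertion by lifting interior points of $\Gamma(\dot\lg)$ to $\Gamma(\lg)$ via the first part. You make explicit a step the paper leaves implicit---namely that membership in $\Gamma(\lg)$ forces $\varphi\geq 0=\varphi_{(e,e,e,0)}$ via Proposition~\ref{prop:ineg}, hence membership in $\Ac_{(e,e,e,0)}$---but this is the intended logic.
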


\begin{proof}
  The first assertion is certainly well known. It can also be obtained
  as a consequence of Theorem~\ref{th:egalite}. Indeed, in
  $\Ho^*(\Gr_{\dot G},\ZZ)$, we have $n_{e,e}^e=1$. 
For $\tau=\varpi_{\alpha_0^\vee}$,
$(\dot\lambda_1+\bl_1\Lambda,\dot\lambda_2+\bl_2\Lambda,\dot\mu+(\bl_1+\bl_2)\Lambda)$
satisfies equality~\eqref{eq:egalite}.
Corollary~\ref{cor:G2L} shows that the multiplicity of 
$L(\dot\mu+(\bl_1+\bl_2)\Lambda)$ in
$L(\dot\lambda_1+\bl_1\Lambda)\otimes L(\dot\lambda_2+\bl_2\Lambda)$
is equal to those of $L_{\dot G}(\dot\mu)$ in
$L_{\dot G}(\dot\lambda_1)\otimes L_{\dot G}(\dot\lambda_2)$. The first
assertion of the lemma follows.

It is well known (see e.g. \cite[Theorem~1.4]{PR:example}) that
$\Gamma(\dot\lg)$ has nonempty interior in $(X(\dot T)_\QQ)^3$. 
But for any given
$(\dot\lambda_1,\dot\lambda_2,\dot\mu)\in\Gamma(\dot\lg)$, 
$(\dot\lambda_1+\bl_1\Lambda,\dot\lambda_2+\bl_2\Lambda,\dot\mu+(\bl_1+\bl_2)\Lambda)\in\Gamma(\lg)$
for any $\bl_1\geq\langle\dot\lambda_1,\dot\theta^\vee\rangle$,
$\bl_2\geq\langle\dot\lambda_2,\dot\theta^\vee\rangle$ and $\bl_1+\bl_2\geq\langle\dot\mu,\dot\theta^\vee\rangle$.
The second assertion follows. 
\end{proof}

\subsection{The main result}

We can now restate our description $\Gamma^\circ(\lg)$.

\begin{theo}
\label{th:GammaC}
  With the above notation, we have
$$
\Gamma_{\rm red}^\circ(\lg)=\Cun.
$$
\end{theo}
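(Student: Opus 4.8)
The proof proceeds by the five-step strategy announced in the introduction. First I would record that $\Gamma_{\mathrm{red}}^\circ(\lg)\subset\Cun$: this is Step~2, i.e. Proposition~\ref{prop:ineg} together with the remark that $n_{u_1u_2}^v(G/P)=n_{u_1u_2}^v(G/B)$, so every inequality \eqref{eq:1} holds on $\Gamma(\lg)$, while the constraints (i)--(iv) defining $\Cun$ are automatic for $\Gamma_{\mathrm{red}}^\circ$ (positivity of $\lambda_i(c)$ is the hypothesis, $\lambda_i(d)=0$ is the reduction, and $\mu(c)=\lambda_1(c)+\lambda_2(c)$ is forced by the action of the center). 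Next, by Step~1 (Lemma~\ref{lem:BW} gives convexity of $\Gamma(\lg)$, hence of $\Gamma_{\mathrm{red}}^\circ(\lg)$) and the realization of $\Cun$ as the hypograph of the concave function $\varphi$ on $\Ac$, the reverse inclusion $\Cun\subset\overline{\Gamma_{\mathrm{red}}^\circ(\lg)}$ reduces to showing that every boundary point of $\Cun$ lies in $\overline{\Gamma_{\mathrm{red}}^\circ(\lg)}$; since $\Cun$ is locally polyhedral (Proposition~\ref{prop:locpol}), it suffices to treat the relative interior of each codimension-one face $\Face_{(u_1,u_2,v,i)}$, $(u_1,u_2,v,i)\in\cI_1$.

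\textbf{The induction.} Fix such a face $\Face=\Face_{(u_1,u_2,v,i)}$, so ${}^\bkprod n_{u_1u_2}^v=1$ in $\Ho^*(G/P_i,\ZZ)$ and on $\Face$ the inequality \eqref{eq:1} is an equality, i.e. \eqref{eq:28} holds for $\tau=\varpi_{\alpha_i^\vee}$ (the passage from $\varpi_{\alpha_i^\vee}$ to a generic $\tau\in\bigoplus_{\alpha_j\notin\Delta(P_i)}\ZZ_{>0}\varpi_{\alpha_j^\vee}$ is harmless because the equality face is cut out by a single linear functional). By Corollary~\ref{cor:G2L} (equivalently Theorem~\ref{th:reductionintro}), for an integral point $(\lambda_1,\lambda_2,\mu)$ in (the relative interior of) $\Face$ one has $c_{\lambda_1\lambda_2}^\mu = \bar c_{\bar\lambda_1\bar\lambda_2}^{\bar\mu}$ with $\bar\lambda_j = u_j^{-1}\lambda_j$, $\bar\mu=v^{-1}\mu$, the multiplicity for the Levi $L_i$. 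Thus $(\lambda_1,\lambda_2,\mu)\in\Gamma(\lg)$ iff $(\bar\lambda_1,\bar\lambda_2,\bar\mu)\in\Gamma(L_i)$, and by induction on the semisimple rank this is governed (in the finite-type directions) by Belkale--Kumar's inequalities for $\Gamma(\dot L_i)$ together with (in the affine-Grassmannian case $i=0$) the $\dot\lg$-description of Lemma~\ref{lem:face0}. I would then need to check that each inequality of $\Gamma(L_i)$, attached to a structure constant ${}^\bkprod n_{\tilde u_1\tilde u_2}^{\tilde v}=1$ of $\Ho^*(L_i/(P_j\cap L_i),\ZZ)$, is implied by some inequality \eqref{eq:1} of $\Cun$: this is exactly the multiplicativity mechanism of Step~5. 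Concretely, Proposition~\ref{prop:multiplicative} produces from the pair $(u_1,u_2,v)$ (lying in $W^{Q}$ with $Q=P_i$) and the ``inner'' triple $(\tilde u_1,\tilde u_2,\tilde v)\in W_{P_i}^{P_i\cap P_j}$ a structure constant $n_{u_1'u_2'}^{v'}=1$ of $\Ho^*(G/(P_i\cap P_j),\ZZ)$ for the \emph{ordinary} product; Lemma~\ref{lem:bkhyp} guarantees the length hypotheses of Proposition~\ref{prop:multiplicative} are satisfied; and Theorem~\ref{th:essineqBKBprod} upgrades this to ${}^\bkprod n_{u_1'u_2'}^{v'}\neq0$ for $\bkprod$ \emph{provided the inequality of $\Gamma(L_i)$ we are using is ``useful''}, i.e. is actually attained by regular points. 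Pushing forward along $G/(P_i\cap P_j)\to G/P_j$ yields a structure constant $=1$ of $\Ho^*(G/P_j,\ZZ)$ for $\bkprod$, hence an element of $\cI$ whose inequality \eqref{eq:1} is precisely the inequality of $\Gamma(L_i)$ transported back. Therefore the relative interior of $\Face$ consists of points satisfying all inequalities of $\Gamma(L_i)$, so lies in $\Gamma(L_i)$, hence in $\Gamma(\lg)$.

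\textbf{From the boundary to all of $\Cun$.} Having shown $\partial\Cun\subset\overline{\Gamma_{\mathrm{red}}^\circ(\lg)}$, I would conclude by a convexity/density argument: Lemma~\ref{lem:face0} shows $\Ac_{(e,e,e,0)}$ has nonempty interior in $\Ac$, and the corresponding face $\Face_{(e,e,e,0)}$ (the hyperplane $b=0$) together with the quadratic bound of the first Remark after Lemma~\ref{lem:Gammaepigraph} shows $\Cun$ has nonempty interior in the ambient $\mu(c)=\lambda_1(c)+\lambda_2(c)$ hyperplane; any interior point of $\Cun$ is a convex combination of an interior point of a top-dimensional face and a point deep inside, and $\Gamma_{\mathrm{red}}^\circ(\lg)$ being convex and containing (dense subsets of) all the faces, it contains $\Cun$. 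Finally one checks $\Gamma_{\mathrm{red}}^\circ(\lg)\subset\Cun$ was already the first inclusion, so $\Gamma_{\mathrm{red}}^\circ(\lg)=\Cun$, which is Theorem~\ref{th:mainintro} after adding back the $\delta$-directions as in the discussion preceding the statement.

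\textbf{Main obstacle.} The crux is the ``useful $\Rightarrow$ nonvanishing for $\bkprod$'' implication, Theorem~\ref{th:essineqBKBprod}: it is what forces the recursive inequalities coming from the Levi subalgebras to be \emph{$\bkprod$-inequalities} of $G$ and not merely ordinary-product inequalities, so that the induction closes up inside the class $\cI$ defining $\Cun$. Its proof rests on the GIT/stability input (abundance of the relevant $\mathrm{Pic}$, existence of stable points, Lemma~\ref{lem:stabCs}) and on Proposition~\ref{prop:LmovBKBprod} relating Levi movability to the vanishing of $\delta_{u_1u_2}^v$ — these are the genuinely new ingredients replacing Kempf--Hilbert--Mumford theory, which is unavailable in the Kac-Moody setting. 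The bookkeeping of which face corresponds to which element of $\cI$, and the verification that the transported inequality is literally \eqref{eq:1}, is routine but lengthy given the explicit formulas \eqref{eq:3} and \eqref{eq:33} for $\varphi_{(u_1,u_2,v,i)}$.
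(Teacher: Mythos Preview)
Your proposal identifies all the right ingredients and follows the five-step strategy, but there is a genuine structural gap in how you assemble them.

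The issue is the hypothesis of Theorem~\ref{th:essineqBKBprod}. To upgrade the ordinary-product structure constant on $G/(P_i\cap P_j)$ to a $\bkprod$-structure constant, you need a \emph{regular} dominant triple $(\lambda_1,\lambda_2,\mu)$ that (a) lies in $\Gamma(\lg)$ and (b) satisfies the equality~\eqref{eq:424}. Such a point necessarily lies on the face $\Face_{(u_1,u_2,v,i)}$ you are currently trying to prove is contained in $\Gamma(\lg)$. So you cannot treat each face independently: for a generic face you have no a priori way to produce the required witness, and ``the inequality is useful'' is precisely what you are trying to establish. Your write-up acknowledges the proviso but never discharges it.

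The paper breaks this circularity by an adjacency propagation. It formulates hypothesis~(H): the face $\Face_{(u_1,u_2,v,i)}$ meets $(P^{++}_\QQ)^3\cap\Gamma(\lg)$. Then it proves three claims: (1) under~(H) the entire face lies in $\Gamma(\lg)$ (this is essentially your argument, now legitimate because~(H) supplies the regular witness for Theorem~\ref{th:essineqBKBprod}); (2) the base face $(e,e,e,0)$ satisfies~(H), by Lemma~\ref{lem:face0}; (3) if two faces $\Ac_a,\Ac_{a'}$ are adjacent and $a$ satisfies~(H), then so does $a'$, because a regular point on their common codimension-one intersection is produced by Claim~1 applied to $a$. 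Since every $a\in\cI_1$ is connected to $(e,e,e,0)$ by a chain of adjacent faces, (H) holds for all of them. This chain argument is the missing piece in your outline.

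Two smaller remarks. First, there is no ``induction on semisimple rank'': the Levi $L_i$ of a maximal parabolic in the untwisted affine case is finite-dimensional, so one invokes Belkale--Kumar~\cite{BK} directly. Second, your final ``convexity/density'' paragraph is unnecessary: once every codimension-one face lies in $\Gamma(\lg)$, Lemma~\ref{lem:Gammaepigraph} (downward closure in the $\delta$-direction over $\QQ$) immediately gives $\Cun\subset\Gamma_{\rm red}^\circ(\lg)$, with no further convexity argument.
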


\begin{proof}
The inclusion $\Gamma_{\rm red}^\circ(\lg)\subset \Cun$ is a direct
consequence of Proposition~\ref{prop:ineg}.
We have to prove that $\Cun$ is contained in $\Gamma(\lg)$.
Let $x_0\in\Ac$.
By Lemma~\ref{lem:Gammaepigraph}, to show that
$x_0+]-\infty,\varphi(x)](0,0,\delta)$ is contained in $\Gamma(\lg)$,
it is sufficient to prove that $x_0+\varphi(x)(0,0,\delta)$ belongs to
$\Gamma(\lg)$.

By Proposition~\ref{prop:locpol}, there exists $a_0\in\cI$ such that
$\varphi(x_0)=\varphi_{a_0}(x_0)$.
Let $U$ be an open neighborhood of $x_0$ in $\Ac$ such that for any
$y\in U$, $ \varphi_{a_0}(y)\leq \varphi(x_0)+1$. 
By Proposition~\ref{prop:locpol}, there exists a smaller neighborhood $V\subset U$
of $x_0$ and $J\subset\cI$ finite such that
$$
\forall y\in V\qquad\forall a\in\cI-J\qquad
 \varphi_{a}(y)>\varphi(x_0)+1\geq\varphi_{a_0}(y).
$$
Note that $a_0\in J$. Then, for any $y\in V$
$$
\varphi(y)=\min_{a\in J} \varphi_{a}(y).
$$
Choose a simplex ${\mathcal S}$ containing $x_0$ in its interior such that
${\mathcal S}\cap\Ac\subset V$. Up to replacing $V$ by ${\mathcal
  S}\cap\Ac$, one may assume that $V$ is a convex polytope. 
Then, the theory of polyhedrons implies that,
for any $y\in V$
$$
\varphi(y)=\min_{a\in J\cap\cI_1} \varphi_{a}(y).
$$
In particular, it is sufficient to prove that 
% Let $\Psi\,:\,\Ac\longto\RR$ be the function that maps
% $(\lambda_1,\lambda_2,\bar\mu)\in\Ac$ to the real number $B$ defined in
% Lemma~\ref{lem:Gammaepigraph}. 
% Note that the proposition is equivalent to
% $\varphi=\Psi$. 
% Moreover, the known inclusion is equivalent to $\varphi\geq\Psi$. 
% It remains to prove that 
\begin{equation}
  \label{eq:facein}
\forall (u_1,u_2,v,i)\in \cI_1\qquad
\Face_{(u_1,u_2,v,i)}\subset\Gamma_{\rm red}^\circ(\lg).
\end{equation}
Let $\cI_1^0$ denote the set of elements in $\cI_1$ satisfying
inclusion~\eqref{eq:facein}.

\bigskip
Fix an element $ (u_1,u_2,v,i)\in \cI$. Set $C=L_iu_1\inv
\underline{o}^-\times L_iu_2\inv \underline{o}^-\times L_iv\inv \underline{o}$ the component of
$\varpi_{\alpha_i^\vee}$-fixed points in $\dX$.
Let $\Gamma(C,L_i)$ denote the set of
$(\lambda_1,\lambda_2,\bar\mu+b\delta)\in \Ac\times\QQ(0,0,\delta)$ such that
$C^{\rm
  ss}(\Li,L_i)\neq\emptyset$ (where $\Li$ is the $G$-linearized line
bundle on $\dX$ associated to $(\lambda_1,\lambda_2,\bar\mu+b\delta)$).
Observe that, since $\varpi_{\alpha_i^\vee}$ acts trivially on
$\Li_{|C}$, we have
$b=\varphi_{(u_1,u_2,v,i)}(\lambda_1,\lambda_2,\bar\mu)$ if $C^{\rm
  ss}(\Li,L_i)\neq\emptyset$.

\bigskip
Consider the following assumption
\begin{center}
  (H)\hspace{1cm} $\Face_{(u_1,u_2,v,i)}\cap(P^{++}_\QQ)^3\cap
  \Gamma(\lg)$ is not empty.
\end{center}

The proof proceed in three steps:
\begin{enumerate}
\item[Claim 1.] Under assumption $(H)$, we have $(u_1,u_2,v,i)\in \cI_1^0$.
\item[Claim 2.] The element $(e,e,e,0)$ belongs to $\cI_1$ and satisfies assumption $(H)$.
\item[Claim 3.] If one of two adjacent elements of $\cI_1$ satisfies assumption
  $(H)$ then the two ones satisfy assumption
  $(H)$.
\end{enumerate}

These claims are  sufficient. Indeed, for any $a\in\cI_1$, there exists a
sequence $a=a_0,\dots,a_n=(e,e,e,0)$ such that $\Ac_{a_i}$ and $\Ac_{a_{i+1}}$ are
adjacent for any $i$. By Claim~2, $a_n$ satisfies assumption~$(H)$. By an immediate
induction, Claim~3 implies that $a$ satisfies $(H)$. 
Now Claim~1 shows that $a\in \cI_1^0$.

% \in\cI_1^0$. Since $a_{n-1}$ is
% adjacent to $a_n$, $a_{n-1}$ satisfies assumtion $(H)$. Next
% $a_{n-1}\in \cI_1^0$. By induction, we deduce that $a\in\cI_1^0$.

\bigskip \noindent
{\it Proof of Claim 2.} 
It is a direct consequence of Lemma~\ref{lem:face0}.

\bigskip \noindent
{\it Proof of Claim 1$\;\Rightarrow\;$ Claim 3.}
Let $a$ and $a'$ in $\cI_1$ be such that $\Ac_a$ and $\Ac_{a'}$ are 
adjacent along the face $\Ac'$. Assume that $a$ satisfy assumption $(H)$.
Then the interior of $\Ac'$ is contained in the interior of
$\Ac_a\cup\Ac_{a'}$. In particular, 
 $\Ac'$  intersects $(P^{++}_\QQ)^3$.
Let $(\lambda_1,\lambda_2,\bar\mu)\in\Ac\cap P^{++}$. 
Then
$\varphi_a(\lambda_1,\lambda_2,\bar\mu)=\varphi_{a'}(\lambda_1,\lambda_2,\bar\mu)$. 
But, by Claim~1 for $a$,
$(\lambda_1,\lambda_2,\bar\mu)+\varphi_a(\lambda_1,\lambda_2,\bar\mu)\delta$
belongs to $\Gamma(\lg)$.
Then $(\lambda_1,\lambda_2,\bar\mu)+\varphi_{a'}(\lambda_1,\lambda_2,\bar\mu)\delta$
belongs to $\Gamma(\lg)\cap P^{++}_\QQ\cap\Face_{a'}$ and
$a'$ satisfies assumption $(H)$.

\bigskip \noindent
{\it Proof of Claim 1.} 
Let $(\bar u_1,\bar u_2,\bar v,i)\in \cI_1$ satisfying assumption
$(H)$.
Let $(\lambda_1,\lambda_2,\mu)\in \Face_{(\bar u_1,\bar u_2,\bar
  v,i)}$.
Then $(\lambda_1,\lambda_2,\bar\mu)\in\Ac$ and 
\begin{eqnarray}
  \label{eq:mudphi}
  \mu(d)=\varphi_{(\bar u_1,\bar u_2,\bar
  v,i)}(\lambda_1,\lambda_2,\bar \mu).
\end{eqnarray}
By Theorem~\ref{th:egalite}, to prove that $(\lambda_1,\lambda_2,\mu)$ belongs
to $\Gamma(\lg)$, it is sufficient  to prove that
$C^\rss(\Li,L_i)\neq\emptyset$ where $\Li$ is the line bundle on $\dX$
associated to $(\lambda_1,\lambda_2,\mu)$ and 
$C=L_iu_1\inv\underline o^-\times L_iu_2\inv\underline o^-\times
L_iv\inv\underline o$.
By \cite{BK}, this last assertion is equivalent to the fact that
$(\lambda_1,\lambda_2,\mu)$ satisfies a finite family of linear
inequalities. 
More precisely, $C^\rss(\Li,L_i)\neq\emptyset$ if and only if
\begin{enumerate}
\item \label{cond1}$u_1\inv\lambda_1$,$u_2\inv\lambda_2$ and $v\inv\mu$ are
  dominant for $L_i$;
\item \label{cond2} $Z(L_i)^\circ$ acts trivially on $\Li_{|C}$;
\item for any $j\in\{0,\dots,l\}-\{i\}$
  \begin{equation}
    \label{eq:412}
\langle\tilde v\varpi_{\alpha_j^\vee},\bar v\inv\mu\rangle\leq
\langle\tilde u_1\varpi_{\alpha_j^\vee},\bar u_1\inv\mu\rangle
+
\langle\tilde u_2\varpi_{\alpha_j^\vee},\bar u_2\inv\mu\rangle,
  \end{equation}
for any $(\tilde u_1,\tilde u_2,\tilde v)\in W_{L_i}^{P_j^i}$
such that $\epsilon_{\tilde v}(L_i/P_j^i)$ appears with
coefficient one in $\epsilon_{\tilde u_1}(L_i/P_j^i)\bkprod
\epsilon_{\tilde u_2}(L_i/P_j^i)$.
\end{enumerate}
Here $P_j^i$ is the maximal standard parabolic subgroup of $L_i$
associated to $j$. Then $P_j^i=L_i\cap P_j$. Set also $P_{i,j}=P_i\cap
P_j$.
One easily checks that $(\lambda_1,\lambda_2,\mu)$ satisfies
\eqref{cond1} and \eqref{cond2}.
It remains to prove that it satisfies inequalities~\eqref{eq:412}.

Denote by $p$ the projection that maps $(\lambda_1,\lambda_2,\mu)$ on
$(\lambda_1,\lambda_2,\bar\mu)$. 
By \cite{BK}, the image by $p$ of the set of triples satisfying
conditions \eqref{cond1}, \eqref{cond2} and inequalities~\eqref{eq:412} is
a closed convex polyhedral cone $\cone^{L_i}(C)$ of nonempty
interior. 
It remains to prove that $(\lambda_1,\lambda_2,\bar\mu)$ belongs to
$\cone^{L_i}(C)\cap \Ac$.
By assumption $(H)$, the cone $\cone^{L_i}(C)$ intersects the interior
of $\Ac$. 
In particular, $\cone^{L_i}(C)\cap \Ac$ has nonempty interior.
Then, the general theory of convex polyhedrons implies that it is
sufficient to check the conditions~\eqref{eq:412} such that the
associated face of $\cone^{L_i}(C)\cap \Ac$ has codimension one.
Consider such an inequality associated to $(\tilde u_1,\tilde u_2,\tilde
v,j)$ and
\begin{center}
\begin{tikzpicture}
  \matrix (m) [matrix of math nodes,row sep=2.5em,column sep=1.2em,minimum width=2em]
  {
     L_i/(P_j^i)&&G/(P_{ij})\\
&G/P_i&&G/P_j\\};
  \path[-stealth]
    (m-1-1) edge  (m-1-3)
    (m-1-3) edge  (m-2-2)
    (m-1-3) edge  (m-2-4);
\end{tikzpicture}  
\end{center} 
By Lemma~\ref{lem:WQP},  $(u_1=\bar u_1 \tilde u_1,u_2=\bar u_2\tilde
u_2,v=\bar v\tilde v)\in
(W^{P_{i,j}})^3$.
By Proposition~\ref{prop:multiplicative}, the
$\epsilon_v$ appears with multiplicity one in
$\epsilon_{u_1}.\epsilon_{u_2}$,  in $\Ho^*(G/P_{i,j},\ZZ)$.
Set $\tau=\varpi_{\alpha_i^\vee}+\varpi_{\alpha_j^\vee}$. 
Then
\begin{eqnarray}
  \label{eq:tau}
\langle v\tau,\mu'\rangle\leq
\langle u_1\tau,\lambda_1'\rangle+
\langle u_2\tau,\lambda_2'\rangle,
\end{eqnarray}
for any $(\lambda_1',\lambda_2',\mu')\in \Gamma(\lg)$.
Let  $(\lambda_1'',\lambda_2'',\mu'')$ in the interior of $\Ac$, in
$\cone^{L_i}(C)$ and such
that inequality~\eqref{eq:412} is an equality.
Then  inequality~\eqref{eq:tau} is an equality for 
$(\lambda_1'',\lambda_2'',\mu''+\varphi_{(\bar u_1,\bar u_2,\bar
  v,i)}\delta)$.
Now, Theorem~\ref{th:essineqBKBprod} implies that $\epsilon_v$ appears with multiplicity one in
$\epsilon_{u_1}\bkprod \epsilon_{u_2}$.

Consider now the three
elements $\bar u_1',\,\bar u'_2$ and $\bar v'$ in $W^{P_{j}}$ such
that $\bar u_1' W_{P_j}=u_1 W_{P_j}$,$\bar u'_2 W_{P_j} =u_2 W_{P_j}$
and $\bar v' W_{P_j} =v W_{P_j}$.
Lemma~\ref{lem:bkhyp} and
Proposition~\ref{prop:multiplicative} imply that, in $H^*(G/P_j,\ZZ)$, $\epsilon_{\bar v'}$ appears with multiplicity one in
$\epsilon_{\bar u_1'}\bkprod \epsilon_{\bar u'_2}$.
Since $(\lambda_1,\lambda_2,\mu)\in\Cun$, it satisfies
\begin{equation}
  \label{eq:410}
\langle\mu,v\varpi_{\alpha_j^\vee}\rangle\leq
\langle\lambda_1,u_1\varpi_{\alpha_j^\vee}\rangle+
\langle\lambda_2,u_2\varpi_{\alpha_j^\vee}\rangle.
\end{equation}
But, modulo equality~\eqref{eq:mudphi}, the inequality~\eqref{eq:412}
to prove is equivalent to 
inequality~\eqref{eq:410}.

We conclude that  $(\bar
  u_1,\bar u_2,\bar v,i)$ belongs to $\cI^0_1$.
\end{proof}

\section{Saturation factors}
\label{sec:saturation}

In this section, we prove Theorems~\ref{th:saturation} and
\ref{th:saturation2} of the introduction. Let us first check the
computation of the constants $k_s$. 

In the finite dimensional case,  known saturation factors are collected
in the following tabular. These results was obtained in
\cite{KT:saturation} for the
type $A$,  in \cite{KapMill:tssemigroup} for $B_2$ and $G_2$,
\cite{BK:satoddorthsymp} for types $B_\bl, C_\bl$, \cite{KKM} for 
$D4$ and in \cite{KM} in the remaning cases.

$$
\begin{array}{|c|c|c|c|c|c|}
\hline
  {\rm Type}&A_\bl&B_\bl(\bl\geq 2)&C_\bl(\bl\geq 3)&D_4&D_\bl(\bl\geq 5)\\[1.1ex]
\hline
{\rm Saturation\ factor}&1&2&2&1&4
\\
\hline\hline
 {\rm Type}&E_6&E_7&E_8&F_4&G_2 \\[1.1ex]
\hline
{\rm Saturation\ factor}&36&144&3\,600&144&2,3\\
\hline
\end{array}
$$

Using these datas,
one can easily check the computations of the $k_s$ given in the
introduction reading the  Dynkin diagrams.

\begin{center}
  \begin{tabular}{|c|c|c|}
\hline
  \begin{tikzpicture}[scale=.7,baseline=(2.center)]
    \draw (0.5,1.4) node[anchor=north]  {$\tilde{A}_1$};

    \node[dnode] (1) at (0,0) {};
    \node[dnode] (2) at (1,0) {};

    \path (2) edge[infedge] (1)
          ;
\end{tikzpicture}&
\begin{tikzpicture}[scale=.7,baseline=(2.center)]
\draw (2.5,2.4) node[anchor=north]  {$\tilde{A}_\bl\;(\bl\geq 2)$};
    \node[dnode] (1) at (0,0) {};
    \node[dnode] (2) at (1,0) {};
    \node[dnode] (3) at (2,0) {};
    \node[dnode] (4) at (3,0) {};
    \node[dnode] (5) at (4,0) {};
    \node[dnode] (6) at (5,0) {};
    \node[dnode,fill=black] (0) at (2.5,1) {};
    \path (1) edge[sedge,dashed] (2) edge[sedge] (0);
    \path (3) edge[sedge] (2) edge[sedge] (4);
     \path (6) edge[sedge] (5) edge[sedge] (0);
\path (4) edge[sedge] (5);
  \end{tikzpicture}
  &
  
  \begin{tikzpicture}[scale=.7,baseline=(3.center)]%%%% B
\draw (2.5,2) node[anchor=north]  {$\tilde{B}_\bl\;(\bl\geq 3)$};
    \node[dnode] (1) at (0,0.7) {};
    \node[dnode] (2) at (1,0) {};
    \node[dnode] (3) at (2,0) {};
    \node[dnode] (4) at (3,0) {};
    \node[dnode] (5) at (4,0) {};
    \node[dnode] (6) at (5,0) {};
    \node[dnode,fill=black] (0) at (0,-0.7) {};
    \path (2) edge[sedge,dashed] (3) edge[sedge] (0) edge[sedge] (1);
    \path (4) edge[sedge] (3) edge[sedge] (5);
     \path (5) edge[dedge] (6);
  \end{tikzpicture}\\[4.5ex]
\hline
\begin{tikzpicture} [scale=.7,baseline=(2.center)]%%%%% G2
\draw (1,1.4) node[anchor=north]  {$\tilde{G}_2$};
    \node[dnode,fill=black] (1) at (0,0) {};
    \node[dnode] (2) at (1,0) {};
    \node[dnode] (3) at (2,0) {};
 
     \path (2) edge[tedge] (3);
\path (1) edge[sedge] (2);
  \end{tikzpicture}
   &

\begin{tikzpicture}[scale=.7,baseline=(2.center)]
    \draw (2.5,1.4) node[anchor=north]  {$\tilde{C}_\bl\;(\bl\geq 2)$};
\node[dnode,fill=black] (1) at (0,0) {};
    \node[dnode] (2) at (1,0) {};
    \node[dnode] (3) at (2,0) {};
    \node[dnode] (4) at (3,0) {};
    \node[dnode] (5) at (4,0) {};
    \node[dnode] (6) at (5,0) {};
    %\node[dnode,fill=black] (0) at (2.5,1) {};
    \path (1) edge[dedge] (2);
    \path (3) edge[sedge,dashed] (2) edge[sedge] (4);
     \path (6) edge[dedge] (5);
\path (4) edge[sedge] (5);
  \end{tikzpicture}
&
 \begin{tikzpicture}[scale=.7,baseline=(3.center)]%%%% D
  \draw (2.5,2) node[anchor=north]  {$\tilde{D}_\bl\;(\bl\geq 4)$};
  \node[dnode] (1) at (0,0.7) {};
    \node[dnode] (2) at (1,0) {};
    \node[dnode] (3) at (2,0) {};
    \node[dnode] (4) at (3,0) {};
    \node[dnode] (5) at (4,0) {};
    \node[dnode] (6) at (5,0.7) {};
     \node[dnode] (7) at (5,-0.7) {};
    \node[dnode,fill=black] (0) at (0,-0.7) {};
    \path (2) edge[sedge,dashed] (3) edge[sedge] (0) edge[sedge] (1);
    \path (4) edge[sedge] (3) edge[sedge] (5);
     \path (5) edge[sedge] (6) edge[sedge] (7);
  \end{tikzpicture}
\\[3.6ex]
\hline
\begin{tikzpicture}[scale=.7,baseline=(1.center)]%%%% E6
\draw (2,1.1) node[anchor=north]  {$\tilde{E}_6$};
    \node[dnode,fill=black] (0) at (0,0) {};
    \node[dnode] (1) at (1,0) {};
    \node[dnode] (2) at (2,0) {};
\node[dnode] (3) at (3,-0.7) {};
    \node[dnode] (4) at (3,0.7) {};
    \node[dnode] (5) at (4,-0.7) {};
    \node[dnode] (6) at (4,0.7) {};

    \path (1) edge[sedge] (0) edge[sedge] (2);
    \path (2) edge[sedge] (3) edge[sedge] (4);
     \path (5) edge[sedge] (3);\path (6) edge[sedge] (4);
  \end{tikzpicture}
&
\begin{tikzpicture} [scale=.7,baseline=(4.center)]%%%%% E7
\draw (4,1.4) node[anchor=north]  {$\tilde{E}_7$};
    \node[dnode,fill=black] (1) at (0,0) {};
    \node[dnode] (2) at (1,0) {};
    \node[dnode] (3) at (2,0) {};
    \node[dnode] (4) at (3,0) {};
    \node[dnode] (5) at (4,0) {};
    \node[dnode] (6) at (5,0) {};
    \node[dnode] (7) at (6,0) {};
    \node[dnode] (0) at (3,1) {};
    \path (4) edge[sedge] (0) edge[sedge] (5) edge[sedge] (3);
    \path (2) edge[sedge] (1) edge[sedge] (3);
     \path (6) edge[sedge] (5) edge[sedge] (7);
  \end{tikzpicture}
&
\begin{tikzpicture} [scale=.7,baseline=(3.center)]%%%%% F4
\draw (2.5,1.4) node[anchor=north]  {$\tilde{F}_4$};
    \node[dnode,fill=black] (1) at (0,0) {};
    \node[dnode] (2) at (1,0) {};
    \node[dnode] (3) at (2,0) {};
    \node[dnode] (4) at (3,0) {};
    \node[dnode] (5) at (4,0) {};
    
    \path (2) edge[sedge] (1) edge[sedge] (3);
   
     \path (3) edge[dedge] (4);
\path (4) edge[sedge] (5);
  \end{tikzpicture}\\[3ex]
\hline

\multicolumn{3}{|c|}{
\begin{tikzpicture} [scale=.7]%%%%% E8
\draw (2.5,1.4) node[anchor=north]  {$\tilde{E}_8$};
    \node[dnode,fill=black] (0) at (-1,0) {};
    \node[dnode] (1) at (0,0) {};
    \node[dnode] (2) at (1,0) {};
    \node[dnode] (3) at (2,0) {};
    \node[dnode] (4) at (3,0) {};
    \node[dnode] (5) at (4,0) {};
    \node[dnode] (6) at (5,0) {};
    \node[dnode] (7) at (6,0) {};
    \node[dnode] (8) at (4,1) {};
    \path (5) edge[sedge] (6) edge[sedge] (4) edge[sedge] (8);
    \path (3) edge[sedge] (4) edge[sedge] (2);
     \path (1) edge[sedge] (0) edge[sedge] (2);
     \path (6) edge[sedge] (7);
  \end{tikzpicture}}\\
\hline
\end{tabular}
  \end{center}

  \begin{proof}[Proof of Theorems~\ref{th:saturation} and \ref{th:saturation2}]
    Let $(\lambda_1,\lambda_2,\mu)\in (P_+)^3$ such that
    $\mu-\lambda_1-\lambda_2\in Q$ and there exists $N>0$ such
    that $(N\lambda_1,N\lambda_2,N \mu)\in\Gamma_\NN(\lg)$.
Up to tensoring with $L(\delta)$ one may assume that
$\lambda_1(d)=\lambda_2(d)=0$.
Write $\mu$ as $\bar\mu+n\delta$, with $n\in \ZZ$.

Set $b=\varphi(\lambda_1,\lambda_2,\bar\nu)$. 
By Proposition~\ref{prop:locpol}, there exists $(u_1,u_2,v,i)\in\cI$
such that $b=\varphi_{(u_1,u_2,v,i)}(\lambda_1,\lambda_2,\bar\nu)$.

We claim that $bk_\lg$ is an integer.
The norm on $\dot Q^\vee$ is normalized by $\Vert\dot\alpha^\vee\Vert^2=2$
for a short coroot $\dot\alpha^\vee\in\dot\Phi^\vee$. 
Then, for any $h\in \dot Q^\vee$, we have $\frac{\Vert h\Vert^2}2\in \ZZ$. 
This can be proved easily by a case by case consideration. 
Then, formula~\eqref{eq:3} shows that $b\in\ZZ$ if $i=0$. 
If $i>0$, formula~\eqref{eq:33} shows that $k_\lg b\in\ZZ$.

 Consider $(k_\lg\lambda_1,k_\lg\lambda_2,k_\lg\bar\nu+(k_\lg
 b)\delta)$. 
Observe the $Q=\dot Q+\ZZ\delta$. Then, since $
\nu-\lambda_1-\lambda_2\in Q$,
$k_\lg\bar\nu+(k_\lg
 b)\delta -k_\lg\lambda_1-k_\lg\lambda_2\in Q$. 
For any $\lambda\in \lh^*_\ZZ$ and $w\in W$, $\lambda-w\lambda\in
Q$. Hence, $v\inv k_\lg\bar\nu+(k_\lg
 b)\delta-u_1\inv k_\lg\lambda_1-u_2\inv k_\lg\lambda_2$ belongs to
 $Q$. 
Since $(N\lambda_1,N\lambda_2,N \nu)\in\Gamma_\NN(\lg)$,
Corollary~\ref{cor:G2L} implies that 
$(u_1\inv \lambda_1,u_2\inv \lambda_2,v\inv (\bar\nu+b\delta))$ belongs to
$\Gamma(L_i)$. 
But $k_s$ is a saturation factor for the group $L_i$. Then 
$(u_1\inv k_s k_\lg\lambda_1,u_2\inv k_s k_\lg\lambda_2,v\inv k_s k_\lg(\bar\nu+b\delta))$ belongs to
$\Gamma(L_i)$. 
Corollary~\ref{cor:G2L} implies that $(k_s k_\lg\lambda_1,k_s k_\lg\lambda_2,k_s k_\lg(\bar\nu+b\delta))$ belongs to
$\Gamma_\NN(\lg)$.

Proposition~\ref{prop:ineg} implies that $n\leq b$.
Then $k_\lg(b-n)\in \ZZ_{\geq 0}$. 

If $b=n$, we already proved that 
$(k_s k_\lg\lambda_1,k_s k_\lg\lambda_2,k_s k_\lg\nu)$ belongs to
$\Gamma_\NN(\lg)$.
Theorem~\ref{th:saturation} is proved in this case.

Moreover, Proposition~\ref{prop:ineg} implies that the integer $b_0$
of 
Lemma~\ref{lem:Gammaepigraph} for $(k_s k_\lg\lambda_1,k_s
k_\lg\lambda_2,k_s k_\lg\bar\nu)$ is equal to $k_s k_\lg b$. 
Then, Lemma~\ref{lem:Gammaepigraph} implies that 
$(k_s k_\lg\lambda_1,k_s
k_\lg\lambda_2,k_s k_\lg\bar\nu+m\delta)\in\Gamma_\NN(\lg)$, for any
\begin{equation}
  \label{eq:110}
  m\leq k_s k_\lg b-2.
\end{equation}
Assume that $k_\lg(b-n)\in \ZZ_{>0}$.
If $k_s>1$, $m=k_sk_\lg n$ satisfies condition~\eqref{eq:110}. 
Similarly, for any $d>1$, $k_sk_\lg n-d$ satisfies
condition~\eqref{eq:110}.
The theorems follow in these cases.

Assume now that  $k_s=1$ and fix $d>1$.
We may assume that $n\neq b$.
 Then, the integer $b_0$ of 
Lemma~\ref{lem:Gammaepigraph} for $(d k_\lg\lambda_1,d
k_\lg\lambda_2,d k_\lg\bar\nu)$ is equal to $dk_\lg b$.
Since  $m=dk_\lg n=d(k_\lg n-k_\lg b)+dk_\lg n$ satisfies 
$m\leq d k_\lg b-2$, Theorem~\ref{th:saturation} also holds in this case.
  \end{proof}

\section{Some technical lemmas}
\label{sec:tech}

\renewcommand{\thelemma}{\thesection.\arabic{lemma}}
\renewcommand{\theprop}{\thesection.\arabic{prop}}
\setcounter{prop}{0}
\setcounter{lemma}{0}

In this section we collect some technical results on Birkhoff and
Bruhat decompositions, on Geometric Invariant Theory\dots

\subsection{Bruhat and Birkhoff decompositions}

In this subsection, $G$ is the minimal Kac-Moody group associated to
any symmetrizable GCM. 
Fix $T$, $W$, $B$ and $B^-$ as usually.  
Let $P\supset B$ be a standard parabolic subgroup with standard Levi
subgroup $L$. 
Fix a one parameter subgroup $\tau$ of $T$ such that for all
$\beta\in\Phi$, 
$\beta\in\Phi(P)$ if and only if $\langle\beta,\tau\rangle\geq 0$.

\begin{lemma}
 \label{lem:flow1}
Let $u\in W$ and $v\in W^P$ such that  $u\neq v$. 
Let $x\in \cX^{u\inv}_\GBm$.

Then   $\lim_{t\to 0}\tau(t)x$ does not belong to $v\inv B\underline{o}^-$.
\end{lemma}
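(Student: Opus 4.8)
The plan is to reduce the statement to the analogous classical fact about the action of a torus on a finite-dimensional Schubert variety, via the projection from $G/B^-$ to a suitable finite-dimensional quotient. First I would note that $x\in\cX^{u\inv}_\GBm=B^-u\inv\underline{o}^-$, so $u\inv\bleq w$ for every $w$ with $x\in X^{w}_\GBm$, and in fact the $B^-$-orbit of $x$ is exactly $\cX^{u\inv}_\GBm$. The key observation is that since $\tau$ lies in the positive chamber attached to $P$, the limit $z:=\lim_{t\to 0}\tau(t)x$ is a $\tau(\CC^*)$-fixed point, hence a $T$-fixed point of $G/B^-$ after a further limit, but more precisely $z$ lies in the closure $\overline{(L\cap B^-)u\inv\underline{o}^-}$ because the negative-weight directions of $\tau$ get contracted while $L$ commutes with $\tau$. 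So $z\in\overline{L u\inv\underline{o}^-}=X^{u\inv}_{G/B^-}\cap(\text{Levi part})$, and in particular $Pz=Pu\inv\underline{o}^-$.

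The main step will then be a contradiction argument: if $z\in v\inv B\underline{o}^-$, then applying the projection $\pi\colon G/B^-\to G/P$ I would get $\pi(z)\in\pi(v\inv B\underline{o}^-)$. Since $v\in W^P$, the point $v\inv\underline{o}^-\in G/B^-$ projects to a $T$-fixed point of $G/P$, and $v\inv B\underline{o}^-$ projects into the Schubert cell $\cX^{v\inv}_{G/P}$ (using that $v\inv Bv\cap P$-orbit structure is controlled by $v\in W^P$). On the other hand $\pi(z)\in\pi(Pu\inv\underline{o}^-)=u\inv\text{-cell's }P/P$, which equals the single point corresponding to the class $uW_P$. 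Comparing, I would deduce $uW_P=vW_P$, and since $v\in W^P$ this forces $v$ to be the minimal representative of $uW_P$. But then one still needs to rule out $u\neq v$: here I would use that $x\notin X^{v\inv}_\GBm$ would be needed, so instead I argue directly on $G/B^-$ that $z\in\overline{Lu\inv\underline{o}^-}$ cannot lie in $v\inv B\underline{o}^-$ unless $u\inv\bleq v\inv$ inside the $L$-orbit closure — and a weight computation at the fixed point $v\inv\underline{o}^-$ shows the contracting directions of $\tau$ are incompatible with $x$ flowing there unless $u=v$.

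Concretely, the cleanest route is: pick $w\in W$ with $x\in\cX^w_\GBm$, so $w=u\inv$; the $\tau$-limit $z$ then lies in $\overline{\tau(\CC^*)x}$, and since this closure is a $\PP^1$ (or a point) whose two fixed points are $x$'s ``source'' and ``sink'', and the sink is a $T$-fixed point $\sigma\inv\underline{o}^-$ with $\sigma\bleq u$ in the appropriate sense while $\sigma\in$ the $L$-coset of $u$. If $z\in v\inv B\underline{o}^-$ then refining, $z\in v\inv B\underline{o}^-\cap(\text{its }\tau\text{-fixed locus})=(v\inv Bv\cap L)v\inv\underline{o}^-$, forcing $Pz=Pv\inv\underline{o}^-$; combined with $Pz=Pu\inv\underline{o}^-$ and $v\in W^P$ this gives $v=$ minimal rep of $uW_P$. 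If $u\in W^P$ too then $u=v$, contradiction; if $u\notin W^P$, then $u\inv\underline{o}^-$ and $v\inv\underline{o}^-$ project to the same point of $G/P$, and one checks via Lemma~\ref{lem:rappelsRichardson} and the Bruhat order that $x$, whose $B^-$-orbit is $\cX^{u\inv}_\GBm$, cannot have $\tau$-limit inside $v\inv B\underline{o}^-$ because the latter meets the fibre $\pi\inv(\pi(v\inv\underline{o}^-))$ only in $\cX^{v\inv}$ of that fibre, which does not contain the limit point coming from $u\inv$.

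The hard part will be pinning down precisely which $T$-fixed point $z$ is and showing it avoids the cell $v\inv B\underline{o}^-$ rather than merely avoiding $v\inv\underline{o}^-$; this is a delicate interplay between the $\tau$-flow, the Bruhat order, and the fibration $G/B^-\to G/P$, and I expect it is exactly what the subsequent lemmas \ref{lem:flow2} and the Birkhoff-decomposition lemmas in this section are designed to handle, so I would lean on \cite[Lemma~7.3.5, Lemma~7.3.10]{Kumar:KacMoody} and the explicit description $W^P=\{w: w\inv\Phi^-\cap\Phi^+(L)=\emptyset\}$ to make the weight bookkeeping rigorous.
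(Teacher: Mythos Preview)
Your proposal contains a notation error at the outset that propagates through the whole argument, and then takes an unnecessary detour. In the paper's conventions (set up in the proof of Lemma~\ref{lem:rest3}), $\cX^{u\inv}_\GBm = B u\inv\underline{o}^-$ is a $B$-orbit, not a $B^-$-orbit. So your opening line ``$x\in\cX^{u\inv}_\GBm=B^-u\inv\underline{o}^-$'' is wrong, and the subsequent claim that $z\in\overline{(L\cap B^-)u\inv\underline{o}^-}$ is based on the wrong contraction direction. With the correct convention, write $x=bu\inv\underline{o}^-$ for $b\in B$; since $\langle\beta,\tau\rangle\geq 0$ for every positive root $\beta$, the conjugate $\tau(t)b\tau(t)^{-1}$ converges as $t\to 0$ to an element of $B\cap L$, whence $z\in (B\cap L)u\inv\underline{o}^-$ directly.

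Once this is fixed, you actually have the key computation in your own text: you correctly observe that $v\in W^P$ gives $v\inv Bv\cap L=B\cap L$, so the $\tau$-fixed locus of $v\inv B\underline{o}^-$ is exactly $(B\cap L)v\inv\underline{o}^-$. The paper's proof stops right here: $z$ lies in $(B\cap L)u\inv\underline{o}^-$, and if it also lay in $v\inv B\underline{o}^-$ it would lie in $(B\cap L)v\inv\underline{o}^-$; but distinct elements of $W$ give distinct $(B\cap L)$-orbits of $T$-fixed points in $G/B^-$, so $u=v$. Your whole projection to $G/P$, the case split on whether $u\in W^P$, and the worry that ``the hard part will be pinning down precisely which $T$-fixed point $z$ is'' are all unnecessary --- $z$ need not be $T$-fixed, only $\tau$-fixed, and the two $(B\cap L)$-orbit computations finish the argument in one line.
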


\begin{proof}
  Recall that $(G/B^-)^\tau$ can be decomposed in the two following
  ways:
$$
(G/B^-)^\tau=\sqcup_{w\in W^P}Lw\inv \underline{o}^-=\sqcup_{w\in W}(B\cap L)w\underline{o}^-.
$$
Moreover $\{w\underline{o}^-\,:\,w\in W\}$ are exactly the $T$-fixed points
in $G/B^-$. 
Hence, if $Y\subset G/B^-$ is $(B\cap L)-$stable then
$Y^\tau=\sqcup_{x\in Y^T}(B\cap L)x$. 
For $Y=Bu\inv \underline{o}^-$ we get 
$$
(Bu\inv \underline{o}^-)^\tau=(B\cap L)u\inv \underline{o}^-.
$$
If $v\in W^P$ then $(v\inv B v)\cap L=B\cap L$. Hence, for $Y=v\inv B\underline{o}^-$,
we get
$$
(v\inv B\underline{o}^-)^\tau=(B\cap L)v\inv \underline{o}^-.
$$
Since $\lim_{t\to 0}\tau(t)x$ belongs to $(Bu\inv \underline{o}^-)^\tau$, we
deduce that it does not belong to $v\inv B\underline{o}^-$.
\end{proof}

\begin{lemma}
  \label{lem:flow2}
Let $u,v\in W$ such that $l(v)=l(u)+1$.

\begin{enumerate}
% \item Let $x\in B u\inv \underline{o}^-$ such that
%   $\lim_{t\to\infty}\tau(t)x\in Bv\inv \underline{o}^-$. 

% Then there exists $l\in B\cap L$ and $x'\in B u\inv \underline{o}^-\cap B^-
% v\inv \underline{o}^-$ such that $x=lx'$.
\item Let $x_1,x_2\in \cX^u_\GBm$ such that
  $\lim_{t\to\infty}\tau(t)x_1=\lim_{t\to\infty}\tau(t)x_2$ belongs to
  $\cX^v_\GBm$. 

Then $\tau(\CC^*)x_1=\tau(\CC^*)x_2$.
\item Let $x_1,x_2\in \cX_v^\GB$ such that
  $\lim_{t\to\infty}\tau(t)x_1=\lim_{t\to\infty}\tau(t)x_2$ belongs to
  $\cX_u^\GB$. 

Then $\tau(\CC^*)x_1=\tau(\CC^*)x_2$.
\end{enumerate}
\end{lemma}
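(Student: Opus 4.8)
The plan is to prove (i) in detail and to deduce (ii) by running the identical argument on $G/B$, with the roles of Birkhoff and Schubert cells (equivalently of $B^-$ and $B$) interchanged. For (i) write $\cX^u_\GBm=Bu\underline o^-$, $\cX^v_\GBm=Bv\underline o^-$ for the two Birkhoff cells in the statement, and $\cX_v^{G/B^-}=B^-v\underline o^-=U^-_vv\underline o^-$ for the Schubert cell, where $U^-_v=\prod_{\alpha\in\Phi^-\cap v\Phi^+}U_\alpha$ is its ``unipotent radical''. I would start with the soft observations. First, $x_i$ is not $\tau$-fixed: otherwise $x_i=\lim_{t\to\infty}\tau(t)x_i\in\cX^v_\GBm$, contradicting $x_i\in\cX^u_\GBm$ and $u\ne v$. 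Second, the common limit $z:=\lim_{t\to\infty}\tau(t)x_i$ is $\tau$-fixed and, being the limit of a curve inside the $T$-stable cell $\cX^u_\GBm$, lies in $\overline{\cX^u_\GBm}$; since $z\in\cX^v_\GBm$ as well, the Birkhoff order relation forces $u\bleq v$ (so, with $l(v)=l(u)+1$, $v$ covers $u$). Third, exactly the computation in the proof of Lemma~\ref{lem:flow1} gives $z\in(\cX^v_\GBm)^\tau=(B\cap L)v\underline o^-$, say $z=bv\underline o^-$ with $b\in B\cap L$.

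The heart of the proof is the following claim: for a $P$-dominant $\tau$, the attracting set $A_v:=\{x\in G/B^-:\lim_{t\to\infty}\tau(t)x=v\underline o^-\}$ is contained in the single Schubert cell $\cX_v^{G/B^-}$. To prove it I would note that any $x\in A_v$ must eventually enter the $\tau$-stable open cell $vU\underline o^-=(vUv^{-1})v\underline o^-$ and hence, running the flow backwards, lies in it; writing such a point as $nv\underline o^-$ with $n$ in the pro-unipotent group $vUv^{-1}=\prod_{\alpha\in v\Phi^+}U_\alpha$ in normal form, conjugation by $\tau(t)$ is monomial in the root coordinates, $U_\alpha(\xi_\alpha)\mapsto U_\alpha(t^{\langle\alpha,\tau\rangle}\xi_\alpha)$, so $nv\underline o^-\in A_v$ precisely when $n$ is supported on roots $\alpha$ with $\langle\alpha,\tau\rangle<0$. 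For a $P$-dominant $\tau$ such $\alpha$ are negative, hence lie in $\Phi^-\cap v\Phi^+$, so $n\in U^-_v$ and $A_v\subseteq U^-_vv\underline o^-=\cX_v^{G/B^-}$. Granting the claim, since $b$ centralizes $\tau(\CC^*)$ we have $\{x:\lim_{t\to\infty}\tau(t)x=z\}=bA_v\subseteq b\,\cX_v^{G/B^-}$, and because $b\in B$ stabilizes $\cX^u_\GBm$,
$$
x_i\ \in\ \cX^u_\GBm\cap b\,\cX_v^{G/B^-}\ =\ b\,\bigl(\cX^u_\GBm\cap\cX_v^{G/B^-}\bigr)\ =:\ b\,\Xi,
$$
where $\Xi$ is the open Richardson cell attached to $(u,v)$: by Lemma~\ref{lem:rappelsRichardson} it is $\tau$-stable (an intersection of $T$-stable cells), irreducible, and, since $u\bleq v$ and $l(v)=l(u)+1$, one-dimensional and isomorphic to $\CC^*$.

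To conclude, $b\Xi$ is an irreducible, $\tau$-stable curve isomorphic to $\CC^*$ containing $x_1$ and $x_2$, neither of which is $\tau$-fixed; the non-$\tau$-fixed points of such a curve form a single $\tau(\CC^*)$-orbit (the orbit of a non-fixed point is open dense, and two such orbits would be disjoint open dense subsets of an irreducible space), so $\tau(\CC^*)x_1=\tau(\CC^*)x_2$. For (ii) the same scheme works verbatim on $G/B$: here $z:=\lim_{t\to\infty}\tau(t)x_i\in(\cX_u^{G/B})^\tau=(B\cap L)u\underline o$, the analogous normal-form computation inside the $\tau$-stable open cell $uU^-\underline o$ shows that the attracting set of $u\underline o$ is contained in the Birkhoff cell $\cX^u_{G/B}=B^-u\underline o$, and one again lands in a translate of the one-dimensional open Richardson cell $\cX^u_{G/B}\cap\cX_v^{G/B}$. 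The hard part is the attracting-set claim: it rests on knowing that $\tau$-limits exist and can be run backwards (which follows from the ind-projectivity of $G/B^-$ applied inside a finite-dimensional Schubert subvariety) and on the monomiality of $\tau$-conjugation on $vUv^{-1}$ in normal-form coordinates; everything else is bookkeeping with the Bruhat and Birkhoff orders and the Richardson-cell facts already recorded in Lemma~\ref{lem:rappelsRichardson}.
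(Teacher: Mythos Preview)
Your argument is correct and follows essentially the same route as the paper: translate by an element of $B\cap L$ so that the common limit becomes $v\underline o^-$ (resp.\ $u\underline o$), show that each $x_i$ then lies in the one-dimensional open Richardson locus $\cX^u_\GBm\cap\cX_v^\GBm\cong\CC^*$ (resp.\ $\cX^u_\GB\cap\cX_v^\GB$), and conclude from the fact that a non-fixed $\tau$-orbit in $\CC^*$ is all of $\CC^*$. The only difference is in how you land in the Schubert cell: the paper writes $x_i=g^u_i\,y$ with $g^u_i\in P^{u,-}$ directly (using that $\tau(t)g^u\tau(t)^{-1}\to e$ as $t\to\infty$), whereas you pass through the $\tau$-stable open translate $vU\underline o^-$ and a root-coordinate computation --- one small caveat there is that in the minimal Kac-Moody group $U$ (hence $vUv^{-1}$) is an ind-group rather than pro-unipotent, and imaginary roots carry no one-parameter subgroups $U_\alpha$, so the product $\prod_{\alpha\in v\Phi^+}U_\alpha$ and the ``monomial in root coordinates'' step should be phrased with a little more care (e.g.\ restrict to a finite-dimensional $T$-stable Schubert variety containing the $x_i$, or simply use $P^{u,-}$ as the paper does).
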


\begin{proof}
Let us prove the first assertion.
Set $y=\lim_{t\to\infty}\tau(t)x_1$.
Then $y\in (\cX^v_\GBm)^\tau =(B\cap L)v\underline{o}^-$.
Fix $l\in B\cap L$ such that $y=l v\underline{o}^-$.
Moreover, $Py=Px_1=Px_2$.
Note that for any $g^u\in P^{u,-}$, $l'\in L$ and $w\in W$, we have
$$\lim_{t\to\infty}\tau(t)g^ul'w\underline{o}^-=l'w\underline{o}^-.$$
 One deduces that there exist $g^u_i\in
P^{u,-}$ such that $x_i=g^u_iy$, for $i=1,2$.
Then  $l\inv x_i=(l\inv g^u_il)v \underline{o}^-$ belongs to
$\cX_v^\GBm$. 
Since $l\in B$, $l\inv x_i$ also belongs
to $\cX^u_\GBm$. 
Finally, $l\inv x_i$ belongs to  $\cX^u_\GBm\cap \cX_v^\GBm$.

But $l(v)=l(u)+1$ and  $\cX^u_\GBm\cap \cX_v^\GBm$ is isomorphic to
$\CC^*$. Since $l\inv x_1$ and $l\inv x_2$ are not fixed by
$\tau(\CC^*)$ they belong to the same $\tau$-orbit. 
Since the actions of $\tau$ and $L$ commute, one deduces that
$\tau(\CC^*)x_1=\tau(\CC^*)x_2$. 

\bigskip
The second assertion works similarly. Up to translating by an element
of $B\cap L$, one may assume that 
$\lim_{t\to\infty}\tau(t)x_1=\lim_{t\to\infty}\tau(t)x_2=u\underline
o$. Then $x_1$ and $x_2$ belong to 
$\cX^u_\GB\cap\cX_v^\GB$ that is isomorphic to $\CC^*$.
\end{proof}

\begin{lemma}
\label{lem:Cplushom}

Let $Q_1,Q_2$ be  two parabolic subgroups of $G$ containing $B^-$. Consider
$\dX=G/Q_1\times G/Q_2\times G/P$ with base point $(\underline
o_1,\underline o_2,\underline o)$. Let $P$ be a standard parabolic subgroup of finite
type and $L$ denote its  standard Levi subgroup. Fix $l\in L$. 
Let $u_1,u_2$, and $v$ in $W^P$. Set $\dx_0=(lu_1\inv \underline{o}_1,u_2\inv
\underline{o}_2,v\inv \underline{o})\in \dX$ and $\Orb=G.\dx_0$.

Then
$$
\{x\in\Orb\,:\,\lim_{t\to 0}\tau(t)x\in L.\dx_0\}=
P.\dx_0.
$$
\end{lemma}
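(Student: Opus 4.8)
The plan is to prove the two inclusions separately, the inclusion $P.\dx_0\subset\{x\in\Orb\,:\,\lim_{t\to 0}\tau(t)x\in L.\dx_0\}$ being the easy one. Indeed, write $p=g^u l'$ with $g^u\in P^{u,-}$ and $l'\in L$ — wait, more carefully: $P=L\ltimes P^u$ where $P^u$ is the pro-unipotent radical, so write $p=l'p^u$ with $l'\in L$, $p^u\in P^u$. Since $\tau$ is dominant regular on the roots of $P^u$ (all positive $\langle\beta,\tau\rangle$), one has $\lim_{t\to 0}\tau(t)p^u\tau(t)\inv=e$ in the appropriate pro-unipotent group, so for the base point $\underline o$ of $G/P$ (which is fixed by $P$), and more generally for $u_i\inv\underline o_i$ etc., one computes $\lim_{t\to 0}\tau(t)p\dx_0=\lim_{t\to 0}\tau(t)l'p^u\dx_0=l'\lim_{t\to 0}(\tau(t)l'\inv\tau(t)\inv)(\tau(t)p^u\tau(t)\inv)\tau(t)\dx_0$; since $l'$ commutes with $\tau$ up to staying in $L$ and $\tau(t)\dx_0=\dx_0'$ still has a limit in the $L$-fixed locus, one gets a point in $L.\dx_0$. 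I would phrase this more cleanly using that $\dx_0$ itself is fixed by $\tau(\CC^*)$: since $u_1,u_2,v\in W^P$ and the line bundle computations in Section~\ref{sec:bkbprod} show $\tau$ acts with weight $0$... actually simpler: $u_i\inv\underline o_i$ and $v\inv\underline o$ are $T$-fixed, hence $\tau(\CC^*)$-fixed, and $l$ commutes with $\tau$ modulo $L$, so $\lim_{t\to0}\tau(t)\dx_0=\dx_0$. Then for $p=l'p^u\in P$, $\lim_{t\to0}\tau(t)p\dx_0=\lim_{t\to0}\tau(t)l'p^u\tau(t)\inv\dx_0=l'\dx_0\in L.\dx_0$, using that $\tau(t)p^u\tau(t)\inv\to e$ and $\tau(t)l'\tau(t)\inv$ stays in a compact-type... no, stays in $L$ and acts on $\dx_0$, converging since each factor is $T$-fixed — I would just say $\tau(t)l'\tau(t)\inv\to l'$ because $l'\in L$ and $\tau$ is central-ish; in fact $\tau\subset T\subset L$ so $\tau(t)l'\tau(t)\inv=l'$ exactly. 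Good, so the limit is $l'\dx_0$.

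For the reverse inclusion, suppose $x=g\dx_0\in\Orb$ with $z:=\lim_{t\to0}\tau(t)x\in L.\dx_0$, say $z=l''\dx_0$. The strategy is to analyze $x$ componentwise in $G/Q_1\times G/Q_2\times G/P$ and reduce to statements about single flag varieties. Since $P$ has finite type, $L$ is a finite-dimensional reductive group and $G/P$ carries a finite-dimensional flag-variety-like structure near the $L$-fixed locus; more importantly $P^{u,-}\underline o\subset G/P$ is an $L$-stable open affine neighborhood of $\underline o$ on which $\tau$ acts with all weights $\leq 0$, with $0$-weight space exactly the $L$-fixed point $\underline o$. I would argue: replacing $x$ by $l''\inv x$ we may assume $z=\dx_0$. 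Then each component $z_k$ of $z$ is a $\tau$-fixed point lying in the orbit closure of the corresponding component $x_k$ under the $\tau$-flow. Using the Birkhoff/Bruhat decomposition (Lemma~\ref{lem:flow1}-type arguments) one shows that $x_k$ must lie in the ``attracting cell'' of $z_k$, which for $z_k=u_k\inv\underline o_k$ (resp.\ $v\inv\underline o$) is precisely $P^{u,-}\cdot(u_k\inv\underline o_k)$ type set intersected appropriately — and the key point is that the common element $g$ moving all three components simultaneously, combined with $\lim\tau(t)g\tau(t)\inv$ existing (because $g\dx_0$ has a limit and the $\dx_0$-stabilizer is controlled), forces $g$ into $P$. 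Concretely: the set of $g\in G$ such that $\lim_{t\to0}\tau(t)g\dx_0$ exists and equals a point in $L.\dx_0$ — I would show $\lim_{t\to0}\tau(t)g\tau(t)\inv$ exists in $G$ (this is where one uses that $x$ has a limit and that the three-fold product of cells pins down $g$ up to the stabilizer), call it $g_0$; then $g_0\dx_0\in L.\dx_0$, and $\lim_{t\to0}\tau(t)g\tau(t)\inv=:g_0$ existing means $g\in$ (the subgroup of $G$ whose conjugates by $\tau(t)$ converge), which is exactly $P$ by the structure theory of Kac--Moody groups (\cite[Section~6.1]{Kumar:KacMoody}) since $\tau$ is dominant with $P$ as its ``non-negative'' parabolic.

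The main obstacle I expect is the step ``$\lim_{t\to0}\tau(t)g\tau(t)\inv$ exists in $G$'': a priori only $\lim_{t\to0}\tau(t)g\dx_0$ exists, which is weaker, and one must use the specific geometry of $\dX=G/Q_1\times G/Q_2\times G/P$ and the positions $u_1\inv\underline o_1$, $u_2\inv\underline o_2$, $v\inv\underline o$ to promote orbit-limit convergence to group-element convergence. The way I would handle this: work in the open cell. For the third component, $g v\inv\underline o$ having a $\tau$-limit in $Lv\inv\underline o$ forces, by the Bruhat decomposition $G=\bigsqcup_{w\in W^P}\cX^w_{G/P}$ and the behaviour of $\tau$ on each cell (Lemma~\ref{lem:flow1}, Lemma~\ref{lem:flow2}), that $g v\inv\underline o\in \cX^{v\inv}_{G/P}$, i.e.\ $g\in B^- v\inv P$... but we need more. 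The cleaner route: note $G_{\dx_0}=lu_1\inv Q_1u_1l\inv\cap u_2\inv Q_2u_2\cap v\inv Bv$ contains $\tau(\CC^*)$; write $G=P^{u,-}\cdot P$ (open Birkhoff-type cell, valid since $P$ has finite type so $P^{u,-}$ is the full ``opposite'' and $P^{u,-}P$ is open dense), decompose $g=g^-p$ with $g^-\in P^{u,-}$, $p\in P$. By the easy inclusion already proved, $p\dx_0$ has limit $p_L\dx_0\in L.\dx_0$ where $p=p_Lp^u$; so replacing, we reduce to $g=g^-\in P^{u,-}$ and must show $g^-\dx_0$ has a $\tau$-limit in $L.\dx_0$ only if $g^-\in G_{\dx_0}$, i.e.\ $g^-\dx_0=\dx_0$. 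Now $\tau(t)g^-\tau(t)\inv$ diverges (its ``distance to identity'' grows) unless $g^-$ lies in $G_{\dx_0}\cap P^{u,-}$; and if it lies there, $g^-\dx_0=\dx_0$ and we are done. To make ``diverges'' precise I would project to a finite-dimensional quotient $P^{u,-}/P^{u,-}_{\geq N}$ as in the proof of Proposition~\ref{prop:LmovBKBprod}, where the statement becomes an elementary statement about a $\CC^*$-action on an affine space with only negative weights, whose attracting set to the origin is everything — so the only way $\lim_{t\to0}\tau(t)g^-\dx_0$ lands in $L.\dx_0$ (the $\tau$-fixed locus within the relevant neighborhood) rather than running off is $g^-\in G_{\dx_0}$. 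I will spell this out carefully, since conflating the orbit $\Orb=G.\dx_0$ (which need not be closed or nicely embedded) with these local models is the subtle part.
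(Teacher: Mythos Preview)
Your easy inclusion is fine. The hard inclusion, however, has a genuine gap, and your route differs substantially from the paper's.

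\textbf{The gap.} You propose to write $g=g^-p$ with $g^-\in P^{u,-}$, $p\in P$, and then reduce to showing $g^-\dx_0$ has $\tau$-limit in $L.\dx_0$ only if $g^-$ stabilizes $\dx_0$. There are two problems. First, $P^{u,-}P$ is only the open Birkhoff cell, not all of $G$; you give no argument for $g\notin P^{u,-}P$, and in fact the conclusion $g\in PG_{\dx_0}$ does not force $g$ into that cell. Second, even granting $g=g^-\in P^{u,-}$, the step ``$\tau(t)g^-\tau(t)^{-1}$ diverges in $P^{u,-}$, hence $(\tau(t)g^-\tau(t)^{-1})\dx_0$ cannot converge into $L.\dx_0$'' is exactly the content of the lemma and is not justified by passing to a finite-dimensional quotient $P^{u,-}/P^{u,-}_{\geq N}$: that quotient is a group, not a variety containing $\dx_0$, and the orbit map $P^{u,-}\to\dX$ is neither proper nor injective, so divergence of the group element says nothing a priori about convergence of the image point. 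You flag this yourself (``the subtle part'') but do not resolve it.

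\textbf{What the paper does instead.} The paper never tries to lift convergence of $\tau(t)g\dx_0$ to convergence of $\tau(t)g\tau(t)^{-1}$. It proceeds factor by factor. First project to $G/Q_2\times G/B$: on the $G/Q_2$ factor the attracting set of $Lu_2^{-1}\underline o_2$ is exactly $Pu_2^{-1}\underline o_2$, so one reduces to the fibre over $u_2^{-1}\underline o_2$, where the stabilizer $u_2^{-1}Q_2u_2$ acts on $G/B$. A tangent-space comparison at $v^{-1}\underline o$ (using that $P\cap u_2^{-1}Q_2u_2$ is finite-dimensional because $P$ has finite type) shows the attracting locus inside this orbit is $(P\cap u_2^{-1}Q_2u_2).\dx_1$, giving $\Orb_1^+=P.\dx_1$. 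Then one projects the three-factor $\dX$ onto $G/Q_2\times G/B$ and reduces to the fibre over $\dx_1$; the stabilizer there is $u_2^{-1}Q_2u_2\cap v^{-1}Bv$, which is \emph{finite-dimensional}, and the paper invokes the finite-dimensional version of the statement (Lemma~12 of \cite{GITEigen}) to conclude.

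The key idea you are missing is this iterated projection: it converts the infinite-dimensional problem into a finite-dimensional one by successively cutting down to stabilizer orbits, exploiting that $v^{-1}Bv\cap P^{u,-}$ (and hence all the relevant stabilizers once the $G/B$ factor is fixed) is finite-dimensional. Your direct approach via $P^{u,-}$ keeps you in an infinite-dimensional setting where the Bialynicki--Birula-type statements you want are not available.
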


\begin{proof}
  Consider first the anologous situation in $G/Q_2\times G/B$, with
  its two projections $p_1$ and $p_2$ on $G/Q_2$ and $G/B$. 
Set $\dx_1=(u_2\inv \underline{o}_2,v\inv \underline{o})$, $\Orb_1=G.\dx_1$ and
$\Orb_1^0=L.\dx_1$.
Set also 
$\Orb_1^+=\{x\in\Orb_1\,:\,\lim_{t\to 0}\tau(t)x\in L.\dx_1\}$.
Then $p_1(\Orb_1)=G/B^-$ and $p_1(\Orb_1^0)=Lu_2\inv \underline{o}_2$. Moreover, 
$$
\{x\in G/Q_2\,:\,\lim_{t\to 0}\tau(t)x\in L.u_2\inv \underline{o}\}=
P.u_2\inv \underline{o}.
$$
Since $\Orb_1^+$ is stable by $P$, it follows that 
$$
\Orb_1^+=P.\cI\mbox{ where } \cI=
(\{u_2\inv \underline{o}_2\}\times G/B)\cap \Orb_1^+.
$$
Set $\dx_2=v\inv \underline{o}$. Then, 
$p_2(\cI)$ is the set of points $x\in (u_2\inv Q_2u_2).\dx_2$ such that
$\lim_{t\to 0}\tau(t)x\in (L\cap u_2^{-1}Q_2u_2)\dx_2$.
In particular $p_2(\cI)$ is contained in $P\dx_2$. In particular
the weights of $\tau$ on $T_{\dx_2}p_2(\cI)$ are nonnegative. On the other
hand they are contained in $T_{\dx_2}(u_2^{-1}Q_2u_2)\dx_2$. 
It follows
that $T_{\dx_2}p_2(\cI)$ is contained in $T_{\dx_2}(P\cap
u_2^{-1}Q_2u_2)\dx_2$. 
Note that, since $P$ has finite type, $P\cap u_2\inv Q_2u_2$ is finite
dimensional. Moreover, the dimension of $\cI$ (at $\dx_2$) is at most equal to $\dim((P\cap
u_2^{-1}Q_2u_2)\dx_2)$. It follows that $(P\cap u_2^{-1}Q_2u_2)\dx_2$
is open in $p_2(\cI)$.
Since $(P\cap u_2^{-1}Q_2u_2)\dx_2$ contains $(L\cap
u_2^{-1}Q_2u_2)\dx_2$, we deduce that 
$$
p_2(\cI)=(P\cap u_2^{-1}Q_2u_2)\dx_2,
$$
and
\begin{equation}
  \label{eq:11}
  \Orb_1^+=P.\dx_1.
\end{equation}

Consider now 
$$
\pi_1\,:\,\dX\longto G/Q_2\times G/B,\,(x_1,x_2,x_3)\longmapsto
(x_2,x_3).
$$
Set $\Orb^+=\{x\in\Orb\,:\,\lim_{t\to 0}\tau(t)x\in L.\dx_0\}$.
Equality~\eqref{eq:11} and the fact that $\Orb^+$ is $P$-stable imply that 
\begin{equation}
  \label{eq:102}
\Orb^+=P\bigg(
(G/Q_1\times \{\dx_1\})\cap \Orb^+
\bigg).
\end{equation}
Note that 
$$
\begin{array}{ll}
  (G/Q_1\times \{\dx_1\})\cap \Orb=(u_2\inv Q_2u_2\cap v\inv
  Bv).\dx_0&\mbox{and}\\
(G/Q_1\times \{\dx_1\})\cap \Orb^0=(u_2\inv Q_2u_2\cap v\inv
  Bv\cap L).\dx_0
\end{array}
$$
Then, since $u_2\inv Q_2u_2\cap v\inv Bv$ is finite dimensional,
\cite[Lemma~12]{GITEigen} shows that 
$$
(G/Q_1\times \{\dx_1\})\cap \Orb^+=(P\cap u_2\inv Q_2u_2\cap v\inv
  Bv).\dx_0.
$$
With equality~\eqref{eq:102} this ends the proof of the lemma.
\end{proof}

\subsection{Affine root systems}

In this subsection, we consider an untwisted affine root system and
use the notations of Section~\ref{sec:defaff}. 
Recall in particular, that $\dot\lh^*_\RR$ is endowed with $\dot
W$-Euclidean norm $\Vert\,\cdot\,\Vert$ such that
$\Vert\theta\Vert^2=2$. 

\begin{lemma}
  \label{lem:lvsnorm}
Consider an affine Weyl group $W=\dot Q^\vee.\dot W$. Set $N=\sharp \dot \Phi^+$.

There exists a positive real constant $K$  such that
for any $h\in \dot Q^\vee$ and $\dot w\in\dot W$, we have
$$
K \Vert h\Vert-N\leq l(h\dot w)\leq N+\sqrt 2 N \Vert h\Vert.
$$
\end{lemma}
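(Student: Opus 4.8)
The plan is to use the well-known formula for the length of an element of an affine Weyl group in terms of inversions, and then estimate the number of affine roots made negative by $h\dot w$ in terms of $\Vert h\Vert$. Recall that the affine real roots are of the form $\dot\alpha + n\delta$ with $\dot\alpha\in\dot\Phi$ and $n\in\ZZ$ (with $n\geq 0$ if $\dot\alpha\in\dot\Phi^+$ and $n\geq 1$ otherwise), and that $l(w)=\sharp\Phi_w$ where $\Phi_w = w^{-1}\Phi^+\cap\Phi^-$. Writing $w=h\dot w$ and using \eqref{eq:actionWh}, one computes directly that $w^{-1}(\dot\alpha+n\delta) = \dot w^{-1}\dot\alpha + (n - (\dot\alpha,h))\delta$ (up to the central term, which is irrelevant for the sign of a real root). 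Hence the affine root $\dot\alpha+n\delta$ lies in $\Phi_w$ precisely when $\dot\alpha+n\delta\in\Phi^+$ and $\dot w^{-1}\dot\alpha + (n-(\dot\alpha,h))\delta\in\Phi^-$, a condition controlled, for each fixed $\dot\alpha\in\dot\Phi$, by whether $n$ lies between $0$ (or $1$) and $(\dot\alpha,h)$ (or $(\dot\alpha,h)$ shifted by one, depending on the sign of $\dot w^{-1}\dot\alpha$).

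From this description, for each of the $N=\sharp\dot\Phi^+$ positive roots $\dot\alpha$ the number of relevant $n$'s differs from $|(\dot\alpha,h)|$ by at most one (the $\pm1$ coming from the boundary conventions and from the sign of $\dot w^{-1}\dot\alpha$). Summing over $\dot\alpha\in\dot\Phi^+$ gives
$$
\Big| l(h\dot w) - \sum_{\dot\alpha\in\dot\Phi^+}|(\dot\alpha,h)| \Big| \leq N.
$$
So it suffices to bound $\sum_{\dot\alpha\in\dot\Phi^+}|(\dot\alpha,h)|$ above and below by constant multiples of $\Vert h\Vert$. For the upper bound, Cauchy--Schwarz gives $|(\dot\alpha,h)|\leq\Vert\dot\alpha\Vert\,\Vert h\Vert$; since every positive root has $\Vert\dot\alpha\Vert\leq\Vert\dot\theta\Vert=\sqrt 2$ (the highest root is a longest root), we get $\sum_{\dot\alpha\in\dot\Phi^+}|(\dot\alpha,h)|\leq \sqrt 2 N\Vert h\Vert$, hence $l(h\dot w)\leq N+\sqrt 2 N\Vert h\Vert$, which is exactly the claimed upper bound. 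For the lower bound, observe that $h\mapsto \sum_{\dot\alpha\in\dot\Phi^+}|(\dot\alpha,h)|$ is a norm on $\dot\lh_\RR$ (it is clearly a seminorm, and it is positive definite because the $\dot\alpha$ span $\dot\lh^*_\RR$), and any two norms on the finite-dimensional space $\dot\lh_\RR$ are equivalent; thus there is $K>0$ with $\sum_{\dot\alpha\in\dot\Phi^+}|(\dot\alpha,h)|\geq K\Vert h\Vert$ for all $h$, whence $l(h\dot w)\geq K\Vert h\Vert - N$.

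The main obstacle is getting the combinatorial count of $\Phi_w$ exactly right: one must carefully separate the cases $\dot w^{-1}\dot\alpha\in\dot\Phi^+$ versus $\dot\Phi^-$, and handle the asymmetric range conventions ($n\geq 0$ for positive $\dot\alpha$, $n\geq 1$ for negative $\dot\alpha$) so that the error term is genuinely bounded by $N$ uniformly in $\dot w$ and $h$. Once that bookkeeping is done, the norm-equivalence argument is immediate and requires no explicit value of $K$, which is all the statement asks for. (Alternatively, one can invoke the classical formula $l(h) = \sum_{\dot\alpha\in\dot\Phi^+}|(\dot\alpha^\vee,h)|$ type identities from the literature on affine Weyl groups, e.g. Iwahori--Matsumoto or \cite[Chap.~13]{Kumar:KacMoody}, and then the triangle inequality $|l(h\dot w)-l(h)|\leq l(\dot w)\leq N$ together with norm equivalence finishes the proof in the same way.)
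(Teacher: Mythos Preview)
Your proof is correct and follows essentially the same route as the paper: both rely on the Iwahori--Matsumoto length formula (which you rederive from scratch while the paper simply cites it), bound the upper side by Cauchy--Schwarz together with $\Vert\dot\alpha\Vert\leq\sqrt 2$, and obtain the lower side from the equivalence of norms on the finite-dimensional space $\dot\lh_\RR$. The paper phrases the lower bound via $l(h\dot w)\geq l(h)-l(\dot w)$, which is precisely the alternative you mention at the end.
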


\begin{proof}
  Set $w=h\dot w$. The length of $w$ is the
cardinality of $w\inv\Phi^+\cap\Phi^-$. One can deduce (see
e.g. \cite{IM}) that:
\begin{equation}
  \label{eq:416}
l(h\dot w)=\sum_{\dot \alpha\in\dot \Phi^+,\,\dot w\inv \dot \alpha\in\dot \Phi^+}|\langle
h,\dot \alpha\rangle|+
\sum_{\dot \alpha\in\dot \Phi^+,\,\dot w\inv \dot \alpha\in\dot \Phi^-}|\langle h,\dot \alpha\rangle-1|.
\end{equation}
The inequality on the right just follows from 
$$
\begin{array}{l}
|\langle h,\dot \alpha\rangle-1|\leq |\langle h,\dot \alpha\rangle|+1\\
  |\langle
h,\dot \alpha\rangle|\leq \Vert h\Vert \Vert \dot\alpha\Vert \leq \sqrt 2\Vert
  h\Vert.
\end{array}
$$

Moreover,
$$
\begin{array}{ll}
  l(h\dot w)&\geq l(h)-l(\dot w)\\
&\geq\sum_{\dot \alpha\in\dot\Phi^+}|\langle
h,\dot \alpha\rangle|-N.
\end{array}
$$
Since $\dot\Phi^+$ spans $\dot\lh^*_\RR$, the map $h\mapsto \sum_{\dot \alpha\in\dot \Phi^+}|\langle
h,\dot \alpha\rangle|$ is a norm on the real vector space $\dot\lh^*_\RR$. This norm
is equivalent to $\Vert\cdot\Vert$, and there exists $K$ such that $K \Vert h\Vert\leq \sum_{\dot \alpha\in\dot \Phi^+}|\langle
h,\dot \alpha\rangle|$. The lemma follows.
\end{proof}

\subsection{Jacobson-Morozov's theorem}

Let $\lg$ be an untwisted affine Kac-Moody Lie algebra and $\lp$ be a
standard parabolic subalgebra. 
Let $G$ be the minimal Kac-Moody group associated to $\lg$ and $P$ be
the parabolic subgroup corresponding to $\lp$.
Fix $\tau$ a one parameter subgroup of $T$ in
$\oplus_{\alpha_j\not\in\Delta(P)}\ZZ_{>0}\varpi_{\alpha_j^\vee}$.
Consider the action of $\tau$ on $\lg$ and the corresponding weight
space decompositions
$$
\lg=\oplus_{n\in\ZZ}\lg_n\qquad\lp=\oplus_{n\in\ZZ_{\geq 0}}\lg_n.
$$

In $\sl_2(\CC)$, we denote by $(E,H,F)$ the standard triple
$$
E=
\begin{pmatrix}
  0&1\\0&0
\end{pmatrix}
\qquad
H=
\begin{pmatrix}
  1&0\\0&-1
\end{pmatrix}
\qquad
F=
\begin{pmatrix}
  0&0\\1&0
\end{pmatrix}
$$
satisfying
$$
[E,F]=H\qquad [H,E]=2E\qquad [H,F]=-2F.
$$

\begin{prop}
  \label{prop:JM}
Fix $n\in\ZZ_{>0}$. Let $w\in W$.
Let $\xi$ be a nonzero vector in $\lg_n\cap w\lu^-w\inv$.

Then there exists a morphism $\phi\,:\,\SL_2(\CC)\longto G$ of
ind-groups such that $T_e\phi(E)=\xi$. 
\end{prop}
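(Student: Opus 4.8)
The plan is to build the $\sl_2$-triple inside $\lg$ and then integrate it to the group using the standard $\SL_2$-subgroup attached to a real root, or more precisely an iterated version of this. First I would observe that $\xi\in\lg_n$ is a sum of root vectors $\xi=\sum_{\alpha}\xi_\alpha$ over positive real roots $\alpha$ with $\langle\alpha,\tau\rangle=n$, and the condition $\xi\in w\lu^-w\inv$ means every such $\alpha$ lies in $w\Phi^-$, i.e. is an inversion of $w\inv$; in particular each such $\alpha$ is a \emph{real} root, since $w$ has finite length and imaginary roots are not inverted by elements of $W$. This is the key structural point: $\xi$ is nilpotent and lies in the nilradical $\lu_w:=\oplus_{\alpha\in\Phi_{w\inv}}\lg_\alpha$, a finite-dimensional nilpotent Lie subalgebra of $\lg$ whose roots are all real, positive, and have $\tau$-weight $\geq n>0$.

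Next I would invoke Jacobson--Morozov inside a suitable finite-dimensional subalgebra. The subalgebra $\lk$ generated by $\lu_w$ and $w\lh w\inv$ (equivalently, the standard parabolic of the finite-type Levi-like piece cut out by the finitely many real roots involved) is finite dimensional, or one can simply work in $\dot\lg$-type subalgebras: since all relevant roots are real, the subalgebra $\mathfrak{s}$ generated by the root spaces $\lg_{\pm\alpha}$ for $\alpha$ ranging over inversions of $w\inv$ is a finite-dimensional semisimple Lie algebra (a sub-root-system of the real roots is of finite type). Then classical Jacobson--Morozov in $\mathfrak{s}$ produces $H,F\in\mathfrak{s}$ with $(\xi,H,F)$ an $\sl_2$-triple. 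Because $\xi$ has strictly positive $\tau$-weight $n$, $H$ lies in $\lh$ up to the ambiguity, $F$ has $\tau$-weight $-n<0$, hence $F\in\lu^-$; and the triple is genuinely in $\lg$.

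Finally I would integrate. The minimal Kac-Moody group $G$ is an ind-group and, for each real root $\beta$, carries the root subgroup $U_\beta\,:\,\CC\longto G$ and a morphism $\varphi_\beta\,:\,\SL_2(\CC)\longto G$ with $T_e\varphi_\beta(E)$ spanning $\lg_\beta$ (see \cite[\S6,\S7]{Kumar:KacMoody}). The finite-dimensional semisimple subalgebra $\mathfrak{s}$ exponentiates to a closed finite-dimensional semisimple subgroup $S\subset G$ (generated by the $\varphi_\beta$ for $\beta$ a real root of $\mathfrak{s}$), and the inclusion $S\hookrightarrow G$ is a morphism of ind-groups. Inside the complex semisimple group $S$, the $\sl_2$-triple $(\xi,H,F)$ integrates to a homomorphism $\phi\,:\,\SL_2(\CC)\longto S\subset G$ with $T_e\phi(E)=\xi$ by the classical theory; composing with $S\hookrightarrow G$ gives the required morphism of ind-groups.

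The main obstacle is the first step: justifying cleanly that $\xi$ lies in a \emph{finite-dimensional} subalgebra of $\lg$ all of whose roots are real and which exponentiates to an honest algebraic subgroup of $G$. One must check that the set of roots appearing in $\xi$, together with their $\ZZ$-span intersected with $\Phi$, forms a finite closed subsystem consisting of real roots — this uses that $\Phi_{w\inv}$ is finite and consists of real roots, plus the fact that a subsystem of the real roots of an affine root system generated by finitely many real roots is of finite type (it cannot contain $\delta$). Once this is in place, both Jacobson--Morozov and the integration of $\sl_2$-triples are entirely classical, and the ind-group structure only enters through the elementary fact that finite-dimensional subgroups generated by real root subgroups are closed algebraic subgroups of $G$.
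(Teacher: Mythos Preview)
Your approach is genuinely different from the paper's, and the step you yourself flag as the main obstacle is in fact a real gap: the subalgebra you propose need not be finite-dimensional. Take $\lg$ of type $\tilde A_3$, $P=P_0$, $\tau=\varpi_{\alpha_0^\vee}=d$, and $n=1$. The three real roots $\gamma_1=\dot\alpha_1+\delta$, $\gamma_2=\dot\alpha_2+\delta$, $\gamma_3=(\dot\alpha_1+\dot\alpha_2)+\delta$ all have $\tau$-weight $1$, and all lie in $\Phi^+\cap w\Phi^-$ for $w=t_h$ with $h=-2(\dot\alpha_1^\vee+\dot\alpha_2^\vee)\in\dot Q^\vee$: one checks $w^{-1}\gamma_1=\dot\alpha_1-\delta$, $w^{-1}\gamma_2=\dot\alpha_2-\delta$, $w^{-1}\gamma_3=(\dot\alpha_1+\dot\alpha_2)-3\delta$, all negative. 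But $\gamma_1+\gamma_2-\gamma_3=\delta$, so their $\ZZ$-span contains $\delta$ and meets $\Phi$ in an infinite set; your parenthetical ``it cannot contain $\delta$'' is exactly what fails. Concretely, in the loop realization the subalgebra generated by $\lg_{\pm\gamma_i}$ contains $[e_1\otimes t,\,f_{12}\otimes t^{-1}]\in\CC f_2$ (with $f_{12}$ a root vector for $-\dot\alpha_1-\dot\alpha_2$), then $[f_2,\,e_2\otimes t]\in\CC(h_2\otimes t)$, and inductively $[h_2\otimes t,\,e_2\otimes t^k]\in\CC(e_2\otimes t^{k+1})$ for all $k\geq 1$. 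So there is no finite-dimensional semisimple $\mathfrak{s}\subset\lg$ of the kind you describe containing a generic $\xi\in\lg_{\gamma_1}\oplus\lg_{\gamma_2}\oplus\lg_{\gamma_3}$.

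The paper bypasses this by exploiting the loop realization. It views $\xi$ inside $\dot\lg\otimes\KK$ with $\KK=\CC((t))$, which \emph{is} finite-dimensional over the field $\KK$; local nilpotence of $\ad\xi$ on $\lg$ forces nilpotence over $\KK$, so Jacobson--Morozov applies there. The resulting triple is then homogenized for the $\tau$-grading (using the injectivity of $\ad H_0+2\Id$ on $\Ker\ad\xi$ from \cite{Kostant:sl2triple}, which pushes the $F$-partner into degree $-n$), hence lies in $\dot\lg\otimes\Rr$ with $\Rr=\CC[t,t^{-1}]$. Integration is done over $\Rr$ via \cite[Expos\'e~XXIV, Proposition~7.3.1]{SGA3}, giving $\SL_2(\CC)\to\dot G(\Rr)$, and finally lifted through the central extension $1\to\CC^*\to G\to\CC^*\ltimes\dot G(\Rr)\to 1$. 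Note that your integration step also rests on the missing subgroup $S$: an abstract $\sl_2$-triple in $\lg$ does not automatically exponentiate inside the \emph{minimal} Kac--Moody group, and the loop-group detour is precisely what makes this work in the affine case.
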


\begin{proof}
  Observe that $\lg_n$ is contained in $\lu$. Then $\xi\in\lu\cap
  w\lu^-w\inv$ and by \cite[Theorem~10.2.5]{Kumar:KacMoody},
  $\ad\xi\in\End(\lg)$ is locally nilpotent. 

Set $\KK=\CC((t))=\CC[t\inv][[t]]$ and
$\Rr=\CC[t,t\inv]\subset\KK$. Consider the Lie algebras $\dlg\otimes
\Rr$ and $\dlg\otimes\KK$. Recall that $\CC d\oplus\dlg\otimes
\Rr$ is a semi-direct product and that 
\begin{center}
\begin{tikzpicture}
  \matrix (m) [matrix of math nodes,row sep=3em,column sep=3em,minimum width=2em]
  {
     0&\CC c&\lg& \CC d\oplus\dlg\otimes
\Rr&0\\
   };
\draw[->] (m-1-4-|m-1-3.east) -- (m-1-4-|m-1-4.west);
\draw[->] (m-1-4-|m-1-1.east) -- (m-1-4-|m-1-2.west);
\draw[->] (m-1-4-|m-1-2.east) -- (m-1-4-|m-1-3.west);
\draw[->] (m-1-4-|m-1-4.east) -- (m-1-4-|m-1-5.west);
  % \path[-stealth]
 %    (m-1-1) edge[->] (m-1-2)
 %    (m-1-2)  edge[->]  (m-1-3)
 %     (m-1-3)  edge[->]  (m-1-4)
 % (m-1-4)  edge[->]  (m-1-5);
\end{tikzpicture}  
\end{center}
is a central extension. Consider also the canonical $\CC$-linear
embedding $\iota\,:\,\dlg\otimes\Rr\longto\lg$. It is not an
homomorphism of Lie
algebras.

Note that the one parameter subgroup $\tau$ is equal to $\dot\tau+md$
for some one parameter subgroup $\dot\tau$ of $\dot T$ and some
positive integer $m$. 
Then, $\tau$ acts on $\dlg\otimes \Rr$ and
$\dlg\otimes \KK$ by $\CC$-linear automorphisms.
 Consider the decomposition 
$$
\dlg\otimes \Rr=\oplus_{k\in\ZZ}( \dlg\otimes\Rr)_k
$$
in $\tau$-eigenspaces. Since each $( \dlg\otimes\Rr)_k$ is finite
dimensional and $m$ is positive,
we have
\begin{equation}
  \label{eq:decgKK}
\dlg\otimes \KK=\oplus_{k\in\ZZ_{< 0}}(
\dlg\otimes\Rr)_k\oplus\prod_{k\in\ZZ_{\geq 0}} (
\dlg\otimes\Rr)_k.
 \end{equation}

Observe that, for any nonzero $k\in\ZZ$, $\lg_k=(
\dlg\otimes\Rr)_k$. In particular, $\xi$ belongs to $\dlg\otimes\Rr$. We
denote by $\bar\xi$ (resp. $\tilde\xi$) the element $\xi$ considered
as an element of the Lie algebra $\dlg\otimes\Rr$
(resp. $\dlg\otimes\KK$). 

Since $\ad\xi$ is locally nilpotent and $\ad\tilde\xi$ is
$\KK$-linear, $\ad\tilde\xi$ is nilpotent. 
Applying Jacobson-Morozov's theorem (see
e.g. \cite[VIII--\S 11~Proposition~2]{Bourb:Lie79}) to
the Lie algebra $\dlg\otimes\KK$ over the field $\KK$ of
characteristic zero, we get an $\sl_2$-triple $(X,H,Y)$ in
$\dlg\otimes\KK$ such that $X=\tilde\xi$.

Write $Y=\sum _{k\in\ZZ} Y_k$ according to the
decomposition~\eqref{eq:decgKK}. Since the Lie bracket is graded, we have
in $\dlg\otimes\KK$
$$
[X,[X,Y_{-n}]]=-2X.
$$
Set $H_0=[X,Y_{-n}]$ and $\lln=\Ker(\ad\tilde\xi)$.
Since $X$ is homogeneous, $\lln$ decomposes 
as $\oplus_{k\in\ZZ_{< 0}}\lln_k\oplus\prod_{k\in\ZZ_{\geq 0}}
\lln_k$, where $\lln_k=\lln\cap(\dot\lg\otimes\KK)_k$.
Note that $[X,Y_{-n}]+2Y_{-n}$ belongs to $\lln_{-n}$.
By \cite[Corollary~3.4]{Kostant:sl2triple}, $\ad H_0+2\Id_{\dlg\otimes\KK}$ is
injective and stabilizes each $\lln_n$. Moreover, $\lln_{-n}$ is $(\ad
H_0-2\Id_{\dlg\otimes\KK})$-stable and finite dimensional as a
complex vector space. Then there exists $Y'\in\lln_{-n}$ such that 
$$
[X,Y_{-n}]+2Y_{-n}=[X,Y'_{-n}]+2Y'_{-n}.
$$
Then, $(X,H_0,Y_{-n}-Y_{-n}')$ is an $\sl_2$-triple
contained in $(\dlg\otimes\KK)_{n}\times(\dlg\otimes\KK)_0\times (\dlg\otimes\KK)_{-n}$.
In particular, this $\sl_2$-triple is contained in
$\dlg\otimes\Rr$. Hence,  we get an $\Rr$-linear Lie algebra homomorphism
$$
\phi\,:\,\sl_2(\Rr)\longto \dlg\otimes\Rr
$$
such that $\phi(E)=\xi$. Since $\SL_2$ is simply connected and $\Rr$
contains $\QQ$, \cite[Expos\'e~XXIV, Proposition~7.3.1]{SGA3} implies
that there exists a morphism
$$
\Phi\,:\,\SL_2\longto \dot G
$$
of $\Rr$-group schemes with $\phi$ as differential map at the
identity. In particular, we get a morphsm of ind-groups
$$
\bar\Phi\,:\,\SL_2(\CC)\longto \dot G\otimes\Rr
$$
such that $T_e\bar\Phi(E)=\xi$. 

Consider now the semidirect product $\CC^*\ltimes \dot G\otimes\Rr$
associated to the derivation $d$, and the central extension

\begin{center}
\begin{tikzpicture}
  \matrix (m) [matrix of math nodes,row sep=3em,column sep=3em,minimum width=2em]
  {
     \{1\}&\CC^*&G& \CC^*\ltimes \dot G(\Rr)&\{1\}.\\
   };
  \path[-stealth]
    (m-1-1) edge[->] (m-1-2)
    (m-1-2)  edge[->]  (m-1-3)
     (m-1-3)  edge[->]  node [above]{$\pi$}(m-1-4)
 (m-1-4)  edge[->]  (m-1-5);
\end{tikzpicture}  
\end{center}
Then $\pi\inv(\bar\Phi(\SL_2(\CC)))$ is a central extension of
$(P)\SL_2(\CC)$. Hence, it is isomorphic to either $\CC^*\times
(P)\SL_2(\CC)$ or  $\GL_2(\CC)$. 
In each case, $\bar\Phi$ can be lift to a morphism to
$\pi\inv(\bar\Phi(\SL_2(\CC)))$. This concludes the proof of the
proposition.
\end{proof}

\subsection{Geometric Invariant Theory}

For a given $\CC^*$-variety $X$ and a given integer $k$, we denote by
$\CC[X]^{(k)}$ the set of regular functions $f$ on $X$ such that
$(t.f)(x)=f(t\inv x)=t^kf(x)$, for any $t\in\CC^*$ and $x\in X$.

\begin{lemma}\label{lem:GIT}
  Let $X$ be a normal affine $\CC^*$-variety. 
Let $D$ be a $\CC^*$-stable irreducible divisor and set $U=X-D$.
We assume that
\begin{enumerate}
\item \label{hyp1}
$\forall x\in U\quad\lim_{t\to 0} tx$ does not exist in $X$.
\item \label{hyp2}$\forall x_1,x_2\in U\qquad\lim_{t\to \infty} tx_1=\lim_{t\to
    \infty} tx_2\in D\,\Longrightarrow\,
\CC^*x_1=\CC^*x_2$.
\item \label{hyp3}$\forall x\in D\quad\lim_{t\to 0} tx$ does exist in $D$.
\item \label{hyp4}$\forall y\in D^{\CC^*}\qquad\exists x\in U\qquad \lim_{t\to \infty} tx=y$.
\end{enumerate}

Then, for any nonnegative integer $k$, the restriction map induces an
isomorphism $\CC[X]^{(k)}\simeq \CC[U]^{(k)}$.
\end{lemma}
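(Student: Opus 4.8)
The plan is to establish injectivity and surjectivity of the restriction map separately. Injectivity is immediate: $U$ is dense in the irreducible variety $X$, so a regular function on $X$ vanishing on $U$ is $0$, and hence $\CC[X]^{(k)}\to\CC[U]^{(k)}$ is injective for every $k$. For surjectivity I fix $f\in\CC[U]^{(k)}$ with $k\ge 0$. Since $X$ is normal it is regular at the generic point $\xi_D$ of the irreducible divisor $D=X\setminus U$, so $\co_{X,\xi_D}$ is a discrete valuation ring; let $v_D$ denote the corresponding valuation on $\CC(X)$. Because $X$ is normal, to prove $f\in\CC[X]$ it is enough to check $v_E(f)\ge 0$ for every prime divisor $E$; this is automatic for $E\ne D$ (the generic point of $E$ lies in $U$, where $f$ is regular), so the whole problem reduces to showing $v_D(f)\ge 0$. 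I will use throughout that $\CC[X]=\oplus_n\CC[X]^{(n)}$ and $\CC[U]=\oplus_n\CC[U]^{(n)}$ are $\ZZ$-graded by the $\CC^*$-action, that the restriction map is graded, and that the ideal $I(D)$ of $D$ is homogeneous and $v_D$ is $\CC^*$-invariant, both because $D$ is $\CC^*$-stable.

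The argument will rest on two facts about this grading. First, every homogeneous $g\in\CC[X]^{(n)}$ with $n>0$ vanishes on $D$: given $x\in D$, hypothesis~\eqref{hyp3} provides $z:=\lim_{t\to0}tx\in D$, while $g(tx)=t^{-n}g(x)$; letting $t\to0$, the left-hand side tends to the finite number $g(z)$, which is impossible unless $g(x)=0$. (In particular some $\CC[X]^{(n)}$ with $n>0$ is nonzero, for otherwise the same estimate would make $\lim_{t\to0}tx$ exist for all $x\in X$, contradicting hypothesis~\eqref{hyp1}.) Second --- and this is the crux --- $D$ carries a homogeneous defining function of strictly positive weight: there is $a\in I(D)$, homogeneous of some weight $d>0$, with $v_D(a)=1$. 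That a homogeneous element of $I(D)$ with $v_D=1$ exists at all is routine: the maximal ideal of the DVR $\co_{X,\xi_D}$ is generated by the image of $I(D)$, so some $b\in I(D)$ has $v_D(b)=1$, and decomposing $b$ into homogeneous components and using a Vandermonde argument --- the components are $\CC$-linear combinations of the translates $t\cdot b$, all of which have $v_D=1$ since $v_D$ is $\CC^*$-invariant --- yields a homogeneous component of $v_D$-value $1$. The substance is that the weight may be taken positive, and here I would invoke hypotheses~\eqref{hyp1}, \eqref{hyp2} and \eqref{hyp4}: \eqref{hyp1} together with \eqref{hyp3} shows that $D$ is exactly the attracting set $\{x:\lim_{t\to0}tx\text{ exists}\}$, so the conormal direction of $D$ is ``$t\to0$-repelling''; \eqref{hyp4} supplies, through any $\CC^*$-fixed point $y\in D$, a point $x_0\in U$ with $\lim_{t\to\infty}tx_0=y$, and evaluating a homogeneous $a\in I(D)$ along the orbit $\CC^*x_0$ gives $a(tx_0)=t^{-d}a(x_0)\to a(y)=0$ as $t\to\infty$, which forces $d>0$ once $a$ is chosen with $a(x_0)\ne0$; it is the uniqueness in \eqref{hyp2} (each fixed point is the limit of a single orbit of $U$), combined with the first fact, that makes such a choice possible.

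Granting the two facts, I would conclude in one line. Suppose $v_D(f)=-m<0$ with $m\ge1$, and put $g:=a^mf$. Then $v_D(g)=m\cdot1+(-m)=0$, while for every prime divisor $E\ne D$ one has $v_E(g)=m\,v_E(a)+v_E(f)\ge0$; hence $g\in\CC[X]$ by normality. But $g$ is homogeneous of weight $md+k\ge d>0$, so the first fact forces $g$ to vanish on $D$, i.e. $v_D(g)\ge1$, contradicting $v_D(g)=0$. Therefore $v_D(f)\ge0$, so $f\in\CC[X]$, and surjectivity follows.

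The main obstacle is the second fact, the existence of a homogeneous equation of $D$ of strictly positive weight: everything else uses only normality, the $\CC^*$-grading and hypothesis~\eqref{hyp3}, whereas hypotheses~\eqref{hyp1}, \eqref{hyp2} and \eqref{hyp4} are needed exactly here. It is also at this point that the hypothesis $k\ge0$ becomes indispensable --- for $k<0$ the auxiliary function $a^mf$ may have non-positive weight, so no contradiction arises, and indeed the conclusion fails for $k<0$ already when $X=\AA^1$ with $\CC^*$ acting by $t\cdot x=t^{-1}x$ and $D=\{0\}$.
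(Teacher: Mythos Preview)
Your valuation-theoretic strategy is quite different from the paper's, which first reduces to $k=0$ by passing to $\tilde X=X\times\AA^1$ and then proves that the induced map $\theta:U\quot\CC^*\to X\quot\CC^*$ is a bijection (surjectivity from (iii) and (iv), injectivity from (i) and (ii)), concluding by Zariski's main theorem; no valuations enter. Your first fact is correct; the problem is the second.

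Your ``second fact'' is not merely unproven --- it is \emph{false} under (i)--(iv). Take $X=\Spec\CC[x,y,z]/(xy-z^2)$ (the normal $A_1$-cone) with action $t\cdot(a,b,c)=(t^{-2}a,\,t^2b,\,c)$, so that $x,y,z$ have weights $2,-2,0$, and let $D=\{x=z=0\}$ be the $y$-axis. One checks that all four hypotheses hold (the only fixed point in $D$ is the origin, approached from the punctured $x$-axis). Here $I(D)=(x,z)$ with $v_D(z)=1$ and $v_D(x)=2$, so $I(D)/I(D)^{(2)}$ is the free rank-one $\CC[D]=\CC[y]$-module on $\bar z$; since $z$ has weight $0$ and $y$ weight $-2$, every homogeneous $a$ with $v_D(a)=1$ has weight $\le 0$. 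Your heuristic for forcing positive weight collapses here: precisely because every such $a$ has weight $d\le 0$, the relation $a(tx_0)=t^{-d}a(x_0)\to a(y)=0$ as $t\to\infty$ forces $a(x_0)=0$ for \emph{all} candidates $a$, so there is nothing to choose. Hypothesis (ii) does not manufacture a nonvanishing $a$.

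There is, however, a one-line repair. Drop the requirement $v_D(a)=1$: by (i) alone there exists a nonzero homogeneous $p\in\CC[X]$ of weight $e>0$ (otherwise every $\lim_{t\to0}tx$ exists in $X$, contradicting (i)), and $m_0:=v_D(p)\ge 1$ by your first fact. If $f\in\CC[U]^{(k)}$ with $k\ge 0$ has $v_D(f)=-m<0$, set $g:=f^{m_0}p^{m}$. Then $v_D(g)=0$, and $v_E(g)\ge 0$ for every other prime divisor $E$, so $g\in\CC[X]$ by normality; but $g$ is homogeneous of weight $m_0k+me>0$, so $g|_D=0$ by the first fact, contradicting $v_D(g)=0$. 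This version uses only (i) and (iii), so in fact establishes the lemma without hypotheses (ii) and (iv) --- a strengthening the paper's GIT argument does not yield.
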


\begin{NB}
  In assumption $(ii)$, $\lim_{t\to \infty} tx_1=\lim_{t\to
    \infty} tx_2\in D$ means that the limits exist and belong to $D$.
\end{NB}
\begin{proof}
  Set $\tilde X=X\times \CC$, $\tilde U=U\times \CC$ and $\tilde
  D=D\times \CC$. We define an action of $\CC^*$ on $\tilde X$ by
  $t.(x,z)=(t.x,tz)$ for any $t\in\CC^*$, $x\in X$ and $z\in\CC$.
Observe that $\CC[\tilde X]^{\CC^*}=\oplus_{k\in \NN} \CC[X]^{(k)} z^k$ and
$\CC[\tilde U]^{\CC^*}=\oplus_{k\in \NN} \CC[U]^{(k)} z^k$.  
Then, it is sufficient to prove that $\CC[\tilde X]^{\CC^*}=\CC[\tilde
U]^{\CC^*}$.

But, one can easily check that $\tilde X$ satisfy all the assumptions
of the lemma. As a consequence, it is sufficient to prove the lemma
for $k=0$.

Consider the commutative diagram

\begin{center}
\begin{tikzpicture}
  \matrix (m) [matrix of math nodes,row sep=3em,column sep=4em,minimum width=2em]
  {
     U&X \\
     U\quot\CC^* & X\quot\CC^*,  \\};
  \path[-stealth]
    (m-1-1) edge node [left] {$\pi_U$} (m-2-1)
            edge  (m-1-2)
    (m-2-1) edge node [above] {$\theta$}(m-2-2)
    (m-1-2) edge node [right] {$\pi_X$} (m-2-2);
\end{tikzpicture}  
\end{center}
where $\quot\CC^*$ denotes the GIT-quotient.
It remains to prove that $\theta$ is an isomorphism. 

We first prove the surjectivity of $\theta$.
Let $\xi\in X\quot\CC^*$ and $\Orb\subset X$ be the unique closed $\CC^*$-orbit in
$\pi_X\inv(\xi)$. It $\Orb\subset U$, it is clear that
$\theta(\pi_U(\Orb))=\xi$. 
Otherwise $\Orb\subset D$.
The orbit $\Orb$ being closed, assumption~\eqref{hyp3} implies that
$\Orb$ is a fixed point.
 By assumption~\eqref{hyp4}, there exists
$\Orb'\subset U$ such that $\overline{\Orb'}\supset\Orb$. Then 
$\theta(\pi_U(\Orb'))=\xi$. We conclude that $\theta$ is surjective.

Let us prove now that $\theta$ is injective. Assume that
$\xi_1\neq\xi_2\in U\quot\CC^*$ satisfy
$\theta(\xi_1)=\theta(\xi_2)=:\xi$. 
Let $\Orb_1,\Orb_2\subset U$ and $\Orb\subset X$ be the closed $\CC^*$-orbit in
$\pi_U\inv(\xi_1)$, $\pi_U\inv(\xi_2)$ and $\pi_X\inv(\xi)$
respectively.
Since $\pi_X(\Orb_1)=\pi_X(\Orb_2)=\xi$, we have $\Orb\subset
\overline{\Orb_1}\cap \overline{\Orb_2}$.
In particular, $\Orb$ is a $\CC^*$-fixed point and $\Orb_1$ and
$\Orb_2$ are one dimensional. 
Pick $x_1\in\Orb_1$,  $x_2\in\Orb_2$ and $y\in\Orb$.
By assumption~\eqref{hyp1}, the limit $\lim_{t\to 0}t.x_1$ does not
exist. But $y\in \overline{\Orb_1}-\Orb_1$, so
$\lim_{t\to \infty}t.x_1=y$. Similarly $\lim_{t\to \infty}t.x_2=y$. Now,
assumption~\eqref{hyp2}, implies that $\Orb_1=\Orb_2$. Hence $\theta$ is
injective.

Since we work over complex numbers, the fact that $\theta$ is bijective
implies that it is birational. 
By assumption $X$ is normal. Thus $X\quot \CC^*$ is normal.
Then Zariski's main theorem (see
e.g. \cite[Theorem~A.11]{Kumar:KacMoody}) implies
that   $\theta$ is an isomorphism.
\end{proof}

\bibliographystyle{amsalpha}
\bibliography{tensorKacMoody}

\begin{center}
  -\hspace{1em}$\diamondsuit$\hspace{1em}-
\end{center}
\end{document}